\theoremstyle{plain}
\newtheorem{lemma}{Lemma}[section]
\newtheorem{theorem}[lemma]{Theorem}
\newtheorem{corollary}[lemma]{Corollary}
\newtheorem{proposition}[lemma]{Proposition}
\theoremstyle{definition}
\newtheorem{remark}[lemma]{Remark}
\newtheorem{example}[lemma]{Example}
\newtheorem{definition}[lemma]{Definition}
\newcommand{\Zset}{\mathbb Z}
\newcommand{\M}{\operatorname{\mathbb M}}
\newcommand{\so}{\mathbf{s}}
\newcommand{\ra}{\mathbf{r}}
\newcommand{\V}{\mathcal V}
\newcommand{\F}{\mathcal F}
\newcommand{\Pa}{\mathcal P}
\newcommand{\ol}{\overline}
\newcommand{\POG}{\mathbf {POG}}
\newcommand{\POM}{\mathbf {POM}}
\newcommand{\Ter}{\operatorname{Ter}}
\newcommand{\tot}{\operatorname{tot}}
\newcommand{\can}{\operatorname{can}}
\newcommand{\red}{\operatorname{red}}
\newcommand{\gr}{\operatorname{gr}}
\newcommand{\mymod}{\operatorname{mod}}
\newcommand{\tails}{\operatorname{tails}}
\newcommand{\quot}{\operatorname{quot}}\newcommand{\direct}{\operatorname{dir}}
\newcommand{\cut}{\operatorname{cut}}
\title[GCC for graphs with disjoint cycles]{The Graded Classification Conjecture holds for graphs with disjoint cycles}
\author{Lia Va\v s}
\address{Department of Mathematics, Saint Joseph's University, Philadelphia, PA 19131, USA}
\email{lvas@sju.edu}
\subjclass{16S88, 46L35, 37A55, 19A49, 16E20}
\keywords{Graded Classification Conjecture, graphs, disjoint cycles, graded isomorphism, composition series, Leavitt path algebra}
\thanks{The author is very grateful to S\o{}ren Eilers and Roozbeh Hazrat for their comments which contributed to improvement of the arXiv versions v1 and v2.}
\begin{document}

\begin{abstract}
The Graded Classification Conjecture (GCC) states that the pointed $K_0^{\operatorname{gr}}$-group is a complete invariant of the Leavitt path algebras of finite graphs when these algebras are considered with their natural grading by $\mathbb Z.$ The conjecture has previously been shown to hold in some rather special cases. The main result of the paper shows that the GCC holds for a significantly more general class of graphs -- countable graphs with disjoint cycles, with only finitely many infinite emitters, finitely many sinks and cycles and such that every infinite path ends in a cycle. In particular, our result holds for finite graphs with disjoint cycles (the Toeplitz graph is such, for example).
We formulate and show the main result also for graph $C^*$-algebras. As a consequence, the graded version of the Isomorphism Conjecture holds for the class of graphs we consider.

Besides showing the conjecture for the class of graphs we consider, we realize the Grothendieck $\mathbb Z$-group isomorphism by a specific graded $*$-isomorphism. In particular, we introduce a series of graph operations which preserve the graded $*$-isomorphism class of their algebras. After performing these operations on a graph, we obtain well-behaved ``representative'' graphs, which we call canonical forms. We consider an equivalence relation $\approx$ on graphs such that $E\approx F$ holds when there is a graph isomorphism between some of the canonical forms of $E$ and $F$. In the main result, we show that the relation $E\approx F$ is equivalent to the existence of an isomorphism $f$ of the Grothendieck $\mathbb Z$-groups of the algebras of $E$ and $F$ in the appropriate category. As $E\approx F$ can be realized by a finite series of specific graph operations, any such isomorphism $f$ can be realized by {\em an explicit graded $*$-algebra isomorphism}. Because of this, our main result describes the graded ($*$-)isomorphism classes of the algebras of graphs we consider. Besides the ties to symbolic dynamics and Williams' Problem, such a description is relevant for the active program of classification of graph $C^*$-algebras.
\end{abstract}

\maketitle

\section{Introduction}

If $E$ is a directed graph and $K$ a field, the Leavitt path algebra $L_K(E)$ and its operator theory counterpart, the graph $C^*$-algebra $C^*(E),$ are naturally graded by the group of integers $\Zset$. It is often advantageous to consider the algebras with this grading. For example, it is easy to come up with examples of graphs with isomorphic pointed $K_0$-groups of their Leavitt path algebras while the algebras themselves are not isomorphic. No such examples are known if the algebras are considered with the grading and their $K_0$-groups are adjusted accordingly. The Graded Classification Conjecture, formulated for finite graphs by Roozbeh Hazrat in \cite{Roozbeh_Annalen}, states that such examples do not exist.

\subsection{The conjecture and its  state} \label{subsection_the_GCC_intro}
In the unital case (when $E$ has finitely many vertices), the $\Zset$-grading of $L_K(E)$ induces an action of the infinite cyclic group $\Gamma=\langle t\rangle\cong \Zset$ on the set of the graded isomorphism classes of finitely generated graded projective $L_K(E)$-modules. This action makes the Grothendieck group, formed using the finitely generated {\em graded} projective modules and their {\em graded} isomorphism classes, into a pre-ordered $\Gamma$-group. Although the notation $K_0^{\operatorname{gr}}(L_K(E))$ has often been used for this group, we use $K_0^\Gamma(L_K(E))$ in order to emphasize that the Grothendieck group itself is not graded by $\Gamma$ but that $\Gamma$ acts on it. If $L_K(E)$ is unital, the graded isomorphism class $[L_K(E)]$ is an order-unit of the group $K_0^\Gamma(L_K(E)).$
If $L_K(E)$ is not unital, $K_0^\Gamma(L_K(E))$
can be defined via a unitization
of $L_K(E)$ and a certain generating interval can be considered instead of $[L_K(E)]$ (see section \ref{subsection_graded_Grothendieck} or \cite{Roozbeh_Lia_Ultramatricial} for more details).
A widely accepted formulation (e.g. in \cite{LPA_book} and in  \cite{Ara_Pardo_graded_K_classification}) of The Graded Classification Conjecture (GCC) states the equivalence of the two conditions below for finite graphs $E$ and $F$ and a field $K.$
\begin{enumerate}
\item[(1$^u$)] There is an order-preserving $\Gamma$-group isomorphism of $K_0^\Gamma(L_K(E))$ and $K_0^\Gamma(L_K(F))$ which maps the order-unit $[L_K(E)]$ to the order-unit $[L_K(F)].$
\item[(2$^u$)] The algebras $L_K(E)$ and $L_K(F)$ are graded isomorphic.
\end{enumerate}

By allowing infinite emitters to be present and by considering the generating intervals instead of the order-units, one can also formulate the GCC for the non-row-finite and the non-unital cases and thus extend its scope to all graphs. This generalized version, which we also refer to as the GCC, is stating the equivalence of conditions (1) and (2) below. Any involution on $K$ equips $L_K(E)$ with an involution and we also consider the equivalence of condition (3) with (1) and (2).
\begin{enumerate}
\item There is an order-preserving $\Gamma$-group isomorphism of $K_0^\Gamma(L_K(E))$ and $K_0^\Gamma(L_K(F))$ which maps the generating interval $D_{L_K(E)}$ onto the generating interval $D_{L_K(F)}.$
\item The algebras $L_K(E)$ and $L_K(F)$ are graded isomorphic.

\item The algebras $L_K(E)$ and $L_K(F)$ are graded $*$-isomorphic.
\end{enumerate}

In \cite{Roozbeh_Annalen}, the GCC is shown to hold for polycephalic graphs. These are finite graphs in which every vertex connects to a sink, a cycle without exits, or to a vertex emitting no other edges but finitely many loops and the graph is such that when these loops and an edge of each cycle with no exits are removed, the result is a finite acyclic graph. In \cite{Ara_Pardo_graded_K_classification}, a weaker version of the GCC is shown to hold for finite graphs without sources or sinks. In \cite{Roozbeh_Lia_Ultramatricial}, the GCC, generalized to include the non-unital case, is shown to hold for row-finite, countable graphs in which no cycle has an exit and in which every infinite path ends in a (finite or infinite) sink or in a cycle if the involution on the underlying field is reasonably well-behaved (as the complex-conjugation on $\mathbb C$ is). By \cite{Eilers_Ruiz_Sims_reconstruction}, the GCC holds for countable graphs for which whenever there is an edge from a vertex $v$ to a vertex $w,$ there are infinitely many edges from $v$ to $w$.
In \cite{Eilers_Ruiz_3-bit}, the GCC is shown to hold for two  subclasses of the class of graphs we consider: acyclic graphs with finitely many vertices and 2-S-NE graphs with finitely many vertices. We also note that \cite{Roozbeh_Cortinas} contains a survey of the present state of the GCC.

In recent work \cite{Do_et_al_Williams}, the term ``Graded Classification Conjecture''
is defined differently, as a statement that condition (1) {\em without the condition on the generating intervals} is equivalent to the condition that the Leavitt path algebras are graded Morita equivalent. In our present work, we use the term GCC to refer only to its {\em original} form.

The strong version of the GCC states that the functor $K_0^{\Gamma}$ is full and faithful when considered on the category of Leavitt path algebras of finite graphs and their graded homomorphisms modulo conjugations by invertible elements of the zero components. Recently and almost simultaneously, in \cite{Arnone} and in \cite{Lia_classification}, it was shown that the functor $K_0^{\Gamma}$ is full, in \cite{Lia_classification} for countable graphs with finitely many vertices and, in \cite{Arnone} for finite graphs.  In \cite{Ara_Pardo_graded_K_classification}, it is shown that $K_0^{\Gamma}$ is not faithful.

\subsection{The composition S-NE graphs} Composition series of graphs were introduced in \cite{Lia_porcupine_quotient}. Since this concept is pivotal for our approach to the proof, we review it in section \ref{section_prerequisites}, together with some material on  porcupine-quotients, the graph $\Gamma$-monoid and some other results of \cite{Lia_porcupine_quotient}. We recall that each graded ideal of $L_K(E)$ uniquely corresponds to a pair $(H,S)$ of two subsets of the set of vertices $E^0$ of a graph $E$ called an {\em admissible pair}. The admissible pairs can be ordered in such a way that the lattice of graded ideals is isomorphic to the lattice of admissible pairs. The {\em porcupine-quotient} graph $(G,T)/(H,S)$ of two such pairs $(H,S)\leq (G,T),$ also introduced in \cite{Lia_porcupine_quotient}, has its Leavitt path algebra graded $*$-isomorphic to the quotient $I(G,T)/I(H,S)$ of the two corresponding ideals. This correspondence enables us to transfer the consideration of a graded composition series of $L_K(E)$ to the consideration of a composition series of $E.$ In particular, $E$ has a {\em composition series of length $n$} if there is a finite chain of admissible pairs \[(\emptyset, \emptyset)=(H_0, S_0) \leq (H_1, S_1)\leq \ldots\leq (H_n, S_n)=(E^0, \emptyset)\] such that the porcupine-quotient graph $(H_{i+1}, S_{i+1})/(H_i, S_i)$ is cofinal for all $i=0, \ldots, n-1$ (cofinality of a graph corresponds to graded simplicity of its Leavitt path algebra, see section \ref{subsection_graphs} for a review of this concept).

By \cite[Theorem 5.7]{Lia_porcupine_quotient} (stated here as Theorem \ref{theorem_graded_simple}), a cofinal graph has exactly one of  four types of certain {\em terminal} vertices. Only two of the four types are relevant in this paper: a sink or the set of vertices on a cycle without exits. If only those two types appear for every cofinal porcupine-quotient of $E$, we say that $E$ is an {\em S-NE} graph. Here S is used for ``sinks'' and NE for ``no-exits''. If $E$ is an S-NE graph and it has a composition series,  we say that $E$ is a {\em composition S-NE graph}. If the length of such a composition series for $E$ is $n$, $E$ is an {\em $n$-S-NE graph} (Definition \ref{definition_S_NE_graphs}).

If $E$ is a composition S-NE graph, each of its (finitely many) infinite emitters is a sink of exactly one composition factor. Each of the (disjoint and finite in number) cycles of $E$ is a cycle without exits of exactly one composition factor. Our choice to work with composition S-NE graphs with possibly infinitely many vertices is not a vacuous exercise in generalization, but an absolute necessity: a porcupine graph of a graph with finitely many vertices can have infinitely many vertices as the example of the Toeplitz graph in section \ref{subsection_2SNE_intro} illustrates.

\subsection{1-S-NE graphs}
To prove our main result, Theorem \ref{theorem_GCC_disjoint_cycles}, we start by considering 1-S-NE graphs (i.e., cofinal graphs with either a sink or a cycle without exits)
in section \ref{section_n=1}. It is known that the GCC holds for such finite graphs (\cite[Theorem 4.8]{Roozbeh_Annalen}) and such general graphs under some assumption on $K$ ( \cite[Theorem 5.5]{Roozbeh_Lia_Ultramatricial}). We show that GCC holds for all 1-S-NE graphs in Proposition \ref{proposition_cofinal_graphs}. We introduce a canonical form of a 1-S-NE graph (unique up to a graph isomorphism) and show that the conditions from the GCC are equivalent to the canonical forms of the graphs being isomorphic (as graphs). In graphical representations of canonical forms of 1-S-NE graphs below, we abbreviate the graphs as follows: if $v$ receives $k$ edges originating at sources, no matter whether $k$ is a finite or an infinite cardinal, we depict this as $\xymatrix{\bullet\ar[r]^{(k)}&\bullet^v}.$

The first graph below is a canonical form of a 1-S-NE graph with a sink. Its algebra is graded isomorphic to
$\M_{\kappa}(1,\mu_1, \mu_2,\ldots, \mu_k),$ where $k$ can be finite or infinite cardinal and $\kappa$ is the (cardinal) sum $1+\sum_{i=1}^k \mu_i.$
The second graph below is a canonical form of a 1-S-NE graph with a cycle of length $m$. Its algebra is graded isomorphic to $\M_{\kappa}[x^m, x^{-m}](\mu_0, \mu_1,\ldots, \mu_{m-1}),$ where $\kappa=\sum_{i=0}^{m-1} \mu_i.$
\[\xymatrix{\ar@{.>}[r]&\bullet\ar[r]&\bullet\ar[r]&\bullet\ar[r]&\bullet\\&\bullet\ar[u]^{(\mu_4-1)}&\bullet\ar[u]^{(\mu_3-1)}&\bullet\ar[u]^{(\mu_2-1)}&\bullet\ar[u]^{(\mu_1-1)}}
\hskip2cm
\xymatrix{  \bullet\ar[r]^{(\mu_1-1)}&
{\bullet}^{v_0}\ar@/_/[d]&
{\bullet}^{v_{m-1}}\ar@/_/[l]&  \bullet\ar[l]_{(\mu_0-1)}&\\
\bullet\ar[r]^{(\mu_2-1)}&
{\bullet}^{v_1}\ar@{.>}@/_/[r]&
{\bullet}^{v_j}\ar@{.>}@/_/[u]&  \bullet\ar[l]_{(\mu_{j+1}-1)}&} \]

We refer to the horizontal line of length $k$ in the first graph as the {\em spine} of the graph and to the edges not on the spine which end in the spine as the {\em tails}. In the second graph, we also refer to the edges ending in the cycle but not on the cycle as the {\em tails.}

\subsection{2-S-NE graphs}
\label{subsection_2SNE_intro}
The work on the 2-S-NE graphs in section \ref{section_n=2} contains crucial arguments for the general case. We introduce {\em canonical} graphs (Definition \ref{definition_canonical_forms_n=2}) of 2-S-NE graphs
and we develop graph operations needed to transform a given 2-S-NE graph into its canonical form. For example, if $E$ is the Toeplitz graph
$\;\;\;\xymatrix{{\bullet}^{v}\ar@(lu,ld)  \ar[r] & {\bullet}^{w}},$ then it is
a 2-S-NE graph since its composition series is $(\emptyset, \emptyset)\leq (\{w\}, \emptyset)\leq (E^0, \emptyset).$ The composition factors are the porcupine graph $P_{\{w\}}$, listed first, and the quotient $E/\{w\}$ listed second below.
\[\xymatrix{\ar@{.>}[r] &
\bullet\ar[r] &  \bullet\ar[r]& \bullet\ar[r] & \bullet^w }\hskip3cm \xymatrix{{\bullet}^{v}\ar@(lu,ld)}\]
It turns out that the Toeplitz graph $E$ is its own canonical form and that it is a canonical form of any of the graphs below.
\[\xymatrix{\bullet\ar@(lu,ld)  \ar[r] & \bullet \ar[r] & \bullet}
\hskip1.5cm \xymatrix{\bullet\ar@(lu,ld) \ar[r] & \bullet \ar[r] & \bullet \ar[r] & \bullet}\hskip1.5cm\ldots\]

The order-unit is not available to us for graphs with infinitely many vertices. So, for such graphs, we use cardinality arguments instead of considering the order units  -- {\em we count the tails}.
The next example illustrates how this is achieved in a simple case.

Let $E_1, E_2, E_3,$ and $E_4$ be the four graphs below.
\[\xymatrix{\bullet\ar@/^/[d]&\bullet\ar@/^/[d]\\
\bullet\ar@/^/[r]\ar@/_/[r]\ar@/^/[u]&\bullet\ar@/^/[u]}
\hskip2cm\xymatrix{\bullet\ar@/^/[d]&\bullet\ar@/^/[d]\\
\bullet\ar[ur]\ar[r]\ar@/^/[u]&\bullet\ar@/^/[u]}\hskip2cm\xymatrix{\bullet\ar[r]\ar@/^/[d]&\bullet\ar@/^/[d]\\
\bullet\ar[r]\ar@/^/[u]&\bullet\ar@/^/[u]}\hskip2cm
\xymatrix{\bullet\ar[dr]\ar@/^/[d]&\bullet\ar@/^/[d]\\
\bullet\ar[r]\ar@/^/[u]&\bullet\ar@/^/[u]}\]
By Definition \ref{definition_canonical_forms_n=2}, $E_1$ and $E_2$ are canonical and the two graphs below, $E_3'$ and $E_4',$ are canonical forms of $E_3$ and $E_4.$
\[\xymatrix{\bullet\ar@/^/[d]&\bullet\ar@/^/[d]&\bullet\ar[l]\\
\bullet\ar@/^/[r]\ar@/_/[r]\ar@/^/[u]&\bullet\ar@/^/[u]&}\hskip3cm
\xymatrix{\bullet\ar@/^/[d]&\bullet\ar@/^/[d]&\\
\bullet\ar[ur]\ar[r]\ar@/^/[u]&\bullet\ar@/^/[u]&\bullet\ar[l]}\]

If the types of exits from cycles differ (as, for example, they do for $E_1$ and $E_2$), we show that there is no isomorphism (in the appropriate category) of the $\Gamma$-groups of the graphs. If the types of exits are the same but the number of tails is different (as, for example, it is the case for $E_1$ and $E_3'$), we also we eliminate the possibility that the $\Gamma$-groups are isomorphic. Thus, the four graphs have $\Gamma$-groups in different isomorphism classes and, consequently, their algebras are in different graded $*$-isomorphism classes.

A canonical form is obtained by a series of graph operations we introduce. Some of these operations are out-split and in-split plus moves and their inverses  and, as such, are  the ``graph moves'' of symbolic dynamics. However, some of the operations we consider are not any of the previously considered graph moves in particular if they entail infinitely many changes of some graph elements. Each of the operations $\phi: E\to F$ we consider is defined on the set of vertices and edges of $E$ and its values are elements of $L_K(F)$ which satisfy the axioms (V) and (E1) of the Leavitt path algebra axioms (we review them in section \ref{subsection_LPA}). The map $\phi$ is such that $\phi(v)\neq 0$ for $v\in E^0,$ such that $\phi(v)$ is a homogeneous element of degree zero, and such that $\phi(e)$ is a homogeneous element of degree one for $e\in E^1$. The map $\phi$ is also such that if the values on the ghost edges are defined by $\phi(e^*)=\phi(e)^*$ for $e\in E^1$, then $\phi$ is such that (E2), (CK1), and (CK2) hold. By the Universal Property of Leavitt path algebras, $\phi$ extends to a homomorphism which we continue to call $\phi.$ The property $\phi(e^*)=\phi(e)^*$ ensures that the extension is a $*$-homomorphism. The extension is a graded homomorphism by the requirement on the degrees of $\phi(v)$ and $\phi(e)$ for $v\in E^0$ and $e\in E^1$ and it is injective by the Graded Uniqueness Theorem. For every such map $\phi$, we check that there is an operation $\psi: F\to E$ having analogous properties as $\phi$ and such that $\phi$ and $\psi$ compose to the identity maps on the sets of vertices and edges. This requirement implies that the extension of $\phi$ is a {\em graded $*$-algebra isomorphism.} In particular, the process of obtaining a canonical form of a graph $E$ yields a specific graded $*$-isomorphism of the algebras of $E$ and its canonical form.

The basic operations we consider are of the following three types. The first two types are
the existing out-splits and in-split plus moves. The operation we refer to as a {\em path rearrangement} is related to in-split plus moves except that it can be conducted without any consideration of the in-split minus helper. The third type is an operation we refer to as a {\em cut map}. Cut maps have not been considered before and they are applicable only to graphs with infinitely many vertices. As opposed to the out-splits and in-split plus moves, both the path rearrangement and the cut maps are {\em global} operations in the sense that they are not related to any specific vertex.

We also introduce an equivalence relation $\approx$ on $n$-S-NE graphs so that $E\approx F$ holds if there are canonical forms $E_{\can}$ and $F_{\can}$ of $E$ and $F$ which are isomorphic.
This makes the graphs $E$ and $F$ relatable on the graph level by $\phi_F^{-1}\iota\phi_E$ for $\phi_E: E\to E_{\can},$ $\phi_F: F\to F_{\can},$ and $\iota: E_{\can}\cong F_{\can}.$ Theorem \ref{theorem_n=2}, In the main result on countable 2-S-NE graphs,  we show that the relation
\begin{enumerate}
\item[(4)] $E\approx F$.
\end{enumerate}
holds exactly when conditions (1) to (3) hold. In addition, if
$f: K_0^\Gamma(E)\to K_0^\Gamma(F)$ is an isomorphism from condition (1), we {\em realize} $f$ by a specific graded $*$-algebra isomorphism obtained from a graph operation on two isomorphic canonical forms of $E$ and $F.$ By ``realize $f$'', we mean that we exhibit a graph operation which extends to a graded $*$-isomorphism $\phi$ of the two algebras such that $K_0^\Gamma(\phi)=f.$
Since every canonical form is obtained by a finite series of specific graph operations, any isomorphism of two $\Gamma$-groups of any two graphs, not necessarily canonical, is realizable. We generalize this result to countable $n$-S-NE graphs. Thus, canonical forms can be used to explicitly describe the graded ($*$-)isomorphism class of Leavitt path and graph $C^*$-algebras.

The proof of Theorem \ref{theorem_n=2} consists of  three steps. First, we show Proposition \ref{proposition_E_without_H} which we refer to as {\em the Quotient Proposition} and which focuses on the in-split plus related moves. It establishes an isomorphism of the quotients of graphs under certain conditions. Second, we prove Lemma \ref{lemma_f_identity_implies_cut_graphs_isomorphic}, which we refer to as {\em the Cut Lemma}. It implies that the tail graphs have equal cardinality of tails under certain conditions. This lemma focuses on the cut maps. Having the proposition and the lemma, the proof of Theorem \ref{theorem_n=2} focuses on the format of the $\POM^D$-isomorphism and the operations related to out-splits.

\subsection{The main result and its corollaries}
In section \ref{section_n>2}, we introduce the graph operations on $n$-S-NE graphs for $n>2$ and define canonical forms of such graphs. Suitably selected out-splits transform an $n$-S-NE graph to a graph without breaking vertices and with hereditary and saturated sets $H_j, j=1,\ldots, n$ such that $\emptyset \subseteq H_1\subseteq H_2\subseteq\ldots \subseteq H_n=E$ is a composition series of $E.$

Let $c_j$ be a terminal cycle of the 1-S-NE graph $H_{j+1}/H_{j}$ (we treat a sink as an improper cycle of length zero) for $j=0,\ldots, n-1$.
Our particular focus is on the most involved case when $c_{j+1}$ emits paths to $c_j$ for every $j.$ We start consideration with the 1-S-NE graph $E/H_{n-1}$ which we refer to as the 1-quotient. Using out-splits, we make $c_n$ emit exits to $H_{n-1}$ from a single vertex, say $v_{n0}$ and we look for a form which either prevents us from making certain in-split plus moves or allows us to make arbitrarily many of them. Out-split reducing the resulting graph as much as possible does not impact the 1-quotient any more and we continue the process by considering the 2-quotient $E/H_{n-2}$ and by repeating the previous steps: we make $c_{n-1}$ emit exits to $H_{n-2}$ from a single vertex, then we consider the in-split plus moves of this vertex and make $E/H_{n-2}$ have the 2-quotient canonical form in the 3-S-NE graph $E/H_{n-3}.$ This process involves the consideration of $H_{n-1}/H_{n-2},$ so each porcupine-quotient gets to have its role in the process.

We continue the process of considering $E/H_{j}$ as the quotient of $E/H_{j-1}$ for $j$ deceasing from $n$ to $2.$ The process terminates when the $(n-1)$-S-NE quotient $E/H_1$ is in its quotient canonical form. After that, we consider the $H_j$-to-$H_1$ part for $j$ taking values $2,3,\ldots, n$ in that order. Any changes to this part do not impact the quotient $E/H_1$ and they do not impact the $H_k$-to-$H_1$ parts for $k<j.$  This  ``bottom up'' consideration of the quotients first followed by the ``top down'' consideration of the paths ending in $c_1,$ produces a graph which is a canonical form of $E$. The main result, Theorem \ref{theorem_GCC_disjoint_cycles}, states that the conditions (1) to (4) are equivalent for countable composition S-NE graphs.

The proof of Theorem \ref{theorem_GCC_disjoint_cycles} consists of three steps which match those used for Theorem \ref{theorem_n=2}: Proposition \ref{proposition_on_canonical_quotients_n}, {\em the General Quotient Proposition} focuses on the in-split plus related operations, Lemma \ref{lemma_glavna_in_general_case}, {\em the General Cut Lemma}, focuses on the cut maps, and, finally, the proof of Theorem \ref{theorem_GCC_disjoint_cycles} focuses on the out-split related operations.

A direct corollary of Theorem \ref{theorem_GCC_disjoint_cycles} is that the GCC holds for countable graphs with finitely many vertices and disjoint cycles (Corollary \ref{corollary_unital}). In Corollary \ref{corollary_C_star}, we establish the graph $C^*$-algebra version of our main result: there is a gauge-invariant isomorphism of the graph $C^*$-algebras of two countable composition S-NE graphs if and only if the $K_0^\Gamma$-groups of the two $C^*$-algebras, considered with their generating intervals, are isomorphic.
This implies that the graded version of the {\em Strong Isomorphism Conjecture} holds for the class of graphs we consider. The Isomorphism Conjecture, posed in \cite{Gene_Mark_Iso_Conjecture}, states that two Leavitt path algebras over $\mathbb C$ are isomorphic as algebras precisely when they are isomorphic as $*$-algebras, which is known as
the Isomorphism Conjecture, and that any such algebras are isomorphic as rings precisely when they are isomorphic as $*$-algebras, which is known as the Strong Isomorphism Conjecture.
By \cite[Propositions 7.4 and 8.5]{Gene_Mark_Iso_Conjecture}, the latter conjecture holds for Leavitt path algebras of countable acyclic graphs as
well as row-finite cofinal graphs with a cluster of extreme cycles. By \cite[Theorem 14.7]{Eilers_et_al_classification}, the Strong Isomorphism Conjecture holds for all graphs with finitely many vertices.
The graded versions of these conjectures have every instance of ``isomorphism'' replaced by ``graded isomorphism''. Corollary \ref{corollary_iso_conjecture} states that the following conditions are equivalent with conditions (1) to (4) above.
\begin{enumerate}
\item[(5)] The algebras $L_K(E)$ and $L_K(F)$ are graded isomorphic as rings.

\item[(6)] The algebras $L_K(E)$ and $L_K(F)$ are graded isomorphic as $*$-rings.

\item[(7)] The algebras $C^*(E)$ and $C^*(F)$ are graded isomorphic.
\end{enumerate}

The equivalence of conditions (2) to (4) indicates that Theorem \ref{theorem_GCC_disjoint_cycles} does more than prove the GCC for the class of graphs we consider -- it describes the graded isomorphism class (and graded $*$-isomorphism class) of the algebra (Leavitt path as well as graph $C^*$). This is relevant for an ongoing initiative to classify all graph $C^*$-algebra (the introduction to \cite{Eilers_et_al_classification} contains a comprehensive overview of such initiative and some important results towards obtaining a complete classification).

The operations realizing relation $\approx$ on composition S-NE graphs are diagonal preserving (Lemma \ref{lemma_approx_preserves_the_diagonal}). Thus, the conditions (1) to  (7) are equivalent with the requirements that the isomorphisms in conditions (2), (3) and (5) to (7) are also diagonal-preserving  (Proposition \ref{proposition_on_diagonal}). Given \cite[Theorem 7.3]{Carlsen_et_al}, this relates the GCC to {\em Williams' Problem} and section \ref{subsection_diagonal} contains more details.

We briefly reflect on possible future directions. The methods of our proof indicate that the GCC should generally be considered along with a condition on graphs and that using the length of a composition series for induction is promising.

Every graph with finitely many vertices has a composition series where each composition factor has either a sink, a cycle without exits, or a cluster of extreme cycles. When considering composition graphs whose factors may have any of these three types of terminal elements, some of the methods of our proof may still be applicable. The remaining roadblock to proving the GCC for all graphs with finitely many vertices seems to be proving it for cofinal graphs with extreme cycles. If this turns out to be possible, some of our arguments on the length of a composition series could be applicable to establishing the equivalence of  conditions (1) to (7) for  graphs with finitely many vertices.

\section{Prerequisites}\label{section_prerequisites}

In the rest of the paper,
we use $\Zset^+$ to denote the set of nonnegative integers and $\omega$ for the same set but when considered as the countably infinite cardinal (the smallest infinite ordinal). When considering $n\in \omega$ as a finite cardinal, it is equal to the set containing all of the finite cardinals smaller than it, so $n=\{0, 1, \ldots, n-1\}.$ Writing $i\in n$ means that $i$ is an element of the set $\{0, 1, \ldots, n-1\}.$ When working with cardinals, we  assume the usual cardinal arithmetic laws. We also let $+_n$ denote the addition modulo $n$ (i.e. the addition in $\Zset/n\Zset$).

We let $\Gamma=\langle t\rangle$ be the infinite cyclic group generated by $t$ and $K$ be a field trivially graded by $\Zset$.

While the sets of vertices and edges of graphs in section \ref{section_n=1} have arbitrary cardinalities, all graphs in subsequent sections have {\em countably} many vertices and edges.

\subsection{Graded rings}
\label{subsection_graded_rings}
A ring $R$ (not necessarily unital) is {\em graded} by a group $\Gamma$ if $R=\bigoplus_{\gamma\in\Gamma} R_\gamma$ for additive subgroups $R_\gamma$ and if $R_\gamma R_\delta\subseteq R_{\gamma\delta}$ for all $\gamma,\delta\in\Gamma.$ The elements of the set $\bigcup_{\gamma\in\Gamma} R_\gamma$ are said to be {\em homogeneous}. A left ideal $I$ of a graded ring $R$ is {\em graded} if $I=\bigoplus_{\gamma\in \Gamma} I\cap R_\gamma.$ Graded right ideals and graded ideals are defined similarly. A graded ring is {\em graded simple} if there are no nontrivial and proper two-sided graded ideals (note that we do not require it to be graded Artinian).

A ring $R$ is an involutive ring, or a $*$-ring, if there is an anti-automorphism $*:R\to R$ of order two. If $R$ is also a $K$-algebra for some commutative $*$-ring $K$, then $R$ is a $*$-algebra if $(kx)^*=k^*x^*$ for all $k\in K$ and $x\in R.$
If $R$ is a $\Gamma$-graded ring with involution, it is a {\em graded $*$-ring} if $R_\gamma^*\subseteq R_{\gamma^{-1}}.$

We use the standard definitions of graded right and left $R$-modules, graded module homomorphisms and we use $\cong_{\gr}$ for a graded module or a graded ring isomorphism. If $M$ is a graded right $R$-module and $\gamma\in\Gamma,$ the $\gamma$-\emph{shifted} graded right $R$-module $(\gamma)M$ is defined as the module $M$ with the $\Gamma$-grading given by $(\gamma)M_\delta = M_{\gamma\delta}$ for all $\delta\in \Gamma.$ If $N$ is a graded left module, the $\gamma$-shift of $N$ is the graded module $N$ with the $\Gamma$-grading given by $M(\gamma)_\delta = M_{\delta\gamma}$ for all $\delta\in \Gamma.$

Any finitely generated graded free right $R$-module has the form $(\gamma_1)R\oplus\ldots\oplus (\gamma_n)R$ and any finitely generated graded free left $R$-module has the form
$R(\gamma_1)\oplus\ldots\oplus R(\gamma_n)$ for $\gamma_1, \ldots,\gamma_n\in\Gamma$ (\cite[Section 1.2.4]{Roozbeh_book} contains more details). A finitely generated graded projective module is a direct summand of a finitely generated graded free module.

The presence of the shifts $\gamma_1, \ldots, \gamma_n$ in the above form of a finitely generated graded free module explains the presence of $\gamma_1, \ldots, \gamma_n\in \Gamma$ in the graded matrix ring over a graded ring $R$.  In \cite{Roozbeh_book}, $\M_n(R)(\gamma_1,\dots,\gamma_n)$ denotes the ring of matrices $\M_n(R)$ with the $\Gamma$-grading given by
\begin{center}
$(r_{ij})\in\M_n(R)(\gamma_1,\dots,\gamma_n)_\delta\;\;$ if $\;\;r_{ij}\in R_{\gamma_i^{-1}\delta\gamma_j}$ for $i,j=1,\ldots, n.$
\end{center}
The definition of $\M_n(R)(\gamma_1,\dots,\gamma_n)$ in \cite{NvO_book} is different: $\M_n(R)(\gamma_1,\dots,\gamma_n)$ in \cite{NvO_book} corresponds to $\M_n(R)(\gamma_1^{-1},\dots,\gamma_n^{-1})$ in \cite{Roozbeh_book}. More details on the relations between the two definitions
can be found in \cite[Section 1]{Lia_realization}. Although the definition
from \cite{NvO_book} has been in circulation longer, some matricial representations of Leavitt path algebras involve positive integers instead of negative integers making the definition from \cite{Roozbeh_book} more convenient for us. Since we deal almost extensively with Leavitt path algebras, we opt to use the definition from \cite{Roozbeh_book}. With this definition, if $F$ is the graded free right module $(\gamma_1^{-1})R\oplus \dots \oplus (\gamma_n^{-1})R,$ then Hom$_R(F,F)\cong_{\gr} \;\M_n(R)(\gamma_1,\dots,\gamma_n)$ as graded rings.

We also recall \cite[Remark 2.10.6]{NvO_book} stating the first two parts in Lemma \ref{lemma_on_shifts} and \cite[Theorem 1.3.3]{Roozbeh_book}  stating part (3) for $\Gamma$ abelian. Although we use the lemma below only in the case $\Gamma\cong \Zset,$ we note that it generalizes to arbitrary $\Gamma.$ The part on the isomorphism being $*$-isomorphisms follows by \cite[Proposition 1.3]{Roozbeh_Lia_Ultramatricial}.

\begin{lemma}\cite[Remark 2.10.6]{NvO_book}, \cite[Proposition 1.4.4, Theorems 1.3.3 and 1.4.5]{Roozbeh_book}, \cite[Proposition 1.3]{Roozbeh_Lia_Ultramatricial}.
Let $R$ be a $\Gamma$-graded $*$-ring, $\gamma_1,\ldots,\gamma_n\in \Gamma,$ and let $e_{ij}$ denote the standard matrix unit for $i,j\in\{1,\ldots, n\}.$
\begin{enumerate}
\item If $\pi$ a permutation of the set $\{1,\ldots, n\},$ the $K$-linear extension of the map $e_{ij}\mapsto e_{\pi^{-1}(i)\pi^{-1}(j)}$ is a graded $*$-isomorphism
\begin{center}
$\M_n (R)(\gamma_1, \gamma_2,\ldots, \gamma_n)\;\cong_{\gr}\;\M_n (R)(\gamma_{\pi(1)}, \gamma_{\pi(2)} \ldots, \gamma_{\pi(n)}).$
\end{center}

\item If $\delta$ in the center of $\Gamma,$ then $e_{ij},$ which is in the $\gamma_i\gamma_j^{-1}$-component, can be considered as an element of the $\gamma_i\delta(\gamma_j\delta)^{-1}$-component, so the identity becomes a graded $*$-isomorphism $\;\M_n (R)(\gamma_1, \gamma_2, \ldots, \gamma_n)\;=\;\M_n (R)(\gamma_1\delta, \gamma_2\delta,\ldots, \gamma_n\delta).$

\item If $\delta_i\in\Gamma$ is such that there is an invertible element $a_{\delta_i}$ in $R_{\delta_i}$ for $i=1,\ldots, n,$ then the $K$-linear extension $\phi$ of the map $e_{ij}\mapsto a_{\delta_i}^{-1}a_{\delta_j}e_{ij}$ is a graded isomorphism
\begin{center}
$\M_n (R)(\gamma_1, \gamma_2, \ldots, \gamma_n)\;\cong_{\gr}\;\M_n (R)(\gamma_1\delta_1, \gamma_2\delta_2\ldots, \gamma_n\delta_n).$
\end{center}
If the elements $a_{\delta_i}$ can be found so that $a_{\delta_i}^{-1}=a_{\delta_i}^*,$ then $\phi$ is a graded $*$-isomorphism.
\end{enumerate}
If $\Gamma$ is abelian and $R$ and $S$ are $\Gamma$-graded division rings, then $$\hskip1.3cm\M_n (R)(\gamma_1, \gamma_2, \ldots, \gamma_n)\;\cong_{\gr}\; \M_m (S)(\delta_1, \delta_2, \ldots, \delta_m)$$ implies that $R\cong_{\gr}S,$ that $m=n,$ and that the list $(\delta_1, \delta_2, \ldots, \delta_m)$ is obtained from the list $(\gamma_1, \gamma_2, \ldots, \gamma_n)$ by applying finitely many operations of the lists as in parts (1) to (3).
\label{lemma_on_shifts}
\end{lemma}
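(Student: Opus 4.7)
The plan is to reduce everything to a computation on the standard matrix units $e_{ij}$ and to extend $K$-linearly, then verify in each part that the prescribed map is degree-preserving, multiplicative, and (where claimed) $*$-compatible. The key preliminary observation is that the defining condition $(r_{ij})\in\M_n(R)(\gamma_1,\ldots,\gamma_n)_\delta\iff r_{ij}\in R_{\gamma_i^{-1}\delta\gamma_j}$ forces $re_{ij}$, with $r\in R_\epsilon$, to be homogeneous of degree $\gamma_i\epsilon\gamma_j^{-1}$; in particular $e_{ij}$ has degree $\gamma_i\gamma_j^{-1}$.

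For part (1), the relabelled matrix unit $e_{\pi^{-1}(i)\pi^{-1}(j)}$ in the target $\M_n(R)(\gamma_{\pi(1)},\ldots,\gamma_{\pi(n)})$ has degree $\gamma_{\pi(\pi^{-1}(i))}\gamma_{\pi(\pi^{-1}(j))}^{-1}=\gamma_i\gamma_j^{-1}$, so the map is graded; multiplicativity follows from the relation $e_{ij}e_{kl}=\delta_{jk}e_{il}$ being preserved under relabeling by $\pi^{-1}$, and $*$-compatibility from $e_{ij}^*=e_{ji}$. The map associated with $\pi^{-1}$ is the two-sided inverse. For part (2), in the shifted ring $re_{ij}$ acquires degree $\gamma_i\delta\cdot\epsilon\cdot\delta^{-1}\gamma_j^{-1}$, which reduces to $\gamma_i\epsilon\gamma_j^{-1}$ precisely because $\delta$ is central; hence the identity is a graded $*$-isomorphism.

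For part (3), I would interpret the map as conjugation by the diagonal matrix $D=\mathrm{diag}(a_{\delta_1},\ldots,a_{\delta_n})$, so that $re_{ij}\mapsto a_{\delta_i}^{-1}ra_{\delta_j}e_{ij}$. Since $a_{\delta_i}^{-1}ra_{\delta_j}\in R_{\delta_i^{-1}\epsilon\delta_j}$, its degree in $\M_n(R)(\gamma_1\delta_1,\ldots,\gamma_n\delta_n)$ equals $\gamma_i\delta_i(\delta_i^{-1}\epsilon\delta_j)(\gamma_j\delta_j)^{-1}=\gamma_i\epsilon\gamma_j^{-1}$, matching the source. Multiplicativity comes from the routine cancellation $a_{\delta_j}\,a_{\delta_j}^{-1}=1$ in the product of images. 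When $a_{\delta_i}^*=a_{\delta_i}^{-1}$, one has $(a_{\delta_i}^{-1})^*=a_{\delta_i}$, and a direct check shows $(a_{\delta_i}^{-1}ra_{\delta_j}e_{ij})^*=a_{\delta_j}^{-1}r^*a_{\delta_i}e_{ji}$, which is the image of $(re_{ij})^*$. The two-sided inverse is conjugation by $D^{-1}$.

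The hard part, and the reason the final assertion requires $\Gamma$ abelian and $R,S$ graded division rings, is the uniqueness statement. My plan is to identify $\M_n(R)(\gamma_1,\ldots,\gamma_n)\cong_{\gr}\mathrm{End}_R(F)$ for the graded free right $R$-module $F=(\gamma_1^{-1})R\oplus\cdots\oplus(\gamma_n^{-1})R$, so that any graded isomorphism with $\M_m(S)(\delta_1,\ldots,\delta_m)$ induces a graded Morita equivalence between $R$ and $S$. For graded division rings the category of finitely generated graded modules is particularly rigid (every object is a graded free module and the list of shifts is unique up to reordering and absorbing degrees of invertible homogeneous elements), which forces $R\cong_{\gr}S$ and $m=n$, and shows that any graded change of basis of $F$ decomposes into a permutation of generators, an overall central twist of the grading, and a rescaling of individual generators by invertible homogeneous elements of $R$. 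These three kinds of basis moves translate precisely to operations (1), (2) and (3) on the shift list, yielding the claimed equivalence. Throughout I would cite \cite[Remark 2.10.6]{NvO_book}, \cite[Theorems 1.3.3 and 1.4.5]{Roozbeh_book}, and \cite[Proposition 1.3]{Roozbeh_Lia_Ultramatricial} for the results already established in the literature.
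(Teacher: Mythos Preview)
The paper does not supply its own proof of this lemma; it merely cites \cite[Remark 2.10.6]{NvO_book}, \cite[Proposition 1.4.4, Theorems 1.3.3 and 1.4.5]{Roozbeh_book}, and \cite[Proposition 1.3]{Roozbeh_Lia_Ultramatricial}, so there is nothing in the paper to compare your argument against directly. Your sketch is correct and is essentially the standard argument found in those references: the degree checks for parts (1)--(3) are exactly the computations carried out there, and your treatment of the uniqueness clause via the identification $\M_n(R)(\gamma_1,\ldots,\gamma_n)\cong_{\gr}\operatorname{End}_R\bigl((\gamma_1^{-1})R\oplus\cdots\oplus(\gamma_n^{-1})R\bigr)$ together with the rigidity of graded free modules over a graded division ring is precisely the route taken in \cite[Theorems 1.3.3 and 1.4.5]{Roozbeh_book}.
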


We are going to be interested exclusively in the case when $\Gamma=\Zset$ and the matrices are formed over either a field $K$ graded trivially by $\Zset$ or the ring $K[x^m, x^{-m}]$ of Laurent polynomials $\Zset$-graded by $K[x^m, x^{-m}]_{mk}=Kx^{mk}$ and $K[x^m, x^{-m}]_{n}=0$ if $m$ does not divide $n.$ Note that $K[x^m, x^{-m}],$ graded as above, is a graded field.

It turns out that dealing with graphs having infinitely many paths ending in a sink or a cycle requires us to generalize previously noted results to matrices of arbitrary size. Let $\Gamma$ be any group, $K$ be a $\Gamma$-graded division ring, and let $\kappa$ be an infinite cardinal. We let $\M_\kappa (K)$ denote the ring of infinite matrices over $K$, having rows and columns indexed by $\kappa$, with only finitely many nonzero entries. If $\ol\gamma$ is any function $\kappa\to \Gamma,$ we let $\M_\kappa(K)(\ol\gamma)$ denote the $\Gamma$-graded ring $\M_\kappa(K)$ with the $\delta$-component consisting of the matrices $(a_{\alpha\beta}),$ $\alpha, \beta\in \kappa,$ such that $a_{\alpha\beta}\in K_{\ol\gamma(\alpha)^{-1}\delta\ol\gamma(\beta)}.$ If $K$ is a graded $*$-ring, $\M_\kappa(K)(\ol\gamma)$ is a graded $*$-ring with the $*$-transpose involution.

The first part of Lemma \ref{lemma_on_shifts} has been generalized in \cite[Proposition 4.12]{Roozbeh_Lia_Ultramatricial} as the lemma below.

\begin{lemma} \cite[Proposition 4.12]{Roozbeh_Lia_Ultramatricial}
Let $\Gamma$ be an abelian group, $R$ be a $\Gamma$-graded $*$-ring, $\kappa$ a cardinal, $\ol\gamma\in \Gamma^\kappa$, and $e_{\alpha\beta}$ denotes the standard matrix units for $\alpha,\beta\in\kappa.$
\begin{enumerate}[\upshape(1)]
\item If $\pi$ is a bijection $\kappa\to \kappa$ and $\ol\gamma\pi$ denotes the composition  of $\ol\gamma$ and $\pi,$ then the $R$-linear extension of the map $e_{\alpha\beta}\mapsto e_{\pi^{-1}(\alpha)\pi^{-1}(\beta)}$ is a graded $*$-isomorphism
$\M_\kappa (R)(\ol \gamma)\cong_{\gr}M_\kappa (R)(\ol\gamma\pi).$

\item If $\delta$ is in the center of $\Gamma$ and $\ol\gamma\delta$ denotes the map $(\ol\gamma\delta)(\alpha)=\ol\gamma(\alpha)\delta,$ then $e_{\alpha\beta},$ which is in the $\ol\gamma(\alpha)\ol\gamma(\beta)^{-1}$-component, can be considered as an element of the $\ol\gamma(\alpha)\delta(\ol\gamma(\beta)\delta)^{-1}$-component, so the identity becomes a graded isomorphism
$\M_\kappa (R)(\ol \gamma)\cong_{\gr}M_\kappa (R)(\ol\gamma\delta).$

\item If $\ol\delta: \kappa\to \Gamma$ is such that the component $R_{\ol\delta(\alpha)}$ contains an invertible element $a_\alpha$ for every $\alpha\in\kappa,$ then the $R$-linear extension of the map $e_{\alpha\beta}\mapsto a_{\alpha}^{-1}a_{\beta}e_{\alpha\beta}$ is a graded isomorphism  $\phi:\M_\kappa (R)(\ol \gamma)\cong_{\gr}
\M_\kappa (R)(\ol \gamma\ol \delta).$
If the elements $a_\alpha$ are unitary (i.e. $a_\alpha^{-1}=a_\alpha^*$), then $\phi$ is a graded $*$-isomorphism.
\end{enumerate}
\label{lemma_shifts_general}
\end{lemma}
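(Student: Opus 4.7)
The plan is to follow exactly the strategy of the finite-rank Lemma \ref{lemma_on_shifts}, leveraging the fact that every matrix in $\M_\kappa(R)$ has finite support and is therefore a finite $R$-linear combination of matrix units $e_{\alpha\beta}$. Thus each map described in the statement, once prescribed on matrix units, extends uniquely to an $R$-linear (in particular, additive) map on all of $\M_\kappa(R)$. To verify the three conclusions it suffices to check, on matrix units, that (a) the product relation $e_{\alpha\beta}e_{\gamma\delta}=\delta_{\beta\gamma}e_{\alpha\delta}$ is preserved, (b) the matrix unit $e_{\alpha\beta}$ is sent to an element whose homogeneous degree matches that of $e_{\alpha\beta}$ in the target grading, (c) the map commutes with the $*$-transpose involution under the stated hypothesis, and (d) a candidate inverse, defined by the analogous formula, composes to the identity on matrix units.

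For (1), multiplicativity is immediate because $\pi$ is a bijection, so the Kronecker condition $\delta_{\pi^{-1}(\beta)\pi^{-1}(\gamma)}=\delta_{\beta\gamma}$ holds. The grading check compares $\ol\gamma(\alpha)\ol\gamma(\beta)^{-1}$, which is the degree of $e_{\alpha\beta}$ in $\M_\kappa(R)(\ol\gamma)$, with the degree $(\ol\gamma\pi)(\pi^{-1}(\alpha))\cdot[(\ol\gamma\pi)(\pi^{-1}(\beta))]^{-1}$ of $e_{\pi^{-1}(\alpha)\pi^{-1}(\beta)}$ in $\M_\kappa(R)(\ol\gamma\pi)$; these obviously agree. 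The $*$-claim follows from $e_{\alpha\beta}^*=e_{\beta\alpha}$, and the inverse uses $\pi^{-1}$. For (2), because $\delta$ is central, the degrees $\ol\gamma(\alpha)\ol\gamma(\beta)^{-1}$ and $\ol\gamma(\alpha)\delta(\ol\gamma(\beta)\delta)^{-1}$ coincide, so the identity map of the underlying matrix ring intertwines the two gradings; the $*$-preservation and inversion are automatic.

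Part (3) carries the real content. Multiplicativity rests on the cancellation $(a_\alpha^{-1}a_\beta e_{\alpha\beta})(a_\beta^{-1}a_\delta e_{\beta\delta})=a_\alpha^{-1}a_\delta e_{\alpha\delta}$, mirroring the matrix-unit rule exactly; when the middle indices differ, both sides vanish. For the grading, $a_\alpha^{-1}a_\beta$ lies in $R_{\ol\delta(\alpha)^{-1}\ol\delta(\beta)}$, and placing this entry in position $(\alpha,\beta)$ inside $\M_\kappa(R)(\ol\gamma\ol\delta)$ yields the degree $(\ol\gamma\ol\delta)(\alpha)\cdot\ol\delta(\alpha)^{-1}\ol\delta(\beta)\cdot[(\ol\gamma\ol\delta)(\beta)]^{-1}=\ol\gamma(\alpha)\ol\gamma(\beta)^{-1}$, exactly matching the degree of $e_{\alpha\beta}$ on the source side (this is where commutativity of $\Gamma$ enters). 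The candidate inverse is $e_{\alpha\beta}\mapsto a_\alpha a_\beta^{-1} e_{\alpha\beta}$, which composes to the identity on matrix units by the same cancellation. For the $*$-statement, $(a_\alpha^{-1}a_\beta)^*=a_\beta^*(a_\alpha^*)^{-1}$; under the unitary hypothesis this equals $a_\beta^{-1}a_\alpha$, which is precisely the scalar coefficient of the image of $e_{\beta\alpha}=e_{\alpha\beta}^*$.

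The main obstacle, such as it is, will be the bookkeeping in part (3): tracking how the positional shifts and the scalar grades interact, and verifying that commutativity of $\Gamma$ suffices to make the two sides align. Once this is sorted, nothing genuinely new is needed beyond the finite case, since every relevant matrix computation involves only finitely many nonzero entries and the infinite cardinality of $\kappa$ plays no substantive role in the verifications.
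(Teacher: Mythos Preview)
The paper does not supply its own proof of this lemma; it is quoted verbatim from \cite[Proposition 4.12]{Roozbeh_Lia_Ultramatricial} and used as a black box. Your direct verification on matrix units is correct and is exactly the standard argument---the same computation as in the finite-rank Lemma \ref{lemma_on_shifts}, with the observation (which you make explicitly) that finite support renders the infinite cardinality of $\kappa$ irrelevant to every check.
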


\subsection{Graphs and properties of vertex sets}\label{subsection_graphs}
If $E$ is a directed graph, we let $E^0$ denote the set of vertices, $E^1$ denote the set of edges, and $\so$ and $\ra$ denote the source and the range maps of $E.$ If $\kappa$ is any cardinal, we use $\xymatrix{\bullet^v\ar[r]^\kappa&\bullet^w}$ to depict that $v$ emits $\kappa$ edges to $w.$
If $V\subseteq E^0,$ a subgraph {\em generated by} $V$ is the graph with $V$ as its vertex set and with its edge set consisting of the edges of $E$ which have both their source and their range in $V.$ The graphs $E$ and $F$ are {\em isomorphic}, written as $E\cong F$, if there are bijective maps $f^0$ and $f^1$ of their vertices and edges such that  $\so(f^1(e))=f^0(\so(e))$ and $\ra(f^1(e))=f^0(\ra(e))$ for all $e\in E^1.$ It is direct to show that this definition corresponds to $f=(f^0, f^1)$ being an invertible graph homomorphism in the sense of \cite[Definition 1.6.2]{LPA_book}. In this case, we write $f: E\cong F$ and refer to both $f^0$ and $f^1$ as $f$.

A {\em sink} of $E$ is a vertex which emits no edges and an {\em infinite emitter} is a vertex which emits infinitely many edges. A vertex of $E$ is {\em regular} if it is neither a sink nor an infinite emitter. A graph $E$ is {\em row-finite} if it has no infinite emitters and $E$ is {\em finite} if it has finitely many vertices and edges.

A {\em path} is a single vertex or a sequence of edges $e_1e_2\ldots e_n$ for some positive integer $n$ such that $\ra(e_i)=\so(e_{i+1})$ for $i=1,\ldots, n-1.$  The length $|p|$ of a path $p$ is zero if $p$ is a vertex and it is $n$ if $p$ is a sequence of $n$ edges. The set of vertices on a path $p$ is denoted by $p^0.$ A {\em line} of length $n$ is a path of length $n$ such that all its vertices, except possibly the source and the range, do not emit edges except the one in $p$ and do not receive edges except the one in $p$.

The functions $\so$ and $\ra$ extend to paths naturally. A path $p$ is {\em closed}  if $\so(p)=\ra(p).$ A {\em cycle} is a closed path such that different edges in the path have different sources. A cycle has {\em an exit} if a vertex on the cycle emits an edge which is not an edge of the cycle. A cycle $c$ is {\em extreme} if $c$ has exits and for every path $p$ with $\so(p)\in c^0,$ there is a path $q$ such that $\ra(p)=\so(q)$ and $\ra(q)\in c^0.$  Two cycles $c$ and $d$ of any graph are {\em disjoint} if $c^0\cap d^0=\emptyset.$

If $E$ is $\xymatrix{\bullet\ar@/^/[r]^e&\bullet\ar@/^/[l]_f},$ it  is not uncommon to hear a reference to the ``single cycle'' of $E$. However, $E$ has two cycles: $ef$ and $fe.$ In our work, if $c_0=e_0\ldots e_{n-1}$ is a cycle of length $n$, we use $[c_0]$ for the set of $n$ elements $c_i=e_ie_{i+_n1}\ldots e_{i-_n1}$ for $i=0,\ldots, n-1.$ The relation $c\sim d$ if $[c]=[d]$ is an equivalence relation of the set of all cycles of a graph. Thus, while a cycle in the above graph with two vertices is not unique, there is a unique equivalence class of the relation $\sim$ on this graph.

If $u,v\in E^0$ are such that there is a path $p$ with $\so(p)=u$ and $\ra(p)=v$, we write $u\geq v.$ For $V\subseteq E^0,$
the set $T(V)=\{u\in E^0\mid v\geq u$ for some $v\in V\}$ is called the {\em tree} of $V,$ and, following \cite{Lia_irreducible}, we use $R(V)$ to denote the set $\{u\in E^0\mid u\geq v$ for some $v\in V\}$ called the {\em root} of $V.$ If $V=\{v\},$ we use $T(v)$ for $T(\{v\})$ and $R(v)$ for $R(\{v\}).$

An {\em infinite path} is a sequence of edges $e_1e_2\ldots$ such that $\ra(e_n)=\so(e_{n+1})$ for $n=1,2\ldots.$ Just as for finite paths, we use $p^0$ for the set of vertices of an infinite path $p.$ In \cite[Definition 5.3]{Lia_porcupine_quotient}, an infinite path $p$ of a graph $E$ is said to be {\em terminal} if no element of $T(p^0)$ is an infinite emitter or on a cycle and if every infinite path $q$ originating at a vertex of $p$ is such that $T(q^0)\subseteq R(q^0).$ Note that the property
$T(p^0)\subseteq R(p^0)$ is shared for any path (finite or infinite) which contains a vertex which is on an extreme cycle, or on a cycle without exits or which is a sink and the relevance of these types of vertices is evident from \cite[Theorem 5.7]{Lia_porcupine_quotient} which we review in Theorem \ref{theorem_graded_simple}.
A vertex is {\em terminal} if it is a sink, on a cycle without exits, on an extreme cycle or on a terminal path.

Let $E^{\leq\infty}$ be the set of infinite paths or finite paths ending in a sink or an infinite emitter. A vertex $v$ is {\em cofinal} if for each $p\in E^{\leq\infty}$ there is $w\in p^0$ such that $v\geq w$ and  $E$ is {\em cofinal} if each vertex is cofinal. This property matches the existence of a unique equivalence class of terminal vertices such that $w\approx v$ if and only if $v$ and $w$ are on the elements of $E^{\leq\infty}$ which have the same root (see  \cite{Lia_porcupine_quotient} for more details). In \cite{Lia_porcupine_quotient}, this equivalence class of terminal vertices is called a {\em terminal cluster}, or a {\em cluster} for short. For example, the cluster of a sink $v$ is $\{v\},$ the cluster of a cycle $c$ without exists is $c^0,$ and the cluster of a vertex on an extreme cycle $c$ is $T(c^0).$

A subset $H$ of $E^0$ is said to be {\em hereditary} if $T(H)\subseteq H.$ The set $H$ is {\em saturated} if $v\in H$ for any regular vertex $v$ such that $\ra(\so^{-1}(v))\subseteq H.$ For every $V\subseteq E^0,$ the intersection of all saturated sets of vertices which contain $V$ is the smallest saturated set which contains $V.$ This set is the {\em saturated closure of $V$}. The saturated closure $\ol{V}$ of $T(V)$ is both hereditary and saturated and it is the smallest hereditary and saturated set which contains $V.$

\subsection{Leavitt path algebras}\label{subsection_LPA}
If $K$ is a field, the {\em Leavitt path algebra} $L_K(E)$ of $E$ over $K$ is a free $K$-algebra generated by the set  $E^0\cup E^1\cup\{e^\ast\mid e\in E^1\}$ such that

\begin{tabular}{ll}
(V)  $vw =0$ if $v\neq w$ and $vv=v,$ & (E1)  $\so(e)e=e\ra(e)=e,$\\
(E2) $\ra(e)e^\ast=e^\ast\so(e)=e^\ast,$ & (CK1) $e^\ast f=0$ if $e\neq f$ and $e^\ast e=\ra(e),$\\
(CK2) $v=\sum_{e\in \so^{-1}(v)} ee^*$ for each regular vertex $v$ \\
\end{tabular}

\noindent hold for for $v,w\in E^0$ and $e,f\in E^1.$ Let $v^*=v$ for $v\in E^0$ and $p^*=e_n^*\ldots e_1^*$ for a path $p=e_1\ldots e_n.$ The  set of elements $pq^*$ where $p$ and $q$ are paths of $E$ with $\ra(p)=\ra(q)$ generates $L_K(E)$ as a $K$-algebra and
$\left(\sum_{i=1}^n k_ip_iq_i^\ast\right)^*=\sum_{i=1}^n k_i^*q_ip_i^\ast,$ where $k_i\mapsto k_i^*$ is any involution on $K,$ is an involution of $L_K(E).$ In addition, $L_K(E)$ is graded locally unital (with the finite sums of vertices as the local units, the elements $u$ such that for every finite set of elements $F$, $xu=ux=x$ for every $x\in F$). The algebra $L_K(E)$ is unital if and only if $E^0$ is finite in which case $\sum_{v\in E^0}v$ is the identity.

If we consider $K$ to be trivially graded by $\Zset,$ $L_K(E)$ is naturally graded by $\Zset$ so that the $n$-component $L_K(E)_n$ is the $K$-linear span of the elements $pq^\ast$ for paths $p, q$ with $|p|-|q|=n.$ This grading and the involutive structure make $L_K(E)$ into a graded $*$-algebra.

If $R$ is a $K$-algebra which contains  elements $p_v$ for $v\in E^0,$ and $x_e$ and $y_e$ for $e\in E^1$ such that the five axioms hold for these elements, the Universal Property of $L_K(E)$ states that there is a unique algebra homomorphism $\phi:L_K(E)\to R$ such that $\phi(v)=p_v, \phi(e)=x_e,$ and $\phi(e^*)=y_e$ (see \cite[Remark 1.2.5]{LPA_book}). If $R$ is $\Zset$-graded and $p_v\in R_0$ for $v\in E^0,$ $x_e\in R_1$ and $y_e\in R_{-1}$ for $e\in E^1,$ then $\phi$ is graded.
By the Graded Uniqueness Theorem (\cite[Theorem 2.2.15]{LPA_book}), such graded map $\phi$ is injective if $p_v\neq 0$ for $v\in E^0.$ If $R$ is involutive and $\phi$ is such that $y_e=x_e^*,$ then $\phi$ is a $*$-homomorphism (i.e., $\phi(x^*)=\phi(x)^*$ for every $x\in L_K(E)$).

We recall \cite[Theorem 5.7]{Lia_porcupine_quotient} characterizing graded simplicity of $L_K(E)$ in terms of the existence of exactly one type of four types of terminal vertices.

\begin{theorem} \cite[Theorem 5.7]{Lia_porcupine_quotient}
Let $E$ be a graph and $K$ be a field. The following conditions are equivalent.
\begin{enumerate}[\upshape(1)]
\item $L_K(E)$ is graded simple (equivalently, $E$ is cofinal).

\item The set of terminal vertices is nonempty and it consists of a single cluster $C$ such that $E^0$ is the (hereditary and) saturated closure of $C.$

\item Exactly one of the following holds.
\begin{enumerate}[\upshape(a)]
\item The set $E^0$ is the (hereditary and) saturated closure of a sink. In this case, $E$ is row-finite and acyclic and $E^0=R(v)$ for a sink $v$.

\item The set $E^0$ is the (hereditary and) saturated closure of $c^0$ for a cycle $c$ without exits. In this case, $E$ is row-finite, $E^0=R(c^0),$ and $c$ is the only cycle in $E$.

\item  The set $E^0$ is the hereditary and saturated closure of $c^0$ for an extreme cycle $c.$ In this case, every cycle of $E$ is extreme, every infinite emitter is on a cycle, and $E^0=R(c^0)$.

\item The set $E^0$ is the hereditary and saturated closure of $\alpha^0$ for a terminal path $\alpha.$ In this case, $E$ is acyclic and row-finite and $E^0=R(\alpha^0)$.
\end{enumerate}
\end{enumerate}
\label{theorem_graded_simple}
\end{theorem}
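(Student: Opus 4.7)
The approach is to prove (1) $\Rightarrow$ (2) $\Rightarrow$ (3) $\Rightarrow$ (1), using throughout the lattice correspondence between graded (two-sided) ideals of $L_K(E)$ and admissible pairs $(H,S)$ of hereditary-saturated sets and subsets of breaking vertices. Under that correspondence, $L_K(E)$ is graded simple exactly when the only admissible pairs are $(\emptyset,\emptyset)$ and $(E^0,\emptyset)$. Since there can be no breaking vertices above $\emptyset$, and no breaking vertices inside the empty complement of $E^0$, graded simplicity is equivalent to the statement that the only hereditary and saturated subsets of $E^0$ are $\emptyset$ and $E^0$ itself. This in turn is the standard reformulation of cofinality, completing the bracketed clause in~(1).

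For (1)~$\Rightarrow$~(2), I would first argue that $E$ must contain at least one terminal vertex: if not, then from every vertex one could extend indefinitely an infinite path visiting only non-terminal vertices, and a standard construction (using regularity and absence of extreme cycles) exhibits a proper nonempty hereditary and saturated subset, contradicting cofinality. Given one terminal vertex, cofinality forces the set of terminal vertices to form a single cluster $C$: two inequivalent clusters would yield two disjoint elements of $E^{\leq\infty}$ with incompatible roots, violating the cofinal access requirement. Finally, $\ol{C}$ is hereditary and saturated and nonempty, hence equals $E^0$.

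For (2)~$\Rightarrow$~(3), I would case-split on the type of the cluster $C$, using the classification of clusters recalled just before the theorem: a sink, the vertex set of a cycle without exits, the tree of an extreme cycle, or the vertex set of a terminal path. In case~(a), the hypothesis that $\ol{\{v\}} = E^0$ together with the uniqueness of the cluster rules out any second sink, any cycle (which would create a second cluster or prevent saturation from reaching $v$), and any infinite emitter (whose extra emitted edges would start a path not attracted to $v$); so $E$ is acyclic, row-finite, and $E^0 = R(v)$. Cases~(b) and~(d) follow by the same template: the hypothesized structure exhausts $E^0$ via the saturation, and each forbidden configuration (second cycle, extra terminal path, infinite emitter) would produce a disjoint cluster or defeat cofinality. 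Case~(c) requires a bit more work: I would show that any cycle $d$ in $E$ must be extreme by using the cofinality condition to produce from a non-exit vertex of $d$ a path back into $c^0$, forcing $d$ to admit the extreme property; similarly, any infinite emitter not on a cycle would generate an infinite path terminating outside any cycle, contradicting uniqueness of the cluster. For~(3)~$\Rightarrow$~(1), each of the four cases explicitly provides a hereditary and saturated set (the cluster) whose saturated closure is $E^0$, and a check that no proper nonempty hereditary-saturated subset can sit strictly below it; cofinality, and hence graded simplicity, follows.

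The main obstacle is case~(c): reconciling the coexistence of extreme cycles with infinite emitters while still producing the strong structural conclusions (\emph{every} cycle extreme, \emph{every} infinite emitter on a cycle) requires careful navigation of the saturated closure in the presence of vertices emitting infinitely many edges, since these behave very differently from regular vertices under saturation. A secondary subtlety is case~(d), where the definition of a terminal path built into~$C$ must be leveraged to exclude cycles and infinite emitters from the entire tree $T(p^0)=R(p^0)$, not merely from $p^0$ itself. Both difficulties can be managed by systematically exploiting the identity $T(C)\subseteq R(C)$ characteristic of terminal clusters, which is what makes the saturation computation tractable in each case.
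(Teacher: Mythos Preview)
This theorem is not proved in the present paper; it is merely quoted from \cite{Lia_porcupine_quotient} as background (see the sentence ``We recall \cite[Theorem 5.7]{Lia_porcupine_quotient} characterizing graded simplicity of $L_K(E)$\ldots'' immediately preceding the statement). There is therefore no proof in this paper to compare your proposal against.

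That said, your outline is a reasonable sketch of the standard argument and is broadly consistent with how such results are established in the literature on Leavitt path algebras. The main caution is that several steps you describe as routine---particularly the claim in (1)$\Rightarrow$(2) that a cofinal graph must contain a terminal vertex, and the case~(c) analysis showing every infinite emitter lies on a cycle---require nontrivial work that your plan only gestures at. If you intend to supply a self-contained proof rather than cite \cite{Lia_porcupine_quotient}, those are the places where the real content lives and where a referee would press for detail.
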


\subsection{Porcupine-quotient graphs}\label{subsection_quotient_and_porcupine}

If $H$ is hereditary and saturated, a {\em breaking vertex} of $H$ is an element of the set
\[B_H=\{v\in E^0-H\,|\, v\mbox{ is an infinite emitter and }0<|\so^{-1}(v)\cap \ra^{-1}(E^0-H)|<\omega\}.\]
For each $v\in B_H,$ let $v^H$ stands for $v-\sum ee^*$ where the sum is taken over $e\in \so^{-1}(v)\cap \ra^{-1}(E^0-H).$

An {\em admissible pair} is a pair $(H, S)$ where $H\subseteq E^0$ is hereditary and saturated and $S\subseteq B_H.$
For an admissible pair $(H,S)$,
the ideal $I(H,S)$ generated by $H\cup \{v^H \,|\, v\in S \}$ is graded since it is generated by homogeneous elements. It is the $K$-linear span of the elements $pq^*$ for paths $p,q$ with $\ra(p)=\ra(q)\in H$ and the elements $pv^Hq^*$ for paths $p,q$ with $\ra(p)=\ra(q)=v\in S$ (see \cite[Lemma 5.6]{Tomforde}). Conversely, for a graded ideal $I$, $H=I\cap E^0$ is hereditary and saturated and for $S=\{v\in B_H\mid v^H\in I\},$ $I=I(H,S)$ (\cite[Theorem 5.7]{Tomforde}, also \cite[Theorem 2.5.8]{LPA_book}). If $S=\emptyset,$ we shorten $(H,\emptyset)$ to $H$ and $I(H, \emptyset)$ to $I(H).$

The set of admissible pairs is a lattice with respect to the relation
\[(H,S)\leq (G,T)\;\; \mbox{ if }H\subseteq K\mbox{ and }S\subseteq G\cup T\]
(see \cite[Proposition 2.5.6]{LPA_book} for the meet and the join of this lattice). The correspondence $(H,S)\mapsto I(H,S)$ is a lattice isomorphism of this lattice and the lattice of graded ideals.

An admissible pair $(H,S)$ gives rise to the {\em quotient graph} $E/(H,S)$ and to the {\em porcupine graph}  $P_{(H,S)}$
so that the algebras $L_K(E)/I(H,S)$ and $L_K(E/(H,S))$ are graded isomorphic and that the algebras $L_K(P_{(H,S)})$ and $I(H,S)$ are also graded isomorphic (by \cite[Theorem 5.7]{Tomforde} and \cite[Theorem 3.3]{Lia_porcupine}).
Recently, the two constructions have been generalized by a single construction and the {\em porcupine-quotient graph} corresponding to the quotient of one admissible pair with respect to another admissible pair was introduced in \cite{Lia_porcupine_quotient}. Below are the relevant definitions.

If $H\subseteq G$ are two sets of vertices of $E$, let
\[B_H^G=\{v\in E^0-H\mid v\mbox{ is an infinite emitter and } 0<|\so^{-1}(v)\cap \ra^{-1}(G-H)|<\omega\}.\]

If $(H,S)$ and $(G,T)$ are two admissible pairs of a graph $E$ such that $(H,S)\leq (G,T),$ we let
\[
\begin{array}{ll}
F_1(G-H, T-S)=\{e_1e_2\ldots e_n\mbox{ is a path of }E\mid \ra(e_n)\in G-H, \so(e_n)\notin (G-H)\cup (T-S)
\}\mbox{ and}\\
F_2(G-H, T-S)=\{p\mbox{ is a path of }E\mid \ra(p)\in T-S,  |p|>0\}.
\end{array}
\]

The {\em porcupine-quotient graph} $(G,T)/(H,S)$ is defined by letting the set of its vertices be
\[(G-H)\cup (T-S)\cup \{w^p\mid p\in F_1(G-H,T-S)\cup F_2(G-H,T-S)\}\cup \{v'\mid v\in ((G\cup T)-S)\cap B^G_H
\}\]
and the set of its edges be
\[\{e\in E^1\mid \ra(e)\in G-H\mbox{ and either }\so(e)\in G-H\mbox{ or }\so(e)\in T-S\}\cup\]\[\{f^p\mid p\in F_1(G-H,T-S)\cup F_2(G-H,T-S)\}\cup \{e'\mid \ra(e)\in ((G\cup T)-S)\cap B^G_H\}.\]

The source and range of an edge of $(G,T)/(H,S)$ which is also in $E^1$ are the same as in $E.$

If $e\in E^1\cap(F_1(G-H, T-S)\cup F_2(G-H, T-S)),$ we let $\so(f^e)=w^e$ and $\ra(f^e)=\ra(e).$
If $p=eq$ where $e\in E^1,$ $q\in F_1(G-H, T-S)\cup F_2(G-H, T-S),$ and $|q|>0,$ let $\so(f^p)=w^p$ and $\ra(f^p)=w^q.$

If $\ra(e)\in ((G\cup T)-S)\cap B^G_H,$ we let $\ra(e')= \ra(e)'.$ If $\ra(e)\in (G-S)\cap B_H^G$ and if $\so(e)\in (G-H)\cup (T-S),$ we let $\so(e')=\so(e).$ If $\so(e)\notin (G-H)\cup (T-S),$ then either $\so(e)\notin G\cup T$ or $\so(e)\in S\cap T.$ In either case,  $e\in F_1(G-H, T-S)$ and we let $\so(e')=w^e.$ If $\ra(e)\in (T-S)\cap B_H^G,$ then $e\in F_2(G-H, T-S)$ and we let $\so(e')=w^e.$

If $S=T=\emptyset,$ we write $(G, \emptyset)/(H,\emptyset)$ shorter as $G/H.$

If $(G,T)=(E^0, \emptyset),$ the porcupine-quotient graph is exactly the quotient graph $E/(H,S).$ If $(H,S)=(\emptyset, \emptyset),$ the porcupine-quotient graph is exactly the porcupine graph $P_{(G,T)}.$

By \cite[Theorem 3.6]{Lia_porcupine_quotient}, the Leavitt path algebra of a porcupine-quotient $(G,T)/(H,S)$ is graded isomorphic to the quotient $I(G,T)/I(H,S)$ of two corresponding graded ideals.

\subsection{Out-split and in-split plus moves}
\label{subsection_outsplit}
If $E$ is a graph and $v$ a vertex which emits edges, let $\mathcal E_1,\ldots \mathcal E_n$ be a partition $\mathcal P$ of $\so^{-1}(v).$ We review the definition of the {\em out-split graph} $E_{v,\mathcal P}$ below (\cite[Definition 6.3.23]{LPA_book}) and the out-split operation $E\to E_{v, \mathcal P}$ which we call the {\em out-split of $v$} with respect to $\mathcal P$. If $\so^{-1}(v)=\mathcal E_1\cup \ldots\cup \mathcal E_n,$ the sets of vertices and edges of $E_{v,\mathcal P}$ are
\[E_v^0=E^0-\{v\}\cup\{v_1,\ldots, v_n\},\hskip.3cm
E_v^1=\{f_1, \ldots, f_n\mid f\in E^1, \ra(f)=v\}\cup\{f\in E^1\mid \ra(f)\neq v\},\]
the range of $f_i$ in $E_{v,\mathcal P}$ is $v_i$ and the range of $f$ in $E_{v,\mathcal P}$ is $\ra(f),$
the source of $g\in E_{v,\mathcal P}^1$ is
\begin{itemize}
\item $v_i\;\;\;$ if $g=f_j\in \mathcal E_i$ (so $\so(f)=\ra(f)=v$)
\item $v_i\;\;\;\,$ if $g=f\in \mathcal E_i$ (so $\so(f)=v$ and $\ra(f)\neq v$)
\item $\so(f)$ if $g=f_j$ and $\so(f)\neq v$ (so $\ra(f)=v$)
\item $\so(f)$ if $g=f\;$ and $\so(f)\neq v$ (so $\ra(f)\neq v$).
\end{itemize}

The map on the vertices and edges of $E$ given by $\phi(v)=\sum_{i=1}^n v_i,$ $\phi(w)=w$ for $w\in E^0-\{v\},$ $\phi(f)=\sum_{i=1}^n f_i $ if $\ra(f)=v$ and $\phi(f)=f$ otherwise, extends to a graded $*$-monomorphism $L_K(E)\to L_K(E_{v, \mathcal P}).$ This map is
a graded $*$-isomorphism if  $v$ is regular or if it is an infinite emitter with all but one set $\mathcal E_1,\ldots \mathcal E_n$ finite.
In this last case, and if $\mathcal E_n$ is the infinite set, the inverse of $\phi$ can be defined on the vertices and edges by
$\psi(v_i)=\sum_{e\in \mathcal E_i} ee^*$ for $i=1,\ldots, n-1$ $\psi(v_n)=v-\sum_{i=1}^{n-1}\sum_{e\in \mathcal E_i} ee^*,$ $\psi(w)=w$ for $w\in E^0-\{v_1,\ldots, v_n\},$ and $\psi(f)=f\psi(\ra(f)).$ If $v$ is an infinite emitter and more than one partition set is infinite, $\phi$ may not be onto. For example, the first graph below has a composition series length 3 (see section \ref{subsection_composition}). The second graph is the out-split of $v$ with respect to the two-element partition consisting of the sets of all edges with equal range. It has a composition series of length 4, so the algebras of the two graphs are not graded isomorphic.
\[
\xymatrix{\bullet&\bullet^v
\ar @/_1pc/ [l] _{\mbox{ } } \ar@/_/ [l] \ar [l] \ar@/^/ [l] \ar@{.}@/^1pc/ [l]
\ar@{.} @/_1pc/ [r] _{\mbox{ } } \ar@/_/ [r] \ar [r] \ar@/^/ [r] \ar@/^1pc/ [r]&\bullet}\hskip5cm
\xymatrix{\bullet&\bullet^{v_1}\ar @/_1pc/ [l] _{\mbox{ } } \ar@/_/ [l] \ar [l] \ar@/^/ [l] \ar@{.}@/^1pc/ [l]}\hskip0.6cm
\xymatrix{\bullet^{v_2}\ar@{.} @/_1pc/ [r] _{\mbox{ } } \ar@/_/ [r] \ar [r] \ar@/^/ [r] \ar@/^1pc/ [r]&\bullet}\]

All out-splits we consider are with respect to either a partition of the set of edges emitted from a regular vertex or the set of edges emitted from an infinite emitter such that only one partition set is infinite. The operation of taking the {\em out-amalgamation} of a vertex $v$ with respect to some $\mathcal P$ is the inverse of taking the out-split. So, the graph $E$ is the out-amalgamation of $E_{v, \mathcal P}.$

We often deal with the case when $v$ is regular and $\mathcal E_i$ is a single element set for $i=1,\ldots, |\so^{-1}(v)|.$ We call the out-split graph with respect to this partition the {\em maximal out-split of $v$}. If $V\subseteq E^0$
is a set of regular vertices,
the graph obtained by performing the maximal out-splits of every $v\in V$ is said to be the {\em maximal out-split of $V$} and the term {\em the maximal out-split} is used in the case when $V$ is the set of all regular vertices.
We are interested in the case when $V$ is the set of all regular vertices not on any cycle. We say that the maximal out-split with respect to this set is the {\em total out-split} of $E$ and denote it by $E_{\tot}$. So, $E_{\tot}$ is a graph in which every regular vertex which is not in a cycle emits exactly one edge.

If $v$ is a regular vertex of a graph $E$ and if $\mathcal E_i, i=1,\ldots, n$ is a partition $\mathcal P$ of $\ra^{-1}(v),$ an {\em in-split of $v$ with respect to $\mathcal P$} is defined analogously as the out-split except that the roles of $\ra$ and $\so$ are reversed. If some of the partition sets are allowed to be empty (in which case the collection is still called a partition despite the deviation from the standard definition), the resulting operation is called an {\em in-split minus} move. If $v$ is a regular vertex of a graph $M$ and $E$ and $F$ are the outcomes of the in-split minus moves applied to $v$ and the partitions of $\ra^{-1}(v)$ with the same number of elements, then we say that $E$ is an {\em in-split plus} of $F$ (and, vice versa, $F$ is an in-split plus of $E$). We
refer to the graph $M$ as  the {\em mother} graph which relates $E$ and $F$ by the in-split minus moves.
For example, if $M$ is the graph $\xymatrix{\bullet \ar[r]^e& \bullet^v \ar@/^1pc/[r] & \bullet \ar@/^1pc/ [l]_f}$ and $E$ and $F$ are two graphs from Example \ref{example_canonical_representation}, then the in-split minus of $M$ with respect to  $\{\{e\}, \{f\}\}$ produces $E$ and the in-split minus of $M$ with respect to $\{\{e,f\}, \emptyset\}$ produces $F$.

An in-split plus move extends to a graded $*$-isomorphism of the algebras by \cite[Theorem 2.2.3]{Eilers_Ruiz_3-bit} (and this may not be the case for in-split and in-split minus moves). The reference \cite{Eilers_Ruiz_3-bit} also has more information on the origin and the use of the in-split plus move. For composition S-NE graphs, the in-split plus move of a vertex which does not emit exits is related to the relative path rearrangements of the cofinal subgraph from Proposition \ref{proposition_canonical_tails}.

\subsection{Pre-order monoids and their order-ideals}\label{subsection_order-ideals}
If $G$ is an abelian group with a left action of a group $\Gamma$ compatible with the group operation, then $G$ is a {\em $\Gamma$-group}. If $M$ is an abelian monoid with a left action of a group $\Gamma$ compatible with the monoid operation, then $M$ is a {\em $\Gamma$-monoid}. Let $\geq$ be a reflexive and transitive relation (a pre-order) on a $\Gamma$-monoid $M$ ($\Gamma$-group $G$) such that $g_1\geq g_2$ implies $g_1 + h\geq g_2 + h$ and $\gamma g_1 \geq \gamma g_2$ for all $g_1, g_2, h$ in $M$ (in $G$) and $\gamma\in \Gamma.$ We say that such monoid $M$ is a {\em pre-ordered $\Gamma$-monoid} and that such a group $G$ is a {\em pre-ordered $\Gamma$-group}. Let $\POM$ denote the category whose objects are pre-ordered $\Gamma$-monoids and whose morphisms are order-preserving $\Gamma$-monoid homomorphisms and $\POG$ denote the category whose objects are pre-ordered $\Gamma$-groups and whose morphisms are order-preserving $\Gamma$-group homomorphisms ($\Zset[\Gamma]$-homomorphisms).

If $G$ is a pre-ordered $\Gamma$-group, $G^+=\{x\in G\mid x\geq 0\}$ is a pre-ordered $\Gamma$-monoid. If a $\Gamma$-group homomorphism $f:G\to H$ is order-preserving, then $f(G^+)\subseteq H^+.$ Conversely, if
$M$ is a cancellative pre-ordered $\Gamma$-monoid, then its Grothendieck group is a pre-ordered $\Gamma$-group such that $G^+=M$ and if $f:M\to N$ is an order-preserving $\Gamma$-monoid homomorphism of cancellative pre-ordered $\Gamma$-monoids, then $f$ induces an order-preserving homomorphism of the Grothendieck groups of $M$ and $N.$
The objects of $\POM$ we consider are cancellative, so the formulation of our main result in terms of the elements of $\POM$ is equivalent to the formulation in terms of the elements of $\POG.$

If $\Zset^+$ denotes set of nonnegative integers, the set of finite sums of the $\Zset^+$-multiples of the elements of $\Gamma$ is a pre-ordered $\Gamma$-monoid which we denote by $\Zset^+[\Gamma]$. An element $u$ of a pre-ordered $\Gamma$-monoid $M$ is an {\em order-unit} if for any
$x\in M,$ there is a nonzero $a\in\Zset^+[\Gamma]$ such that $x\leq au$. If $M$ and $N$ are pre-ordered $\Gamma$-monoids with order-units $u$ and $v$ respectively,
we refer to the pairs $(M, u)$ and $(N, v)$ as the {\em pointed monoids}. An order-preserving $\Zset[\Gamma]$-module
homomorphism $f:M\to N$ is a {\em homomorphism of pointed monoids} if $f(u)=v$. Let
$\POM^u$ denote the category whose objects are pointed monoids and whose morphisms are homomorphisms of pointed monoids.

An element $u$ of a pre-ordered $\Gamma$-group $G$ is an \emph{order-unit} if $u\in G^+$ and for any $x\in G$, there is a nonzero $a\in \Zset^+[\Gamma]$ such that $x\leq au.$
Let $\POG^u$ denote the category whose objects are pairs $(G, u)$ where $G$ is an object of $\POG$ and $u$ is an order-unit of $G$ and whose morphisms are morphisms of $\POG$ which are order-unit-preserving. If $G$ is an upwards  directed pre-ordered $\Gamma$-group, then $u$ is an order-unit of $G$ if and only if $u$ is an order-unit of $G^+.$
Since all the objects of $\POG$ we consider are going to be upwards directed, the formulation of our main result in terms of objects and morphisms of $\POM^u$ is equivalent to the formulation in terms of their counterparts in $\POG^u.$

A submonoid $I$ of a pre-ordered monoid $M$ is an {\em order-ideal} of $M$ if  $x\geq y$ and $x\in I$ implies $y\in I$. A $\Gamma$-submonoid $I$ of a pre-ordered $\Gamma$-monoid $M$ which is an order-ideal is a {\em $\Gamma$-order-ideal}. A subset $D$ of such $M$ is {\em convex} if
$x\leq z\leq y$ for $x, y\in D$ and $z\in M$ implies $z\in D.$ A subset $D$ of $M$ is {\em upwards directed} if for $x, y\in D$, there is $z\in D$ such
that $x\leq z$ and $y\leq  z.$ If $M$ is a pre-ordered $\Gamma$-monoid, a subset $D$ of $M$ is a {\em generating
interval} if $D$ it is  upwards directed and convex such that $\Zset^+[\Gamma]D=M.$ Let $\POM^D$ denote the category whose objects are pairs $(M, D)$ where $M$ is an object
of $\POM$ and $D$ is a generating interval of $M$, and whose morphisms
are morphisms of $\POM$ which map the generating interval into the generating interval. Thus, a $\POM$-homomorphism $f:(M, D)\to (N,F)$ is a $\POM^D$ homomorphism if and only if $f(D)\subseteq F.$ If $f$ is an $\POM^D$-isomorphism, then $F=f(f^{-1}(F))\subseteq f(D)\subseteq F,$ so $f(D)=F.$

If $G$ is a pre-ordered $\Gamma$-group,
a subset $D$ of $G^+$ is a {\em generating interval} of $G$ if $D$ is a generating interval of $G^+.$ Let $\POG^D$ denote the category of $\POG$ objects considered with their generating intervals and $\POG$-morphisms mapping the generating interval into the generating interval. For the $\Gamma$-monoids and $\Gamma$-groups we are considering, the categories $\POM^D$ and $\POG^D$ are equivalent.

\subsection{The \texorpdfstring{$\Gamma$}{}-monoid and the Grothendieck \texorpdfstring{$\Gamma$}{}-group of a graded ring}
\label{subsection_graded_Grothendieck}
If $R$ is a unital $\Gamma$-graded ring, let $\V^{\Gamma}(R)$ denote the monoid of the graded isomorphism classes $[P]$ of finitely generated graded projective right $R$-modules $P$ with the addition given by $[P]+[Q]=[P\oplus Q]$ and the left $\Gamma$-action given by $(\gamma, [P])\mapsto [(\gamma^{-1})P].$ The monoid $\V^\Gamma(R)$ can be represented using the classes of left modules in which case the $\Gamma$-action is $(\gamma, [P])\mapsto [P(\gamma)].$ The two representations are equivalent (see \cite[Section 1.2.3]{Roozbeh_book} or \cite[Section 2.3]{Lia_realization}). Note that the action in \cite{Roozbeh_book} is given by $(\gamma, [P])\mapsto [(\gamma)P],$ not as above. For abelian groups, this is equivalent, but for non-abelian groups, this map would not be an action, so we consider the action with $(\gamma, [P])\mapsto [(\gamma^{-1})P].$ This causes some formulas to be different than in \cite{Roozbeh_book}.
The monoid $\V^{\Gamma}(R)$ can also be defined via homogeneous matrices (see \cite[section 3.2]{Roozbeh_book}). Although we focus on the case when $\Gamma$ is $\Zset,$ we note that the definitions and results of \cite[Section 3.2]{Roozbeh_book} carry to the case when $\Gamma$ is not necessarily abelian by \cite[Section 1.3]{Lia_realization}.

The \emph{Grothendieck $\Gamma$-group}  $K_0^{\Gamma}(R)$ is the group completion of the $\Gamma$-monoid $\V^{\Gamma}(R)$ with
the action of $\Gamma$ inherited from $\V^{\Gamma}(R)$. If $\Gamma$ is the trivial group, $K_0^{\Gamma}(R)$ is the usual $K_0$-group.
The pointed monoid $(\V^{\Gamma}(R), [R])$ is an object of $\POM^u$ and the image of $\V^{\Gamma}(R)$ under the natural map $\V^{\Gamma}(R)\to K_0^{\Gamma}(R)$ makes $(K_0^{\Gamma}(R), [R])$ into an object of $\POG^u.$  If $\phi$ is a graded ring homomorphism, then $\V^\Gamma(\phi)$ is a morphism of $\POM$ and $K_0^\Gamma(\phi)$ is a morphism of $\POG.$ If $\phi$ is unital (i.e. $\phi$ maps the identity onto identity), $\V^\Gamma(\phi)$ is a morphism of $\POM^u$ and $K_0^\Gamma(\phi)$ is a morphism of $\POG^u.$

If $K$ is a $\Gamma$-graded division ring, recall that the support $\Gamma_K=\{\gamma\in\Gamma\mid K_\gamma\neq \{0\}\}$ is a subgroup of $\Gamma$. If $\Gamma$ is abelian,
the map
$\V^{\Gamma}(K)\to \Zset^+[\Gamma/\Gamma_K]$ given by $\left[(\gamma_1^{-1})K^{p_1}\oplus\ldots\oplus (\gamma_n^{-1})K^{p_n}\right]\mapsto \sum_{i=1}^n p_i(\gamma_i\Gamma_K)$
is a canonical isomorphism of $\Gamma$-monoids by \cite[Proposition 3.7.1]{Roozbeh_book} (note that the difference of signs in the formula in \cite[Proposition 3.7.1]{Roozbeh_book} is present because the action of $\Gamma$ on $\V^\Gamma(R)$ is different than the action we consider here).

If the grade group is the group of integers and $K$ is a trivially graded field, $K_0^\Gamma(K)$ is a $\Zset[\Zset]$-module. To avoid confusion of working with $\Zset$-modules which also have an additional $\Zset$-action, we consider the infinite group $\Gamma=\langle t \rangle$ on a single generator $t$ to act on the monoids and their Grothendieck groups.
Hence, if $n$ is a positive integer, $\gamma_1, \gamma_2,\ldots, \gamma_n\in\Zset,$ and $R=\M_n(K)(\gamma_1, \ldots, \gamma_n),$ then $R$
is graded by $\Zset$, but its $\Gamma$-monoid $\V^\Gamma(R)$ and the $\Gamma$-group $K_0^\Gamma(R)$ are considered with the action of $\Gamma=\langle t\rangle$ not the action of $\Zset.$
By Lemma \ref{lemma_on_shifts}, we can assume that $\gamma_1, \gamma_2,\ldots, \gamma_n\in\Zset^+$ when considering $R.$ Let $k$ be the maximum of $\gamma_1, \gamma_2,\ldots, \gamma_n$ and $l_0, \ldots, l_k$ be such that $l_i$ is the number of times $i$ appears on the list $\gamma_1, \gamma_2,\ldots, \gamma_n$ for $i=0, \ldots, k.$ There is a canonical $\POM^u$-isomorphism
\[f_{n,\ol\gamma}:(\V^\Gamma(\M_n(K)(\ol\gamma)), [\M_n(K)(\ol\gamma)])\cong (\Zset^+[t,t^{-1}], \sum_{i=0}^kl_it^i).\]
(both \cite[Section 3.1]{Lia_realization} and \cite[Section 1.5]{Roozbeh_Lia_Ultramatricial} have more details). The choice to consider $\POM^u$ instead of $\POM$ is relevant: if $\Gamma=\langle t\rangle\cong \Zset,$ $R=K=K(0),$ and $S=\M_2(K)(0,1),$ then both $\V^\Gamma(R)$ and $\V^\Gamma(S)$ are isomorphic to $\Zset^+[t,t^{-1}],$ so only their order-units, corresponding to 1 and $1+t$ respectively, carry the information on the size of $R$ and $S$ and their shifts.

Let $m$ and $n$ be positive integers, $\gamma_1, \gamma_2,\ldots, \gamma_n\in \Zset$ and $R=\M_n(K[x^m, x^{-m}])(\ol\gamma)$. By using Lemma \ref{lemma_on_shifts} when considering $R$, we can consider the shift $\gamma_i$ modulo $m$ so we can assume $\gamma_1, \gamma_2,\ldots, \gamma_n\in\{0, 1, \ldots, m-1\}.$ Let
$l_0, \ldots, l_{m-1}$ be such that $l_i$ is the number of times $i$ appears on the list $\gamma_1, \gamma_2,\ldots, \gamma_n$ for $i=0,\ldots, m-1.$
There is a a canonical $\POM^u$-isomorphism
\[f_{n,m,\ol\gamma}:(\V^\Gamma(\M_n(K[x^m, x^{-m}])(\ol\gamma)), [\M_n(K[x^m, x^{-m}])(\ol\gamma)])\cong (\Zset^+[t,t^{-1}]/(t^m=1), \sum_{i=0}^{m-1}l_it^i).\]

Let $\kappa$ be any cardinal now. Let $\ol\gamma: \kappa\to \Zset^+$ be a map and $\mu_n$ be the cardinality of $\ol\gamma^{-1}(n)$ for any $n\in\Zset^+.$
We use $e_{\alpha\beta}$ for a standard matrix unit and we denote the standard matrix unit with one in the first row and the first column by $e_{00}$ not $e_{11}$ so that this agrees with the fact that 0, not 1, is the smallest element of a finite ordinal $n.$ With this convention, we note that the standard generating interval $D_{\ol\gamma}$ of  $\V^\Gamma(\M_\kappa(K)(\ol\gamma))$
consists of the finite sums of the elements of the form $l_nt^{n}[e_{00}\M_\kappa(K)(\ol\gamma)]$ where $l_n$ is the cardinality of a finite subset of $\mu_n.$

If $\ol\gamma(\alpha)$ is considered modulo $m$ for every $\alpha\in\kappa,$ and $\mu_n$ is the cardinality of $\ol\gamma^{-1}(n)$ for any $n\in\{0, \ldots, m-1\},$ the standard generating interval $D_{\ol\gamma}$ of $\V^\Gamma(\M_\kappa(K[x^m, x^{-m}])(\ol\gamma))$ consists of the finite sums of the elements of the form $l_nt^{n}[e_{00}\M_\kappa(K[x^m, x^{-m}])(\ol\gamma)]$ where $l_n$ is the cardinality of a finite subset of $\mu_n.$

\subsection{The talented monoid}\label{subsection_graph_monoid}
For any infinite emitter $v$ of a graph $E$ and any finite and nonempty $Z\subseteq \so^{-1}(v),$ let $q^v_Z=v-\sum_{e\in Z}ee^*.$ If it is clear which infinite emitter we consider, we use $q_Z$ for $q_Z^v.$
If $\Gamma=\langle t\rangle$ is the infinite cyclic group on $t,$ the {\em talented monoid} or the {\em graph $\Gamma$-monoid $M_E^\Gamma$}
is the free abelian $\Gamma$-monoid on generators $[v]$ for $v\in E^0$ and $[q^v_Z]$ for infinite emitters $v$ and nonempty and finite sets $Z\subseteq \so^{-1}(v)$ subject to the relations
\[[v]=\sum_{e\in \so^{-1}(v)}t[\ra(e)],\hskip.4cm [v]=[q^v_Z]+\sum_{e\in Z}t[\ra(e)],\,\mbox{ and }\;[q^v_Z]=[q^v_W]+\sum_{e\in W-Z}t[\ra(e)]\]
where $v$ is a vertex which is regular for the first relation and an infinite emitter for the second two relations in which $Z\subsetneq W$ are finite and nonempty subsets of $\so^{-1}(v).$
The map $[v]\mapsto [vL_K(E)]$ and $[q^v_Z]\mapsto [q^v_ZL_K(E)]$ extends to an isomorphism $\gamma^\Gamma_E$ of $M^\Gamma_E$ and $\V^\Gamma(L_K(E))$ (see \cite[Proposition 5.7]{Ara_et_al_Steinberg}).

If $M_E$ is the monoid obtained analogously but by considering the trivial group instead of the group $\Gamma$, the monoid $M_E$ registers only whether two vertices are connected, while the ``talent'' of $M_E^\Gamma$ is to register the lengths of paths between vertices: if $p$ is a path of length $n$, the relation $[\so(p)]=t^n[\ra(p)]+x$ holds in $M_E^\Gamma$ for some $x\in M_E^\Gamma.$

If $E$ has finitely many vertices, the finite sum $u_E=\sum_{v\in E^0}[v]$ is an order-unit of $M_E^\Gamma.$ If $E^0$ is infinite, $D_E=\{x\geq 0\mid x\leq \sum_{v\in F}[v]\mbox{ for some finite }F\subseteq E^0\}$ is a generating interval of $M_E^\Gamma$ (one can indeed check that $D_E$ is convex, upwards directed, and that $\Zset^+[\Gamma]D$ generates $M_E^\Gamma$).
The interval $D_E$ coincides with the ``standard'' generating interval of $\V^\Gamma(L_K(E))$ obtained using unitization of $L_K(E)$ (see \cite[Section 3.5]{Roozbeh_book}, \cite[Section 4.1]{Roozbeh_Lia_Ultramatricial}, and \cite[Section 4.3]{Lia_realization}).
If $F$ is another graph and $\phi: L_K(E)\to L_K(F)$ is a graded  homomorphism, then $\V^{\Gamma}(\phi)$ is a morphism of $\POM^D$ and $K_0^{\Gamma}(\phi)$ is a morphism of $\POG^D.$ We use $\ol\phi$ to shorten the notation of both morphisms. If $\iota: E\to F$ is a graph isomorphism, it extends to a unique graded $*$-algebra isomorphism which we continue to denote by $\iota$ and a unique $\POM^D$-isomorphism which we denote by $\ol\iota.$

If $(H,S)$ is an admissible pair of a graph $E$ and $I(H,S)$ the graded ideal of $L_K(E)$ generated by $\{v\mid v\in H\}\cup\{v^H\mid v\in S\},$ then  the $\Gamma$-order-ideal $J^\Gamma(H,S)$ of $M_E^\Gamma$ generated by $\{[v]\mid v\in H\}\cup\{[v^H]\mid v\in S\}$ is such that $(H,S),$ $I(H,S)$ and $J^\Gamma(H,S)$ correspond to each other in the isomorphisms of the three lattices: the lattice of admissible pairs of $E,$ the lattice of graded ideals of $L_K(E),$ and the lattice of $\Gamma$-order-ideals of $M_E^\Gamma$ (by \cite[Theorem 2.5.8]{LPA_book} and \cite[Theorem 5.11]{Ara_et_al_Steinberg}). By \cite[Proposition 3.7]{Lia_porcupine_quotient}, if $(H,S)$ and $(G,T)$ admissible pairs of a graph $E$ such that $(H,S)\leq (G,T),$ then there is a pre-ordered $\Gamma$-monoid isomorphism $M^\Gamma_{(G,T)/(H,S)}\cong J^\Gamma(G,T)/J^\Gamma(H,S).$

We briefly review an alternative construction of $M_E^\Gamma$ which we also use in one of the proofs. Let $\F_E^\Gamma$ be a free commutative $\Gamma$-monoid on generators $v\in E^0$
and $q_Z^v$ for $v$ infinite emitter and $Z$ finite and nonempty subset of $\so^{-1}(v).$ The monoid $M^\Gamma_E$ can be obtained as a quotient of $\F^\Gamma_E$ subject to the congruence closure $\sim$ of the relation $\to_1$ on $\F_E^\Gamma-\{0\}$ defined by (A1), (A2) and (A3) below
for any $n\in \Zset$ and $a\in \F_E^\Gamma.$  In (A1), $v$ is a regular vertex and in (A2) and (A3), $v$ is an infinite emitter and $Z$ and $W$ are finite and nonempty subsets of $\so^{-1}(v)$ such that  $Z\subsetneq W.$
\begin{enumerate}
\item[(A1)] $a+t^n v\to_1 a+\sum_{e\in \so^{-1}(v)}t^{n+1}\ra(e).$
\item[(A2)] $a+t^n v\to_1 a+t^n q_Z+\sum_{e\in Z}t^{n+1}\ra(e).$
\item[(A3)] $a+t^n q_Z\to_1 a+t^n q_W+\sum_{e\in W-Z}t^{n+1}\ra(e).$
\end{enumerate}
If $\to$ is the reflexive and transitive closure of $\to_1$
on $\F_E^\Gamma,$ then $\sim$ is the congruence on $\F^\Gamma_E$ generated by the relation $\to$.
We review the Confluence Lemma.

\begin{lemma} {\bf The Confluence Lemma.} \cite[Lemma 5.9]{Ara_et_al_Steinberg}  Let $E$ be a graph and $a,b\in  \F^\Gamma_E-\{0\}$. The relation $a\sim b$ holds if and only if $a \to c$ and $b\to c$ for some $c \in \F_E-\{0\}.$
\label{lemma_confluence}
\end{lemma}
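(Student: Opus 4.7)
The ``if'' direction is immediate: since $\to_1\subseteq\sim$ and $\sim$ is symmetric and transitive, $a\to c$ and $b\to c$ together yield $a\sim c\sim b$. For the converse, my plan is to introduce the auxiliary relation $\equiv$ on $\F^\Gamma_E-\{0\}$ by declaring $a\equiv b$ iff there exists $c\in\F^\Gamma_E-\{0\}$ with $a\to c$ and $b\to c$, and then verify that $\equiv$ is a congruence on $\F^\Gamma_E$ containing $\to_1$. Since $\sim$ is by definition the smallest such congruence, this gives $\sim\subseteq\equiv$, which is exactly the content of the lemma.

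Four of the five required properties of $\equiv$ are essentially formal. Reflexivity holds with $c=a$; symmetry is built into the definition; and the inclusion $\to_1\subseteq\equiv$ follows by choosing $c=b$. Compatibility with the addition of $\F^\Gamma_E$ is also transparent from the syntactic shape of the rules (A1)--(A3): every one-step rewrite has the form $a+x\to_1 a+y$, so if $a\to c$ then $a+d\to c+d$ for any $d\in\F^\Gamma_E$, and therefore $a\equiv b$ implies $a+d\equiv b+d$.

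The substantive point is transitivity of $\equiv$, which reduces to the following global confluence statement: whenever $a\to c_1$ and $a\to c_2$, there exists $d$ with $c_1\to d$ and $c_2\to d$. I would establish this in the standard way, first by proving local confluence of $\to_1$ through a finite case analysis. If the two one-step rewrites act on disjoint summands of $a$, apply each in the result of the other to obtain the common $d$. If both act on the same summand $t^n v$ with $v$ regular, rule (A1) is deterministic and the two rewrites coincide. The only genuinely delicate case is when $v$ is an infinite emitter and two rules of type (A2) are applied to the same summand $t^n v$ with finite subsets $Z_1,Z_2\subseteq \so^{-1}(v)$: here I apply (A3) to each $t^n q_{Z_i}$ to enlarge $Z_i$ to $Z_1\cup Z_2$, after which the compensation terms $\sum_{e\in (Z_1\cup Z_2)-Z_i}t^{n+1}\ra(e)$ recombine on both sides into a common $d$. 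The mixed cases (one (A2) step and one (A3) step, or two (A3) steps, acting on the same summand) are handled by the same bookkeeping with $Z_1\cup Z_2$.

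Finally, local confluence is promoted to global confluence by induction on the combined length of the two reductions $a\to c_1$ and $a\to c_2$, closing an elementary diamond at each junction and rethreading it through the inductive hypothesis. The main obstacle is the infinite-emitter case above: carefully tracking how the generators $q^v_Z$ transform under consecutive applications of (A2) and (A3), and verifying that the compensation edges on the two sides actually match, is the only place where a routine rewriting argument does not immediately suffice. It is precisely to make this step go through that rule (A3) is included in the definition of $\to_1$, and once it is settled, the remainder of the argument is formal.
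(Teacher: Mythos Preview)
The paper does not supply its own proof of this statement; it is simply quoted with a citation to \cite{Ara_et_al_Steinberg}. Your argument is the standard one used there and is correct.

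One point deserves to be made explicit. The relation $\to$ is \emph{not} terminating in this setting: a cycle allows (A1) to be iterated forever, and an infinite emitter allows (A2)--(A3) to be iterated forever. Hence Newman's lemma is unavailable, and weak local confluence by itself would not give global confluence. What your case analysis actually establishes is the stronger diamond property: in every overlap the common reduct is reached in at most one $\to_1$-step from each side (via (A3) with $Z_1\cup Z_2$ in the infinite-emitter cases, and trivially in the others). It is this one-step bound, not bare local confluence, that makes the tiling induction go through without any termination hypothesis; your phrase ``closing an elementary diamond at each junction'' is exactly right, but the word ``local confluence'' undersells what you have proved and what you need.
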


The monoid $M_E^\Gamma$ is cancellative (by \cite[Corollary 5.8]{Ara_et_al_Steinberg}) so the natural pre-order is, in fact, an order.
By \cite[Proposition 3.4]{Roozbeh_Lia_Comparable}, the relation $x< t^nx$ is impossible for any $x\in M_E^\Gamma$ and any positive integer $n.$ The remaining possibilities give rise to the following types.
\begin{enumerate}
\item If $x=t^nx$ for some positive integer $n,$ we say that $x$ is {\em periodic}.

\item If $x>t^nx$ for some positive integer $n,$ we say that $x$ is {\em aperiodic}.

\item If $x$ and $t^nx$ are incomparable for any positive integer $n,$ we say that $x$ is {\em incomparable}.
\end{enumerate}

If $x$ is periodic or aperiodic, $x$ is {\em comparable}. This terminology matches the one used in  \cite{Roozbeh_Lia_Comparable}.
We note that \cite{Roozbeh_Alfilgen_Jocelyn} uses ``cyclic'' for ``periodic'' and ``non-comparable'' for ``incomparable''. In our terminology, the authors of \cite{Roozbeh_Alfilgen_Jocelyn} define a
$\Gamma$-order-ideal $I$ of $M_E^\Gamma$ to be {\em periodic} (respectively, {\em comparable, incomparable}) if its every nonzero element is periodic (respectively, comparable, incomparable). We also say that $I$ is {\em aperiodic} if its every nonzero element is aperiodic.

We summarize some results of \cite{Lia_porcupine_quotient} which we use.

\begin{theorem} \cite[Theorems 5.7, 7.4 and 7.5]{Lia_porcupine_quotient}
Let $E$ be any graph.
\begin{enumerate}[\upshape(1)]
\item If $E$ is cofinal, exactly one of the following holds.

\begin{enumerate}[\upshape(a)]
\item $E$ has a sink, has no infinite paths, and every  $x\in M_E^\Gamma, x\neq 0,$ is incomparable.

\item $E$ has a cycle $c$ without exits, every infinite path ends in $c$, and every $x\in M_E^\Gamma$ is periodic.

\item $E$ has an extreme cycle and two non-disjoint cycles and every $x\in M_E^\Gamma, x\neq 0,$ is aperiodic.

\item $E$ has a terminal path and an infinite path which does not end in a cycle and every  $x\in M_E^\Gamma, x\neq 0,$ is incomparable.
\end{enumerate}

\item The following two conditions are equivalent for any graph $E.$
\begin{enumerate}[\upshape(a)]
\item If $(H,S)$ and $(G,T)$ are admissible pairs of $E$ such that $(G,T)/(H,S)$ is cofinal, then  $M_{(G,T)/(H,S)}^\Gamma$ is either periodic or incomparable.

\item The cycles of $E$ are mutually disjoint.
\end{enumerate}
\end{enumerate}
\label{theorem_from_porcupine_quot_on_monoid}
\end{theorem}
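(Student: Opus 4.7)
The plan is to handle part (1) via the structural classification of cofinal graphs given by Theorem \ref{theorem_graded_simple} and part (2) by combining part (1) with properties of the porcupine-quotient construction.

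For part (1), I would go through each of the four cases (a)--(d) in Theorem \ref{theorem_graded_simple}(3). In case (a), $E$ is acyclic and row-finite, and $E^0 = R(v)$ for the unique sink $v$. Applying (A1) repeatedly and invoking the Confluence Lemma \ref{lemma_confluence} gives each $[w]$ a unique normal form $f_w(t)[v]$ with $f_w \in \Zset^+[t]$ (the generating function of paths $w \to v$). A general nonzero element thus has the form $g(t)[v]$ for some nonzero $g \in \Zset^+[t, t^{-1}]$, and comparability of $g[v]$ with $t^n g[v]$ would force $t^n g - g$ or $g - t^n g$ to lie in $\Zset^+[t, t^{-1}]$, which fails for $n \neq 0$. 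Hence every nonzero element is incomparable. Case (d) runs identically: acyclicity and row-finiteness yield analogous normal forms over classes on the terminal path, and the same coefficient argument gives incomparability.

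In case (b), the unique cycle $c = e_0 \ldots e_{m-1}$ has no exits, so each vertex $v_i = \so(e_i)$ on $c$ is regular with $\so^{-1}(v_i) = \{e_i\}$, and (A1) gives $[v_i] = t[v_{i+_m 1}]$. Iterating around $c$ yields $[v_0] = t^m[v_0]$, so each $[v_i]$ is periodic. Every non-cycle vertex $w$, by cofinality and row-finiteness, reduces via (A1) to a $\Zset^+[\Gamma]$-combination of cycle classes, so $[w]$ is periodic as well. In case (c), pick $v_0$ on an extreme cycle $c = e_0 \ldots e_{m-1}$. Whether $v_0$ is regular or an infinite emitter, applying (A1) or (A2) with $Z = \{e_0\}$ respectively yields $[v_0] = t[v_1] + x_0$ with $x_0 > 0$ (the exit term, or the $q_Z$-term, is nonzero because $c$ has an exit). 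Iterating, $[v_0] = t^m[v_0] + \sum_{i=0}^{m-1}t^i x_i$ with at least one $x_i > 0$, so $[v_0] > t^m[v_0]$ and $[v_0]$ is aperiodic. Global propagation to every nonzero class follows from cofinality together with the general fact that in a cofinal talented monoid the three trichotomy types cannot mix (as developed in \cite{Roozbeh_Lia_Comparable}): every $[w]$ is $\Gamma$-connected to $[v_0]$, and the comparability/periodicity type is uniform on the monoid.

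For part (2), the implication (b) $\Rightarrow$ (a) follows from part (1) once one observes that the cycles of a porcupine-quotient $(G,T)/(H,S)$ of $E$ correspond bijectively to cycles of $E$ lying entirely in $G - H$, while the added ``porcupine'' vertices and edges are attached only along acyclic tails. Hence disjointness of cycles of $E$ passes to every porcupine-quotient, excluding case (c) of part (1) (which in its cofinal setting requires two non-disjoint cycles). The cofinal porcupine-quotient must therefore lie in case (a), (b), or (d), giving a periodic or incomparable monoid. For (a) $\Rightarrow$ (b), I would argue contrapositively: if two non-disjoint cycles $c$ and $d$ of $E$ share a vertex, let $G$ be the hereditary and saturated closure of $c^0 \cup d^0$, $H$ the saturated closure of the vertices in $G$ that do not lie on any cycle returning to $c^0 \cup d^0$, and $S$, $T$ appropriate breaking-vertex sets. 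The porcupine-quotient $(G,T)/(H,S)$ is then cofinal and carries the extreme-cycle structure of $c$ and $d$, so part (1)(c) produces a nonzero aperiodic class in $M_{(G,T)/(H,S)}^\Gamma$, contradicting (a).

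The main obstacle is twofold: first, the global propagation of aperiodicity in case (c) of part (1), where showing that a single aperiodic vertex class forces uniform aperiodicity of the cofinal monoid requires the trichotomy machinery and careful handling of the cancellative order; and second, the explicit choice of admissible pairs in (a) $\Rightarrow$ (b) of part (2), where breaking vertices and infinite emitters must be managed so that the resulting porcupine-quotient is genuinely cofinal with a nontrivial extreme cycle.
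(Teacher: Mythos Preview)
The paper does not prove this theorem at all: it is stated as a citation of \cite[Theorems 5.7, 7.4 and 7.5]{Lia_porcupine_quotient} and used as a black box throughout. There is therefore no ``paper's own proof'' to compare against; your proposal is an independent attempt at the cited results.

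That said, two places in your sketch would not go through as written. First, case (d) of part (1) is \emph{not} identical to case (a). In case (a) the graph has no infinite paths, so repeated application of (A1) terminates and every element has a normal form $g(t)[v]$ over the sink $v$; your incomparability argument then works cleanly. In case (d) the terminal path is itself infinite, so reduction via (A1) never terminates and there is no finite normal form over a single base class. You would instead need to argue directly from the Confluence Lemma that $x\sim t^n x$ forces a common refinement whose existence contradicts acyclicity, which is a different (and more delicate) argument than the coefficient comparison you use in (a). Second, in the contrapositive for part (2), your choice of $H$ (``the saturated closure of the vertices in $G$ that do not lie on any cycle returning to $c^0\cup d^0$'') is not obviously hereditary, and it is not clear that the resulting porcupine-quotient is cofinal; one typically needs a maximal-below construction (a pair $(H,S)$ maximal among those with $(H,S)\lneq (G,T)$) to guarantee cofinality of the quotient, and then check that the two non-disjoint cycles survive in $(G,T)/(H,S)$. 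Your final paragraph already flags both of these as the main obstacles, which is accurate.
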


\subsection{Composition series}
\label{subsection_composition}
By \cite[Corollary 4.3]{Lia_porcupine_quotient}, the following are equivalent for any graph $E$.
\begin{enumerate}
\item There is a chain of admissible pairs \[(\emptyset, \emptyset)=(H_0, S_0) \leq (H_1, S_1)\leq \ldots\leq (H_n, S_n)=(E^0, \emptyset)\] such that the porcupine-quotient graph $(H_{i+1}, S_{i+1})/(H_i, S_i)$ is cofinal for all $i=0, \ldots, n-1.$ If $S_i=\emptyset$ for all $i,$ we write the above chain shorter as $\emptyset=H_0\leq H_1\leq \ldots\leq H_n=E^0.$

\item There is a chain of graded ideals $\{0\}=I_0 \leq I_1\leq \ldots\leq I_n=L_K(E)$ such that the graded algebra $I_{i+1}/I_i$ is graded simple for all $i=0, \ldots, n-1.$

\item There is a chain of $\Gamma$-order-ideals $\{0\}=J_0 \leq J_1\leq  \ldots\leq J_n=M_E^\Gamma$ such that the $\Gamma$-monoid $J_{i+1}/J_i$ is simple  (i.e., without any nontrivial and proper $\Gamma$-order-ideals) for all $i=0, \ldots, n-1.$
\end{enumerate}

If the above conditions hold, we say that $E$ (respectively $L_K(E),$ $M_E^\Gamma$) has  {\em composition length $n$}.
The graph $E$ (respectively $L_K(E),$ $M_E^\Gamma$) has a {\em composition series} if $E$ (respectively $L_K(E),$ $M_E^\Gamma$) has composition  length $n$ for some positive integer $n.$

Let $\Ter(E)$ denote the saturated closure of the (hereditary) set of terminal vertices. In \cite{Lia_porcupine_quotient}, the {\em composition quotients} of a graph $E$ are graphs defined by
\begin{center}
$E_0=E\;$ and $\;E_{n+1}=E_n/(\Ter(E_n, B_{\Ter(E_n)}))$ if $\Ter(E_n)\subsetneq E_n^0\;$ and $\;E_{n+1}=\emptyset$ otherwise.
\end{center}
These quotients are used to characterize graphs with composition series in the result below.

\begin{theorem} \cite[Theorem 6.5]{Lia_porcupine_quotient}
The following conditions are equivalent for a graph $E$.
\begin{enumerate}[\upshape(1)]
\item The graph $E$ has a composition series.
\item There is a nonnegative integer $n$ such that $E_{n+1}=\emptyset$ and $E_n\neq\emptyset$ and for each $n$ for which the composition quotient $E_n$ is nonempty, $\Ter(E_n)$ is nonempty, the set of terminal vertices of $E_n$ contains finitely many clusters, and the set of breaking vertices of $\Ter(E_n)$ is finite.
\end{enumerate}
\label{theorem_comp_series}
\end{theorem}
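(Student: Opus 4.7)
The plan is to prove the equivalence by induction on the composition length, with the composition quotients $E_n$ providing a canonical filtration that both refines and is refined by arbitrary composition series.

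For $(2) \Rightarrow (1)$, assume $E_{n+1} = \emptyset$ while all earlier $E_k$ are nonempty, and each $\Ter(E_k)$ consists of finitely many terminal clusters $C_1^{(k)}, \ldots, C_{m_k}^{(k)}$ with finitely many breaking vertices $B_{\Ter(E_k)} = \{v_1^{(k)}, \ldots, v_{b_k}^{(k)}\}$. I would construct the composition series by refining the canonical filtration $\Ter(E_0) \subseteq \pi_1^{-1}(\Ter(E_1)) \subseteq \cdots$. Fix $k$ and work in $E_k$. Order the clusters and form the saturated closures $G_i = \overline{C_1^{(k)} \cup \cdots \cup C_i^{(k)}}$, obtaining a chain $\emptyset = G_0 \subsetneq G_1 \subsetneq \cdots \subsetneq G_{m_k} = \Ter(E_k)$. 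Each porcupine-quotient $G_i / G_{i-1}$ has its vertex set equal to the saturated closure of the single cluster $C_i^{(k)}$ (the porcupine tails feed exactly into this cluster), so by the characterisation in Theorem \ref{theorem_graded_simple} it is cofinal. After exhausting the clusters, insert the breaking vertices one-by-one via admissible pairs $(\Ter(E_k), S)$ as $S$ runs over the chain $\emptyset \subsetneq \{v_1^{(k)}\} \subsetneq \cdots \subsetneq B_{\Ter(E_k)}$; each successive porcupine-quotient is a single infinite-emitter vertex with its tails, hence cofinal. Concatenating these refinements for $k = 0, 1, \ldots, n$ and pulling back through the quotient maps yields a composition series of $E$.

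For $(1) \Rightarrow (2)$, assume a composition series $(\emptyset,\emptyset) = (H_0,S_0) \leq \cdots \leq (H_n,S_n) = (E^0,\emptyset)$ exists. First, $P_{(H_1,S_1)}$ is cofinal, so by Theorem \ref{theorem_graded_simple} its vertex set is the saturated closure of a single terminal cluster $C$ of $P_{(H_1,S_1)}$. Since $H_1$ is hereditary in $E$ and the added porcupine vertices only contribute sources, the cluster $C$ consists of terminal vertices of $E$, so $\Ter(E)$ is nonempty. Next, a chain of admissible pairs built out of distinct terminal clusters of $E$ and distinct breaking vertices of $\Ter(E)$ (as in the construction above) can be refined by any composition series: adjoining a new cluster or breaking vertex produces a strictly larger admissible pair, and the correspondence between admissible pairs and graded ideals forces the associated ideal chain to embed into the fixed composition series. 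Hence the number of terminal clusters of $E$ plus $|B_{\Ter(E)}|$ is at most $n$, which gives both finiteness statements at level $0$. Quotienting by $(\Ter(E), B_{\Ter(E)})$ produces $E_1$, and the chain $(H_i,S_i)/(\Ter(E), B_{\Ter(E)})$, after collapsing the steps that lie inside the bottom ideal, is a composition series of $E_1$ of strictly smaller length. Induction on $n$ then delivers termination of the sequence $(E_k)$ together with the finiteness of clusters and of $B_{\Ter(E_k)}$ at every level.

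The principal technical hurdle is the bookkeeping between the canonical cluster-by-cluster refinement used in $(2) \Rightarrow (1)$ and the arbitrary composition series used in $(1) \Rightarrow (2)$: one must verify that each intermediate porcupine-quotient involved in inserting a single cluster or a single breaking vertex is genuinely cofinal, and that the passage from $E$ to $E_1$ strictly decreases composition length. Both reduce to the lattice isomorphism between admissible pairs and graded ideals together with Theorem \ref{theorem_graded_simple}, and the compatibility of the porcupine-quotient construction with taking sub-quotients from \cite[Theorem 3.6]{Lia_porcupine_quotient}. Once these are in place, the induction closes cleanly.
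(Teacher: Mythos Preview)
The paper does not contain a proof of this theorem: it is quoted verbatim from \cite[Theorem 6.5]{Lia_porcupine_quotient} and used as a black box, so there is no in-paper argument to compare your proposal against.

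That said, your sketch is broadly in the spirit of how such a result is established. The direction $(2)\Rightarrow(1)$ is the more routine one, and your plan of refining the canonical filtration cluster-by-cluster and then breaking-vertex-by-breaking-vertex is the natural construction; the cofinality of each intermediate porcupine-quotient follows from Theorem~\ref{theorem_graded_simple} exactly as you indicate. For $(1)\Rightarrow(2)$, your counting argument (the number of terminal clusters of $E$ plus $|B_{\Ter(E)}|$ is bounded by the composition length) is the right mechanism, and it rests on a Jordan--H\"older-type statement for graded ideals, which is available via the lattice isomorphism with admissible pairs. One point deserving more care than your sketch gives it is the claim that the composition series of $E$ descends to a strictly shorter composition series of $E_1=E/(\Ter(E),B_{\Ter(E)})$: you need that $(\Ter(E),B_{\Ter(E)})$ is genuinely contained in some term of the given series (equivalently, that at least one composition factor lies entirely inside the ideal $I(\Ter(E),B_{\Ter(E)})$), which follows because the bottom factor $P_{(H_1,S_1)}$ is cofinal and its terminal cluster sits inside $\Ter(E)$. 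With that verified, the induction closes as you describe.
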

By \cite[Corollary 6.6]{Lia_porcupine_quotient}), every graph with finitely many vertices has a  composition series.

One can use Theorem \ref{theorem_comp_series} to obtain a specific composition series. We illustrate the construction in the example below.

\begin{example}
Let $E$ be the graph $\xymatrix{&& \bullet^{v_0}\\\bullet_{w_1}\ar[r]^e\ar[urr]^{\omega}&\bullet_{w_0}\ar[r]^f\ar[ur]^{\omega} & \bullet_{v_1}\ar[u]\ar@(ur, dr)}.$
This graph has composition length 4. We have that $\Ter(E)=\{v_0\},$ $B_{\Ter(E)}=\{w_0, w_1\},$ so that $E_1$ is the graph $\xymatrix{\bullet_{w_1}\ar[r]&\bullet_{w_0}\ar[r]& \bullet_{v_1}\ar@(ur, dr)}\;\;\;$ with a unique cluster containing $v_1$, so $\Ter(E_1)=E_1^0$ and  \[(\emptyset,\emptyset)\leq (\{v_0\}, \emptyset)\leq (\{v_0\}, \{w_0\})\leq (\{v_0\}, \{w_0, w_1\}) \leq (E^0, \emptyset)\] is a composition series of $E$.
The composition factors are the porcupine graph of the pair $(\{v_0\}, \emptyset),$ and the graphs $\xymatrix{\bullet_{w^{ee^*}}\ar[r]&\bullet_{w_0}},$ $\bullet_{w_1},$ and $\xymatrix{\bullet_{w_1}\ar[r]&\bullet_{w_0}\ar[r]& \bullet_{v_1}\ar@(ur, dr)}\;\;\;.$
\label{example_cs}
\end{example}

\section{1-S-NE graphs}
\label{section_n=1}

We recall that
we use $\Zset^+$ to denote the set of nonnegative integers,
$\Gamma=\langle t\rangle$ for  the infinite cyclic group generated by $t,$ and $K$ for a field trivially graded by $\Zset$. For any positive integer $n$, we consider $K[x^n, x^{-n}]$ to be graded as in section \ref{subsection_graded_rings}, we let $+_n$ denote the addition modulo $n$ (i.e. the addition in $\Zset/n\Zset$), and $i\in n$ means that $i$ is an element of the set $\{0, 1, \ldots, n-1\}.$  If $\phi$ is a $\Zset$-graded algebra homomorphism, then $\ol\phi$ denotes the map induced by $\phi$ on the $\Gamma$-monoids and on the $\Gamma$-groups.

\subsection{S-NE graphs} We recall that S shortens ``sinks'', that NE stands for ``no-exits'' and indicates cycles without exits, and we recall the definitions from the introduction.

\begin{definition}
A graph $E$ is an {\em S-NE graph} if $(G,T)/(H,S)$ has either a sink or a cycle without exits for every two admissible pairs $(H,S)\leq (G,T)$ such that $(G,T)/(H,S)$ is cofinal.
An S-NE graph $E$ is an {\em $n$-S-NE graph} for a positive integer $n$ if $E$ has  composition length $n$.
An S-NE graph $E$ is a {\em composition S-NE graph} if it is an $n$-S-NE graph for some positive integer $n.$
\label{definition_S_NE_graphs}
\end{definition}

By Theorem \ref{theorem_graded_simple}, $E$ is a 1-S-NE graph if and only if $E$ is a cofinal graph with a sink or a cycle without exits. A finite acyclic graph with $n$ sinks is an $n$-S-NE graph. The Toeplitz graph (see the introduction) is a 2-S-NE graph and the graph from Example \ref{example_cs} is a 4-S-NE graph.
By Theorem \ref{theorem_from_porcupine_quot_on_monoid}, an S-NE graph has disjoint cycles.
If $E$ has finitely many vertices, the converse holds by Theorem \ref{theorem_comp_series} as well as by \cite[Corollary 6.6]{Lia_porcupine_quotient}. By Theorem \ref{theorem_comp_series} and \cite[Lemma 6.3 and Corollary 6.6]{Lia_porcupine_quotient}, $E$ is a composition S-NE graphs if and only if the cycles of $E$ are disjoint, $E$ has finitely many cycles, sinks and infinite emitters, and every infinite path ends in a cycle.
The first graph below is an S-NE graph which is not a composition S-NE graph because it has infinitely many cycles. The second graph below is an acyclic graph of composition length two, but it is not an S-NE graph because there is an infinite path which does not end in a cycle.
\[\xymatrix{\\\ar@{.}[r]&\bullet\ar@(ur,ul)\ar[r]& \bullet\ar@(ur,ul)\ar[r]&\bullet\ar@(ur,ul)}\hskip3cm\xymatrix{\bullet &&\\
\bullet\ar[r]\ar[u] &\bullet \ar[r]
\ar[ul] & \bullet\ar@{.}[r] \ar[ull] & \ar@{.}[ulll]
}\]

By \cite[Proposition 4.2]{Lia_porcupine_quotient}, $E$ has a composition series if and only if $P_{(H,S)}$ and $E/(H,S)$ have  composition series for each admissible pair $(H,S)$ of $E$. By this result and by Definition \ref{definition_S_NE_graphs}, $E$ is a composition S-NE graph if and only if $P_{(H,S)}$ and $E/(H,S)$ are composition S-NE graphs.

\subsection{The graded isomorphism classes of matrix algebras.}  In Proposition \ref{proposition_matrix_rings_general}, we generalize the second part of Lemma \ref{lemma_on_shifts}, show that $\kappa$ and the cardinals $\mu_k$ for $k\in \Zset^+,$ considered in section \ref{subsection_graded_Grothendieck},
are unique for the graded isomorphism class of $\M_\kappa(K)(\ol\gamma)$
and for the graded isomorphism class of $\M_\kappa(K[x^m, x^{-m}])(\ol\gamma)$ for positive integer $m.$ If $\kappa$ is infinite, the proof of Lemma \ref{lemma_on_shifts} as in \cite{Roozbeh_book} does not generalize to the matrix algebras of infinite size because the endomorphism ring of a $\kappa$-dimensional $K$-vector space, when represented via $\kappa\times\kappa$ matrices, contains matrices with infinitely many nonzero entries (most prominently, the identity has infinitely many nonzero entries on the diagonal).
Hence, such an endomorphism ring is not isomorphic to the ring $\M_\kappa(K)$ if $\kappa$ is infinite. So, to prove the proposition, we use a different strategy which involves the Grothendieck $\Gamma$-monoids.

\begin{proposition}
Let $\kappa$ and $\kappa'$ be cardinals, $\ol\gamma: \kappa\to \Zset^+$ and $\ol\gamma': \kappa'\to \Zset^+$ be any functions, and let $e_{\alpha\beta}$ to denote the standard matrix units of a matrix algebra of $\kappa\times\kappa$ size.

\begin{enumerate}[\upshape(1)]
\item Let $\mu_k$ be the cardinality of $\ol\gamma^{-1}(k)$ and $\mu'_k$ be the cardinality of $(\ol\gamma')^{-1}(k)$ for any $k\in \Zset^+.$ If $f: (\V^\Gamma(\M_\kappa (K)(\ol\gamma)), D_{\ol\gamma})\cong (\V^\Gamma(\M_{\kappa'} (K)(\ol\gamma')), D_{\ol\gamma'})$ is a $\POM^D$-isomorphism, then
\begin{center} $\kappa=\kappa'$ and $\;\;\mu_k=\mu'_k$ for each $k\in \Zset^+$
\end{center} and there is a bijection $\rho:\kappa\to\kappa$ and a graded algebra $*$-isomorphism $\phi:\M_\kappa (K)(\ol\gamma)\to \M_{\kappa'} (K)(\ol\gamma')$ such that
$\phi(e_{\alpha\beta})=e_{\rho(\alpha)\rho(\beta)}$ for every $\alpha, \beta\in \kappa$ and such that $\V^\Gamma(\phi)=f.$

\item For positive integers $m$ and $m',$ $\alpha\in\kappa,$ $k\in m,$ $\alpha'\in\kappa',$ and $k'\in m',$
let $\mu_k$ be the cardinality of $\ol\gamma^{-1}(k)$ when $\ol\gamma(\alpha)$ is considered modulo $m$ and let $\mu'_k$ be the cardinality of $(\ol\gamma')^{-1}(k)$ when $\ol\gamma'(\alpha)$ is considered modulo $m'.$ If $f: (\V^\Gamma(\M_\kappa (K[x^m, x^{-m}])(\ol\gamma)), D_{\ol\gamma})\to (\V^\Gamma(\M_{\kappa'} (K[x^{m'}, x^{-m'}])(\ol\gamma')), D_{\ol\gamma'})$ is a $\POM^D$-isomorphism, then
\begin{center}
$\kappa=\kappa',$ $m=m',$ there is $i\in m$ such that
$\mu_k=\mu'_{k+_mi}$ for each $k\in m,$
\end{center} and there is a bijection $\rho:\kappa\to \kappa$ and a graded algebra $*$-isomorphism $\phi:\M_\kappa (K[x^m, x^{-m}])(\ol\gamma)$ $\to \M_{\kappa} (K[x^m, x^{-m}])(\ol\gamma')$ such that $\phi$ maps a standard matrix unit $e_{\alpha\beta}$ to the element of the form $x^{lm}e_{\rho(\alpha)\rho(\beta)}$ for some $l\in \Zset$ determined by $\alpha$ and $\beta$ using $\ol\gamma$ and $\ol\gamma'$ and such that $\V^\Gamma(\phi)=f.$
\end{enumerate}
\label{proposition_matrix_rings_general}
\end{proposition}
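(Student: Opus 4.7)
The plan is to translate everything into statements about the $\Gamma$-monoid $\V^\Gamma(R)$ together with its generating interval, read the invariants $\kappa, m, \mu_n$ off of this data, and then produce $\phi$ by hand using matrix units.

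First I would describe $\V^\Gamma(R)$ for $R=\M_\kappa(K)(\ol\gamma)$. Every finitely generated graded projective right $R$-module is of the form $eR$ for a homogeneous finite-support idempotent $e$ in some $\M_m(R)$, and such an $e$ is graded-equivalent to a sum of diagonal matrix units $e_{\alpha\alpha}$. The matrix unit $e_{\alpha\beta}$ implements a homogeneous isomorphism between $e_{\beta\beta}R$ and $e_{\alpha\alpha}R$ of degree $\ol\gamma(\alpha)-\ol\gamma(\beta)$, so after normalizing $\ol\gamma$ by Lemma~\ref{lemma_shifts_general}(2) to ensure $\ol\gamma(0)=0$, the assignment $[e_{\alpha\alpha}R]\mapsto t^{\ol\gamma(\alpha)}$ extends to a $\Gamma$-monoid isomorphism $\V^\Gamma(R)\cong \Zset^+[t,t^{-1}]$ in case~(1), and to $\V^\Gamma(R)\cong \Zset^+[t,t^{-1}]/(t^m=1)$ in case~(2) (using that $x^m$ is a homogeneous unit of degree $m$). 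Under this identification $D_{\ol\gamma}$ becomes $\{\sum_n l_n t^n : 0\le l_n\le\mu_n,\ \text{finitely many nonzero}\}$, because an idempotent of $R$ (not of some $\M_m(R)$) can have at most $\mu_n$ diagonal entries of shift $n$.

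Next I would constrain the isomorphism. Under these identifications, $f$ is a $\Gamma$-monoid isomorphism of $\Zset^+[t,t^{-1}]$ (resp.\ $\Zset^+[t,t^{-1}]/(t^m=1)$), hence multiplication by a monoid unit $t^i$. In case~(1), the combined requirements $u=[e_{00}R]\in D_{\ol\gamma}$ and $f(u)=t^iu'\in D_{\ol\gamma'}$ force $i\ge 0$, and the same argument for $f^{-1}$ forces $i\le 0$, so $i=0$. Reading $\mu_n=\sup\{\ell\in\Zset^+:\ell t^n\in D_{\ol\gamma}\}$ (with unboundedness together with $\kappa=\sum_n\mu_n$ pinning down the infinite values) then yields $\mu_n=\mu'_n$ and $\kappa=\kappa'$. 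In case~(2), $m$ is recovered as the smallest positive integer with $t^m x=x$ for every $x$ in the $\Gamma$-monoid, giving $m=m'$; the shift $i\in\{0,\dots,m-1\}$ need not vanish, but the same tracking of $D_{\ol\gamma}$ gives $\mu_k=\mu'_{k+_m i}$ and $\kappa=\kappa'$.

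Finally I would realize $f$ by an explicit $\phi$. Identify $\kappa$ with $\kappa'$ and choose any bijection $\rho:\kappa\to\kappa$ with $\ol\gamma'(\rho(\alpha))=\ol\gamma(\alpha)$ in case~(1), respectively $\ol\gamma'(\rho(\alpha))\equiv\ol\gamma(\alpha)+i\pmod m$ in case~(2). In case~(1) set $\phi(e_{\alpha\beta})=e_{\rho(\alpha)\rho(\beta)}$; in case~(2) write $\ol\gamma'(\rho(\alpha))=\ol\gamma(\alpha)+i-k_\alpha m$ with $k_\alpha\in\Zset$ uniquely determined by membership in $\{0,\dots,m-1\}$, and set $\phi(e_{\alpha\beta})=x^{(k_\alpha-k_\beta)m}e_{\rho(\alpha)\rho(\beta)}$. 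Extending $K$-linearly gives a well-defined graded algebra map: degrees match by construction, matrix-unit multiplication is preserved because $\rho$ is a bijection, and involution is preserved because $e_{\rho(\alpha)\rho(\beta)}^*=e_{\rho(\beta)\rho(\alpha)}$ and $(x^{lm})^*=x^{-lm}$. The analogous formula for $\rho^{-1}$ supplies a two-sided inverse, so $\phi$ is a graded $*$-isomorphism, and evaluating $\V^\Gamma(\phi)$ on the generators $[e_{\alpha\alpha}R]$ returns $t^{\ol\gamma'(\rho(\alpha))}u'=t^i t^{\ol\gamma(\alpha)}u'$, matching $f$.

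The main technical obstacle is that the finite-size proof of Lemma~\ref{lemma_on_shifts} (via $\operatorname{End}$ of graded free modules) collapses for infinite $\kappa$, since such an endomorphism ring contains infinite-support matrices not lying in $\M_\kappa(K)(\ol\gamma)$; the workaround is to pass entirely to the $\POM^D$-data, where the generating interval encodes exactly the invariants needed.
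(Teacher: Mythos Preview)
Your proof follows essentially the same route as the paper's: identify $\V^\Gamma$ with $\Zset^+[t,t^{-1}]$ (or its quotient by $t^m=1$), transport $f$ to an automorphism that must be multiplication by a unit $t^i$, use the generating interval to force $i=0$ in case~(1) and to read off the $\mu_k$, then build $\phi$ by permuting matrix units (with $x^{lm}$-twists in case~(2)). The paper argues $g(1)=1$ by writing $g(1)=p(t)$, $g^{-1}(1)=q(t)$, observing that both lie in the generating interval (hence have no negative powers), and using $pq=1$; your version ($f$ is automatically multiplication by a unit $t^i$, then the interval forces $i=0$) is equivalent. Your explicit formula $\phi(e_{\alpha\beta})=x^{(k_\alpha-k_\beta)m}e_{\rho(\alpha)\rho(\beta)}$ in case~(2) is a bit more concrete than the paper's description via compositions of the maps in Lemma~\ref{lemma_shifts_general}, but amounts to the same construction.

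One soft spot in your write-up, shared with the paper: the parenthetical ``unboundedness together with $\kappa=\sum_n\mu_n$ pinning down the infinite values'' does not, as stated, recover individual uncountable $\mu_n$'s, since the interval $D_{\ol\gamma}\subseteq\Zset^+[t,t^{-1}]$ only records whether each $\mu_n$ is finite (with its value) or infinite, not which infinite cardinal. The paper's corresponding step (``$D_\mu$ and $D_{\mu'}$ have the same cardinality, so $\kappa=\kappa'$'') has the same limitation. This is irrelevant for the applications in the paper, which concern countable graphs, where the only infinite option is $\omega$ and unboundedness suffices.
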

\begin{proof}
To show part (1), let $f_{\kappa,\ol\gamma}$ be the $\POM^D$-isomorphism $\V^\Gamma(\M_\kappa(K)(\ol\gamma))\to \Zset^+[t,t^{-1}]$ induced by the map $[e_{00}]\mapsto 1$ and let $f_{\kappa',\ol\gamma'}$ be analogous such isomorphism for $\kappa'$ and $\ol\gamma'.$ The assumption of part (1) implies that $g=f_{\kappa',\ol\gamma'}ff_{\kappa, \ol\gamma}^{-1}$ is a $\POM^D$-isomorphism $(\Zset^+[t, t^{-1}], D_{\mu})\to(\Zset^+[t, t^{-1}], D_{\mu'})$ where the generating intervals $D_\mu$ and $D_{\mu'}$
are as in section \ref{subsection_graded_Grothendieck}.
As $g(D_{\mu})\subseteq D_{\mu'}$ and $g^{-1}(D_{\mu'})\subseteq D_{\mu},$ $D_{\mu}$ and $D_{\mu'}$ have the same cardinality, so $\kappa=\kappa'.$

The isomorphism $g$ is uniquely determined by the image of 1. Assume that $g(1)=p(t)$ for some Laurent polynomial $p(t)\in \Zset^+[t, t^{-1}].$ As $p(t)\in D_{\mu'},$ $p(t)$ contains no negative powers of $t.$ Analogously, if $q(t)$ is the polynomial $g^{-1}(1),$ then $q(t)$ does not contain any negative powers of $t$ as $q\in D_{\mu}.$ Since $1=g(g^{-1}(1))=g(q(t))=q(t)g(1)=q(t)p(t),$ the degrees of $p$ and $q$ add up to zero which implies that $p(t)=a$ and $q(t)=b$ are constant polynomials for some $a,b\in \Zset^+$. Since $ab=1,$ we have that $a=b=1$ which shows that $g(1)=1.$

If $\kappa$ is finite, equating the order-units $\sum_k \mu_k t^{k}$ and  $\sum_k \mu'_k t^{k}$ implies the relation $\mu_k=\mu'_k.$ If $\kappa$ is infinite, $g$ maps the set of monomials of the form $l_k t^{k},$ where $l_k$ is the cardinality of a finite subset of $\mu_k,$ to the set of monomials of the form $l'_k t^{k}$ where $l'_k$ is the cardinality of a finite subset of $\mu_k'.$ As $g(1)=1,$ this implies that the cardinality of the set of all finite sets of $\mu_k$ is equal to the cardinality of the set of all finite sets of $\mu_k'.$ If $\mu_k$ is finite, this implies that $\mu'_k$ is finite and that $\mu_k=\mu_k'$. If $\mu_k$ is infinite, this implies that $\mu_k$ is infinite. As the cardinality of the set of all finite subsets of an infinite cardinal is equal to that cardinal, we have that
$\mu_k=\mu_k'.$ Thus, $\mu_k=\mu_k'$ for any $k\in \Zset^+.$

Let $\pi:\kappa\to\kappa$ be a bijection which permutes the images of $\ol\gamma$ so they are listed in the non-decreasing order and let $\pi'$ be analogous such bijection for $\ol\gamma'.$ Let $\phi_{\pi}$ and $\phi_{\pi'}$ be graded $*$-isomorphisms induced by the maps $e_{\alpha0}\mapsto e_{\pi(\alpha)0}$ and $e_{\alpha0}\mapsto e_{\pi'(\alpha)0}$ respectively. If $\ol\mu: \kappa\to\kappa$ denotes the map $\ol\gamma\pi,$ we have that $f_{\kappa, \ol\gamma}=f_{\kappa,\ol\mu}\ol\phi_\pi$ and that $f_{\kappa, \ol\gamma'}=f_{\kappa,\ol\mu}\ol\phi_{\pi'}.$ Hence,  $g=f_{\kappa, \ol\mu}\ol\phi_{\pi'} f\ol\phi_{\pi}^{-1}f_{\kappa, \ol\mu}^{-1}$ is the identity which implies that $\ol\phi_{\pi'} f\ol\phi_{\pi}^{-1}$ is the identity so that $f=\ol\phi_{\pi'}^{-1} \ol\phi_{\pi}=\ol{\phi_{\pi'}^{-1} \phi_{\pi}}.$ This shows that if we take  $\rho={\pi'}^{-1}\pi$ and $\phi=\phi_{\pi'}^{-1} \phi_{\pi},$ then $\rho$ is as required in the statement of (1) and $\phi$ is a graded $*$-isomorphism such that $f=\ol\phi.$ The isomorphism $\phi$ is obtained by compositions of two maps as in part (1) of Lemma \ref{lemma_shifts_general}, so it maps a standard matrix unit to a standard matrix unit.

To show part (2), let $f_{\kappa,m, \ol\gamma}$ be the $\POM^D$-isomorphism $\V^\Gamma(\M_\kappa(K[x^m, x^{-m}])(\ol\gamma))\to \Zset^+[t,t^{-1}]/$ $(t^m=1)$ induced by the map $[e_{00}]\mapsto 1$ and let $f_{\kappa',m',\ol\gamma'}$ be analogous such isomorphism for $\kappa', m',$ and $\ol\gamma'.$
The assumption of part (2) implies that $g=f_{\kappa',m',\ol\gamma'}ff_{\kappa,m, \ol\gamma}^{-1}$ is a $\POM^D$-isomorphism $(\Zset^+[t, t^{-1}]/(t^m=1), D_{\mu})\to(\Zset^+[t, t^{-1}]/(t^{m'}=1), D_{\mu'}).$ As $g(D_{\mu})\subseteq D_{\mu'}$ and $g^{-1}(D_{\mu'})\subseteq D_{\mu},$ $D_{\mu}$ and $D_{\mu'}$ have the same cardinality, so $\kappa=\kappa'.$ As $t^{m}=t^mg(g^{-1}(1))=g(t^m g^{-1}(1))=g(g^{-1}(1))=1,$ the relation $t^m=1$ holds in $\Zset^+[t,t^{-1}]/(t^{m'}=1).$ Thus, $m\geq m'.$ Analogously,  $m'\geq m$, so $m=m'.$

The map $g$ is uniquely determined by the image of 1. Assume that $g(1)=p(t)$ for some $p(t)\in \Zset^+[t, t^{-1}]/(t^m=1).$ As $p(t)\in D_{\mu'},$ $p(t)$ contains no negative powers of $t.$ Analogously, if $q(t)\in \Zset^+[t, t^{-1}]/(t^m=1)$ is  $g^{-1}(1),$ then $q(t)$ does not contain any negative powers of $t.$ As $1=g(g^{-1}(1))=g(q(t))=q(t)g(1)=q(t)p(t),$ $p=t^{i}$ and $q=t^{m-i}$ for some $i\in m.$ Hence, $g(1)=t^{i}.$

Let $\sigma$ be the cyclic permutation  given by $k\mapsto k+_m i$ (recall that $+_m$ denotes the addition modulo $m$). If $\kappa$ is finite, equating the order-units $\sum_{k=0}^{m-1} \mu_k t^{k}$ and $\sum_{k=0}^{m-1} \mu'_k t^{k+i}$ implies $\mu_k=\mu'_{\sigma(k)}$. If $\kappa$ is infinite, $g$ maps the set of monomials of the form $l_k t^{k}$ for $l_k$ the cardinality of any finite subset of $\mu_k$ to the set of monomials of the form $l'_k t^{k+i}$ for $l'_k$ the cardinality of any finite subset of $\mu_{k+_m i}'.$ The first set has cardinality $\mu_k$ and the second set has cardinality $\mu_{k+_m i}',$ so $\mu_k=\mu_{k+_m i}'$ for any $k\in \Zset^+.$

Let $\ol\mu$ be the list obtained by permuting the images of $\ol\gamma$ so that when they are considered modulo $m,$ they are listed in a non-decreasing order. Let
$\pi:\kappa\to\kappa$ be the corresponding bijection and let $\pi'$ be the analogous bijection for $\ol\gamma'.$ Let $\ol\mu=\ol\gamma\pi$ and $\ol\mu'=\ol\gamma'\pi'.$ Let $\rho_1: \kappa\to \kappa$ denote a permutation so that
$\sigma\ol\mu\rho_1=\ol\mu'.$
Let $\phi_{\rho_1,\sigma}$ be the graded algebra isomorphism $\phi_{\rho_1,\sigma}:\M_\kappa(K[x^m, x^{-m}])(\ol\mu)\to \M_\kappa(K[x^m, x^{-m}])(\ol\mu')$ determined by first permuting the shifts so that the list $\mu_k=\mu'_{\sigma(k)}$ moves from the $k$-th place to the $(k-_mi)$-th place and then adding $i$ modulo $m$ to each of the shifts producing exactly the $\ol\mu'$ list. Note that this second operation can be obtained by considering the graded $*$-isomorphism induced by the map $e_{\alpha\beta}\mapsto x^{(\sigma(\ol\mu(\beta))-\sigma(\ol\mu(\alpha)))m}e_{\alpha\beta}.$ As any element of the form $x^{lm}, l\in\Zset$ is unitary, the map $\phi_{\rho_1,\sigma}$ is a $*$-isomorphism.

If $\phi_{\pi}$ and $\phi_{\pi'}$ denote the graded $*$-isomorphisms induced by the bijections $\pi$ and $\pi',$ we have that $f_{\kappa,m,\ol\mu}\ol\phi_{\pi}=f_{\kappa,m,\ol\gamma}$
and $f_{\kappa,m,\ol\mu'}\ol\phi_{\pi'}=f_{\kappa,m,\ol\gamma'}$
so that
$g =f_{\kappa, m, \ol\mu}\ol\phi_{\pi'} f\ol\phi_{\pi}^{-1}f_{\kappa, m, \ol\mu}^{-1}= f_{\kappa, m, \ol\mu'}\ol\phi_{\rho_1,\sigma}f_{\kappa, m, \ol\mu}^{-1}.$
Thus, \[\ol\phi_{\pi'} f\ol\phi_{\pi}^{-1}=\ol\phi_{\rho_1,\sigma}\mbox{ which implies that } f=\ol\phi_{\pi'}^{-1}\ol\phi_{\rho_1,\sigma}\ol\phi_{\pi}.\]
This shows that for $\phi=\phi_{\pi'}^{-1}\phi_{\rho_1,\sigma}\phi_{\pi},$ we have that $f=\ol\phi.$ Letting $\rho=\pi'^{-1}\rho_1\pi,$ we obtain a bijection as needed and we have that
$\phi(e_{\alpha\beta})=x^{(\sigma(\ol\mu(\beta))-\sigma(\ol\mu(\alpha)))m}e_{\rho(\alpha)\rho(\beta)}.$
\end{proof}

\subsection{1-S-NE graphs and their canonical forms}\label{subsection_cofinal}
Recall that $E$ is a 1-S-NE graph if and only if $E$ is a cofinal graph with a sink or a cycle without exits. If a cofinal graph $E$ has a sink $v_0$ and if $P^{v_0}$ is the set of paths ending in $v_0,$ let $\kappa$ be the cardinality of $P^{v_0}$ so that we can index the elements of $P^{v_0} $ by the elements of $\kappa$ and write $P^{v_0}$ as $\{p_\alpha\mid \alpha\in\kappa\}$. We can chose a bijection $\kappa\to P^{v_0}$ indexing the elements of $P^{v_0}$ so that 0 corresponds to the zero-length path $v_0.$ Let $\ol\gamma: \kappa\to \Zset^+$ map $\alpha$ to the length of the path $p_\alpha.$
If $e_{\alpha\beta}, \alpha,\beta\in\kappa$ are the standard matrix units of $\M_{\kappa}(K)(\ol\gamma),$
the correspondence $p_\alpha p_\beta^*\mapsto$ $e_{\alpha\beta}$ extends to a graded $*$-isomorphism $L_K(E)\cong_{\gr} \M_{\kappa}(K)(\ol\gamma)$ (\cite[Proposition 5.1]{Roozbeh_Lia_Ultramatricial} has more details). The paths from $P^{v_0}$ corresponds to the standard matrix units in the first column and $v_0$ corresponds to $e_{00}$.

If $\mu_k$ is the cardinality of $\ol\gamma^{-1}(k),$ the list of cardinals $\mu_0,  \mu_1, \mu_2,\ldots$, is such that $\mu_0=1$ (because $p_0=v_0$ is the only element of $P^{v_0}$ of length zero) and if $\mu_k\neq 0$ for some $k>0,$ then $\mu_{i}\neq 0$ for all $i\leq k$ (because the suffices of a path of length $k$ have lengths $i\leq k$). One can generalize the proof of \cite[Proposition 3.2]{Lia_LPA_realization} (given for $\kappa$ finite) to show the converse: for every $\kappa$ and a list of cardinals $\mu_k, k\in \Zset^+$ with the above two properties, there is a cofinal graph, which we denote by $E_{\can}$, such that that $L_K(E_{\can})\cong_{\gr}\M_\kappa(K)(\ol\gamma)$ where $\gamma$ is such that $|\ol\gamma^{-1}(k)|=\mu_k$ for any $k\in \Zset^+.$

The graph $E_{\can}$ can be constructed by the following inductive process. Let  $E_0$ be a single vertex $v_{00},$ let $E_1$ be obtained by adding $\mu_1$ new vertices $v_{1\alpha}, \alpha\in \mu_1$ and $\mu_1$ new edges $e_{1\alpha}$ where $\so(e_{1\alpha})=v_{1\alpha}$ and $\ra(e_{1\alpha})=v_{00}$ for all $\alpha\in\mu_1.$ Let $E_2$ be obtained by adding $\mu_2$ new vertices $v_{2\alpha}, \alpha\in \mu_2$ and $\mu_2$ new edges originating at the new vertices and ending at $v_{10}.$ If  $E_k, k\in \omega,$ is the graph obtained by continuing this process, let $E_{\can}$ be the directed union of $E_k, k\in \omega.$ Note that all vertices of $E_{\can}$ except the sink $v_{00}$ are regular and they emit exactly one edge.

If there is $k$ such that $\mu_i=0$ for all $i>k$ and $\mu_k\neq 0,$ then $k$ is the {\em spine length} and the path $e_{k0}\ldots e_{20}e_{10}$ is the {\em spine} of $E_{\can}$. Otherwise, $E_{\can}$ has the spine of infinite length and the left-infinite path $\ldots e_{20}e_{10}$ is the spine of $E_{\can}.$ The $\mu_i-1$ edges $e_{i\alpha},$ $\alpha\in \mu_i-\{0\}$ are referred to as the {\em $(i-1)$-tails}. This terminology reflects the fact that the $i$-tails end at $v_{i0}.$

To make graphical representation of the tails clearer, we introduce the following abbreviation: if $v$ receives $k$ edges originating at sources, no matter whether $k$ is finite or infinite cardinal, we depict this as $\xymatrix{\bullet\ar[r]^{(k)}&\bullet^v}.$ So, a canonical form with infinite spine length  can be represented by the graph below.
\[
\xymatrix{\ar@{.>}[r]&\bullet\ar[r]&\bullet\ar[r]&\bullet\ar[r]&\bullet\\&\bullet\ar[u]^{(\mu_4-1)}&\bullet\ar[u]^{(\mu_3-1)}&\bullet\ar[u]^{(\mu_2-1)}&\bullet\ar[u]^{(\mu_1-1)}}\]

If $E$ is a 1-S-NE graph and if its algebra has the matrix representation $\M_{12}(1,1+2,1+5,1+1),$ then our abbreviated graphical representation of $E_{\can}$ is the first graph below. Its  spine length is three.
As another example, the third graph below is a canonical form of the second graph.
\[\xymatrix{\bullet\ar[r]&\bullet\ar[r]&\bullet\ar[r]&\bullet\\&\bullet\ar[u]^{(1)}&\bullet\ar[u]^{(5)}&\bullet\ar[u]^{(2)}}\hskip3cm\xymatrix{\bullet\ar[r]&\bullet\ar@/^/[r]\ar@/_/[r]&\bullet}\hskip3cm \xymatrix{\bullet\ar[r]&\bullet\ar[r]&\bullet\\&\bullet\ar[u]&\bullet\ar[u]}
\]
The notation $\xymatrix{\bullet\ar[r]^{(k)}&\bullet}$ is not to be confused with $\xymatrix{\bullet\ar[r]^k&\bullet}$ since the first notation indicates that there are $k$ sources each emitting a single edge to the terminal vertex and the second indicates that there are two vertices and the source vertex emits $k$ edges into the terminal vertex.

This construction enables one to come up with a way to associate a graph, unique up to a graph isomorphism, to all graphs with the algebras in the same graded $*$-isomorphism class:
if $E$ is such that $L_K(E)$ is graded $*$-isomorphic to $\M_{\kappa}(\ol\gamma),$ then we can repeat this construction using possibly different bijection
$\ol\gamma^{-1}(k)$ and paths of length $k$ in $E$ and obtain some $E_{\can}'.$ However, the only difference between $E_{\can}$ and $E_{\can}'$ is the labeling of the edges and vertices and, by construction, $E_{\can}'\cong E_{\can}.$ Because of this, a {\em canonical form} $E_{\can}$ is unique up to a graph isomorphism.

If a cofinal graph $E$ has a cycle $c$ of length $m>0$ without exits and $v_0=\so(c)$, let $P^{v_0}$
be the set of paths ending at $v_0$ and let
\begin{center}
$P^{v_0}_{\not c}$ be the set of paths ending in $v_0$ which do not contain the cycle $c.$
\end{center}
If $\kappa$ is the cardinality of $P^v_{\not c},$ we index the elements of $P^v_{\not c}$ by the elements of $\kappa$ so that $p_0=v_0$ is the path of length zero. Then, let $\ol\gamma: \kappa\to \Zset^+$ be the map $\alpha\mapsto |p_\alpha|.$
The correspondence mapping $p_\alpha  p_\beta^*\mapsto e_{\alpha\beta}$ extends to a graded $*$-isomorphism $L_K(E)\cong_{\gr}\M_\kappa(K[x^{m},x^{-m}])(\ol\gamma)$ (\cite[Proposition 5.1]{Roozbeh_Lia_Ultramatricial} has more details) which maps $v_0$ to $e_{00}$. The paths from $P_{\not c}^v$ corresponds to the standard matrix units in the first column.

Let $\mu_k$ be the cardinality of $\ol\gamma^{-1}(k)$ for $k\in m.$ The list of cardinals $\mu_0, \mu_1, \ldots, \mu_{m-1}$ is such that $\mu_k>0$ for each $k\in m$ since
there is a paths of length $k$ within the cycle $c.$ By generalizing the proof of
\cite[Proposition 3.4]{Lia_LPA_realization} to infinite cardinals, one can show the converse:
for any $\kappa,$ any integer $m>0$ and a list of nonzero cardinals $\mu_0, \mu_1,\ldots, \mu_{m-1}$ there is a cofinal graph $E_{\can}$ with a cycle without exits of length $m$ such that  $L_K(E_{\can})\cong_{\gr}\M_\kappa(K[x^m, x^{-m}])(\mu_0, \ldots, \mu_{m-1}).$

The graph $E_{\can}$ can be obtained as follows. Let $E_0$ be an isolated cycle $c$ of length $m$ and let $v_0, v_1,\ldots v_{m-1}$ be the vertices of $c$ listed in the order they appear in $c.$ The graph $E_{\can}$ is obtained by adding
$\mu_i-1$ new vertices $v_{i\alpha}$ and $\mu_i-1$ new edges $e_{i\alpha},$ $\alpha\in\mu_i$ such that $\so(e_{i\alpha})=v_{i\alpha}$ and $\ra(e_{i\alpha})=v_{i-_m1}$ for all $i\in m.$ Such graph $E_{\can}$ is a {\em canonical form} of any graph whose algebra
is graded $*$-isomorphic to the matrix algebra $\M_\kappa(K[x^m, x^{-m}])(\mu_0,\ldots, \mu_{m-1})$. It is unique up to a graph isomorphism and it can be represented as the graph below.
\[\xymatrix{  \bullet\ar[r]^{(\mu_1-1)}&
{\bullet}^{v_0}\ar@/_/[d]&
{\bullet}^{v_{m-1}}\ar@/_/[l]&  \bullet\ar[l]_{(\mu_0-1)}&\\
\bullet\ar[r]^{(\mu_2-1)}&
{\bullet}^{v_1}\ar@{.>}@/_/[r]&
{\bullet}^{v_{i-1}}\ar@{.>}@/_/[u]&  \bullet\ar[l]_{(\mu_{i}-1)}&} \]

For example, the second graph in Example \ref{example_canonical_representation} below is a canonical form of the first graph.

If $m>1,$ let $v_0, v_1, \ldots, v_{m-1}$ be the vertices listed in the order they appear in $c$ and let $c_i, i\in m,$ be the element of the equivalence class $[c]$ with $\so(c_i)=v_i.$
Consideration of $c_i$ instead of $c$ impacts the values of $\mu_k$ only up to the cyclic permutation given by $k\mapsto k+_m i.$ The change from $c$ to $c_i$ corresponds to the graded isomorphism on the matricial level mapping $e_{00}$ onto $e_{\alpha\alpha}$ for some $\alpha\in\kappa$ such that $\ol\gamma(\alpha)=i.$ The induced map on $\Zset^+[t,t^{-1}]/(t^m=1)$  corresponds to the map with $1\mapsto t^i.$

Let us consider an example illustrating the switch from $c=c_0$ to another $c_i\in [c]$.

\begin{example} Let $E$ and $F$ be the two graphs below and let $c$ and $c'$ be their cycles considering starting at $v_0$ and $v_0',$ respectively. \[\xymatrix{\bullet\ar[r]& \bullet \ar[r]& \bullet^{v_0} \ar@/^1pc/[r] & \bullet^{v_1} \ar@/^1pc/ [l]}\hskip3cm\xymatrix{\bullet \ar[r]& \bullet^{v_0'} \ar@/^1pc/ [r] & \bullet \ar@/^1pc/ [l]&\bullet\ar[l] }\]
The paths in $P^{v_0}_{\not c}$ have lengths  0, 1, 1, and 2. If $c_1$ is the cycle based in $v_1,$ the paths in $P^{v_1}_{\not c_1}$ have the lengths, 0, 1, 2 and 3. Hence, both $\M_4(K[x^2, x^{-2}])(0,1,1,2)$ and $\M_4(K[x^2, x^{-2}])(0,1,2,3)$ are matricial representations of $L_K(E).$ The two representations are graded $*$-isomorphic by Lemma \ref{lemma_on_shifts} since, when the shifts are considered modulo $2$ and listed in non-decreasing order, the resulting list is $0,0,1,1$ in each case.
The change from $c$ to $c_1$ corresponds to mapping the unit $e_{00}$ to $e_{11}.$ This change induces $[e_{00}]\mapsto [e_{11}]=t[e_{00}]$ on the $\Gamma$-monoid level, so
this corresponds to the automorphism of $\Zset^+[t,t^{-1}]/(t^2=1)$ mapping 1 to $t$.

For the second graph, the lengths of paths in $P^{v_0'}_{\not c'}$ are $0,1,1,2$ and, considered modulo 2 and listed in non-decreasing order, we obtain $0,0,1,1.$ As this is the same list as the list for $E$, we have that $L_K(E)
\cong_{\gr}L_K(F)$ by Lemma \ref{lemma_on_shifts}. Mapping $P^{v_0}_{\not c}$ to $P^{v_0'}_{\not c'}$ bijectively and such that the lengths of paths remain the same modulo 2, induces such a graded isomorphism.

This example also illustrates that the Leavitt path algebras of two graphs can be graded isomorphic without the graphs being out-split equivalent (i.e obtained one from the other by finitely many out-splits and out-amalgamations).
\label{example_canonical_representation}
\end{example}

To integrate the terminology, we would like to unify the cases when a 1-S-NE graph has a sink and when it has a cycle by treating a path of zero length as a cycle of length zero. We say that such a cycle is an {\em improper cycle.} If $E$ is a 1-S-NE graph with the terminal cluster $c^0$ for a cycle $c$ (possibly improper) and if $m=|c|,$ then $m=0$ indicates that $E$ has a sink and $m>0$ indicates that $E$ has a cycle with no exits. If $v_0=\so(c)$ and $m=0,$ we let $P^{v_0}_{\not c}=P^{v_0}.$

We say that 1-S-NE graphs $E$ and $F$ are {\em 1-S-NE equivalent} and write $E\approx F$ if $E_{\can}\cong F_{\can}.$

Next, we show that the GCC holds for 1-S-NE graphs. This is known to hold for some special types of 1-S-NE graphs (the introduction has more details) but not for all 1-S-NE graphs and all underlying fields. In addition, condition (2) of the proposition below has not been previously considered together with (1) and (3).

\begin{proposition} {\bf The GCC holds for 1-S-NE graphs.}
The following conditions are equivalent for 1-S-NE graphs $E$ and $F.$
\begin{enumerate}[\upshape(1)]
\item There is a $\POM^D$-isomorphism $f:M_E^\Gamma\to M_F^\Gamma.$

\item $E\approx F.$

\item There is a graded $*$-isomorphism $\phi: L_K(E)\to L_K(F).$
\end{enumerate}
If $(1)$ holds, the isomorphism $\phi$ from condition $(3)$ can be found so that $f=\ol\phi.$
\label{proposition_cofinal_graphs}
\end{proposition}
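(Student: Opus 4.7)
The plan is to cycle through (3) $\Rightarrow$ (1) $\Rightarrow$ (2) $\Rightarrow$ (3), using the matricial representation developed in Section 3 and Proposition \ref{proposition_matrix_rings_general} as the main technical tool; the ``realization'' claim will drop out of the explicit construction used in (1) $\Rightarrow$ (2) $\Rightarrow$ (3). The direction (3) $\Rightarrow$ (1) is immediate by functoriality: given $\phi$, the induced map $\ol\phi=\V^\Gamma(\phi)$ is a $\POM^D$-isomorphism (composed with the identifications $M_E^\Gamma\cong\V^\Gamma(L_K(E))$ from section \ref{subsection_graph_monoid}).

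For (1) $\Rightarrow$ (2) I would first rule out the ``mixed'' case. By Theorem \ref{theorem_from_porcupine_quot_on_monoid}(1), the monoid $M_E^\Gamma$ is entirely incomparable when $E$ has a sink and entirely periodic when $E$ has a cycle without exits; a $\Gamma$-monoid isomorphism preserves both properties, so $E$ and $F$ must be of the same kind. In the sink case, use the graded $*$-isomorphisms $L_K(E)\cong_{\gr}\M_\kappa(K)(\ol\gamma)$ and $L_K(F)\cong_{\gr}\M_{\kappa'}(K)(\ol\gamma')$ from section \ref{subsection_cofinal} to transport $f$ to a $\POM^D$-isomorphism between the talented monoids of the two matrix algebras, then invoke Proposition \ref{proposition_matrix_rings_general}(1) to conclude $\kappa=\kappa'$ and $\mu_k=\mu'_k$ for each $k\in\Zset^+$; since $E_{\can}$ is determined up to graph isomorphism by the pair $(\kappa,(\mu_k))$, we obtain $E_{\can}\cong F_{\can}$, i.e.\ $E\approx F$. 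In the cycle case, the analogous transport to $\M_\kappa(K[x^m,x^{-m}])(\ol\gamma)$ and Proposition \ref{proposition_matrix_rings_general}(2) yields $\kappa=\kappa'$, $m=m'$, and a cyclic permutation $k\mapsto k+_m i$ matching $(\mu_k)$ with $(\mu'_k)$; since that cyclic permutation exactly corresponds to choosing a different basepoint of the cycle when building $F_{\can}$, again $E_{\can}\cong F_{\can}$.

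For (2) $\Rightarrow$ (3), a graph isomorphism $E_{\can}\cong F_{\can}$ extends to a graded $*$-algebra isomorphism of their Leavitt path algebras. The remaining observation is that $L_K(E)\cong_{\gr,*}L_K(E_{\can})$ (and similarly for $F$): both algebras are, by the construction in section \ref{subsection_cofinal}, graded $*$-isomorphic to matrix algebras with the same multiplicities $(\mu_k)$, so Proposition \ref{proposition_matrix_rings_general} produces a graded $*$-isomorphism between them. Composing these three isomorphisms gives the desired $\phi:L_K(E)\to L_K(F)$.

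Finally, for the realization statement, I would simply track how $f$ travels through the chain $M_E^\Gamma\to\V^\Gamma(\M_\kappa(K)(\ol\gamma))\to\V^\Gamma(\M_{\kappa'}(K)(\ol\gamma'))\to M_F^\Gamma$ (respectively with $K[x^m,x^{-m}]$). Proposition \ref{proposition_matrix_rings_general} is stated strongly enough to give a graded $*$-algebra isomorphism $\phi_0$ between the two matrix algebras with $\V^\Gamma(\phi_0)$ equal to the transported map, so conjugating by the matricial identifications yields $\phi$ with $\ol\phi=f$. The only genuinely nontrivial part of the argument is the infinite-matrix bookkeeping inside Proposition \ref{proposition_matrix_rings_general}; once that is in hand, the rest of this proposition is a clean book-keeping exercise combining the type dichotomy from Theorem \ref{theorem_from_porcupine_quot_on_monoid} with the canonical form construction.
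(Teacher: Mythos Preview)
Your proposal is correct and follows essentially the same approach as the paper: both arguments use the periodic/incomparable dichotomy from Theorem \ref{theorem_from_porcupine_quot_on_monoid} to match types, transport $f$ to the matricial level via the representations of section \ref{subsection_cofinal}, and then invoke Proposition \ref{proposition_matrix_rings_general} to equate the multiplicity data and realize $f$ by an explicit graded $*$-isomorphism. The only cosmetic difference is that for (2) $\Rightarrow$ (3) the paper observes directly that $L_K(E)$ and $L_K(E_{\can})$ are graded $*$-isomorphic to the \emph{same} matrix algebra by construction, whereas you route this through Proposition \ref{proposition_matrix_rings_general}; your version works but is slightly less direct.
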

\begin{proof}
Since both $L_K(E)$ and $L_K(E_{\can})$ are graded $*$-isomorphic to the same graded matrix algebra by the definition of $E_{\can},$ we have that
(2) $\Rightarrow$ (3). The implication (3) $\Rightarrow$ (1) is direct, so it remains to show (1) $\Rightarrow$ (2) and the last sentence of the theorem.

Assume that (1) holds and let $m$ be the length of the terminal cycle of $E$. If $m=0$,
then every nonzero element of $M_E^\Gamma$ is incomparable. So, every element of $M_F^\Gamma$ is incomparable, which implies that $F$ has a sink. If $m>0,$ then every nonzero element of $M_E^\Gamma$ is periodic with the period $m.$ The existence of $f$ implies that every nonzero element of $M_F^\Gamma$ is periodic with the period $m.$ Hence, $F$ has a cycle without exits of length $m.$

Let $\ol\mu$ and $\ol\mu'$ be the maps corresponding to the shifts of the matrix representations $\M_E$ and $\M_F$ and let $\phi_E$ and $\phi_F$ be graded $*$-isomorphisms of the Leavitt path algebras and their matrix representations. The map $\ol{\phi_F}f\ol{\phi_E}^{-1}$ is a $\POM^D$-isomorphism.
By Proposition \ref{proposition_matrix_rings_general}, the cardinalities $\kappa$ and $\mu_k, k\in \Zset^+$ from Proposition \ref{proposition_matrix_rings_general} match and there is $i\in m$ such that $\mu_k$ and $\mu'_{k+_mi}$ for $k\in \Zset^+$ have the same values modulo $m.$ If $i\neq 0,$ we can choose a different matrix representation of $L_K(F)$ so that the images of $\phi_E$ and $\phi_F$ is the same matrix algebra. If $\operatorname{id}$ is the identity map on this matrix algebra, then $\ol{\phi_F}f\ol{\phi_E}^{-1}=\ol{\operatorname{id}}.$  Thus,  $f=\ol{\phi_F^{-1}\operatorname{id}\phi_E},$ so we realized $f$ by a graded $*$-isomorphism. Since the two graph algebras have the same matrix algebra representation, we have that $E_{\can}\cong F_{\can}$ and so $E\approx F$ holds.
\end{proof}

\subsection{Relative canonical form}\label{subsection_relative_for_n=1}
If $E$ is a 1-S-NE graph, let $v_0, \ldots, v_{m-1}$ be the vertices of a terminal cycle $c.$ In addition to the sets
$P^{v_0}$ and $P^{v_0}_{\not c}$, we also consider the following set for $j\in m$ and a positive integer $k.$
\begin{enumerate}
\item[] $\Pa^{v_j}_{k}$ is the set of paths of length $k$ ending in $v_j$ which share no edge with the cycle $c.$
\[\mbox{ Let }\Pa^{v_j}=\bigcup_{0<k\in\omega} \Pa^{v_j}_k,\mbox{ and let $E^{v_j}$ be the subgraph generated by the vertices of the paths in $\Pa^{v_j}.$}\]
\end{enumerate}
We use calligraphic $\Pa$ instead of $P$ to highlight the difference between $P^{v_j}$ and $\Pa^{v_j}$: the first set contains all paths ending at $v_j$ while the second can be strictly smaller.
The sets $\Pa^{v_j}$ and $P^{v_j}_{\not c}$ can also be different. The graph $E^{v_j}$ is a 1-S-NE graph and $v_j$ is its sink.

Let $E=E_{\tot}$ be a 1-S-NE graph and $V\subseteq E^0.$ We define a graph $E_{\can, V}$ which is canonical only {\em relative} to the root of $V$ and we call it the {\em canonical form relative to $V$.} If $V=\emptyset,$ $E_{\can, V}=E_{\can}$ so this construction also presents a specific operation transforming a 1-S-NE graph $E$ to $E_{\can}.$
If $V$ contains a vertex of the terminal cycle $c$ or the sources of $\Pa_1^{v_j}$ paths for all $j\in m,$ then $E_{\can, V}=E.$

Our main application of this construction is ensuring that the paths connecting one cycle with the other in an $n$-S-NE graph are as short and direct as possible. In particular, we use this construction for the porcupine-quotient graph $\ol{c_{j+1}^0}/\ol{c_j}$ of an $n$-S-NE graph and its two cycles $c_j$ and $c_{j+1}$ such that $c_j$ emits exits to $c_{j+1}$ and with $V$ being the set of vertices which are not in $\ol{c_j^0}.$ The outcome of such constructions is a {\em direct-exits form} of the $(j+1)j$-connecting part of the graph.

We let $E_V$ be the subgraph of $E$ generated by $c^0\cup R(V).$ We define
$E_{\can, V}$ as a graph which contains $E_V$ as well as some  new vertices and edges. We introduce these new elements simultaneously with creating a bijection $\sigma$ of the set $P^{v_0}_{\not c}$ of $E$ and $P^{v_0}_{\not c}$ of $E_{\can, V}.$

{\bf The case $\mathbf{m>0}.$} For any $k\geq 0,$ we let $c_{(m-_mk)0}$ be the part of $c$ from $v_{m-_mk}$ to $v_0$ if $m-_mk\neq 0$ and $c_{(m-_mk)0}=v_0$ if $m-_mk=0.$ With this notation, we let $\sigma(c_{j0})=c_{j0}$ for $j\in m$ and  $\sigma(hc_{j0})=hc_{j0}$ for  $h\in \Pa_1^{v_j}$ such that $\so(h)\in R(V).$

For $k>0,$ and $p=gq\in \Pa_k^{v_j}$ where $g$ is an edge and $q\in \Pa_{k-1}^{v_j}$ is such that $\so(q)\notin R(V),$
we consider the cases $\so(g)\notin R(V)$ and $\so(g)\in R(V).$

If $\so(g)\notin R(V),$ let $v_g$ be a new vertex and $e_g$ be a new edge with the source $v_g$ and range $v_{j-_mk}.$
Thus, one can consider that the path $p$ of $E$ is replaced by the tail $e_g$ in
$E_{\can, V},$ so we let $\sigma(pc_{j0})=e_gc_{(j-_m|q|)0}.$ Note that the length of $pc_{j0}$ is $1+|q|+m-_m j$ and the length of $e_gc_{(j-_m|q|)0}$ is $1+m-_m (j-_m |q|)=1+m-_mj+_m|q|.$ So, $|\sigma(pc_{j0})|=|pc_{j0}| (\mymod m).$

If $\so(g)\in R(V)$, let $e_g$ be a new edge with the source $\so(g)$ and range $v_{j-_mk}$ and let $\sigma(p)$ be defined as in the previous case. Requiring that $\so(g)=\so(e_h)$ ensures that (E1) holds for the images of the map $\phi$ from the proof of Proposition \ref{proposition_canonical_tails}.

If $p=rgq$ is such that all vertices of $r$ are in $R(V),$ $g\in E^1$ is such that $\ra(g)\notin R(V)$ and $q\in \Pa_{k}^{v_j}$ for some $k$ and $j\in m,$ we let $\sigma(rgq)$ be $r\sigma(gq).$ With these definitions, the map $\sigma$ becomes defined on $P_{\not c}^{v_0}$ and it bijectively maps it to the set $P_{\not c}^{v_0}$ of $E_{\can, V}.$

For example, if $E$ is the first graph below and $V=\{v\},$ $E_{\can, V}$ is the second graph below.
\[
\xymatrix{\bullet^v\ar[r]^g&\bullet\ar[r]^{h}&\bullet\ar@/^/[d]\\
&&\bullet\ar@/^/[u]}\hskip1.5cm
\xymatrix{\bullet^v\ar[rd]_{e_g}&\bullet\ar@/^/[d]&\bullet\ar[l]_{e_{h}}\\
&\bullet\ar@/^/[u]}\]

{\bf The case $\mathbf{m=0}.$} If there is a maximum of lengths of paths from a vertex in $V$ to $v_0,$ let us denote it with $k$. If there is no such maximum $k$, let $k=\omega.$

We let $e_{k-1}\ldots e_0$ ($\ldots e_1e_0$ if $k=\omega$) be a new path containing new edges and new vertices except its range $v_0$. We refer to such path as the {\em spine of $E$ relative to $V$}.
Pick a path $p_1$ in the set of paths in $\Pa_1^{v_0}$ with sources not in $R(V)$ and let $P_1$ be the remaining set of paths in $\Pa_1^{v_0}$ with sources not in $R(V)$, if any.
If there is a preference for one such path $p_1$ (see the last sentence of Proposition \ref{proposition_canonical_tails}), we can make a choice so this preference is satisfied.

For each $p\in P_1,$  we add a tail $e_p$ to $v_0$ (i.e. a new edge $e_p$ ending in $v_0$ and a new vertex $v_p$ as the source of $e_p$) and we let $\sigma(p)=e_p.$ We also let $\sigma(p_1)=e_0.$ If $p$ is a path in $\Pa_1^{v_0}$ such that $\so(p)\in R(V),$ then this path is also a path of $E_V$ and we let $\sigma(p)=p.$

If $k>1$ we continue this process by considering the set of paths $p=gh$ in $\Pa_2^{v_0}$
such that $\so(h)\notin R(V),$ we again consider the cases $\so(g)\notin R(V)$ and
$\so(g)\in R(V).$

If $\so(g)\notin R(V),$ then the set $P_2$ of all paths in $\Pa_2^{v_0}$ such that neither the source or the range of the first edge is in $R(H)$ is nonempty.
Let $p_2$ be arbitrary element of $P_2$. If there is a preference for one such path $p_2$, we can make a choice so this preference is satisfied.

Let $P_2'=P_2-\{p_2\}.$ If $P_2'$ is nonempty and if $p\in P_2',$ we add a new edge $e_g$ and its new source $v_g=\so(e_g)$ to be a tail to $\so(e_0),$ we let $\sigma(p)=e_ge_0,$ and we let $\sigma(p_2)=e_1e_0.$

If $\so(g)\in R(V),$ we add a new edge $e_g$ with $\so(g)$ as its source and $\so(e_0)$ as its range (recall that $e_0$ is the last edge of our newly added spine). In addition, we let $\sigma(p)=e_ge_0.$

If $k=1,$ then there are no paths $p=gh\in\Pa_2^{v_0}$ such that $\so(g)\in R(V)$ and $\ra(g)\notin R(V).$ So, in this case, it is sufficient to consider the case when $\so(g)\notin R(V)$ and this case is similar to the consideration of the same case if $k>1$:
the condition  $\so(g)\notin R(V)$ rules out the possibility $\so(h)\in R(V),$ so we have that $\so(h)\notin R(V).$ In this case, we let $\sigma(p)=e_ge_0$ for a new edge $e_g$ starting at a new source $v_g$ and ending in $\so(e_0).$

We continue this process by considering $\Pa_3^{v_0}$ if needed. Eventually, the map $\sigma$ becomes defined on the entire set $P^{v_0}$ and it bijectively maps it to the set $P^{v_0}$ of $E_{\can, V}.$

For example, let $E$ be the first graph below so that $E$ consists of paths $p_n$ of length $n$ ending at $v_0$ for every $n$ such that $p_n$ and $p_l$ have no common vertices or edges except their range $v_0$ if $n\neq l$. If $\so(p_n)=v_n$ for $n\in \omega,$ let $V=\{v_1,v_2,\ldots \}$ be the set of the sources of $E.$ Then, $E_{\can, V}$ is the second graph below.
\[
\xymatrix{&\bullet^{v_3}\ar[r]&\bullet\ar[r]&\bullet\ar[dr]&&\bullet\ar[dl]&
\bullet^{v_2}\ar[l]\\\bullet^{v_4}\ar[r]&\bullet\ar[r]&\bullet\ar[r]&\bullet\ar[r]&\bullet^{v_0}&\ar[l]\bullet^{v_1}&\\&&&\ar@{.>}[ur]&\ar@{.>}[u]&\ar@{.>}[ul]&}\hskip.8cm
\xymatrix{&\bullet\ar[d]^{(\omega)}&\bullet\ar[d]^{(\omega)}&\bullet\ar[d]^{(\omega)}&\bullet\ar[d]^{(\omega)}\\\ar@{.>}[r]
&\bullet\ar[r]^{e_2}&\bullet\ar[r]^{e_1}&\bullet\ar[r]^{e_0}&\bullet^{v_0}\\
&\bullet_{v_4}\ar[u]&\bullet_{v_3}\ar[u]&\bullet_{v_2}\ar[u]&\bullet_{v_1}\ar[u]}\]

We show that the algebras of $E$ and $E_{\can, V}$ are  graded $*$-isomorphic.

\begin{proposition} {\bf Canonical path rearrangement.}
If $E=E_{\tot}$ is a 1-S-NE graph and $V\subseteq E^0,$ then there is an operation $E\to E_{\can, V}$ which extends to a graded $*$-isomorphism $L_K(E)\to L_K(E_{\can, V}).$

If $E, F,$ and $f$ are as in Proposition \ref{proposition_cofinal_graphs}, there are operations $\phi_E: E\to E_{\can},$ $\iota: E_{\can}\cong F_{\can},$ and $\phi_F: F\to F_{\can},$ and $f$ can be realized as $\ol{\phi_F^{-1}\iota\phi_E}=f.$

If $m=0$ and $f_k,f_{k-1},\ldots, f_1$ are edges not in the subgraph generated by $R(V),$ and such that the length of the path from $\ra(f_i)$ to the sink is $i-1,$ for $i=1,\ldots,k$, then we can have that the edges $f_k,f_{k-1},\ldots, f_1$ are mapped to the edges of the suffix of length $k$ of the spine of $E_{\can, V}.$
\label{proposition_canonical_tails}
\end{proposition}
\begin{proof}
Let us consider the case $m>0$ first. Keeping our previous notation, let $\sigma$ be the bijection introduced along with $E_{\can, V}$.
We define a map $\phi$ on  $E^0\cup E^1$ by mapping the vertices and edges of $E_V$ identically onto themselves. For $p=gq\in \Pa_k^{v_j},$ $g\in E^1,$ and $q\in \Pa_{k-1}^{v_j}$ such that $\so(q)\notin R(V)$, let
\begin{center}
$\phi(g)=e_gc_{(j-_mk)0}\sigma(qc_{j0})^*.\;$
If  $\so(g)\notin R(V),$ we also let $\phi(\so(g))=v_g.$
\end{center}
Defining $\phi(g^*)$ as $\phi(g)^*$ ensures that the algebra extension of $\phi$ will be a $*$-homomorphism. It is direct to check that (V) holds. To check (E1), let $g$ be an edge as above such that $\so(g)\notin R(V)$. If $k>2$ and $q=hr$ for $h\in E^1$, we have that
\[\phi(g)\phi(\ra(g))=e_gc_{(j-_mk)0}\sigma(qc_{j0})^*v_h=e_gc_{(j-_mk)0}\sigma(qc_{j0})^*=\phi(g)\]
where the middle equality holds since $\sigma(qc_{j0})$ is a path which originates at the source of the new edge with $v_h$ as its source, so $\sigma(qc_{j0})^*v_h=\sigma(qc_{j0})^*.$ If $k=2,$ then $q=h,$ $\sigma(qc_{j0})=qc_{j0}=hc_{j0}$ and
\[\phi(g)\phi(\ra(g))=e_gc_{(j-_mk)0}c_{j0}^*h^*\phi(\so(h))=e_gc_{(j-_mk)0}c_{j0}^*h^*\so(h)=e_gc_{(j-_mk)0}\sigma(hc_{j0})^*=\phi(g)\]
If $\so(g)\in R(V),$
the argument is very similar. The relation $\phi(\so(g))\phi(g)=\phi(g)$ is direct to check.

By the definition of $\phi$ on the ghost edges, (E2) holds since (E1) holds. If $g', k',$ and $j'$ are analogous to $g, k,$ and $j$ above, then (CK1) holds since
\[\sigma(q'c_{j'0})c_{(j'-_mk')0}^*e_{g'}^*e_gc_{(j-_mk)0}\sigma(qc_{j0})^*\]
is zero unless $g=g', j=j', k=k'$ and $q=q'$ and, in that case, the above expression is
\[\sigma(qc_{j0})c_{(j-_mk)0}^*e_{g}^*e_gc_{(j-_mk)0}\sigma(qc_{j0})^*=\sigma(qc_{j0})c_{(j-_mk)0}^*c_{(j-_mk)0}\sigma(qc_{j0})^*=\]\[\sigma(qc_{j0})
v_0\sigma(qc_{j0})^*=\so(\sigma(qc_{j0}))=\phi(\so(q))=\phi(\ra(g)).\]
If $v\in E^0$ is in $c^0\cup R(V)-V$ or it is a source of a path in $\Pa_1^{v_j},$ then it is direct to check that (CK2) holds. Otherwise, $v$ emits a single edge, say $g$ with its range not in $R(V)$. There is unique $k$ and $j$ and unique path $q\in \Pa_{k-1}^{v_j}$ with $\so(q)=\ra(g)$. If $\so(g)\in R(V)$ then $\phi(v)=v$ and $\phi(v)=v_g$ otherwise. In either case, $\phi(v)$ is a vertex of $E_{\can, V}$ which emits a single edge $e_g,$ and $\phi(v)=\phi(g)\phi(g)^*$ since
\[e_gc_{(j-_mk)0}\sigma(qc_{j0})^*\sigma(qc_{j0})c_{(j-_mk)0}^*e_{g}^*=
e_gc_{(j-_mk)0}v_0c_{(j-_mk)0}^*e_{g}^*=e_gv_{j-_mk}e_g^*=e_ge_g^*.\]

By the Universal Property, $\phi$ extends to a homomorphism which is graded and a $*$-homomorphism by the definition of $\phi$ on vertices, edges and ghost edges. Using the inverse of $\sigma,$ we can obtain the inverse of $\phi$, so $\phi$ is a graded $*$-isomorphism.

If $m=0,$ we define a map $\phi$ on $E^0\cup E^1$ by mapping the vertices and edges of $E_V$ identically on themselves. So, for $p=gq\in \Pa_l^{v_0}$ with $0<l\leq k,$  $g\in E^1,$ and $q\in \Pa_{l-1}^{v_0}$, it is sufficient to consider the case $\so(q)\notin R(V)$. In this case, we let
\begin{center}
$\phi(g)=e_ge_{l-1}e_{l-2}\ldots e_0\sigma(q)^*.$ If $\so(g)\notin R(V),$ we also let
$\phi(\so(g))=v_g.$
\end{center}
By this definition,
$\phi(g)\phi(\ra(g))=e_ge_{l-1}e_{l-2}\ldots e_0\sigma(q)^*v_q=e_ge_{l-1}e_{l-2}\ldots e_0\sigma(q)^*=\phi(g)$ and checking that (E1) holds in other cases is direct. With
$\phi(g^*)=\phi(g)^*$ and (E1) holding, (E2) holds. Checking (CK1) and (CK2) is similar to the $m>0$ case. The argument that $\phi$ extends to a graded $*$-isomorphism is also the same as in the $m>0$ case.

If $E, F,$ and $f$ are as specified, the previous part of the proposition establishes the existence of graph operations $\phi_E: E\to E_{\can}$  and $\phi_F: F\to F_{\can}$ which extend to graded $*$-isomorphisms of the algebras. The existence of $f$ and the proof of Proposition \ref{proposition_cofinal_graphs} implies that there is $\iota: E_{\can}\cong F_{\can}$ such that $\ol{\phi_F}f\ol\phi_E^{-1}=\ol\iota$. Hence, $f=\ol{\phi_F^{-1}\iota\phi_E}$

The last sentence of the proposition holds by our ability to choose paths $p_1, p_2,\ldots, p_k$ so that their first edges are $f_k, f_{k-1}, \ldots, f_1$ respectively when forming the new spine.
\end{proof}

\section{2-S-NE graphs}\label{section_n=2}

In this section, we define a canonical form of a countable 2-S-NE graph and prove Theorem \ref{theorem_n=2}, the main result for this class of graphs.

If $E$ is a 2-S-NE graph, there is an admissible pair $(H, S)$ such that $(\emptyset, \emptyset)\leq (H, S)\leq (E^0, \emptyset)$ is a composition series of $E$. Since $E/(H,S)$ is cofinal, $S=B_H.$ As $H$ is nontrivial and $P_{(H,S)}$ is cofinal, $S=\emptyset$ and so $B_H=\emptyset.$ The set $H$ contains no infinite emitters as $H$ is the saturated closure of a sink or a cycle without exits.
Since $E/H$ is cofinal, it has at most one sink, so $E^0-H$ contains at most one infinite emitter $v$. Since $E/H$ is row-finite, $v$ does not emit infinitely many edges to $E^0-H$. As $B_H=\emptyset$, $v$ emits zero edges to $E^0-H,$ so $v$ emits all the edges it emits to $H.$ As both $P_H$ and $E/H$ are cofinal,
$E$ cannot have more than one infinite emitter, more than two cycles, or more than an infinite emitter and a cycle (the existence of any such elements would imply that the composition series is longer than two). The graph $E$ has either one or two terminal clusters.

\subsection{2-S-NE graphs with two terminal clusters -- the easy case}
If there are two terminal clusters $c_1^0$ and $c_2^0$, then $E$ does not have an infinite emitter or a cycle with exits. The algebra $L_K(E)$ is graded $*$-isomorphic to the direct sum of $I(\ol{c_1^0} )$ and $I(\ol{c_2^0})$. Say that $H=\ol{c_1^0}$ so that $I(\ol{c_2^0})\cong_{\gr} L_K(E/H).$  The total out-split $E_{\tot}$ in this case consists of two disconnected 1-S-NE graphs
whose algebras are graded $*$-isomorphic to $I(\ol{c_1^0})$ and $I(\ol{c_2^0})$, so we can consider $E_{\tot}$ instead of $E$. For example, if $E$ is the graph
\[\xymatrix{\bullet&\bullet\ar[l]\ar[r]&\bullet\ar@(ru,rd)}\]
then $E_{\tot}$
consists of two connected components $
\xymatrix{\bullet&\bullet\ar[l]}$ and $\xymatrix{\bullet\ar[r]&\bullet\ar@(ru,rd)}\;\;\;\;\;.$

If $E=E_{\tot}$ is a 2-S-NE graph with two terminal clusters and $E_1$ and $E_2$ are the two disconnected graphs forming $E$, we can consider the 1-S-NE canonical forms of $E_1$ and $E_2$ and we let $E_{\can}$ be the union of these two canonical forms. With this definition, if $E$ and $F$ are two
2-S-NE graphs with two terminal clusters and if
$F_{\can}=F_1\cup F_2$ and $F_1\cap F_2=\emptyset$, we define the relation $E\approx F$ by
$E_1\approx F_1$ and $E_2\approx F_2$ or $E_1\approx F_2$ and $E_2\approx F_1.$
Thus, we have that
\[E\approx F\;\;\;\Longleftrightarrow\;\;\;E_{\can}\cong F_{\can}.\]

\subsection{2-S-NE graphs with a unique terminal cluster} Having the easy case out of the way, we remain primarily interested in the case when $E$ has only one terminal cluster. In this case, $H$ is the saturated closure of that cluster and $E^0-H$ contains an infinite emitter or a cycle with exits to $H$.
We introduce some notation which we use for this type of graphs.
Let $c^0$ be the terminal cluster of $E/H$ and $d^0$ be the terminal cluster of $E$ (so $H=\ol{d^0}$). Recall that we allow the case that $c$ and $d$ are improper cycles so that this scenario encompasses the situation that $c^0$ is an infinite emitter or that $d^0$ is a sink.
Since $d^0$ is the only terminal cluster of $E$, $c^0\subseteq R(H)$ (otherwise $c^0$ would be the terminal cluster of $E$ also), so there are paths from $c^0$ to $d^0$. We say that $p$ is a {\em $c$-to-$d$ path} if the source of $p$ is in $c^0,$ the range in $d^0$ and no edge of $p$ is on $c$ or $d$.
The examples with the Toeplitz graph from the introduction  illustrates that graphs whose algebras are graded $*$-isomorphic may have different number of $c$-to-$d$ paths of certain length.

Let $n=|d|$ and $m=|c|.$ If $m>0,$ then $c$ is a proper cycle with at least one exit to $H$ and such that all its exits have ranges in $H.$ The cycle $c$ is without exits in $E/H$ and $E$ is row-finite.  In this case, $E$ is a {\em cycle-to-cycle} graph if $n>0$ and a {\em cycle-to-sink} graph if $n=0.$ If $m=0,$ $c$ is an improper cycle and $v_0=\so(c)$ is an infinite emitter of $E$ which emits all of its edges to $H$ and which is a sink in $E/H.$ In this case, $E$ is an {\em infinite-emitter-to-cycle} graph if $n>0$ and an {\em infinite-emitter-to-sink} graph if $n=0.$ If $m=0,$ we say that the edges which $v_0$ emits are {\em exits}. This enables us to unify the terminology in the $m=0$ and $m>0$ cases.

For example, the first graph below is cycle-to-cycle ($m>0$ and $n>0$), the second (the Toeplitz graph) is cycle-to-sink ($m>0$ and $n=0$), the third is infinite-emitter-to-cycle ($m=0$ and $n>0$),   and the fourth is infinite-emitter-to-sink ($m=n=0$).
\[
\xymatrix{\bullet\ar@(lu,ld)\ar[r]&\bullet\ar@(ru,rd)}\hskip3.4cm
\xymatrix{\bullet\ar@(lu,ld)\ar[r]&\bullet}\hskip2.5cm\xymatrix{\bullet
\ar@{.} @/_1pc/ [r] _{\mbox{ } } \ar@/_/ [r] \ar [r] \ar@/^/ [r] \ar@/^1pc/ [r] &\bullet\ar@(ru,rd)}\hskip2.9cm\xymatrix{\bullet
\ar@{.} @/_1pc/ [r] _{\mbox{ } } \ar@/_/ [r] \ar [r] \ar@/^/ [r] \ar@/^1pc/ [r] &\bullet}\]

For the rest of this section, let $E=E_{\tot}$ be a 2-S-NE graph with a single terminal cluster, $H$ be its nonempty and proper hereditary and saturated set,  $m=|c|$ be the length of the terminal cycle $c$ of the 1-S-NE invariant of $E/H,$ and $n=|d|$ be the length of the terminal cycle $d$ of the 1-S-NE graph $P_H.$ Since $E=E_{\tot},$ every vertex which not terminal for neither $E$ nor $E/H$ emits exactly one edge and any non-terminal vertex of $E/H$ (if any) emits its only emitted edge to $E^0-H.$

We start by a series of operations on 2-S-NE graphs.  Some of these operations are out-splits and out-amalgamations, so they belong to the class characterized as the ``graph moves'' of symbolic dynamics. However, some of the operations we consider are not ``moves'' in the sense used in the current literature. All of these operations have properties described in section \ref{subsection_2SNE_intro}, so they induce a graded $*$-isomorphism of the graph algebras. The process from section \ref{subsection_relative_for_n=1} of transforming a 1-S-NE graph into its canonical form is an example of such an operation for 1-S-NE graphs.

\subsection{Direct-exit forms and connecting matrix}
Let $E=E_{\tot}$ be such that $d$ is a proper cycle (so $n>0$). Consider the porcupine graph $P_H$ and let $V$ be the set of all vertices of $P_H$ which are not in $H.$ Thus, $V=R(V).$ We define a 2-S-NE graph $E_{\direct}$ which we call the {\em direct-exit form} of $E$. We let $E_{\direct}$ consists of the part of $(P_H)_{\can, V}$ (defined in section \ref{subsection_relative_for_n=1}) outside of $R(c^0)$ and we let the rest of $E_{\direct}$ be $E/H.$ If $\phi_H$ is the graded $*$-isomorphism of $P_H$ and $(P_H)_{\can, V}$ (see section \ref{subsection_relative_for_n=1} and Proposition \ref{proposition_canonical_tails}), we define a map $\phi$ on the vertices and edges of $E$ by
\[\phi(v)=\phi_H(v)\mbox{ if } v\in H\;\;\mbox{ and }\;\;\phi(v)=v\mbox{ if } v\in E^0-H\mbox{ and }\]
\[\phi(e)=\phi_H(e)\mbox{ if } \ra(e)\in H\;\;\mbox{ and }\;\;\phi(e)=e\mbox{ if } \ra(e)\in E^0-H.\]
It is direct to check that the axioms hold for these images by the definition of $(P_H)_{\can, V}$ and $\phi_H$. Thus, $\phi$ extends to a graded $*$-homomorphism which is invertible since $\phi_H$ is invertible.

We point out some properties of $E_{\direct}.$ First, all $c$-to-$d$ paths have length one and the notation ``dir'' for ``direct'' emphasized that the $c$-to-$d$ paths are as direct as possible. Second, all edges with sources in $H-d^0$ end at vertices of $d.$ We refer to such edges ending at $w_i$ as the {\em $i$-tails} of $E_{\direct}$.

For example, the second graph is the direct-exit form of the first and the fourth graph is such form of the third graph below.
\[
\xymatrix{&&\\\bullet\ar@/^/[d]\ar[r]^g&\bullet\ar[r]^{h}&\bullet\ar@/^/[d]\\
\bullet\ar@/^/[u]&&\bullet\ar@/^/[u]}\hskip1.5cm
\xymatrix{&&\\\bullet\ar@/^/[d]\ar[rd]^{e_g}&\bullet\ar@/^/[d]&\bullet\ar[l]_{e_{h}}\\
\bullet\ar@/^/[u]&\bullet\ar@/^/[u]}\hskip1.3cm
\xymatrix{\\\bullet\ar@/^/[d]\ar[r]^g&\bullet\ar[r]^h&\bullet\ar@/^/[d]\\
\bullet\ar@/^/[u]&\bullet\ar[u]^{g_1}&\bullet\ar@/^/[u]\\&\bullet\ar[u]^{g_{21}}&\bullet\ar[ul]_{g_{22}}}\hskip1.5cm
\xymatrix{&&\bullet\ar[dl]_{e_h}\\\bullet\ar@/^/[d]\ar[dr]^{e_g}&\bullet\ar@/^/[d]&\bullet\ar[l]_{e_{g_{21}}}\\
\bullet\ar@/^/[u]&\bullet\ar@/^/[u]&\bullet\ar[ul]_{e_{g_{22}}}\\&\bullet\ar[u]^{e_{g_1}}&&}\]

Let us consider an example with $m=0.$ Let $E_1$ be the first graph below and let $E_3$ be the graph obtained by replacing the $c$-to-$d$ paths of $E_1$ by the paths of length three (the second graph below).
\[\xymatrix{\bullet\ar@{.} @/_1pc/ [r] _{\mbox{ } } \ar@/_/ [r] \ar [r] \ar@/^/ [r] \ar@/^1pc/ [r] &\bullet\ar@(ru,rd)}\hskip3cm \xymatrix{\bullet\ar@{.} @/_1pc/ [r] _{\mbox{ } } \ar@{-->}@/_/ [r] \ar@{-->} [r] \ar@{-->}@/^/ [r] \ar@{-->}@/^1pc/ [r] &\bullet\ar@(ru,rd)}\hskip3cm
\xymatrix{\bullet\ar@{.} @/_1pc/ [r] _{\mbox{ } } \ar@/_/ [r] \ar [r] \ar@/^/ [r] \ar@/^1pc/ [r] &\bullet\ar@(ur,ul)&\bullet\ar[l]_{(\omega)}}
\]
We have that $E_1=(E_1)_{\direct}$ and the last graph is the direct-exit form of $E_3.$ If $E_k$ is obtained by replacing the $c$-to-$d$ paths of $E_1$ by the paths of length $k>1$, the last graph is the direct-exit form of $E_k$ also. The Leavitt path algebras of $E_1$ and of $E_k, k>1$ are not graded $*$-isomorphic  because the first algebra is unital and the second is not. A direct argument for the algebras of $E_3$ and $E_k$ being graded $*$-isomorphic is that the two graphs have the same direct-exit form.

If $E$ is a direct-exit graph with $n>0$, we let $a_{ji},$ $j\in m, i\in n,$ be the number of edges $v_j$ emits to $w_i.$ If $m>0$ this number is finite and we refer to the $m\times n$ matrix $[a_{ji}]$ as the $c$-to-$d$ {\em connecting matrix}.
This matrix is dependent on the choice of $d\in [d]$ and $c\in [c],$ so the reference to $c$ and $d$ is needed. If $c'\in[c]$ is a different element, the $c'$-to-$d$ connecting matrix can be obtained by applying a degree $m$ cyclic permutation of rows of the original matrix. Similarly, if $d'\in [d]$ is a different element then the $c$-to-$d'$ connecting matrix is obtained by permuting the columns of the $c$-to-$d$ matrix by a cyclic permutation of length $n$.

If $m=0$, we allow $\omega$ to be the entry of a connecting matrix. In this case, the connecting matrix is a $1\times n$ matrix over $\omega\cup\{\omega\}$ such that $a_{0i}$ is the cardinality of the exits ending at $w_i.$

Let us move on to the $n=0$ case.  If $E=E_{\tot}$ has a sink,
we define $E_{\direct}$ analogously by letting $V=R(V)$ be the set of vertices of $P_H$ which are not in $H$ and pasting the graphs $(P_H)_{\can, V}$ and $E/H$ together with rearranging paths using Proposition \ref{proposition_canonical_tails} just as in the $n>0$ case.

We also let $H_t$ (where $t$ is for ``tails'') be the subgraph of $E_{\direct}$ generated by $H$ and $k_t$ be its spine length. The graph $H_t$ is a 1-S-NE graph and $w_0$ is its sink. We say that $H_t$ is the {\em tail graph}.
If $n>0,$ we define the tail graph analogously as the subgraph generated by vertices in $H$. Such a definition enables us to unify the arguments of both $n>0$ and $n=0$.

For example, if $E$ is the first graph below, the second graph is $E_{\direct}$ and the third graph is $H_t.$
\[\xymatrix{\bullet\ar@(lu,ld)\ar[r]
\ar[rd]&\bullet\ar[r]&\bullet\ar[r]&\bullet\\&\bullet\ar[ur]&&}\hskip2cm\xymatrix{\bullet\ar@(lu,ld)\ar[r]
\ar@/_1pc/[r]&\bullet\ar[r]&\bullet\ar[r]&\bullet\\&&\bullet\ar[u]&}\hskip2cm\xymatrix{\bullet\ar[r]&\bullet\ar[r]&\bullet\\&\bullet\ar[u]&}\]

Let $w_i$ be the vertex on the spine of $H_t$ which is at length $i$ from $w_0.$ If the set of $i$-values such that $w_i$ receives an exit has the largest element $k,$ we say that $k$ is the {\em spine length of $E$} and the path from $w_k$ to $w_0$ is the {\em spine of $E$}. If such largest element does not exist, then $n=0$ necessarily and the tail graph has infinite spine length $k_t=\omega$. In this case, we let  $k=k_t$ and we let the spine of $E$ be the spine of the tail graph.
If $m>0$ and $n=0,$ $k$ is necessarily finite. In the example above, $k=k_t=2.$ Some of the following examples exhibit graphs with $k<k_t$ and some of graphs with $k=k_t=\omega.$

For a cycle-to-sink direct-exit graph with a spine of length $k$, we define its $c$-to-$d$ {\em connecting matrix} as the $m\times (k+1)$ matrix $[a_{ji}]$ where $a_{ji}$ is the number of edges $v_j$ emits to $w_i.$
It is dependent on the choice of $c\in [c]$ and a different choice of an element of $[c]$ results in a matrix with rows permuted by a cyclic permutation of degree $m$. For example, below is an $m=2, n=0$ direct-exit graph with the spine of length $k=3$ and the connecting matrix $
\left[\begin{array}{cccc}
1 & 0 & 4 & 2 \\
0 & 3 & 0 & 1
\end{array}\right].$
\[\xymatrix{\bullet_{v_0}\ar[dr]^2\ar[drr]^4\ar[drrrr]^1\ar@/^/[d]&&&\\
\bullet_{v_1}\ar[r]^1\ar@/^/[u]\ar@/_1pc/[rrr]_3&\bullet_{w_3}\ar[r]&\bullet_{w_2}\ar[r]&\bullet_{w_1}\ar[r]&\bullet_{w_0}}\]

If $n=m=0$ and if $k$ is the spine length (possibly $\omega$),
the connecting matrix is a  $1\times (k+1)$ matrix with the cardinality of the exits ending at $w_i$ at the $i$-th spot.
The first graph below is a direct-exit graph with a finite spine of length two and connecting matrix $[\omega\; 0\; \omega]$. The second  graph below is a direct-exit graph with the spine length infinite  and the connecting matrix $[\omega\; 0\; \omega\; 0\; \omega\; 0\;\ldots].$
\[\xymatrix{\bullet_{v_0} \ar[drr]^\omega \ar[d]^\omega&\\
\bullet_{w_2}\ar[r]&\bullet_{w_1}\ar[r]&\bullet_{w_0}}\hskip.5cm
\xymatrix{&&&&\bullet_{v_0}\ar@{.}[dllll]\ar[dll]_\omega \ar[drr]^\omega \ar[drrrr]^\omega\ar[d]^\omega&&&\\
&\ar@{.>}[r]&\bullet_{w_6}\ar[r]&\bullet_{w_5}\ar[r]&\bullet_{w_4}\ar[r]&\bullet_{w_3}\ar[r]&\bullet_{w_2}\ar[r]&\bullet_{w_1}\ar[r]&\bullet_{w_0}}\]

If $E$ is the first graph below, its $c$-to-$d$ part is ``direct'' but the path originating at a source in $H$ and ending in $w_0$ is not ``canonical''. The second graph is $E_{\direct}.$
\[\xymatrix{\bullet_{v_0} \ar[drr]^\omega \ar[drrrr]^\omega\ar[d]^\omega&&&\\
\bullet_{w_4}\ar[r]&\bullet_{w_3}\ar[r]&\bullet_{w_2}\ar[r]&\bullet_{w_1}\ar[r]&\bullet_{w_0}\\
&\bullet\ar[r]&\bullet\ar[r]&\bullet\ar[r]&\bullet\ar[u]}\hskip2cm
\xymatrix{\bullet_{v_0} \ar[drr]^\omega \ar[drrrr]^\omega\ar[d]^\omega&&&\\
\bullet_{w_4}\ar[r]&\bullet_{w_3}\ar[r]&\bullet_{w_2}\ar[r]&\bullet_{w_1}\ar[r]&\bullet_{w_0}\\&\bullet\ar[u]&\bullet\ar[u]&\bullet\ar[u]&\bullet\ar[u]}\]
If the length of the path with all its vertices not in $T(v_0)$ which ends at $w_0$ were 7 instead of 4, the following graph would be $E_{\direct}$. This graph is also an example of a graph with $k=4<k_t=7.$
\[
\xymatrix{&&&\bullet_{v_0} \ar[drr]^\omega \ar[drrrr]^\omega\ar[d]^\omega&&&\\
\bullet\ar[r]&\bullet\ar[r]&\bullet\ar[r]&\bullet_{w_4}\ar[r]&\bullet_{w_3}\ar[r]&\bullet_{w_2}\ar[r]&\bullet_{w_1}\ar[r]&\bullet_{w_0}\\
&&&\bullet\ar[u]&\bullet\ar[u]&\bullet\ar[u]&\bullet\ar[u]&\bullet\ar[u]}\]

\subsection{Connecting polynomial}
Let $E=E_{\direct}$ be a direct-exit 2-S-NE graph with $m>0$ and let $k$ be the spine length in the case that $n=0.$ Let
\[a_{E}(t)=\sum_{j\in m, i\in n} a_{ji}\,t^{j+1+n-_ni}\mbox{ if $n>0\;\;$ and }\;\;a_{E}(t)=\sum_{j\in m, i\leq k} a_{ji}\,t^{j+1+i}\mbox{ if $n=0.$}\]
We refer to $a_E$ as a {\em connecting polynomial}.  It depends on  $c\in[c]$ and $d\in [d]$ but if it is clear which cycles we use, we will shorten the notation to $a_E.$ The following lemma focuses on this polynomial.

\begin{lemma}
Let $E$ be a direct-exit 2-S-NE graph with $m>0$ and
let $a_{E}$ be its connecting polynomial computed using $c$ and $d.$ If $v_0=\so(c)$ and  $l\in \Zset^+,$  then
\[[v_0]=t^{lm}[v_0]+\sum_{j=0}^{l-1}t^{jm}a_E[w_0].\]
\label{lemma_monoid_of_canonical_form}
\end{lemma}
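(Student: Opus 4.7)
My plan is to prove the identity by induction on $l$, with the main work concentrated in the base case $l=1$, which reduces to unwinding the talented-monoid relations along $c$ and then along $d$ (or along the spine, when $n=0$).

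For $l=1$, since $m>0$ the graph $E=E_{\direct}$ is row-finite, so each vertex $v_j$, $j\in m$, on $c$ is regular. The edges it emits are precisely the cycle edge $v_j\to v_{j+_m 1}$ together with the $a_{ji}$ exits from $v_j$ to $w_i$, so the regular-vertex relation (A1) from section \ref{subsection_graph_monoid} gives
\[ [v_j] \;=\; t[v_{j+_m 1}] \;+\; \sum_{i} a_{ji}\, t[w_i]. \]
Substituting these $m$ identities into one another (i.e.\ telescoping around $c$) yields
\[ [v_0] \;=\; t^m[v_0] \;+\; \sum_{j=0}^{m-1}\sum_{i} a_{ji}\, t^{j+1}[w_i]. \]

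Next I would rewrite each $[w_i]$ as a $\Gamma$-translate of $[w_0]$. If $n>0$, then $d$ has no exits in $E$, so each $w_i$ is regular and emits only the cycle edge $w_i\to w_{i+_n 1}$; iterating the resulting identity $[w_i]=t[w_{i+_n 1}]$ around $d$ and using the periodicity $[w_0]=t^n[w_0]$ gives $[w_i]=t^{\,n-_n i}[w_0]$ for each $i\in n$. If instead $n=0$, then $w_0,w_1,\ldots,w_k$ are the spine vertices of the tail graph $H_t$ with $w_0$ the sink, and each $w_i$ ($i\geq 1$) emits only the spine edge $w_i\to w_{i-1}$, so iteration gives $[w_i]=t^i[w_0]$. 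In either case, substitution into the previous display produces exactly the exponents $j+1+n-_n i$ (respectively $j+1+i$) appearing in the definition of $a_E$, yielding
\[ [v_0] \;=\; t^m[v_0] + a_E\,[w_0], \]
which is the $l=1$ case.

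The inductive step is then immediate: assuming $[v_0]=t^{lm}[v_0]+\sum_{j=0}^{l-1}t^{jm}a_E[w_0]$, apply the $t^{lm}$-shift of the base case to the leading term to get $t^{lm}[v_0]=t^{(l+1)m}[v_0]+t^{lm}a_E[w_0]$, and substitute back; the case $l=0$ is the trivial identity $[v_0]=[v_0]$. The only piece of the argument that requires care is the reduction of $[w_i]$ to $[w_0]$, since the two regimes $n>0$ and $n=0$ must be handled separately and, in the former, the periodicity $[w_0]=t^n[w_0]$ is essential to produce an exponent in $\Zset^+$. This bookkeeping is routine but is really the only substantive step in the proof.
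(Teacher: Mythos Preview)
Your proof is correct and follows essentially the same route as the paper's, just spelled out in more detail. The paper compresses the base case to the single assertion $v_0\to t^m v_0+a_E w_0$ in $\F_E^\Gamma$ and then iterates, whereas you unpack this by telescoping the regular-vertex relations around $c$ and reducing each $[w_i]$ to a shift of $[w_0]$; both then pass to $M_E^\Gamma$ and induct on $l$.
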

\begin{proof}
We have that
$v_0\to t^mv_0+a_Ew_0\to t^{2m}v_0+t^ma_Ew_0+a_Ew_0\to\ldots\to t^{lm}v_0+\sum_{j=0}^{l-1} t^{jm}a_Ew_0 $ holds in $\F^\Gamma_E$ for any nonnegative integer $l,$ so $[v_0]=t^{lm}[v_0]+\sum_{j=0}^{l-1} t^{jm}a_E[w_0]$ holds in $M_E^\Gamma$.
\end{proof}

We consider the $m=0$ case next and prove a lemma analogous to Lemma \ref{lemma_monoid_of_canonical_form}. If $m=0$, the concept of a single connecting polynomial is replaced by polynomials defined for each nonempty and finite $Z\subseteq\so^{-1}(v_0)$ where $v_0$ is the infinite emitter. For such $Z$, let $P_Z$ be the set of paths in $P_{\not d}^{w_0}$ which have the first edge in $Z$ and let $a_Z\in \Zset^+[t]$ be
the polynomial $\sum_{p\in P_Z} t^{|p|}.$ We let $q_{\emptyset}$ denote $v_0$ to unify the treatment.
If we define the relation $\leq$ on the set of polynomials such that $a\leq b$ if $a+c=b$ for some polynomial $c,$ we have that $Z\subseteq W$ if and only if $a_Z\leq a_W.$

\begin{lemma}
Let $E=E_{\direct}$ be a 2-S-NE graph with $m=0$ and let $v_0$ be the infinite emitter of $E$.
For any finite $Z\subseteq \so^{-1}(v_0),$ $[v_0]=[q_Z]+a_Z[w_0].$

Let $b\in \Zset^+[t],$ $[x]\in M_E^\Gamma,$ and let $Z$ be finite subset of $\so^{-1}(v_0).$
If $[q_Z]=[x]+b[w_0]$ holds, then there is finite $W\subseteq \so^{-1}(v_0)$ disjoint from $Z$ such that $[x]=[q_{Z\cup W}]$ and $b=a_W[w_0].$
\label{lemma_monoid_of_canonical_form_m=0}
\end{lemma}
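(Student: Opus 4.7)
For the first statement, the plan is to apply the defining relations of the talented monoid directly. For finite nonempty $Z\subseteq \so^{-1}(v_0)$, relation (A2) yields $[v_0] = [q_Z] + \sum_{e\in Z} t[\ra(e)]$. Since $E=E_{\direct}$ has $n=0$, the sink $w_0$ is the only terminal vertex of the tail subgraph $H_t$; because $E=E_{\tot}$, every $u\in H\setminus\{w_0\}$ is regular and emits a single edge, so there is a unique path $\tau_e$ from $\ra(e)$ to $w_0$ and iterated application of (A1) gives $[\ra(e)] = t^{|\tau_e|}[w_0]$. Since $P_Z=\{e\tau_e: e\in Z\}$ and $|e\tau_e|=1+|\tau_e|$, we obtain $\sum_{e\in Z} t[\ra(e)] = a_Z[w_0]$. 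The empty case is trivial via the convention $q_\emptyset=v_0$ together with $a_\emptyset=0$.

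For the second statement, I would invoke the Confluence Lemma (Lemma~\ref{lemma_confluence}). Let $y\in \F_E^\Gamma$ be any lift of $[x]$. Then $q_Z$ and $y+bw_0$ both represent $[q_Z]$, so they share a common descendant $c$ under $\to$. The only one-step reduction applicable to $q_Z$ is (A3), producing $q_V+\sum_{e\in V-Z}t[\ra(e)]$ for some finite $V\supsetneq Z$; each $\ra(e)\in H$ then fully reduces to $t^{|\tau_e|}w_0$ by repeated (A1). By extending any partial reduction, one may take $c = q_V + a_{V-Z}w_0$ for some finite $V\supseteq Z$. The crucial point is that $w_0$ is a sink and hence is never the left-hand side of any rule (A1)--(A3), so each $t^k w_0$ appearing in the multiset $y+bw_0$ persists unchanged in every descendant. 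Consequently $y$ reduces to $q_V+(a_{V-Z}-b)w_0$, which forces $b\leq a_{V-Z}$ termwise in $\Zset^+[t]$.

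To complete the argument, I would exploit the combinatorial structure of $a_{V-Z}=\sum_{e\in V-Z} t^{1+|\tau_e|}$: since each edge contributes exactly one monomial, any polynomial dominated by $a_{V-Z}$ is itself an $a_W$ for some $W\subseteq V-Z$ (for each degree $k$, choose $[b]_k$ edges from $\{e\in V-Z:1+|\tau_e|=k\}$). Such a $W$ is automatically disjoint from $Z$, and $a_{V-Z}-b=a_{(V-Z)\setminus W}$. Passing back to $M_E^\Gamma$, we have $[x]=[q_V]+a_{(V-Z)\setminus W}[w_0]$; applying (A3) with $Z\cup W\subseteq V$ yields $[q_{Z\cup W}]=[q_V]+a_{V\setminus(Z\cup W)}[w_0]=[q_V]+a_{(V-Z)\setminus W}[w_0]=[x]$, while $b=a_W$ in $\Zset^+[t]$ (equivalently, $b[w_0]=a_W[w_0]$ in $M_E^\Gamma$). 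The main subtleties are the persistence argument for the $bw_0$ summand and the monomial-by-monomial identification of $b$ as some $a_W$; both follow from direct inspection of the rewriting rules defining $\to_1$ and of the definition of $a_W$.
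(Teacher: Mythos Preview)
Your approach via the Confluence Lemma is essentially the paper's, and your organization of the second part (persistence of the $bw_0$ summand, then degree-by-degree selection of $W\subseteq V\setminus Z$) is a clean way to extract $W$. However, you have introduced an unwarranted hypothesis: the lemma assumes only $m=0$ (so $v_0$ is an infinite emitter), not $n=0$. When $n>0$ the terminal cluster of $P_H$ is the proper cycle $d$, so $w_0$ is \emph{not} a sink. This breaks two of your claims: (i) in the first part there are infinitely many paths from $\ra(e)$ to $w_0$ (you want the unique one in $P_{\not d}^{w_0}$, of length $n-i$ if $\ra(e)=w_i$), and more seriously (ii) your persistence argument fails, since (A1) does apply to $t^kw_0$, sending it to $t^{k+1}w_1$ and, after a full revolution, to $t^{k+n}w_0$.

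The repair is easy and the paper's proof indicates it: for $n>0$ a descendant of $q_Z$ can be taken of the form $q_{Z\cup W'}+t^{kn}a_{W'}w_0$ (the extra $t^{kn}$ records possible revolutions around $d$), and since $t^n[w_0]=[w_0]$ in $M_E^\Gamma$ one has $t^{kn}a_{W'}[w_0]=a_{W'}[w_0]$. One then argues at the level of $M_E^\Gamma$ rather than $\F_E^\Gamma$: from $[x]+b[w_0]=[q_Z]=[q_{Z\cup W'}]+a_{W'}[w_0]$ one sees $b[w_0]$ is a summand of $a_{W'}[w_0]$, chooses $W\subseteq W'$ with $a_W[w_0]=b[w_0]$ exactly as you do, and cancels $a_W[w_0]$ from $[x]+a_W[w_0]=[q_{Z\cup W}]+a_W[w_0]$ using cancellativity of $M_E^\Gamma$. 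Your reduction-level argument is valid precisely when $n=0$; for the general statement you should either split into cases or work modulo $t^n-1$ throughout.
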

\begin{proof}
The first part holds since $v_0\to q_Z+a_Zw_0$ holds in $\F^\Gamma_E.$ To show the second part, assume that $[q_Z]=[x]+b[w_0]$ holds for some $b, [x],$ and $Z$ as in the statement of the lemma. This implies that $q_Z\to y$ and $x+bw_0\to y$ for some
$y\in \F_E^\Gamma$ by the Confluence Lemma (Lemma \ref{lemma_confluence}). The first relation implies that such $y$ can be chosen to have the form $y=q_{Z\cup W'}+a_{W'}t^{kn}w_0$ for some finite $W'$ disjoint from $Z$ and some nonnegative integer $k.$ Since $a_{W'}t^{kn}[w_0]=a_{W'}[w_0]$ and $w_0$ is a terminal vertex,
the relation $x+bw_0\to y$ implies that such $W'$ can be chosen so that $b[w_0]$ is a summand of $a_{W'}[w_0].$ As any summand of $a_{W'}[w_0]$ is of the form $a_{W}[w_0]$ for some finite subset $W$ of $W',$ we have that $b[w_0]=a_{W}[w_0]$ for some such $W.$ Thus, we have that
\[[x]+a_{W}[w_0]=[x]+b[w_0]=[q_Z]=[q_{Z\cup W}]+a_W[w_0].\]
By canceling $a_W[w_0],$ we obtain that $[x]=[q_{Z\cup W}].$
\end{proof}

\subsection{The exit moves}
Let $E$ be a direct-exit graph with $m>0$ and let $c\in[c]$ be such that $v_j\in c$ emits an exit $e.$ We consider the out-split with respect to $\{e\}$ and $\so^{-1}(v_j)-\{e\}$ followed by the total out-split of the new graph. We continue to use the label $v_j$ for the second new vertex in the out-split because this vertex is between $v_{j-_m1}$ and $v_{j+_m1}$ in the cycle with exits of the new graph which we continue to call $c.$ With these labels, the parts $E/H$ and $d$ are unchanged in the out-split graph. If $v_e$ is the second new vertex, it receives one new edge $g_e$ from $v_{j-_m1}$ and a copy of the subgraph generated by $R(v_j)-T(v_j)\cup\{v_j\}$ ending at $\so(g_e)$ instead of $v_j.$ The vertex $v_e$ emits the edge which we continue to call $e$ since it ends in $\ra(e)$. We refer to the resulting graph as the {\em $e$-blow-up} of $E.$

For example, if $E$ is the Toeplitz graph $\;\;\;\;\xymatrix{\bullet\ar@(lu,ld)\ar[r]^e&\bullet},$ then $\;\;\;\;\xymatrix{\bullet\ar@(lu,ld)\ar[r]&\bullet\ar[r]&\bullet}$
is the $e$-blow-up of $E.$ If $f$ is the newly added exit from the cycle, then
$\;\;\;\;\xymatrix{\bullet\ar@(lu,ld)\ar[r]&\bullet\ar[r]&\bullet\ar[r]&\bullet}$ is the $f$-blow-up of the previous graph. As another example, let $E$ be the first graph below. The second graph is the $e$-blow-up and the third graph is the $f$-blow-up.
\[\xymatrix{\bullet\ar[r]&\bullet\ar@/^/[d]\ar[r]^e&\bullet\\
&\bullet\ar@/^/[u]\ar[ur]_f&}\hskip1.5cm\xymatrix{\bullet\ar[r]&\bullet\ar@/^/[d]&\bullet&\\&
\bullet\ar[r]\ar[ur]\ar@/^/[u]&\bullet\ar[u]&\bullet\ar[l]}\hskip1.5cm\xymatrix{\bullet\ar[r]&\bullet\ar[r]\ar@/^/[d]\ar[dr]&\bullet\\
&\bullet\ar@/^/[u]&\bullet\ar[u]}\]

If $m=0$ and $e$ is an edge $v_0$ emits, the {\em $e$-blow-up} is the graph obtained by the out-split with respect to $\{e\}$ and $\so^{-1}(v_0)-\{e\}$ and then considering the total out-split of the obtained graph. For example, if $E$ is the first graph below and $e$ is the first edge of the $c$-to-$d$ paths of length two, then the second graph is the $e$-blow-up of $E$.
\[\xymatrix{&\bullet\\\bullet\ar[ur]&
\ar@{.} @/_1pc/ [u] _{\mbox{ } } \ar@/_/ [u] \ar [u] \ar@/^/ [u] \ar@/^1pc/ [u] \bullet\ar[l]\\&
\bullet\ar[u]}\hskip4cm \xymatrix{&&\bullet\\
\bullet\ar[urr]&\bullet\ar[l]&\ar@{.} @/_1pc/ [u] _{\mbox{ } } \ar@/_/ [u] \ar [u] \ar@/^/ [u] \ar@/^1pc/ [u] \bullet\\&\bullet\ar[u]&\bullet\ar[u]}\]

Performing a blow-up with respect to an exit $e$ results in a graph which may not be direct-exit any more. Let $E_e$ be the graph obtained by transforming the resulting graph to a direct-exit graph. We call such an operation an {\em exit move} and write $E\to_1 E_e$.

For example, if $E$ is the first graph below, the second graph is the blow-up with respect to the only exit and the third graph is the direct form of the second, hence, the exit move of $E$.

\[
\xymatrix{\bullet\ar[r]\ar@(lu,ru)&\bullet\ar@(lu,ru)}
\hskip2.7cm \xymatrix{\bullet\ar[r]\ar@(lu,ru)&\bullet\ar[r]&\bullet\ar@(lu,ru)}
\hskip2.7cm
\xymatrix{\bullet\ar[r]\ar@(lu,ru)&\bullet\ar@(lu,ru)&\bullet\ar[l]}\]

If $E\to_1 E_e$ and $E_e\to_1 F=(E_e)_g$ for some exit $g,$ we write $E\to_2 F$.
For example, if both exits of the first graph are moved, the second graph is obtained.
\[\xymatrix{\bullet\ar[r]&\bullet\ar@(lu,ru)\ar@/^/[r]
\ar@/_/[r]&\bullet\ar@(lu,ru)}\hskip3cm
\xymatrix{\bullet\ar[r]&\bullet\ar@(lu,ru)\ar@/^/[r]
\ar@/_/[r]&\bullet\ar@(lu,ru)&\bullet\ar[l]_{(2)}}\]

We consider the impact of an exit move to the number of tails and the connecting matrix. If $n>0,$ we let $l_i$ denote the cardinality of the set of $i$-tails for $i\in n.$ If $n=0$ and $k_t$ is the spine of the tail-graph $H_t,$ we let $l_i$ denote the cardinality of the set of $i$-tails for $i\leq k_t.$

An exit with source $v_j, j\in m$ and range $w_i$ is an {\em $ji$-exit} where $i\in n$ if $n>0$ and $i\leq k$ if $n=0$.
If $m>0, n>0$ and an $ji$-exit $e$ is moved, the only change in the connecting matrix is that the $a_{ji}$-value decreases by one and the $a_{(j-_m1)(i-_n1))}$-value increases by one. The cardinality $l_i$ increases by at least one (corresponding to $e$). To describe the increase in the number of $i'$-tails, we extend the definition of $\Pa_k^{v_j}$ to include the value $k=0$ and let this set be the moved exit $e.$ If $\Pa^{v_j}_k$ is nonempty and $i'\in n$ is such that $k+i'=i(\mymod n),$ then $w_{i'}$ gets $|\Pa^{v_j}_k|$ new tails. Thus, the number of $i'$-tails in the resulting graph is
\begin{equation}
l_{i'}+\sum_{k\in\{k\mid k=i-i'(\mymod n)\}}|\Pa^{v_j}_k|
\label{equation_i-tails_n>0}
\end{equation}
where the standard cardinal arithmetic holds if either $l_{i'}$ or $|\Pa^{v_j}_k|$ is infinite.

If $m>0$, $n=0,$ $k_t$ is the spine length of the tail graph $H_t$, $k_j$ is the spine length of $E^j,$ and a $ji$-exit is moved for some $j\in m, i\leq k,$
then $a_{ji}$ decreases by one and $a_{(j-_m1)(i+1)}$ increases by one. If $i=k,$ the length of the spine of the new graph increases by one. If $k_j>k_t,$ the length of the spine graph increases. The number of $i'$-tails in the new graphs is equal to
\begin{equation}
l_{i'}+|\Pa^{v_j}_{k'}|\;\;\mbox{  for } k'\mbox{ such that }k'=i'-i.
\label{equation_i-tails_n=0}
\end{equation}
Formula (\ref{equation_i-tails_n>0}) has $i-i'$ and formula (\ref{equation_i-tails_n=0}) has $i'-i$. This difference is present since the distance from $w_i$ to $w_0$ is $n-i$ if $n>0$  and it is  $i$ if $n=0.$

If $m=0$ and $n>0$ and if a $0i$-exit is moved, the $0i$-value of the connecting matrix of the resulting graph is $a_{0i}-1$ (here we use cardinal arithmetic to have that $\omega-1=\omega$ if $a_{0i}=\omega$). No other entries of the matrix are changed.
The number of $i'$-tails in the new graph is given by the same formula as in the $n>0$ and $m>0$ case except that the only possible value of $j$ is 0.

If $m=n=0$, $k$ is the spine length, and a $0i$-exit is moved, the $0i$-value of its connecting matrix is $a_{0i}-1$. The formula for the number of the $i'$-tails in the new graph is the same as in the $m>0$ and $n=0$ case except that the only possible $j$-value is zero.

Because $\Pa_0^{v_j}$ is nonempty, if $l_i$ is finite, then a move of any $ji$-exit increases the number of $i$-tails in the resulting graph. If $l_i$ is infinite, the move of an exit may produce a graph isomorphic to the original graph. Example \ref{example_moving_exits_invariant} below exhibits some such graphs.

\begin{example} Moving any of the exits of the two graphs below produces a graph isomorphic to the initial graph.
\[
\xymatrix{&&\\\bullet\ar@/^/[r]\ar@/_/[r]\ar@(lu,ld)&\bullet\ar@(lu,ru)&\bullet\ar[l]_{(\omega)}}
\hskip3cm
\xymatrix{\bullet\ar@/^/[d]&\bullet\ar@/^/[d]&\bullet\ar[l]_{(\omega)}\\
\bullet\ar[r]\ar@/^/[u]&\bullet\ar@/^/[u]&\bullet\ar[l]_{(\omega)}}
\]
\label{example_moving_exits_invariant}
\end{example}

We prove a short lemma we use for the cycle-to-cycle graphs.

\begin{lemma}
Let $E=E_{\direct}$ be a cycle-to-cycle graph with $|c|=m, |d|=n,$ and let $G$ be the greatest common divisor of $m$ and $n$. A $ji$-exit can be moved to become a $lk$-exit of the resulting graph if and only if $i-j=l-k (\mymod G).$
\label{lemma_feasibility_of_an_exit_move}
\end{lemma}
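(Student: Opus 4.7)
The plan is to extract a single conserved quantity from one exit move and invoke the Chinese Remainder Theorem for both directions.

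For the forward direction, I would use the explicit description of a single move recorded in the paragraph preceding the lemma: a $j'i'$-exit becomes a $(j' -_m 1)(i' -_n 1)$-exit of the new direct-exit graph, with the cycles $c$ and $d$ (and their vertex labellings) retained. Iterating, after $s \geq 0$ successive moves the tracked $ji$-exit becomes a $(j -_m s,\, i -_n s)$-exit. Since $G = \gcd(m,n)$ divides both $m$ and $n$, both coordinates decrease by the same amount $s$ modulo $G$, so the difference between them is preserved modulo $G$. Writing $(l,k)$ for the final position, this yields the stated congruence between $i - j$ and $l - k$ modulo $G$.

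For the converse, given the congruence, I would solve the simultaneous system
\[
s \equiv j - l \pmod{m}, \qquad s \equiv i - k \pmod{n}
\]
using the Chinese Remainder Theorem for non-coprime moduli; its compatibility condition $j - l \equiv i - k \pmod{G}$ is equivalent to our hypothesis. We may add a nonnegative multiple of $\operatorname{lcm}(m,n)$ to take $s \geq 0$. Performing $s$ successive exit moves then transforms the $ji$-exit into an exit at position $(j -_m s,\, i -_n s) = (l, k)$, as required. Each intermediate move is well-defined because in a cycle-to-cycle graph every vertex $v_{j'}$ of $c$ also emits the cycle edge $c_{j'}$, so the partition $\{\{e\}, \so^{-1}(v_{j'}) \setminus \{e\}\}$ underlying the blow-up is always a nontrivial partition.

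The essential content of the argument is the invariance observation in the forward direction, which follows at once from the one-step formula. The backward direction reduces to a classical application of the Chinese Remainder Theorem, with a routine check that the sequence of moves never breaks down because the source of the tracked exit remains on $c$ throughout.
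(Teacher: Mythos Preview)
Your proof is correct and takes essentially the same approach as the paper: both reduce the question to the solvability of the simultaneous congruences $s\equiv j-l\pmod m$, $s\equiv i-k\pmod n$, with you invoking the Chinese Remainder Theorem directly while the paper writes out the B\'ezout coefficients by hand. Your added remark that each intermediate blow-up is well defined (because the cycle edge keeps the partition nontrivial) is a nice touch that the paper leaves implicit.
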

\begin{proof}
Let $m=Gm'$ and $n=Gn'$ for $m'$ and $n'$ which are mutually prime.

By the definition of an exit move, a $ji$-exit can be moved to become a $lk$-exit of the resulting graph if and only if there is $k'\in \Zset$ such that $j-k'=k(\mymod m)$ and $i-k'=l(\mymod n).$ Assuming this holds, let $k'=j-k+m_0m=i-l+n_0n$ for some $m_0, n_0\in \Zset.$ We have that $j-k=i-l+G(n_0n'-m_0m')$ which shows that $j-k=i-l(\mymod G)$ and so $i-j=l-k(\mymod G).$

Conversely, suppose that $i-j=l-k(\mymod G)$ so that $j-k=i-l(\mymod G)$ and let $j-k=i-l+k''G$ for some $k''\in\Zset.$
Since $m'$ and $n'$ are mutually prime, let $m_0,n_0\in \Zset$ be such that $1=m'm_0+n'n_0.$ Hence,  $k''G=k''Gm'm_0+k''Gn'n_0=k''mm_0+k''nn_0$ so that
$j-k=i-l+k''mm_0+k''nn_0$.
Let $k'=i-l+k''nn_0$ so that   $j-k=k'(\mymod m)$ and that $i-l=k'(\mymod n).$ Thus,
$j-k'=k(\mymod m)$ and $i-k'=l(\mymod n)$ showing that a $ji$-exit can be moved to become a $lk$-exit.
\end{proof}

\subsection{Reduction and reducibility}
We would like to introduce the inverse of an exit move which we refer to as {\em reduction}. Let us start with such consideration for cycle-to-cycle graphs.

Assume that $E$ is a cycle-to-cycle graph for which there are $j\in m$ and $i\in n$ such that $a_{(j-_m1)(i-_n1)}\neq 0$ and such that
\begin{equation}
l_{i'}\geq \sum_{k\in\{k\mid k=i-i'(\mymod n)\}}|\Pa^{v_j}_k|
\label{equation_reduceable_n>0}
\end{equation}
holds for every $i'\in n.$
In this case, there is an operation inverse to a move of a $ji$-exit and we say that $E$ is {\em $ji$-reducible}. Note that the relation  (\ref{equation_reduceable_n>0}) trivially holds for $i'$ such that $l_{i'}=\omega.$ We say that $E$ is {\em reducible} if there are $i\in n$ and $j\in m$ such that $E$ is $ji$-reducible.

If $E$ is $ji$-reducible and $[a_{ji}]$ is the connecting matrix of $E$ corresponding to $c\in [c]$ and $d\in [d],$ we define {\em a single $ji$-reduction} $E_{\red, 1, ji}$ of $E$ as follows. The quotient of $E_{\red, 1, ji}$ is the same as the quotient $E.$ The $c$-to-$d$ part of $E_{\red, 1, ji}$ is determined by the connecting matrix $a'_{j'i'}$ given by
\[a_{ji}'=a_{ji}+1, \;\;\;a_{(j-_m1)(i-_n1)}'=a_{(j-_m1)(i-_n1)}-1\mbox{ and }\]\[a'_{j'i'}=a_{j'i'}\mbox{ if }(j'\neq j\mbox{ or }i'\neq i)\mbox{ and }(j'\neq j-_m1\mbox{ or }i'\neq i-_n1).\]
The graph $E_{\red, 1, ji}$
has the number of $i'$-tails
equal to
\[
l_{i'}-\sum_{k\in\{k\mid k=i-i'(\mymod n)\}}|\Pa^{v_j}_k|
\]
if $l_{i'}$ is finite and it has $\omega$ $i'$-tails otherwise. If $l_{i'}$ is finite, the cardinal subtraction is subtraction of finite numbers because  if $l_{i'}$ is finite and  relation (\ref{equation_reduceable_n>0}) holds, then $|\Pa^{v_j}_k|$ is finite for any $k$ such that $k=i-i'(\mymod n).$ The scenario when both sides of relation (\ref{equation_reduceable_n>0})
are $\omega$ is considered in section \ref{subsection_tail_cutting}.

By the definition of $E_{\red, 1, ji},$ we have that $E_{\red, 1, ji}\to_1 E$ holds. Hence, the algebras of $E$ and $E_{\red, 1, ji}$ are in the same graded $*$-isomorphism class. Thus, if $E$ is $ji$-reducible, there is a graph $F$ such that $F\to_1 E$ is the move of a $ji$-exit. The converse also holds: if there is such a graph $F,$ then $a_{(j-_m1)(i-_n1)}\neq 0$ and relation (\ref{equation_reduceable_n>0}) holds.
We use this condition to define reducibility for S-NE graphs with composition length larger than two in section \ref{section_n>2}.

If $E_{\red, 1, ji}$ is $j'i'$-reducible, we let $E_{\red, 2, ji, j'i'}$ be the graph $(E_{\red, 1, ji})_{\red, 1, j'i'}.$ Continuing this argument, we define $E_{\red, l, j_1i_1,\ldots, j_li_l}$ as $(E_{\red, l-1, j_1i_1,\ldots, j_{l-1}i_{l-1}})_{\red, 1, j_li_l}.$ We write $E_{\red, l, ji}$  for the graph $E_{\red, l, (j-_m(l-1))(i-_n(l-1))\ldots, (j-_m1)(i-_n1), ji}$
where $E_{\red, l, (j-_m(l-1))(i-_n(l-1))\ldots, (j-_m1)(i-_n1), ji}\to_l E$ is the move of the same $ji$-exit $l$ times.

Let $E$ be a graph in which  $l_i$ is finite for at least one $i.$ If $E$ is not reducible, we say that $E$ is {\em reduced} and write $E=E_{\red}.$ If $E$ is reducible, the process of reduction terminates after finitely many steps producing a {\em reduction} $E_{\red}$ of $E$. If $E$ is a graph with $l_i=\omega$ for all $i\in n,$ we say that $E$ is $ji$-reduced for every $j\in m$ and $i\in n.$ We also say that $E$ is {\em reduced} and write $E=E_{\red}.$ Thus, the case $l_i=\omega$ for all $i\in n$ is the only case when a graph is {\em both reduced and reducible}.

We are interested in one particular type of a reduced graph -- a graph reduced only with respect to full revolutions of exits. If $m>0,$ $n>0,$ and if $L$ is the least common multiple of $m$ and $n$, let $N_{jii'}$ be the number of $i'$-tails added to $w_{i'}$ when a $ji$-exit is moved $L$-times. An explicit formula for $N_{jii'}$ can be obtained using formula (\ref{equation_i-tails_n>0}) but we do not display it since we do not use it.

We repeat the process of defining a reducible and a reduced graph, but by considering reduction of exits only for a full $L$-revolution instead of a single exit move. In particular, if $E$ is such that there are $j\in m$ and $i\in n$ such that $a_{ji}\neq 0$ and such that
$l_{i'}\geq N_{jii'}$
holds for every $i'\in n$, then $E$ is {\em $L,ji$-reducible}. We say that $E$ is {\em $L$-reducible} if it is $L, ji$-reducible for some $i\in n, j\in m.$

Let $E$ be a graph in which $l_i$ is finite for at least one $i.$ If $E$ is not $L$-reducible, then $E$ is {\em $L$-reduced}. If $E$ is $L$-reducible, the process of reduction terminates after finitely many steps and it produces an $L$-reduced graph
which we refer to as an
{\em $L$-reduction} of $E$. If $E$ is a graph with $l_i=\omega$ for all $i,$ we say that $E$ is {\em $L$-reduced}. We can unify the two cases by stating that $E$ is $L$-reduced if and only if for every $j\in m$ and $i\in n,$ if $E_{\red, L, ji}$ is defined, then $E\cong E_{\red, L, ji}.$

Let $E$ be the graph below where $l_0$ and $l_1$ countable cardinalities. This graph is reduced if and only if both $l_0$ and $l_1$ are in $\{0, \omega\}.$ This is because if $0<l_0<\omega,$ then $E$ can be $01$-reduced and, if $0<l_1<\omega$ then $E$ can be $00$-reduced. Two consecutive moves of any of the exits create one 0-tail and one 1-tail, so $E$ is 2-reduced if and only if at least one of $l_0$ and $l_1$ is in $\{0, \omega\}.$
\[
\xymatrix{\bullet_{v_1}\ar@/^/[d]&\bullet\ar@/^/[d]&\bullet\ar[l]_{(l_1)}\\
\bullet_{v_0}\ar[r]\ar[ur]\ar@/^/[u]&\bullet\ar@/^/[u]&\bullet\ar[l]_{(l_0)}}
\]

Next, let $E$ be a graph with an infinite-emitter. If there is $F$ such that $F\to_1 E$ and $E$ is obtained by a move of a $0i$-exit, we say that $E$ is $0i$-reducible. We define a {\em $0i$-reduction,} {\em $0i$-reducibility,} and {\em being reduced} analogously as for $m>0$. Instead of $L$-reductions, we consider reductions with respect to $0i$-exits such that $a_{0i}=\omega$ and we call such reductions {\em $\omega$-reductions}. Thus, $E$ is $\omega$-reducible if there is a graph $F$ such that $F\to_1 E$ holds and it is a move of a $0i$-exit with $a_{0i}=\omega.$
We say that $E$ is {\em $\omega$-reduced} if $E$ is either not $\omega$-reducible or if it is isomorphic to any of its $\omega$-reductions.
For example, let $E$ be an infinite-emitter-to-sink graph with quotient consisting of a single vertex, with the infinite spine length, and such that $a_{0i}=a_{0i}'=\omega$ for all $0\neq i\in \omega$ and with $a_{00}<\omega.$
Then $E$ is reduced if any only if $l_i\in \{0, \omega\}$ for $i\geq 0$ and $E$ is $\omega$-reduced if and only if  $l_i\in \{0, \omega\}$ for $i>0.$

A cycle-to-sink graph $E$ is {\em $ji$-reducible} if $F\to_1 E$ for a graph $F$ such that $E$ is obtained by moving a $ji$-exit of $F$ where $j\in m$ and $i$ is less than or equal to the spine length of $F$.
We define the concepts of a {\em $ji$-reduction,} of {\em $ji$-reducibility} and of {\em being reduced} analogously as when $n>0$.

If $k$ is the spine length of  a cycle-to-sink graph $E$, and if all exits except those with range $w_k$ are moved until they end in $w_k$, then the spine length of the resulting graph is still $k$. We are interested in graphs reduced up to the exit moves which would increase the spine length. In particular, we say that $E$ is {\em spine-reducible} if there is $F$ and $l$ such that $F\to_l E$ and $F$ has shorter spine length.
If $E$ is not spine-reducible, we say that $E$ is {\em spine-reduced}.

For example, let us consider the class of graphs with their quotients being a single loop and with $H_t$ having the spine of infinite length with infinitely many $i$-tails for all $i\in \omega.$ Let $E_k$ be one such graph with the connecting matrix $[a_{0i}],$ $i=0, \ldots, k$  and with $k>0.$ Let $E_{k-1}$ be another such graph with the same quotient and the tail graph as $E_k$ and with the first $k-2$ entries of the connecting matrix the same, with the spine length $k-1$ and with $a_{0(k-1)}+a_{0k}$ at the $k-1$-spot of the connecting matrix. The graph $E_k$ is the result of moving $a_{0k}$-many of $0(k-1)$-exits of $E_{k-1}.$ We can continue this process to obtain $E_{k-2}$ so that $E_{k-2}\to_{a_{k-1}} E_{k-1}
$ and so on until $E_0$ is obtained. The spine length of  $E_0$ is zero, its connecting matrix is $\left[\sum_{i=0}^k a_{0i}\right]_{1\times 1},$ and $E_0$ is spine-reduced.

\subsection{Tail cutting}\label{subsection_tail_cutting}
Reducing a graph limits the number of tails if the number of tails obtained by an exit move is finite. To be able to control the infinite case also, we consider another type of the tail number minimization. The following example illustrates this.

\begin{example}
Let $E, E'$ and $F$ be the three graphs below. The graph $E'$ is obtained by moving the only exit of $E.$ The graph $E'$ is {\em also} obtained by moving the (only) exit of $F.$ All three graphs are reduced.
\[\xymatrix{\bullet\ar[r]^{(\omega)}&\bullet\ar[r]\ar@(lu,ru)&\bullet\ar@(lu,ru)&}\hskip2cm
\xymatrix{\bullet\ar[r]^{(\omega)}&\bullet\ar[r]\ar@(lu,ru)&\bullet\ar@(lu,ru)&\bullet\ar[l]_{(\omega)}}
\hskip2cm
\xymatrix{\bullet\ar[r]^{(\omega)}&\bullet\ar[r]\ar@(lu,ru)&\bullet\ar@(lu,ru)&\bullet\ar[l]_{(1)}}\]

Let $v_0$ be the exit-emitter in all three graphs and let $w_0$ be the vertex of the terminal cycle. Let $\phi_E:E\to E'$ and $\phi_F: F\to E'$ be the exit moves. They both induce $\POM^D$-isomorphisms given by $[w_0]\mapsto [w_0]$ and $[v_0]\mapsto [v_0]+t[w_0]=[v_0]+[w_0]$ with the inverses such that $[v_0]\mapsto t[v_0]+t[w_0]=t[v_0]+[w_0].$ So, the $\POM^D$-isomorphism $f=\ol\phi_E^{-1}\ol\phi_F:M_F^\Gamma\to M_E^\Gamma$ has $[v_0]\mapsto [v_0]$ and, hence, it is the identity map.
If $g$ is the exit, $\{g_n\mid n\in \omega\}$ are the edges originating at sources  and ending in $v_0,$   $u_n=\so(g_n),$ $f_1$ is the 0-tail of $F,$ and $w_1=\so(f_1),$ then a graded $*$-isomorphism $\iota_{\cut}$ which realizes $f$ can be obtained by
\[f_1\mapsto g_0gd^*,\;\;\;\;g_n\mapsto g_n-g_ngg^*+g_{n+1}gg^*,\] by
$w_1=f_1f_1^*\mapsto \iota_{\cut}(f_1)\iota_{\cut}(f_1)^*=g_0gg^*g_0^*$ and
$u_n=g_ng_n^*\mapsto  \iota_{\cut}(g_n)\iota_{\cut}(g_n)^*=u_n-g_ngg^*g_n^*+g_{n+1}gg^*g_{n+1}^*$ and by mapping all other vertices and edges of $F$ identically onto themselves.
It is direct to check that such a correspondence extends to a graded $*$-monomorphism with the inverse $\iota_{\cut}^{-1}$ such that
\[g_0\mapsto g_0-g_0gg^*+f_1dg^*\;\mbox{ and }g_n\mapsto g_n-g_ngg^*+g_{n-1}gg^*\mbox{ for }n>0\]
and $u_0=g_0g_0^*\mapsto \iota_{\cut}^{-1}(g_0)\iota_{\cut}^{-1}(g_0)^*=u_0-g_0gg^*g_0^*+w_1$ and $u_n=g_ng_n^*\mapsto  \iota_{\cut}^{-1}(g_n)\iota_{\cut}^{-1}(g_n)^*=u_n-g_ngg^*g_n^*+g_{n-1}gg^*g_{n-1}^*$ for $n>0.$
\label{example_number_of_tails}
\end{example}

In examples like these,
if $E'$ is obtained from an exit move which increased the number of $i$-tails by $\omega$ for some $i,$ we would like to highlight the fact that $E$ is obtained from $E'$ by the inverse of such exit move which is decreasing the number of tails by the {\em maximal} possible number.
In contrast, the inverse of the exit move producing $F$ from $E'$ is not such. So, we would like $E$ to be the cut-form of both $F$ and $E'.$ Note that the graphs $E$ and $F$ have isomorphic quotients and equal connecting matrices, so only the number of tails differentiates them.

If graphs have a sink, the tail cutting process requires some additional consideration so we consider graphs with a sink first.
We let $E_{\cut}=E$ if $E/H$ is finite. Let $k_t$ be the length of the spine of the tail graph and let $C(i)$ stands for the following statement for $i>0$.
\begin{itemize}
\item[$C(i)$] There are $j,j'\in m, i'\in k_t,$ and $k',l\in\omega$ such that
$|\Pa^{v_j}_l|=\omega,$$a_{j'i'}\neq 0,$ and $l+k'|c|+d(j,j')+i'=i$
\end{itemize}
where $d(j,j')$ stands for the length of the shortest path in $c$ from $v_j$ to $v_j'.$ If $C(i)$ holds, we say that $i$-tails are {\em cuttable}. Note that here we are treating the edge of the spine from $w_{i+1}$ to $w_i$ as an $i$-tail. This will enable us to decrease the length of the spine of the tail graph if $(k_t-1)$-tails are cuttable.

If $m=0,$ the condition $C(i)$ simplifies to:
\begin{itemize}
\item[] There are $i'\in k_t,$ and $l\in\omega$ such that
$|\Pa^{v_0}_l|=\omega,\;\;$$a_{0i'}\neq 0,\;$ and $\;l+i'=i.$
\end{itemize}

The following example illustrates our approach to the type of the tail cutting which decreases the spine length of the tail graph for 2-S-NE graphs with a sink.
\begin{example}
Let $E_1$, $E_2$ and $E_{\omega}$ be the three graphs below. The tail graph is the only distinguishing element of the three graphs and their $\Gamma$-monoids are indistinguishable.
\[\xymatrix{\bullet\ar[r]^{(\omega)}&\bullet\ar[r]\ar@(lu,ru)&\bullet^{w_0}&\bullet^{w_1}\ar[l]}\hskip3cm
\xymatrix{\bullet\ar[r]^{(\omega)}&\bullet\ar[r]\ar@(lu,ru)&\bullet^{w_0}&\bullet^{w_1}\ar[l]&\bullet^{w_2}\ar[l]}\]
\[\xymatrix{\bullet\ar[r]^{(\omega)}&\bullet\ar[r]\ar@(lu,ru)&\bullet^{w_0}&\bullet^{w_1}\ar[l]&\bullet^{w_2}\ar[l]&\bullet^{w_3}\ar[l]&\ar@{.>}[l]}\]

The spine length of the quotient graphs is one. We consider one edge of the quotient graph $g_t$ to be the spine of the quotient graph and we let $u_t=\so(g_t).$ Let $u_n$ be the other sources of the quotient graph and $g_n$ be the edges for $n\in \omega$. Let $e$ be the loop and $g$ the exit in each graph and let $f_n$ be the edge which $w_n$ emits in each graph for $n=1$ in $E_1,$ and $n=1,2$ in $E_2,$ and $n>0$ in $E_{\omega}.$

In this case, all $i$-tails for $i>0$ are cuttable in $E_2$ and $E_\omega.$ These graphs have no tails for $i>0$ but we can shorten the tail graph spine. The porcupine graphs of all three graphs are 1-S-NE equivalent to the graph below and the existence of a bijection of the sets $P^{w_0}$ of these graphs enables us to define the tail cutting maps.
\[\xymatrix{\ar@{.>}[r]&\bullet\ar[r]&\bullet\ar[r]&\bullet\ar[r]&\bullet^{w_0}\\
&\bullet\ar[u]^{(\omega)}&\bullet\ar[u]^{(\omega)}&\bullet\ar[u]^{(\omega)}&\bullet^{w_1}\ar[u]}\]

We obtain one such bijection by matching $f_2f_1$ of $E_2$ with $g_0g$ of $E_1$ and
$g_ng$ of $E_2$ with $g_{n+1}g$ of $E_1$ for $n\geq 0.$
This
enables to define
the tail-cutting operation $\iota_{\cut}: E_2\to_{\cut} E_1$ by mapping the vertices $v_0, w_0, w_1,$ and $u_t$ and the edges $e,g, f_1,$ and $g_t$ identically on themselves and by mapping the edges $f_2$ and $g_n$ as follows.
\[f_2=f_2f_1f_1^*\mapsto g_0gf_1^* \;\mbox{ and }\;g_n\mapsto g_n-g_ngg^*+g_{n+1}gg^*\]
We let $w_2=f_2f_2^*\mapsto \iota_{\cut}(f_2)\iota_{\cut}(f_2)^*=g_0gg^*g_0^*$ and
$u_n=g_ng_n^*\mapsto \iota_{\cut}(g_n)\iota_{\cut}(g_n)^*=u_n-g_ngg^*g_n^*+g_{n+1}gg^*g_{n+1}^*.$ With these definitions, $\iota_{\cut}(g_ng)=g_{n+1}g$ for $n\geq 0.$
We define the inverse $\iota_{\cut}^{-1}: E_1\to_{\cut, -1} E_2$ also similarly as in Example \ref{example_number_of_tails} and it is such that
\[g_0\mapsto g_0-g_0gg^*+f_2f_1g^*,\;\;\;\;g_n\mapsto g_n-g_ngg^*+g_{n-1}gg^*\mbox{ for }n>0\]
which determines the images of $u_0$ and $u_n, n>0$ to be $u_0-g_0gg^*g_0^*+w_2$ and $u_n-g_ngg^*g_n^*+g_{n-1}gg^*g_{n-1}^*$ for $n>0$ in order for (CK2) to hold. We have that
$\iota_{\cut}^{-1}( g_0g)=f_2f_1$ and
$\iota_{\cut}^{-1}(g_ng)=g_{n-1}g$ for $n>0.$
The maps $\ol\iota_{\cut}$ and $\ol{\iota_{\cut}^{-1}}$ are identities since $[g_{n}gg^*g_{n}^*]=t^2[w_0]=[g_lgg^*g_l^*]$ for any $n, l\geq 0$.

Let us consider $E_\omega$ and $E_1$ and the bijection of paths ending at $w_0$ in the two graphs given by matching $f_2f_1$ with
$g_0g$, $f_3f_2f_1$ with $g_1eg,$ $f_4f_3f_2f_1$ with $g_2e^2g,$ and so on.
This bijection gives rise to the tail-cutting operation $\iota_{\cut}: E_\omega\to_{\cut} E_1$  which map the vertices $v_0, w_0, w_1$ and the edges $e,g, f_1$ identically on themselves and which maps the remaining edges by the formulas below. For those formulas, we let $e^0$ stand for $v_0$ and $e^{-n}$ stand for $(e^*)^n$ for $n>0$. We also shorten the notation of $e^{n}gg^*e^{-n}$ to $e_n$ for $n>0$ and we let $e_0=gg^*$ so that the idempotents $e_n$ are mutually orthogonal. With this notation, $\iota_{\cut}$ is defined so that
\[f_n=(f_n\ldots f_1)(f_{n-1}\ldots f_1)^*\mapsto g_{n-2}e^{n-2}g(g_{n-3}e^{n-3}g)^*=g_{n-2}ee_{n-3}g_{n-3}^*\;\;\mbox{ for }n>2,\]\[f_2=f_2f_1f_1^*\mapsto g_0gf_1^*,\;\mbox{ and }g_n\mapsto
g_n-\sum_{j=0}^{n}g_{n}e_j+\sum_{j=0}^{n}g_{n+1}e_j\mbox{ for }n\geq 0.
\]
In order for (CK2) to hold for the images, we let $w_n=f_nf_n^*\mapsto \iota_{\cut}(f_n)\iota_{\cut}(f_n)^*= g_{n-2}e_{n-2}g_{n-2}^*, n\geq 2$ and
$u_n=g_ng_n^*\mapsto\iota_{\cut}(g_n)\iota_{\cut}(g_n)^*=u_n-\sum_{j=0}^{n}g_{n}e_jg_n^*+\sum_{j=0}^{n}g_{n+1}e_jg_{n+1}^*.$ With these definitions, we have that $\iota_{\cut}(g_ne_l)=g_{n+1}e_l$ for $n\geq 0$ and $l\leq n$ and $\iota_{\cut}(g_ne_l)=g_{n}e_l$ for $l>n$. We aim to define the tail-creating operation $\iota_{\cut}^{-1}: E_1\to_{\cut, -1} E_\omega$, inverse to $\iota_{\cut},$ so that
$\iota_{\cut}^{-1}(g_ne_l)$ is $g_{n}e_l$ for $l>n,$ $g_{n-1}e_l$ for $l<n,$ and $f_{n+2}\ldots f_1g^*e^{-n}$ for $l=n.$ We achieve that by letting
\[g_0\mapsto g_0-g_0e_0+f_2f_1g^*,\;\; \;\;g_n\mapsto g_n-\sum_{j=0}^{n}g_{n}e_j+\sum_{j=0}^{n-1}g_{n-1}e_j+f_{n+2}\ldots f_1g^*e^{-n}\mbox{ for }n>0.\]
It is direct to check that the maps extend to mutually inverse graded $*$-homomorphisms.

If $E_{\omega,\omega}$
is the graph below, its algebra is still graded $*$-isomorphic to that of $E_1.$\[\xymatrix{\bullet\ar[r]^{(\omega)}&\bullet\ar[r]\ar@(lu,ru)&\bullet^{w_0}&\bullet^{w_1}\ar[l]&\bullet^{w_2}\ar[l]&\bullet^{w_3}\ar[l]&\ar@{.>}[l]\\&&&\bullet\ar[u]^{(\omega)}&\bullet\ar[u]^{(\omega)}&\bullet\ar[u]^{(\omega)}&}\]
We can choose to map $f_2f_1$ to $g_0g,$ $f_3f_2f_1$ to $g_2eg,$ $f_4f_3f_2f_1$ to $g_4e^2g$ and, continuing this trend,
$f_{l}\ldots f_1$ to $g_{2(l-2)}e^{l-2}g=g_{2l-4}e^{l-2}g$ for $l\geq 2.$
If $w_{(l,n)}$ are the sources of tails $f_{(l,n)}$ ending at $w_l$ for $l\geq 0,$
we can choose to map
$f_{(0,n)}f_1$ to $g_{2n}gf_1^*$ and
\[f_{(l,n)}f_l\ldots f_1\;\;\mbox{ to }\;\;g_{2(l-2)+2n+2}e^{l-1}g=g_{2l+2n-2}e^{l-1}g\mbox{ for }l\geq 2\mbox{ and }n\geq 0.\]
Let $\iota_{\cut}$ map $v_0, w_0, w_1, u_t, e, g, f_1,$ and $g_t$ identically onto themselves and let
\[f_l\mapsto g_{2l-4}ee_{l-3}g_{2l-6}^*\;\mbox{ for }l>2,\;\;f_2\mapsto g_0gf_1^*,\;\;f_{(l,n)}\mapsto g_{2l+2n-2}ee_{l-2}g_{2l-4}^*\;\mbox{ for }l\geq 2, n\geq 0\mbox{ and }\]
\[
g_l\mapsto
g_l-\sum_{j=0}^{\lfloor\frac{l}{2}\rfloor}g_{l}e_j+\sum_{j=0}^{\lfloor\frac{l}{2}\rfloor}g_{2l+1}e_j\mbox{ for }l\geq 0
\]
where $\lfloor\frac{l}{2}\rfloor$ is the floor function returning the value $\frac{l}{2}$ if $l$ is even and $\frac{l-1}{2}$ if $l$ is odd. As in the previous examples, we want (CK2) to hold for the images of the sources of the edges with already defined images, so we let $w_l\mapsto g_{2l-4}e_{l-2}g_{2l-4}^*$  for $l\geq 2,$ $w_{(l,n)}\mapsto g_{2l+2n-2}e_{l-1}g_{2l+2n-2}^*,$ and $u_l\mapsto u_l-\sum_{j=0}^{\lfloor\frac{l}{2}\rfloor}g_{l}e_jg_l^*+\sum_{j=0}^{\lfloor\frac{l}{2}\rfloor}g_{2l+1}e_jg_{2l+1}^*.$
It is direct to check that $\iota_{\cut}$ extends to a graded $*$-monomorphism. The inverse $\iota_{\cut}^{-1}$ can be obtained by considerations
of the $\iota_{\cut}$-images of the elements of the form $g_le_n$ for $l,n\geq 0$ and have that $\iota_{\cut}^{-1}$ maps $g_le_n$ to $g_le_n$ if $n>\lfloor\frac{l}{2}\rfloor,$ to $g_ke_n$ if $l=2k+1$ and $n\leq k,$ to $f_{(n+1, k-n)}f_{n+1}\ldots f_1g^*e^{-n}$ if $l=2k$ and $n<k,$ and to $f_{k+2}\ldots f_1g^*e^{-k}$ if $l=2k$ and $n=k.$ This enables us to define $\iota_{\cut}^{-1}(g_l)$ by the condition  $g_l=\iota_{\cut}^{-1}\iota_{\cut}(g_l)=\iota_{\cut}^{-1}(g_l-\sum_{j=0}^{\lfloor\frac{l}{2}\rfloor}g_{l}e_j+\sum_{j=0}^{\lfloor\frac{l}{2}\rfloor}g_{2l+1}e_j)$ so that
\[\iota_{\cut}^{-1}(g_l)=g_l+\sum_{j=0}^{\lfloor\frac{l}{2}\rfloor}\iota_{\cut}^{-1}(g_{l}e_j)-\sum_{j=0}^{\lfloor\frac{l}{2}\rfloor}\iota_{\cut}^{-1}(g_{2l+1}e_j).\]
Note that the edge $f_1$ cannot be cut from any of the graphs since the $0$-tails are not cuttable.
\label{example_shortening_tails}
\end{example}

The approach to obtaining the cut maps and their inverses in the previous example generalizes to arbitrary graphs. We continue to consider 2-S-NE graphs with one terminal cycle and with sinks.

\begin{definition}
Let $l_i$ be the number of $i$-tails of a 2-S-NE graph $E$ with one terminal cycle and a sink for $i<k_t.$ We define {\em the cut form $E_{\cut}$} of $E$
so that its quotient, $d$ and the $c$-to-$d$ part are the same as for $E$ and the number of tails is specified as follows. For $i<k,$ $E_{\cut}$ has $l_i$ $i$-tails if $i$-tails are not cuttable and it has zero $i$-tails otherwise. For the rest of the tails, if any, we consider the following cases.
\begin{enumerate}
\item There is no $k_0\in k_t, k_0\geq k$ such that $i$-tails are cuttable for all $i\in k_t, i\geq k_0.$ Then, the number of $i$-tails is $l_i$ if $i$-tails are not cuttable and zero otherwise.

\item There is $k_0\in k_t, k_0\geq k$ such that $i$-tails are cuttable for all $i\in k_t, i\geq k_0.$ The length of the spine graph of $E_{\cut}$ is $k_0$ in this case. For $ i<k_0,$ the number of $i$ tails is $l_i$ if $i$-tails are cuttable and zero otherwise.
\end{enumerate}

We say that a graph $E$ is {\em cut} or that it is in its {\em cut form} if $E=E_{\cut}.$
\label{definition_cut_form}
\end{definition}

For example, for $E_2,$ $E_\omega,$ and $E_{\omega,\omega}$ of the previous example, $k_0=1$ and the tail spine length of $E_2, E_\omega,$ and $E_{\omega,\omega}$ can be reduced to one.

\begin{proposition}
If $E$ is a graph with a sink, the map $\iota_{\cut}: E\to_{\cut} E_{\cut}$ extends to a graded $*$-isomorphism of the corresponding algebras.
\label{proposition_cut_form}
\end{proposition}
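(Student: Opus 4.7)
The plan is to imitate the construction illustrated in Examples \ref{example_number_of_tails} and \ref{example_shortening_tails}, absorbing cut tails into the infinite supply of paths guaranteed by the cuttability condition $C(i)$. The key combinatorial step is to set up a bijection $\sigma$ between the paths ending at $w_0$ in $E$ and those in $E_{\cut}$; then the algebra isomorphism is forced on vertices and edges by the formula ``replace each cut tail by its $\sigma$-image'' and the axioms are checked by a routine but bookkeeping-heavy computation.

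First I would fix, once and for all, witnesses for each cut index $i$: by $C(i)$ there exist $j,j'\in m$, $i'\in k_t$, $l,k'\in\omega$ with $|\Pa^{v_j}_l|=\omega$, $a_{j'i'}\neq 0$, and $l+k'|c|+d(j,j')+i'=i$. Choose an exit edge $g_i$ from $v_{j'}$ to $w_{i'}$ and a path $c_{jj'}$ on $c$ of length $d(j,j')$. Since $|\Pa^{v_j}_l|=\omega$, I would partition $\Pa^{v_j}_l$ into countably many countably infinite blocks indexed by (a)~the $i$-tails of $E$ being cut at each cuttable index, and (b)~in case~(2) of Definition~\ref{definition_cut_form}, the spine edges $w_{k_0}\leftarrow\cdots\leftarrow w_{k_t}$ being eliminated. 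Matching blocks with tails length-by-length via paths of the shape $p\,c_{jj'}\,c^{k'}\,g_i\,(\text{spine to }w_0)$ defines the bijection $\sigma$, which is the identity on paths not involved in the cut.

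Next I would define $\iota_{\cut}$ on $E^0\cup E^1$, generalizing the formulas in Example~\ref{example_shortening_tails}. The vertices and edges of $E/H$ not involved in cutting, of $d$, and of the $c$-to-$d$ part are mapped identically. For each vertex $v_0$ where paths of $\Pa^{v_j}_l$ are reassigned, the idempotents $e_n=e^n gg^\ast e^{-n}$ (for a distinguished loop-type structure produced by the infinite set $\Pa^{v_j}_l$) are pairwise orthogonal, and the image of each cut tail edge $f$ is defined to be a single monomial of the form (edge of $E_{\cut}$)$\cdot$(matrix unit $e_n$)$\cdot$(ghost path), where $n$ is determined by which block $\sigma$ places $f$ into. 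For each edge $g$ of $E$ whose source receives a rerouted cut tail, $\iota_{\cut}(g)$ is defined by the ``swap'' $g\mapsto g-\sum g\,e_n + \sum g'\,e_n$ seen in the examples, where $g'$ is the displaced edge. The images of sources and ghost edges are then forced by $\iota_{\cut}(v)=\iota_{\cut}(f)\iota_{\cut}(f)^\ast$ and $\iota_{\cut}(f^\ast)=\iota_{\cut}(f)^\ast$.

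With $\iota_{\cut}$ so defined, I would verify the axioms (V), (E1), (E2), (CK1), (CK2) directly: (V) and (E1), (E2) are immediate from the construction and from orthogonality of the $e_n$; (CK1) reduces to $e_n e_{n'}=\delta_{nn'}e_n$ and the fact that $\sigma$ preserves lengths exactly, while (CK2) at a ``swap'' vertex is a finite computation in which the swapped summands cancel. The Universal Property then produces a graded $\ast$-homomorphism $\iota_{\cut}:L_K(E)\to L_K(E_{\cut})$, and since $\iota_{\cut}(v)\neq 0$ for all $v\in E^0$, the Graded Uniqueness Theorem \cite[Theorem~2.2.15]{LPA_book} yields injectivity. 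The symmetric construction using $\sigma^{-1}$ produces $\iota_{\cut}^{-1}$, and $\iota_{\cut}\circ\iota_{\cut}^{-1}=\mathrm{id}=\iota_{\cut}^{-1}\circ\iota_{\cut}$ on generators completes the proof. The main obstacle I expect is case~(2) of Definition~\ref{definition_cut_form}: shortening the tail spine from length $k_t$ to length $k_0$ forces one to choose the blocks of $\Pa^{v_j}_l$ \emph{coherently} for tails at every level $i\geq k_0$ simultaneously, so that spine edges and tail edges do not compete for the same paths. This is the step where the precise formulation of $C(i)$ (and the freedom to pick a single $j$ with $|\Pa^{v_j}_l|=\omega$ serving all cut indices at once) is essential.
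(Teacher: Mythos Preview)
Your plan is essentially the paper's own approach: fix witnesses for each cuttable index, exploit the infinitely many paths in some $\Pa^{v_j}_l$ to absorb the deleted tails, define $\iota_{\cut}$ by a ``swap'' formula on the affected edges, and verify the axioms so that the Universal Property and Graded Uniqueness yield the graded $*$-isomorphism. Two technical points deserve sharpening, however.

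First, your orthogonal idempotents should not be the loop-based $e^ngg^*e^{-n}$ of the examples; those are special to $m=1$. The paper takes, for each cuttable $i$, the path $q_i=p_{l_i-1}c_{j_i}^{k_i}c_{j_ij_i'}g_{j_i'i_i'}d_{i_i'}$ built from the chosen witnesses and sets $e_i=q_iq_i^*$. Orthogonality of the $e_i$ comes from the uniqueness of the witness quintuple, not from a loop.

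Second, your closing remark that one may ``pick a single $j$ with $|\Pa^{v_j}_l|=\omega$ serving all cut indices at once'' is not how the argument goes, and need not be possible. Different cuttable indices $i$ may require different pairs $(j_i,l_i)$; the paper records all of these in a set $JL$ and, for each $(j,l)\in JL$, enumerates the indices $i$ using that pair as $C_{j,l}=\{i_{j,l,0},i_{j,l,1},\ldots\}$. The swap at the edge $g_{j,l,n'}$ then reads
\[
g_{j,l,n'}\ \longmapsto\ g_{j,l,n'}-\sum_{i\in C_{j,l,\leq\lfloor n'/2\rfloor}}g_{j,l,n'}e_i+\sum_{i\in C_{j,l,\leq\lfloor n'/2\rfloor}}g_{j,l,2n'+1}e_i,
\]
and the floor-function cutoff is precisely what keeps each such sum finite while simultaneously (via the even/odd splitting $n'\mapsto 2n'+1$ and $2n_i+2n'$) freeing up infinitely many ``slots'' $g_{j,l,2n_i+2n'}q_i$ to receive the cut tails $f_{(i,n')}d_i$ \emph{and}, in case~(2), the eliminated spine edges. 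Your block-partition idea is equivalent in spirit, but you must ensure the resulting swap formulas are finite sums at every edge; the paper's enumeration via $C_{j,l,\leq\lfloor n'/2\rfloor}$ is one clean way to do this, and it handles cases~(1) and~(2) uniformly.
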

\begin{proof}
We fix some notation first. Since $E$ has a sink, $|d|=0,$ so we can use ``$n$'' for other values throughout the proof.

For $i\in k_t,$ let $l_i$ be the cardinality of $i$-tails, for $i\leq k_t,$ let $w_i$ be the vertices on the spine of $H_t$ emitting the edge $f_i$ on the spine and let $f_{(i, n)}$ be the $i$-tails for $n\in l_i$ and $w_{{(i,n)}}$ be their sources. Let $C\subseteq k_t$ be the set of all $i$ such that $i$-tails are cuttable.

Recall the definitions of the sets $\Pa^{v_j}_{k},\;j\in m$
and $\Pa^{v_j}=\bigcup_{0<k\in\omega} \Pa^{v_j}_k$
and graphs $E^{v_j}, j\in m,$
from section \ref{subsection_relative_for_n=1}. We can assume that $E$ is a graph such that $E^{v_j}$ is in the 1-S-NE canonical form for all $j\in m$ (we use this construction in the next section also).

For $j\in m$ if $m>0$ and $j=0$ if $m=0,$ let $k_j$ be the spine length of
$E^{v_{j}}.$ For $l\in k_j, l>0,$ there is a unique path $p_{j,l-1}$ of length $l-1$ ending at $v_j$ by the definition of 1-S-NE canonical form of a 1-S-NE graph with a sink. Let $g_{j,l,n}, n\in |\Pa^{v_j}_l|-1$ be the edge of
$E^{v_j}$ ending in $\so(p_{j, l-1})$ and originating at a source $u_{j, l, n}=\so(g_{j,l,n}).$ So,  $g_{j,l,n}, n\in |\Pa^{v_j}_l|-1$ are the tails to $\so(p_{j, l-1})$ and the remaining element of the set of the first edges of paths in $\Pa^{v_j}_l$ is on the spine of $E^{v_j}$.

Let $c_{j}$ be the element of $[c]$ starting at $v_{j},$ let $c_{jj'}$ be the shortest path from $v_{j}$ to $v_{j'},$ let  $g_{j'i'}$ be any $j'i'$ exit if $a_{j'i'}\neq 0,$ and let $d_{i}$ be the part of the tail graph spine from $w_{i}$ to $w_0.$

For $i\in C$, there are $j_i, l_i, k_i, j_i',$ and $i_i'$ such that $|\Pa_{l_i}^{v_{j_i}}|=\omega, a_{j_i'i_i'}\neq 0,$ and $l_i+k_i|c|+d(j_i, j_i')+i_i'=i.$
This enables us to map
$f_{i+1}d_{i}$ if $i\leq n_0$ in case (2) and
$f_{(i, n)}d_{i},$ in any case, to
\[g_{j_i, l_i, \sigma(n)}p_{l_i-1}c_{j_i}^{k_i}c_{j_ij'_i}g_{j'_ii'_i}d_{i'_i}\] where $\sigma(n)$ is appropriately chosen value we specify in the rest of the proof and which depends on cases (1) and (2) and on whether $l_i$ is finite or not. No matter our choice of $\sigma(n),$ the lengths of the corresponding paths match because $l_i+k_i|c|+d(j_i, j_i')+1+i_i'=i+1.$

Let us consider the case $m>0$
first. Let $JL=\{(j,l)\in m\times k_j\mid j=j_i, l=l_i$ for some $i\in C\}$ and, for $(j,l)\in JL,$ let \[C_{j,l}=\{i\in C\;\mid\; j=j_i,\; l=l_i\}.\]
Note that any of $C,$ $JL,$ and $C_{j,l}$ can be infinite.
For any $(j,l)\in JL,$ let us index the elements of $C_{j,l}$ as $i_{j,l,0}, i_{j,l,1},\ldots $ where the list ends after finitely many steps if the cardinality of $C_{j,l} $ is finite and it is infinite otherwise.
The indexing of $C_{j,l}$ enables us to assign a unique triple $(j,l,n)$ such that $(j,l)\in JL$ and $n\in |C_{j,l}|$ to any $i\in C$. We denote this correspondence by $i\mapsto (j_i, l_i, n_i)$ and its inverse by $(j,l,n)\mapsto i_{(j,l,n)}.$ Note that even when $C_{j,l}$ is infinite, for any $n\in\omega,$ the set \[C_{j,l,\leq n}=\{i\in C_{j,l}\mid n_i\leq n\}\] is finite. If $C_{j,l}$ is finite and $n\geq |C_{j,l}|$, then $C_{j,l, \leq n}=\{i_{j,l,0}, \ldots, i_{j,l,|C_{j,l}|-1}\}$. If $C_{j,l}$ is infinite,  $C_{j,l, \leq n}=\{i_{j,l,0}, \ldots, i_{j,l,n}\}$ for any $n\in\omega$.

We start to create the tail-cutting map $\iota_{\cut}: E\to_{\cut} E_{\cut}.$
Let us shorten
\begin{center}
$p_{l_{i}-1}c_{j_{i}}^{k_{i}}c_{j_{i}j'_{i}}g_{j'_{i}i'_{i}}d_{i'_i}$ to $q_i\;\;$ and $\;\;q_iq_i^*$ to $e_{i}.$
\end{center}
Since the quintuple $(l_i, j_i, k_i, j'_i, i'_i)$ is unique for $i\in C,$ the idempotents $e_i$ are  orthogonal to each other and $e_iq_i=q_i.$
With the introduced abbreviations, for $i\in C$ and $n'\in l_i,$ we define $\iota_{\cut}$ on paths of the form $f_{(i,n')}d_{i}$ by
\[f_{(i,n')}d_{i}\mapsto
\left\{
\begin{array}{ll}
g_{j_{i}, l_{i}, 2n_i+2n'}\;\;\,q_i&\mbox{ if (1) holds or if (2) holds and $i<k_0$ }\\
g_{j_{i}, l_{i}, 2n_i+2n'+2}q_i&\mbox{ if (2) holds and $i\geq k_0.$}
\end{array}
\right.\]
If (1) holds, then we map $d_i$ to $d_i$ for any $i\in k_t.$ If (2) holds, then we map $d_i$ to $d_i$ for $i< k_0$ and, for $i=k_0+n, n\geq 0,$ we treat $f_{k_0+n+1}=f_{i+1}$ as a $k_0+n$ tail and let
\[d_{i+1}=f_{i+1}d_i\mapsto g_{j_{i}, l_{i}, 2n_i}q_{i},\] so this agrees with the formula for the image of $f_{(i,n')}d_i$ in the sense that it explains the lag of ``+2'' in the formula for the image of $f_{(i, n')d_i}$ the case that (2) and $i\geq k_0$ hold.

We extend $\iota_{\cut}$ to $E^0\cup E^1$ by the formulas which ensure that the axioms hold as follows. We let $\iota_{\cut} $ be the identity on the vertices and edges of $c, d,$ any $c$-to-$d$ path,  any non-cuttable $i$-tails for $i\in k_t,$ the spines of $E^{v_j}$ for $j\in m$ and on the tails of $\so(p_{j, l-1})$ and their sources for all $j$ and $l$ such that $(j,l)\notin JL.$ For the remaining tails, we let
\[f_{(i,n')}=f_{(i,n')}d_id_i^*\mapsto \iota_{\cut}(f_{(i,n')}d_i)\iota_{\cut}(d_i)^*, \;\;\;
w_{(i,n')}=f_{(i,n')}f_{(i,n')}^*\mapsto \iota_{\cut}(f_{(i,n')})\iota_{\cut}(f_{(i,n')})^*\;\;\mbox{ for } n\in l_i\]
If (2) holds and $i\geq k_0,$   we let
$f_{i+1}=d_{i+1}d_{i}^*
\mapsto
\iota_{\cut}(d_{i+1})\iota_{\cut}(d_{i})^*$ and
\[w_{i+1}
\mapsto \iota_{\cut}(f_{i+1})\iota_{\cut}(f_{i+1})^*=
\iota_{\cut}(d_{i+1})\iota_{\cut}(d_{i})^*\iota_{\cut}(d_{i})
\iota_{\cut}(d_{i+1})^*=
\iota_{\cut}(d_{i+1})\iota_{\cut}(d_{i+1})^*.\] With these definitions, for $n\in\omega$,
\[\iota_{\cut}(f_{i+1})^*\iota_{\cut}(f_{i+1})=\iota_{\cut}(d_{i})\iota_{\cut}(d_{i+1})^*\iota_{\cut}(d_{i+1})\iota_{\cut}(d_{i})^*=\iota_{\cut}(d_{i})\iota_{\cut}(d_{i})^*=\iota_{\cut}(w_{i})\]
and one checks that
\[\iota_{\cut}(f_{(i, n')})^*\iota_{\cut}(f_{(i, n')})=\iota_{\cut}(d_i)\iota_{\cut}(f_{(i,n')}d_i)^*\iota_{\cut}(f_{(i,n')}d_i)\iota_{\cut}(d_i)^*=\iota_{\cut}(d_i)\iota_{\cut}(d_i)^*=\iota_{\cut}(w_{i}).\]

We define the images on the rest of the vertices and edges of $E^{v_j}$ next. For $n'\in \omega$ we consider the floor function $\lfloor \frac{n'}{2}\rfloor,$ returning the value $\frac{n'}{2}$ if $n'$ is even and $\frac{n'-1}{2}$ if $n'$ is odd (same as in Example \ref{example_shortening_tails}).
For $(j,l)\in JL$ and $n'>0,$ we let
\[g_{j,l, n'}\mapsto
g_{j,l, n'}-\sum_{i\in C_{j,l,\leq\lfloor\frac{n'}{2}\rfloor}}g_{j,l, n'}e_{i}
+\sum_{i\in C_{j,l, \leq \lfloor \frac{n'}{2}\rfloor}}g_{j,l, 2n'+1}e_{i},\] and $
u_{j,l, n'}=g_{j,l,n'}g_{j,l,n'}^*\mapsto
\iota_{\cut}(g_{j,l,n'})\iota_{\cut}(g_{j,l, n'})^*.
$
Defining $\iota_{\cut}(g^*)$ to be $\iota_{\cut}(g)^*$ ensures that the extension is a $*$-map. We have that $\iota_{\cut}(e_i)=e_i$ for every $i\in C.$ Thus, for $(j, l)\in JL$ and $i\in C$
\begin{center}
\begin{tabular}{lll}
$\iota_{\cut}(g_{j,l,n'}e_{i})$ is & 0 & if $i\notin C_{j,l}$ (and $g_{j,l,n'}e_{i}=0$ in this case also)\\
&$g_{j,l,n'}e_{i}$ &if $i\in C_{j,l}-C_{j,l,\leq\lfloor \frac{n'}{2}\rfloor}$\\
&
$g_{j,l,2n'+1}e_{i}$& if $i\in C_{j,l, \leq\lfloor\frac{n'}{2}\rfloor}$.
\end{tabular}
\end{center}

It is direct to check that the axioms hold so the above map extends to a graded $*$-homomorphism by the Universal Property. The extension is injective by the Graded Uniqueness Theorem.

We aim to define the inverse of $\iota_{\cut}$ next. First we consider the values of such inverse on the elements of the form $g_{j,l,n'}e_i$ for $(j,l)\in JL$ and $i\in C_{j,l}.$ For $i\in C_{j,l,\leq \lfloor \frac{2n'}{2}\rfloor}=C_{j,l, \leq n'},$ $i=i_{(j,k,n_i)}$ for some $n_i\leq n'.$ Hence, $n'=n_i+n$ for some $n\geq 0.$ This enables us to define $\iota_{\cut}^{-1}(g_{j,l,n'}e_{i})$ as
\begin{center}
\begin{tabular}{ll}
$g_{j,l,n'}e_{i}$ &if $i\in C_{j,l}-C_{j,l,\leq\lfloor \frac{n'}{2}\rfloor},$\\
$g_{j,l,n}\;e_{i}$ & if $n'=2n+1$ and $i\in C_{j,l,\leq \lfloor \frac{n'}{2}\rfloor},$\\
$f_{(i,n)}d_{i}q_i^*$ & if $i\in C_{j,l,\leq \lfloor \frac{n'}{2}\rfloor},$ $n'=2n_i+2n$ for $n\geq 0$ if (1) holds or (2) holds and $i<k_0,$\\
$f_{(i,n)}d_{i}q_i^*$ & if $i\in C_{j,l,\leq \lfloor \frac{n'}{2}\rfloor}$ and $n'=2n_i+2n+2$ for $n\geq 0$ if (2) holds and $i\geq k_0,$\\
$d_{i+1}q_i^*$ & if $i\in C_{j,l,\leq \lfloor \frac{n'}{2}\rfloor}$ and $n'=2n_i$ if (2) holds and $i\geq k_0.$
\end{tabular}
\end{center}
With $\iota_{\cut}^{-1}$ defined on $g_{(j,l, n')}e_i,$ we automatically have that
$g_{(j,l, n')}=\iota_{\cut}^{-1}(\iota_{\cut}(g_{(j,l, n'))})$ if we let
\[\iota_{\cut}^{-1}(g_{(j,l, n')})=g_{j,l, n'}+\sum_{i\in C_{j,l,\leq\lfloor\frac{n'}{2}\rfloor}}\iota_{\cut}^{-1}(g_{j,l, n'}e_{i})
-\sum_{i\in C_{j,l, \leq \lfloor \frac{n'}{2}\rfloor}}\iota_{\cut}^{-1}(g_{j,l, 2n'+1}e_{i})
\]
and one checks that $g_{(j,l, n')}=\iota_{\cut}(\iota_{\cut}^{-1}(g_{(j,l, n'))}).$ Finally, we let $u_{(j,l, n')}=g_{(j,l, n')}g_{(j,l n')}^*$ be mapped to $
\iota_{\cut}^{-1}(g_{(j, l, n')})\iota_{\cut}^{-1}(g_{(j, l n')})^*
$ for $(j,l)\in JL$ and $n'\in\omega$ which ensures that (CK2) holds for these images. It is direct to check that $\iota_{\cut}^{-1}$ and $\iota_{\cut}$ are inverse to each other.

If $m=0,$ the only possible value of $j$ is zero, so some definitions simplify and some formulas are shorter. For example, the elements $q_i$ become
$p_{l_{i}-1}g_{0i'_{i}}d_{i'_i},$ the set $JL$ becomes the set $L=\{l\in k_0\mid l=l_i$ for $i\in C\}$ and the sets $C_{j,l}$ become $C_l=\{i\in C\mid l_i=l\}$ for $l\in L.$ With these modifications, the definitions of
$\iota_{\cut}^{-1}$ and $\iota_{\cut}$ are analogous to those in the case $m>0.$ Note that if the spine length $k$ is infinite, the case (2) cannot happen (because $k=k_t$ in that case).
\end{proof}

Let us move on to the case when $E$ has a proper terminal cycle in which case we use $n$ only for $|d|>0.$ Let $E_{\cut}$ be the graph with the same quotient and $c$-to-$d$ part as $E$.
If $E/H$ is finite, $E_{\cut}=E.$ If $E/H$ is infinite, let $d(j,j')$ stands for the length of the shortest path in $c$ from $v_j$ to $v_j'$ and let $C(i)$ be the following statement.
\begin{itemize}
\item[$C(i)$] There are $j,j'\in m, i'\in n,$ and $k',l\in\omega$ such that
$|\Pa^{v_j}_l|=\omega,\;\;$$a_{j'i'}\neq 0,\;$ and $\;l+k'|c|+d(j,j')+n-i'=n-i(\mymod n).$
\end{itemize}

If $m=0,$ we have that $j=j'=|c|=0$ in condition $C(i)$ so it simplifies as follows.
\begin{itemize}
\item[] There are $i'\in n,$ and $l\in\omega$ such that
$|\Pa^{v_0}_l|=\omega,\;\;$$a_{0i'}\neq 0,\;$ and $\;l+n-i'=n-i(\mymod n).$
\end{itemize}

We say that the $i$-tails are {\em cuttable} if $C(i)$ holds or if there is $j\in m$ such that $\Pa_l^{v_j}$ is nonempty for infinitely many $l$ (equivalently, the spine of $E^{v_j}$ is infinite). In this last case, we also say that $i$-tails are cuttable for every $i\in n.$
Using the introduced terminology, if $l_i$ is the number of $i$-tails of $E$, then $E_{\cut}$ has $l_i$ tails if $i$-tails are not cuttable and it has zero $i$-tails otherwise.

Condition $C(i)$ for $n>0$ is completely analogous to condition $C(i)$ for $n=0$ except that $n-i$ and $n-i'$ in the last equation for $n>0$ are replaced by $i$ and $i'$ for $n=0$. This is due to the fact that the distance from $w_i$ to $w_0$ is $n-i$ if $n>0$ and it is $i$ if $n=0.$ Since $\mymod 0$ is the trivial relation, the presence of $\mymod n$ in the $n>0$ case matches the absence of $\mymod 0$ in the $n=0$ case.

If $n>0,$ then $n$ plays the role of both $k$ and $k_t$ and condition (2) never holds. If $E$ is a graph with $n>0$ and such that $E^{v_j}$ has finite spine length for all $j\in m,$ then the proof of Proposition \ref{proposition_cut_form} carries when using $n$ instead of $k_t.$ The proof is simplified since condition (1) is always in effect. This enables us to have the tail cutting operation $\iota_{\cut}:E\to_{\cut} E_{\cut}$ which cuts all cuttable tails of $E$ and produces $E_{\cut}$ for graph with proper terminal cycle.

Assume that $E$ is a graph with $n>0$ and such that $E^{v_j}$ has infinite spine length for some $j\in m.$ If $m>0$ and $L$ is the least common multiple of $m$ and $n,$ let $E'$ be the graph obtained by moving any exit of $E$ $L$ times. If $m=0,$ let $E'$ be the graph obtained by moving any $0i$-exit of $E$ with $a_{0i}=\omega.$ In each case, $E'$ has infinitely many $i$-tails for every $i\in n$ and $E_{\cut}$ is defined as the graph with zero $i$-tails for every $i\in n.$ Let $\phi_E$ be the move operation which transforms $E$ to $E'$ and let $\phi_{\cut}$ be the same
move but applied to $E_{\cut}$. It produces $E'$ also since infinitely many tails are created by such operation. Thus, we can let $\iota_{\cut}:E\to_{\cut} E_{\cut}$ be $\phi_{\cut}^{-1}\phi_E$ so that $\iota_{\cut}^{-1}=\phi_E^{-1}\phi_{\cut}.$

\subsection{Canonical quotients} \label{subsection_canonical_quotients}
Next we turn to the quotient $E/H$ of a 2-S-NE graph $E=E_{\direct}$ with one terminal cluster. We introduce some  terminology needed in the $m>0$ case. We say that $E$ is a {\em single exit-emitter} graph if only one vertex of $c$
emits exits. If $m=0,$ this trivially holds.

Let $v_0, \ldots, v_{m-1}$ be the vertices of the emitting cycle $c$ of $E$. We consider the following construction. First, we move the exits so that only $v_0$ emits them.
Then, we let $S_{0}$ be the 1-S-NE graph obtained by considering the quotient but without the edge $v_0$ emits in $c$. The vertex $v_0$ is the sink of this graph and the presence of ``S'' in this notation indicates that $S_{0}$ is a graph with a sink. We call it the {\em $0$-sink-graph} or, if it is clear which $c\in [c]$ we consider, a {\em sink-graph}. By considering other elements of $[c],$ for $j\in m$ we have the {\em $j$-sink-graph}.

By the last sentence of Proposition \ref{proposition_canonical_tails}, we can rearrange the paths in the quotient so that the part of $c$ from $v_{1}$ to $v_0$ is on the spine. Here ``1'' in $v_1$ is considered modulo $m$. If the spine is longer than $m$, we think of $v_1$ as the entry point of the spine to the cycle. With this rearrangement, we use
$S_{10}$ for the sink-graph of the resulting graph to indicate both the entry point as well as the single exit emitter location: ``1'' receives the spine and ``0'' emits the exits.

If the spine of $S_{10}$ is longer than $m-1$ and if the number of the $j$-tails is at least one for $1\neq j\in m,$ then one can rearrange the paths so that the entry point of the spine is moved $m-1$ vertices in the direction of $c$ and the length is by $m-1$ longer. This can also be achieved by considering the following $m-1$ in-split plus moves: one at each $v_j$ for $j=m-1, m-2, \ldots 1.$ The diagram below represents one step of this process.
The first graph is the mother graph $M$ in which $qe$ is on the spine and the non-relevant tails of the spine and the cycle are not represented. The in-split minus of $v_j$ in $M$ with respect to the partitions $\{\ra^{-1}(\ra(e)), \emptyset\}$ and $\{\ra^{-1}(\ra(e))-\{e\}, \{e\}\}$ produces the second and the third graph. We refer to such a move as {\em moving the entry point of the spine from $v_{j}$ to $v_{j+_m1}.$}
\[
\xymatrix{&&\bullet^{v_j}\ar@/^/[d]\\\ar@{.>}^{qe}[urr]&&\bullet^{v_{j+_m1}}\ar@{.>}@/^/[u]}
\hskip1.8cm
\xymatrix{&&\bullet^{v_{j}}\ar@/^/[d]\\\ar@{.>}[urr]^{qe}&&\bullet^{v_{j+_m1}}\ar@{.>}@/^/[u]&\bullet\ar[l]}
\hskip1.8cm
\xymatrix{&&&\bullet^{v_{j}}\ar@/^/[d]\\\ar@{.>}[rr]^{qe}&&\bullet\ar[r]&\bullet^{v_{j+_m1}}\ar@{.>}@/^/[u]}
\]

Let $k_2$ be the length of the spine of the sink-graph $S_{10}$. The presence of ``2'' in the subscript of $k_2$ notes that this is the spine length of the spine of $H_2/H_1$ and we follow this labeling when considering $n$-S-NE-graphs for $n>2$. Since we make the spine be a part of the cycle of length $m-1,$ $k_2\geq m-1.$ For $i\leq k_2,$ let $u_i$ be the vertices on the spine and $n_i$ be the cardinality of $\ra^{-1}(u_i).$  So, for $j\in m,$ $u_j=v_{m-_mj}.$
Since the edge $u_i$ receives in the spine is counted in $n_i,$ we have that $n_i\geq 1$ for all $i\leq k_2.$

With this notation, we consider the feasibility of extending the spine using an in-split plus move we specify next and which is possible if the following two conditions hold. When considering condition (1), it is relevant that $E=E_{\direct}.$
\begin{enumerate}
\item There is a tail ending at $w_i$ for every exit $e$ which ends at $w_i.$

\item Every $v_j, j\in m,$ has a tail.
\end{enumerate}

We express condition (2) in terms of cardinalities $n_j$ in order to generalize it to graphs with composition length larger than two. Expressed in this way, condition (2) is stating that  $n_j>1$ for every $j\in m$ and, if $k_2>m-1,$ then $n_{m-1}>2$.

We demonstrate how the spine can be extended if $E$ satisfies these conditions. Since $n_j>1$ for $j\in m,$ $E$ allows moving the entry point of the spine from $v_1$ to $v_0$ (as ``1'' in $v_1$ is considered modulo $m$, if $m=1$ then this step is trivial). This uses up one tail for each $1\neq j\in m.$ Let $E'$ be the resulting graph. In $E',$ $v_j$ has one less tail for all $j\neq 1$ and the part of the spine outside of $c$ is extended by a line of length $m-1$ ending in $v_0.$

Since the number of tails of $v_1$ is unimpacted by this move and since (1) holds, $E'$ is the result of an in-split minus move of the following mother graph $M$. The connecting parts of $E'$ and $M$ are the same. The quotient of $M$ is the same as that of $E'$ except that the tail at $v_1$ is deleted. The tail graph of $M$ is the same except that the tails from condition (1) are deleted. In this case, $E'$ is the result of the in-split minus move of $M$ at $v_0$ with respect to the partition $\{\ra^{-1}(v_0), \emptyset\}$ followed by the total out-split.

If $e$ is the edge on the spine with range $v_0$ in $E'$, we can consider the in-split minus of $M$ with respect to $\{\ra^{-1}(v_0)-\{e\}, \{e\}\}$ and let $F$ be the total out-split of the resulting graph. So, the entry point of the spine in  $F$ is moved from $v_0$ back to $v_{1}$ and its length is extended by a line of length $m$ inserted between $v_1$ and $u_m$.

Comparing the quotients of $E$ and $F,$ the $n_j$-value of $F$ is by one less than $n_j$ for $E$ for every $j\in m$ and the spine is longer by a line of length $m$ inserted between $u_m$ and $v_1=u_{m-1}.$ We refer to the operation $E\to F$ as a one-step {\em spine extending}. By considering composition of such operations, if possible by (1) and (2), we can extend the spine by multiples of $m$.

\begin{example}
\label{example_with_different_E/H_parts}
Let $E$ be the first graph below.

\[\xymatrix{\bullet\ar[r]&\bullet\ar[r]\ar@(lu,ru)&\bullet\ar@(ru,rd)\\&\bullet\ar[u]&\bullet\ar[u]_{(2)}}\hskip2cm
\xymatrix{\bullet\ar[r]^e&\bullet\ar[r]\ar@(lu,ru)&\bullet\ar@(ru,rd)\\&&\bullet\ar[u]}
\hskip2cm
\xymatrix{\bullet\ar[r]&\bullet\ar[r]\ar@(lu,ru)&\bullet\ar@(ru,rd)\\&\bullet\ar[u]\ar[ur]&\bullet\ar[u]}
\hskip2cm
\xymatrix{\bullet\ar[r]\ar@(lu,ru)&\bullet\ar@(ru,rd)\\\bullet\ar[u]\ar[ur]&\bullet\ar[u]\\\bullet\ar[u]&}
\]
The presence of one $v_0$ tail and one $w_0$ tail makes $E$ eligible for the spine extending. The
second graph is the mother graph and the third and the fourth graphs are two in-split plus equivalent graphs obtained by the in-split minus moves on the mother graph.

The total out-split of the third graph is $E$ and the first graph below is the total out-split of the last graph in the previous diagram.
\[
\xymatrix{\bullet\ar[r]&\bullet\ar[r]&\bullet\ar[r]\ar@(lu,ru)&\bullet\ar@(ru,rd)\\&\bullet\ar[r]&\bullet\ar[ur]&\bullet\ar[u]}
\]
\end{example}

As another example, the graph below permits arbitrary spine extending. Extending the spine three times produces the second graph below.

\[
\xymatrix{\bullet\ar[r]\ar@(lu,ru)&\bullet\\\bullet\ar[u]^{(\omega)}&\bullet\ar[u]^{(\omega)}}
\hskip2.5cm
\xymatrix{\bullet\ar[r]&\bullet\ar[r]&\bullet\ar[r]&\bullet\ar[r]\ar@(lu,ru)&\bullet
&\bullet\ar[l]
&\bullet\ar[l]
&\bullet\ar[l]
\\\bullet\ar[u]^{(\omega)}
&&
&\bullet\ar[u]^{(\omega)}&\bullet\ar[u]^{(\omega)}
&&
&\bullet\ar[u]^{(\omega)}}
\]

If $E$ permits arbitrary spine extending, then condition (2) implies that $n_j=\omega$ for every $j\in m.$ Extending the spine $l$ time inserts a line of length $lm$ between $u_{m-1}$ and $u_{(l+1)m-1}$ and the original part of the spine is below $u_{(l+1)m-1}$. The presence of this segment of length $lm,$ enables us to be able to tell whether the spine can be shortened by it. So, this enables us to
reduce the spine length to its smallest possible length -- when there is no line of length $lm$ for any $l>0.$ So, either the spine length is $m-1$ or there is $i$ such that $n_i>1$ and $m\leq i<2m$. In this case, we say that the spine of $E$ is {\em as short as possible}.
For example, the second graph in the last example above does not have the spine as short as possible and the first graph does. The terminology ``as short as possible'' is somewhat misleading if the spine of the sink-graph is infinite because it remains infinite even though the line of some finite length may be deleted from it and the two remaining pieces reconnected.

We say that $E$ is {\em I-plus-reduced} if one of the conditions (a) or (b) below holds.
\begin{enumerate}[\upshape(a)]
\item $E$ does not permit a spine extending.

\item $E$ permits arbitrary spine extending and the spine of $E$ is as short as possible.
\end{enumerate}

We focus such graphs in Proposition \ref{proposition_E_without_H} and the proof of this proposition implies that the conditions (1) and (2) are also necessary for the spine extending.

If the graph $E$ is I-plus-reduced and $v_j$ is the single exit-emitter, we use $E_{\can\quot, j}$ for $E$.
The above process shows that every $E$ can be converted to a graph which is I-plus-reduced and we write $E\to E_{\can\quot, j}$ for such a graph. The quotient of
$E_{\can\quot, j}$ is a {\em canonical quotient} of $E.$

If $m>1,$ the sink-graphs in $E_{\can\quot, j}$ and $E_{\can\quot, j'}$ do not have to be isomorphic as the following example shows. Let $E$ be the first graph below (where $l_0$ and $l_1$ can be finite or infinite), let $c$ be the cycle with $\so(c)=v_0$ and $c_1$ is the other element of $[c].$ The second graph is $E_{\can\quot, 0}$ and the third is $E_{\can\quot, 1}.$
\[\xymatrix{&\bullet\ar@/^/[d]\ar[dr]^2&\bullet\ar@/^/[d]&\bullet\ar[l]_{(l_1)}\\\bullet\ar[r]&
\bullet_{v_0}\ar[r]\ar@/^/[u]&\bullet\ar@/^/[u]&\bullet\ar[l]_{(l_0)}}\hskip1.5cm
\xymatrix{&\bullet\ar@/^/[d]&\bullet\ar@/^/[d]&\bullet\ar[l]_{(l_1)}\\\bullet\ar[r]&
\bullet\ar[ur]^2\ar[r]\ar@/^/[u]&\bullet\ar@/^/[u]&\bullet\ar[l]_{(l_0+2)}}
\hskip1.5cm
\xymatrix{&\bullet\ar@/^/[d]\ar[r]\ar[dr]^2&\bullet\ar@/^/[d]&\bullet\ar[l]_{(l_1+1)}\\\bullet\ar[r]&
\bullet\ar@/^/[u]&\bullet\ar@/^/[u]&\bullet\ar[l]_{(l_0+1)}}\]
The sink-graph of $E_{\can\quot, 0}$ is $\xymatrix{&\bullet\ar@/^/[d]\\\bullet\ar[r]&
\bullet}$ and the sink-graph of $E_{\can\quot, 1}$ is $\xymatrix{&\bullet&\\\bullet\ar[r]&
\bullet\ar@/^/[u]&}$.

\subsection{Canonical forms} For $j\in m,$ we define a canonical form by reducing and cutting
$E_{\can\quot, j}$. Reductions and cuts do not impact the quotient of $E_{\can\quot, j}$ -- they only ensure that the number of tails is as small as possible. After this, we move exits to certain form depending on $m$ and $n$ values which are specified in the definition below and which make $v_j$ be a single exit-emitter again if $n>0.$ Finally, we $L$-, spine- or $\omega$-reduce (whatever is applicable based on the $m$ and $n$ values) and cut the tails to regain their  minimality.
We shorten $(E_{\red})_{\cut}$ to $E_{\red, \cut}.$

\begin{definition} Let $E$ be a direct-exit 2-S-NE graph, $j\in m$ (where this stands for $j=0$ if $m=0$). Move the exits of $(E_{\can\quot, j})_{\red, \cut}$, if needed, as specified below and then, let
$E_{\can}$ be the $L$-, spine- or $\omega$-reduced and cut form, whatever is applicable, of the resulting graph.
\begin{itemize}
\item If $m>0$ and $n>0,$ then the required form is obtained by moving the exits so that the resulting graph is a single exit-emitter and that the ranges of the exits are among $G=$GCD$(m,n)$ consecutive vertices of $d.$ In particular, for $c\in [c]$ and $d\in [d]$ such that $v_j=\so(c)$ is a single exit-emitter and the vertices from $w_{n-G+1}$ to $w_0$ are receiving exits. This is possible by Lemma \ref{lemma_feasibility_of_an_exit_move}. Thus, $E_{\can}$ depends on the choice of $c$ and $d.$

\item If $m>0$ and $n=0,$ the required form is obtained by moving the exits which do not end in the spine source $w_k$ to end in $w_k.$ In particular, a $ji$-exit with $i<k$ would become a $(j-_m{k-i})(i+(k-i))$-exit.

\item If $m=0$ and $n>0$ or
$m=n=0$ and $k<\omega$,
the required form is obtained by moving all the $0i$-exits if $w_i$ receives nonzero and finitely many of such exits. Thus,
for $i\in n$ if $n>0$ and for $i\leq k$ if $n=0$, $w_i$ receives
either zero or infinitely many exits.

\item If $m=n=0$ and $k=\omega$,
no exit moves are needed and $(E_{\can\quot, j})_{\red, \cut}$ is canonical.
\end{itemize}
If $E$ is a 2-S-NE graph, we let $E_{\can}=(E_{\direct})_{\can}.$
We say that $E$ is {\em canonical} or that it is {\em in a canonical form} if $E\cong F_{\can}$ for some graph $F.$
\label{definition_canonical_forms_n=2}
\end{definition}

For example, let $E, E_0,$ and $E_1$ be three graphs below in that order. Both $E_0$ and $E_1$ are canonical forms of $E.$
\[\xymatrix{\bullet^{v_1}\ar@/^/[d]\ar[dr]^2&\bullet^{w_1}\ar@/^/[d]&\\
\bullet_{v_0}\ar[r]\ar@/^/[u]&\bullet_{w_0}\ar@/^/[u]&}
\hskip2cm
\xymatrix{\bullet\ar@/^/[d]&\bullet\ar@/^/[d]&\\
\bullet\ar[ur]^2\ar[r]\ar@/^/[u]&\bullet\ar@/^/[u]&\bullet\ar[l]_{(2)}}
\hskip2cm
\xymatrix{\bullet\ar@/^/[d]\ar[r]\ar[dr]^2&\bullet\ar@/^/[d]&\\
\bullet\ar@/^/[u]&\bullet\ar@/^/[u]&\bullet\ar[l]}
\]
Note that $E_0$ is obtained by moving both $10$-exits and $E_1$ is obtained by moving the $00$-exit. A graded $*$-isomorphism of the algebras of $E_0$ and $E_1$ can be obtained by composing the inverse of the moves $E\to E_0$ with the moves $E\to E_1.$
We generalize this observation in the following lemma whose proof is contained in its statement.

\begin{lemma}
Let $E=E_{\direct}$ be  such that $F$ and $G$ are its canonical forms.
Let $\phi_F: E\to F$ and $\phi_G: E\to G$ be the maps used to obtain the canonical forms $F$ and $G$ from $E$. Then, $\phi_{G}\phi_F^{-1}: F\to G$ induces a graded $*$-isomorphism of the corresponding algebras.
\label{lemma_maps_between_canonical_forms}
\end{lemma}

Although a graph $E$ can have non-isomorphic canonical forms, they do determine the graded $*$-isomorphism class as Theorem \ref{theorem_n=2} shows: the algebras of $E$ and $F$ have graded $*$-isomorphic algebras if and only if representatives of $[c]$ and $[d]$ can be found so that the canonical forms determined using those representatives are isomorphic.

We consider another example illustrating the impact of the number of tails on the graded $*$-isomorphism class of the corresponding algebra. Let us consider graphs of with $m=n=2$ and with the quotient consisting of one edge ending in the cycle. A canonical form of such a graph  has the form as in the figure below.
\[
\xymatrix{&\bullet\ar@/^/[d]&\bullet\ar@/^/[d]&\bullet\ar[l]_{(l_1)}\\
\bullet\ar[r]&\bullet\ar[ur]^{a_{01}\hskip-.2cm}\ar[r]^{a_{00}}\ar@/^/[u]&\bullet\ar@/^/[u]&\bullet\ar[l]_{(l_0)}}\]
If both $a_{00}$ and $a_{01}$ are nonzero, then $l_0, l_1\in \{0,1,2, \omega\}$ and the possible values for the pair $(l_0,l_1)$  include any with at least one entry zero and $(1,1), (1,\omega), (\omega, 1),$ and $(\omega,\omega).$ If $a_{00}\neq a_{01}$, then all the different possibilities for the $l_0$ and $l_1$ values correspond to different graded $*$-isomorphism classes of the corresponding algebras. If $a_{00}=a_{10},$ then the map interchanging $w_0$ and $w_1,$ their tails and the exits they receive and which is the identity on other graph elements is a graph isomorphism. This makes the algebra of a graph with $(l_0,l_1)$ graded $*$-isomorphic to the algebra with $(l_1,l_0),$ so the
number of graded $*$-isomorphism classes is smaller than when $a_{00}\neq a_{01}.$

If $\omega$ tails are added to the exit-emitter and if $a_{00}\neq 0,$ then the 1-tails are cuttable, so $l_1=0$ in any canonical form. The 0-tails are cuttable if and only if $a_{01}\neq 0.$
\[
\xymatrix{&\bullet\ar@/^/[d]&\bullet\ar@/^/[d]&\\\bullet\ar[r]^{(\omega)}&
\bullet\ar[ur]^{a_{01}\hskip-.2cm}\ar[r]^{a_{00}}\ar@/^/[u]&\bullet\ar@/^/[u]&\bullet\ar[l]_{(l_0)}}\]
If $a_{01}\neq 0,$ a canonical form has $l_0=l_1=0.$ If $a_{01}=0$, then $l_0\in\{0,1,\omega\}.$ So, there are exactly three graded $*$-isomorphism classes of algebras of such graphs.

The Toeplitz graph is a canonical form of any graph with quotient consisting of a single loop, the spine of the tail graph being the spine of the graph and the tail graph having no tails.

Let us look at an example with $m=0$ and $n>0.$ Let $E_{0}$ be the first graph below where $0\leq a_{01}<\omega.$  In this graph, the 1-tails are cuttable and 0-tails are cuttable if and only if $a_{01}\neq 0.$
If $a_{01}\neq 0,$ then a cut form has neither 0- nor 1-tails and a canonical form, the second graph below, is obtained by moving all $a_{01}$ exits and then cutting the resulting graph. If $a_{01}=0,$ then only 1-tails are cuttable but then 0-tails are reducible to zero if their number is finite. In this case, the second and the third graphs are possibilities for a canonical form based on whether $l_0$ is finite or not.
\[
\xymatrix{&&\bullet\ar@/^/[d]&\bullet\ar[l]_{(l_1)}\\
\bullet\ar[r]^{(\omega)}&\bullet\ar[r]^\omega\ar[ur]^{a_{01}}&\bullet\ar@/^/[u]&\bullet\ar[l]_{(l_0)}}\hskip2cm
\xymatrix{&&\bullet\ar@/^/[d]\\
\bullet\ar[r]^{(\omega)}&\bullet\ar[r]^\omega&\bullet\ar@/^/[u]}\hskip2cm \xymatrix{&&\bullet\ar@/^/[d]&\\
\bullet\ar[r]^{(\omega)}&\bullet\ar[r]^\omega&\bullet\ar@/^/[u]&\bullet\ar[l]_{(\omega)}}
\]

For an example with $m=n=0$ and $k=\omega,$ let $E$ be the graph below so that its connecting matrix is $[\omega\; a_{01}\; \omega\; a_{03}\; \omega\; a_{05}\ldots]$) where $a_{0(2k+1)}$ is finite for all $k\in\omega.$ Let $0<l_{i}<\omega$ for $i\geq 0.$
\[\xymatrix{&&&&\bullet\ar[d]^{(\omega)}&&&\\&&&&\bullet_{v_0}\ar@{.}[dllll]\ar[dll]_\omega\ar[dl]_{a_{05}\hskip-.2cm} \ar[drr]^\omega\ar[dr]^{a_{03}\hskip.5cm} \ar[drrrr]^\omega\ar[d]^\omega\ar[drrr]^{a_{01\hskip.5cm}}&&&\\
&\ar@{.>}[r]&\bullet_{w_6}\ar[r]&\bullet_{w_5}\ar[r]&\bullet_{w_4}\ar[r]&\bullet_{w_3}\ar[r]&\bullet_{w_2}\ar[r]&\bullet_{w_1}\ar[r]&\bullet_{w_0}\\&&\bullet\ar[u]^{(l_6)}&\bullet\ar[u]^{(l_5)}&\bullet\ar[u]^{(l_4)}&\bullet\ar[u]^{(l_3)}&\bullet\ar[u]^{(l_2)}&\bullet\ar[u]^{(l_1)}&\bullet\ar[u]^{(l_0)}}\]
The $2k+1$-tails are cuttable for all $k\geq 0$ and $2k+2$-tails are cuttable if $a_{0(2k+1)}\neq 0.$
In addition, $2k$-tails are reducible to zero if their number is finite. So, the number of $2k$-tails determines the graded $*$-isomorphism class of the algebra of such a graph.

\subsection{2-S-NE equivalence} For 2-S-NE graphs $E$ and $F$, we define the following relation.
\[E\approx F\;\;\;\mbox{ if there are canonical forms $E_{\can}$ and $F_{\can}$ such that }E_{\can}\cong F_{\can}\]
and we say that $E$ and $F$ are {\em 2-S-NE equivalent} if $E\approx F$ holds. It is clear that the relation $\approx$ is reflexive and symmetric. We show that  transitivity follows from Lemma \ref{lemma_maps_between_canonical_forms}.

\begin{lemma} The relation $\approx$ is transitive on 2-S-NE graphs.
\label{lemma_approx_is_transitive}
\end{lemma}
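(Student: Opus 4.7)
The plan is to reduce transitivity to the uniqueness of the canonical form up to graph isomorphism. Suppose $E \approx F$ and $F \approx G$. By the definition of $\approx$, there exist canonical forms $E_{\can}$ of $E$, $F_1$ of $F$ with a graph isomorphism $\iota_1 : E_{\can} \to F_1$, and canonical forms $F_2$ of $F$, $G_{\can}$ of $G$ with a graph isomorphism $\iota_2 : F_2 \to G_{\can}$. If I can produce a graph isomorphism $\iota : F_1 \to F_2$, then the composition $\iota_2 \circ \iota \circ \iota_1 : E_{\can} \to G_{\can}$ is a graph isomorphism, witnessing $E \approx G$. So the lemma reduces to the claim that any two canonical forms of the same 2-S-NE graph $F$ are graph-isomorphic.

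To establish this uniqueness claim I would trace through Definition \ref{definition_canonical_forms_n=2}. The intermediate outputs $F_{\direct}$, $(F_{\direct})_{\red,\cut}$, $((F_{\direct})_{\red,\cut})_{\can\quot}$ and the subsequent $(\ldots)_{\red,\cut}$ are each determined by $F$ up to graph isomorphism: the direct-exit form is canonical by Proposition \ref{proposition_canonical_tails} applied to the porcupine graph $P_H$; the reduction and cut operations extract graph-theoretic minima (numbers of $i$-tails, spine lengths, and cuttability data) that are invariants of the starting graph; and the canonical quotient step depends on choosing a feasible vertex only when $|\Fea|>1$, in which case the text records the requisite symmetry, so that different choices are interchanged by a cyclic automorphism of the graph. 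The remaining freedom in passing to $F_{\can}$ lies in the choice of $c \in [c]$, $d \in [d]$, and (if applicable) of a feasible vertex. Different choices of $c$ or $d$ amount to cyclically re-indexing the vertices of the terminal cycles of $F$ and $P_H$, which produces the same graph up to graph isomorphism; different feasible vertices are again related by the cyclic automorphism guaranteed by the iterated definition of $\Fea_\ell$. Lemma \ref{lemma_maps_between_canonical_forms} guarantees a graded $*$-algebra isomorphism between any two such canonical forms, but the above graph-level analysis yields the stronger conclusion that the map can be realized as a graph isomorphism.

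The main obstacle will be verifying this uniqueness uniformly across the four cases of Definition \ref{definition_canonical_forms_n=2}, especially the $m=n=0$ case with $k=\omega$, where no structural normalization beyond $\omega$-reduction and cutting is imposed. In that case I would argue that once $\omega$-reduction and cutting have normalized the finite entries $a_{0i}$ and the non-cuttable tail cardinalities at each $w_i$, the graph is determined up to isomorphism by this data, so any two canonical forms of $F$ must coincide as abstract directed graphs. Once the uniqueness claim is verified in each of the four cases, the transitivity of $\approx$ follows by the composition argument of the first paragraph.
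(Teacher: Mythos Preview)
Your reduction to the claim that any two canonical forms of the same 2-S-NE graph are graph-isomorphic is the crux of your argument, and unfortunately this claim is false. The paper itself exhibits a counterexample immediately after Definition \ref{definition_canonical_forms_n=2}: the graph $E_0$ with $|c|=|d|=2$ and $\Fea_{E_0}=c^0$ has two canonical forms $E$ and $F$ obtained by moving exits to different feasible vertices, and these two canonical forms have different numbers of tails (one versus two) and hence different numbers of vertices, so $E\not\cong F$. Your statement that ``different feasible vertices are again related by the cyclic automorphism guaranteed by the iterated definition of $\Fea_\ell$'' is correct only at the level of the quotient $E/H$: the cyclic symmetry of the quotient does not extend to a symmetry of the $c$-to-$d$ part, so moving all exits to one feasible vertex versus another can yield different connecting matrices and different tail counts.

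The paper's proof avoids this trap by not asserting uniqueness. Instead, given canonical forms $F_1,F_2$ of $F$ and $G_2$ of $G$ with $F_2\cong G_2$, it invokes Lemma \ref{lemma_maps_between_canonical_forms} to obtain a concrete graph operation $\phi$ carrying $F_1$ to $F_2$. Since this operation is described purely in terms of exit moves and cuts on the underlying graph, it can be transported along the isomorphism $F_2\cong G_2$ and applied (in reverse) to $G_2$, producing another canonical form $G_1$ of $G$ with $G_1\cong F_1$. Then $E_1\cong F_1\cong G_1$ witnesses $E\approx G$. The point is that $\approx$ is defined using the existential quantifier ``there exist canonical forms,'' so one is free to manufacture a new canonical form $G_1$ rather than being stuck with the original $G_2$.
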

\begin{proof}
If $E, F,$ and $G$ are graphs
such that $E\approx F$ and $F\approx G,$ then there are canonical forms $E_1$ of $E,$
$F_1$ and $F_2$ of $F,$ and $G_2$ of $G$ such that $E_1\cong F_1$ and $F_2\cong G_2.$
Let $\phi_{F_1F_2}: F_1\to F_2$ be the operation from Lemma \ref{lemma_maps_between_canonical_forms}. Applying the same operation to $G_2$ results in another canonical form $G_1$ of $G$ and $F_2\cong G_2$ implies $F_1\cong G_1.$ Thus, $E_1\cong F_1$ and $F_1\cong G_1$ hold, so $E_1\cong G_1$ holds.
\end{proof}

There are graphs $E$ and $F$ such that $E\approx F$ and $E\ncong F.$ For example, let $E$ be the first and $F$ the second graph. The last graph is a canonical form of both
$E$ and $F$ so we have that $E\approx F.$
\[
\xymatrix{\bullet\ar@/^/[d]\ar[dr]\ar[r]&\bullet&\\
\bullet\ar[r]\ar@/^/[u]&\bullet\ar[u]&}\hskip2cm
\xymatrix{\bullet\ar@/^/[d]&\bullet&\\
\bullet\ar[ur]\ar@/^/[r]\ar@/_/[r]\ar@/^/[u]&\bullet\ar[u]&}\hskip2cm
\xymatrix{\bullet\ar@/^/[d]\ar[dr]&\bullet&\bullet\ar[l]\\\
\bullet\ar@/^/[r]\ar@/_/[r]\ar@/^/[u]&\bullet\ar[u]&}\]

\subsection{Theorem \ref{theorem_n=2} and its proof}\label{subsection_the_proof}
In this section, we formulate and prove that GCC holds for countable 2-S-NE graphs and we realize every $\POM^D$-isomorphism by a graph operation.

\begin{theorem} {\bf The GCC holds for the 2-S-NE graphs.}
Let $E$ and $F$ be two countable 2-S-NE graphs.
The following conditions are equivalent.
\begin{enumerate}[\upshape(1)]
\item There is a $\POM^D$-isomorphism $f:M_E^\Gamma\to M_F^\Gamma.$

\item The relation $E\approx F$ holds.

\item There is a graded $*$-isomorphism $\phi:L_K(E)\to L_K(F).$
\end{enumerate}
If (1) holds, there are  canonical forms $E_{\can}$ and $F_{\can}$ and operations $\phi_E: E\to E_{\can},$ $\iota: E_{\can}\cong  F_{\can},$ and $\phi_F: F\to F_{\can},$ such that $\ol{\phi_F^{-1}\iota\phi_E}=f.$
\label{theorem_n=2}
\end{theorem}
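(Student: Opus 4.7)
The implications (3)$\Rightarrow$(1) and (2)$\Rightarrow$(3) are the easy directions. For (3)$\Rightarrow$(1), functoriality of $K_0^\Gamma$ turns $\phi$ into the $\POM^D$-isomorphism $\ol\phi$. For (2)$\Rightarrow$(3), concatenate the graded $*$-isomorphisms produced by the graph operations of Definition \ref{definition_canonical_forms_n=2}: total out-split, direct-exit form, canonical quotient, reductions, cuts, and exit moves, each of which was shown to extend to a graded $*$-isomorphism, and finish with the extension of a graph isomorphism $E_{\can}\cong F_{\can}$ (which exists by (2)).

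The substance is (1)$\Rightarrow$(2) together with the realization claim. The first step is to reduce to the case $E=E_{\can}$ and $F=F_{\can}$. By (2)$\Rightarrow$(3)$\Rightarrow$(1), the canonicalization operations $\phi_E\colon L_K(E)\to L_K(E_{\can})$ and $\phi_F\colon L_K(F)\to L_K(F_{\can})$ induce $\POM^D$-isomorphisms, so we may replace $f$ by $f'=\ol\phi_F f\ol\phi_E^{-1}$. If $E_{\can}$ has two terminal clusters, the algebra decomposes as a direct sum of two 1-S-NE algebras; $f'$ decomposes accordingly and Proposition \ref{proposition_cofinal_graphs} handles each factor. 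So assume both canonical graphs have a unique terminal cluster.

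Next I match quotients and porcupines. The lattice of $\Gamma$-order-ideals of $M_E^\Gamma$ is canonically isomorphic to the lattice of admissible pairs, so the unique nontrivial proper $\Gamma$-order-ideal $J^\Gamma(H)$ of $M_{E_{\can}}^\Gamma$ must correspond under $f'$ to the analogous ideal $J^\Gamma(H')$ of $M_{F_{\can}}^\Gamma$. Restricting and taking quotients, $f'$ induces $\POM^D$-isomorphisms $M_{P_H}^\Gamma\cong M_{P_{H'}}^\Gamma$ and $M_{E_{\can}/H}^\Gamma\cong M_{F_{\can}/H'}^\Gamma$. Since $P_H$, $P_{H'}$, $E_{\can}/H$, $F_{\can}/H'$ are all 1-S-NE, Proposition \ref{proposition_cofinal_graphs} forces $(P_H)_{\can}\cong (P_{H'})_{\can}$ and $(E_{\can}/H)_{\can}\cong (F_{\can}/H')_{\can}$; hence $m$, $n$, the spine lengths and the tail multiplicities of the quotient and of the porcupine all match.

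The remaining task is to read off from $f'$ the \emph{connecting data}: the connecting polynomial $a_E$ when $m>0$ (or the exit cardinalities $a_{0i}$ when $m=0$) and any surplus tails not already determined by the $c$-to-$d$ structure. For $m>0$, Lemma \ref{lemma_monoid_of_canonical_form} gives $[v_0]=t^m[v_0]+a_E[w_0]$; applying $f'$ and using that $f'([v_0])$ is the distinguished generator forced by the matched cofinal type (Theorem \ref{theorem_graded_simple}) and that $f'([w_0])=[w_0']$ up to a shift already pinned down by the matched terminal data, we deduce $a_{E_{\can}}=a_{F_{\can}}$ up to a cyclic renaming of $[c]$ and $[d]$. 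The rigidity of canonical forms (single exit-emitter, exits confined to $\gcd(m,n)$ consecutive vertices of $d$) removes this ambiguity. For $m=0$, Lemma \ref{lemma_monoid_of_canonical_form_m=0} together with the Confluence Lemma supplies the analogous match for the cardinals $a_{0i}$. Finally, the surplus tail multiplicities in the cut-and-reduced canonical form are read off from the cardinalities of the subsets of the generating interval $D_E$ lying above each $t^{i+1}[w_0]$, after subtracting the contributions of the $c$-to-$d$ data; by the definition of the cut-reduced canonical form these cardinalities live in $\{0,1,2,\dots,\omega\}$ and so are pinned down by $f'$. Thus $E_{\can}\cong F_{\can}$, i.e.\ $E\approx F$. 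The resulting graph isomorphism $\iota$ satisfies $\ol\iota=f'$, and unwinding the reduction yields $f=\ol{\phi_F^{-1}\iota\phi_E}$. The main obstacle I anticipate is in this last paragraph: separating cleanly, inside $f'$, the contributions of the $c$-to-$d$ exits from those of the surplus tails, and showing that the cyclic ambiguity in the choice of $c\in[c]$ and $d\in[d]$ is exactly the ambiguity that the definition of canonical form was designed to eliminate.
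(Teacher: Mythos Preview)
Your overall architecture matches the paper's: (2)$\Rightarrow$(3)$\Rightarrow$(1) are immediate, reduce to canonical forms, split off the two-cluster case, and use the $\Gamma$-order-ideal lattice to match $P_H\approx P_{H'}$ and $E/H\approx F/H'$ via Proposition~\ref{proposition_cofinal_graphs}. The gap is exactly where you suspected: your treatment of the connecting data is too optimistic. In general $f'([v_0])$ is \emph{not} $[v_0']$ up to a shift; rather $f'([v_0])=t^{lm}[v_0']+b[w_0']$ for some $l\ge 0$ and $b\in\Zset^+[t]$ (and similarly $f'^{-1}([v_0'])=t^{l'm}[v_0]+b'[w_0]$). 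Applying Lemma~\ref{lemma_monoid_of_canonical_form} then yields
\[
t^{lm}a_F[w_0']+b[w_0']=t^m b[w_0']+a_E[w_0'],
\]
together with $(t^{lm}b'+b)[w_0]=\sum_{j=0}^{l+l'-1}t^{jm}a_E[w_0]$ and its companion. These do \emph{not} force $a_E=a_F$: one first gets only $|a_E|=|a_F|$, and then the paper needs a block decomposition of $a_E,a_F,b$ into $G$-blocks with $G=\gcd(m,n)$, sums the resulting $n'=n/G$ equations to obtain $A_0+A_1=A_0'+A_1'$, and uses Lemma~\ref{lemma_feasibility_of_an_exit_move} to move exits of $F$ so that $a_E=a_F$ holds. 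Even then $b$ need not vanish; the paper shows one can repeatedly peel off summands $t^{(l-1)m}a_F$ from $b$ to reduce $l$, and then constructs explicit exit-move operations $\phi_{+b}:E\to E'$ and $\phi_{+kL}:F\to F'$ so that $\ol\phi_{+kL}f\ol\phi_{+b}^{-1}$ sends $[v_0]\mapsto[v_0']$ on the nose. Only at that point does Lemma~\ref{lemma_f_identity_implies_cut_graphs_isomorphic} supply a graph isomorphism $\iota':E'\cong F'$ with $\ol{\iota'}$ equal to that composite; a separate $L$-reducedness argument then shows $E\cong F$ (not merely $E'\cong F'$). Your line ``the rigidity of canonical forms removes this ambiguity'' does not hold: canonical forms are not unique (cf.\ Lemma~\ref{lemma_maps_between_canonical_forms}), and choosing \emph{which} canonical forms to compare is part of the work.

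The $m>0,\,n=0$ case needs its own argument (spine-reduction, degree bounds on $b$ to force $b=0$), and the $m=0$ cases require the analysis with $[q_Z]$-elements and bijections $\sigma:\so^{-1}(v_0)\to\so^{-1}(v_0')$, showing first that $a_{0i}=\omega\Leftrightarrow a_{0i}'=\omega$. Your one-line appeal to Lemma~\ref{lemma_monoid_of_canonical_form_m=0} plus Confluence does not cover this. Likewise, your tail-counting (``subtract the $c$-to-$d$ contributions from cardinalities in $D_E$'') is precisely what Lemma~\ref{lemma_f_identity_implies_cut_graphs_isomorphic} makes rigorous via the partition $P^{w_0}_{\not d,i}=P^{w_0}_{\exits,i}\sqcup P^{w_0}_{\quot,i}\sqcup P^{w_0}_{\tails,i}$; the point is that cuttable tails have already been excised on both sides, so the bijection of paths restricts to a bijection of tails---but this requires the connecting matrices to agree first, which you have not yet established.
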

The implications (2) $\Rightarrow$ (3) and (3) $\Rightarrow$ (1) are direct.
In the rest of this section, we show that (1) $\Rightarrow$ (2)  and that the last sentence of the theorem holds.

Before going any further, we fix some notation and establish a general set-up for two 2-S-NE graphs $E$ and $F$ and a $\POM^D$-isomorphism $f$ between their $\Gamma$-monoids. Let
$\phi_E: E\to E_{\can}$ and $\phi_F: F\to F_{\can}$
be operations transforming the graphs to their canonical forms. We can consider $\ol\phi_Ff\ol{\phi_E^{-1}}$ instead of $f$ and so we can assume that $E=E_{\can}$ and $F=F_{\can}.$

Let $E$ have composition factors $P_H$ and $E/H$ and let $\ol\iota_H$ be the map induced by the inclusion $\iota_H: I(H)\to L_K(E)$ and $\ol\pi_{E/H}$ be the map induced by the natural map $\pi_{E/H}: L_K(E)\to L_K(E/H).$ Let $G$ be a hereditary and saturated subset of $F^0$ such that $f\ol\iota_H(J^\Gamma(H))=J^\Gamma(G)$ and let $\iota_G$ and $\pi_{F/G}$ be analogous to $\iota_H$ and $\pi_{E/H}.$ By the choice of $G,$  $f\ol\iota_H: J^\Gamma(H)\to J^\Gamma(G)$ is a $\POM^D$-isomorphism and $f$ induces a $\POM^D$-isomorphism $f_{2/1}:M_{E/H}^\Gamma\to M_{F/G}^\Gamma$ such that the diagram below commutes.
\[\xymatrix{0\ar[r]& J^\Gamma(H)\ar[r]^{\hskip-.4cm\ol\iota_H}\ar[d]^{f\ol\iota_H}&\; M_E^\Gamma\;\ar[r]^{\hskip-.4cm\ol\pi_{E/H}}\ar[d]^{f}&\; M^\Gamma_{E/H}\;\ar[r]\ar[d]^{f_{2/1}}&0\\
0\ar[r] &J^\Gamma(G)\ar[r]^{\hskip-.4cm\ol\iota_G}& M_F^\Gamma\ar[r]^{\hskip-.4cm\ol\pi_{F/G}}& M^\Gamma_{F/G}\ar[r]&0} \]

{\bf The two-cluster case.} In the easy case when $E$ has two connected components, the first row of the above diagram splits and the existence of $f$ implies that there is a splitting of the bottom row.
Thus, any two generators of $M_F^\Gamma$ are $\Zset^+[t,t^{-1}]$-independent. This implies that that there are no paths from any vertex of the cluster of $F/G$ to $G$ (because there would be  two $\Zset^+[t,t^{-1}]$-dependent generators of $M_F^\Gamma$ otherwise).
As $F=F_{\tot},$ $F$ has two connected components, one 1-S-NE equivalent to $H$ and the other to $E/H$. Since $E$ and $F$ are canonical, their 1-S-NE components are canonical also and, hence, isomorphic by Propositions \ref{proposition_cofinal_graphs} and \ref{proposition_canonical_tails}. Thus, we have that $E\cong F.$ In addition,
there are $\chi: P_H\to P_G$ and $\psi: E/H\to F/G$ such that $\ol\chi=f\ol\iota_H$ and $\ol\psi=f_{2/1}$ by Propositions \ref{proposition_cofinal_graphs} and \ref{proposition_canonical_tails}. This implies the existence of a graph operation $\phi$ which is defined on $P_H$ as $\chi$ and on $E/H$ as $\psi$ and which is such that $\ol\phi=f.$

{\bf The one-cluster case.}
Having the easy case out of the way, let us assume that $E$ has only one terminal cluster. As the previous paragraph shows, $F$ also has only one terminal cluster.

Let $c, c', d,$ and $d'$ be the terminal cycles of $E/H,$ $F/G,$ $P_H$, and $P_G,$ respectively and let $|c|=m,$ $|c'|=m',$ $|d|=n,$ and $|d'|=n'$.
We let $c^0$ consists of $v_0,\ldots, v_{m-_m1}$ where these vertices are listed in the same order they appear in $c$ and we let $v'_0,\ldots, v'_{m'-_{m'}1}$ be analogously ordered vertices of $c',$ $w_0,\ldots, w_{n-_n1}$ of $d,$ and $w'_0,\ldots, w'_{n'-_{n'}1}$ of $d'.$

By Propositions \ref{proposition_cofinal_graphs} and \ref{proposition_canonical_tails}, the existence of $f\ol\iota_H$ implies that
$P_H\approx P_G,$ so
$n=|d|=|d'|=n'.$ By the same propositions, the existence of $f_{2/1}$ implies that
$E/H\approx F/G,$ so
$m=|c|=|c'|=m'.$

If $n=0,$ we have that $f$ maps $[w_0]$ to $[w_0'].$  If $n>0,$ for any $d\in [d]$ and $d'\in [d'],$ if $f([\so(d)])=t^i[\so(d')]$ for some $i\in n,$ we replace $d'$ with the element $d''\in [d']$ which originates at $w_{n-_ni}.$ If $d''\neq d',$ relabel the vertices so that $d''=d'.$ Thus, we have that $f([w_0])=[w_0']$ holds.

\begin{proposition} {\bf The Quotient Proposition.}
Let $E, F, H, G, v_0, v_0', w_0, w_0'$ and $f$ be as above (recall that  $f([w_0])=[w_0']$). If $f([v_0])=[v_0']$ and the connecting matrices (computed using $c, d, c'$  and $d'$) are equal, then there is an isomorphism $\iota: E/H\cong F/G$  such that $f_{2/1}=\ol{\iota}.$
\label{proposition_E_without_H}
\end{proposition}
\begin{proof}
If $m=0,$ the sink-graphs are the 1-S-NE quotients
so the existence of $f_{2/1}$
implies that $E/H\approx F/G.$ Since both $E/H$ and $F/G$ are canonical, we have that $E/H\approx F/G$ implies the existence of an isomorphism $\iota:E/H\cong F/G.$ This isomorphism maps the sink $v_0$ of $E/H$ to the sink $v_0'$ of $F/G$ and so $f_{2/1}([v_0])=[v_0']=[\iota(v_0)]$ which implies that $\ol\iota=f_{2/1}.$

With the case $m=0$ out of the way, it remains to consider the case $m>0.$
By what we showed so far,  $E$ and $F$ have the same connecting matrix, so the $c$-to-$d$ part is the same as the $c'$-to-$d'$ part.

We use the same notation as in section \ref{subsection_canonical_quotients}.
To summarize it,
$k_2$ is the length of the spine of the sink-graph $S_{10}$ of a canonical quotient, $u_i$ are the vertices on the spine for $i\leq k_2$ $n_i$ is the cardinality of $\ra^{-1}(u_i)$ and, for $j\in m,$ $u_j=v_{m-_mj}.$
Let $k'_2, u_i',$ and $n_i',$  be the analogous values for  $F$. We also let $\rho: P_{\not d}^{w_0}\to P_{\not d'}^{w_0'}$ be a bijection induced by the restriction $f_1$ of $f$ on the $\Gamma$-monoids of the porcupine graphs $P_H$ and $P_G.$

For each $i\leq k_2$, let $p_i$ be the part of the spine from $u_i$ to $v_0$ and let $p'_{i}$ be the same for $F$. We refer to the length $i$ of such $p_i$ as the {\em distance} from $u_i$ to $v_0$ and the length $i+1$ as the distance from a tail of $u_i$ to $v_0$ and, similarly, we define the distance for vertices in the quotient of $F$. For a path $p$ from a vertex in the quotient to $w_0$ and $l\geq 0,$ we say that $p$ an {\em $l$-route} if it contains $c^l$ and does not contain $c^{l+1}$.

If the sink-graphs are not isomorphic, there is $i\leq \min \{k_2, k'_2\}$ such that $n_i\neq n_i'$. Let $i_0$ be the smallest such $i$.
If $n_{i_0}>n_{i_0}',$ there are more 0-routes from the sources of the edges $u_{i_0}$ receives to $w_0$ in $E$ than from the sources of the edges $u'_{i_0}$ receives to $w_0'$ in $F$.

Since the connecting matrices are equal, if $i<i_0,$ $\rho$ can be chosen so that it maps all paths starting at vertices at distance $i<i_0$ to the matching paths in $F$.  On the other hand, if $i=i_0,$ the excess of the 0-routes in $E$ implies that $\rho$ has to map a 0-route to a path in the tail graph. Since $n_{i_0}>n_{i_0}'$ at least one tail of $u_{i_0}$ can be chosen so that $\rho$ maps a 0-route from the tail's source to a path in the tail graph. Hence, the tail graph of $F$ contains enough tails to accommodate such an image. In particular, for every exit $e$ which ends at $w_i$, $F$ contains a tail at $w_i'$. Thus, condition (1) for the spine extending (see section \ref{subsection_canonical_quotients}) holds.

Since $F$ contains at least two edges ending at $w_i'$ (the exit matching $e$ and the tail which $\rho$ maps to a 0-route), $w_i$ has to receive at least that many edges. As one of them is $e$, $w_i$ has to have at least one more tail. So, $E$ {\em also} satisfies condition (1) for spine extending.

Because $\rho$ interchanges a quotient route to a tail route, at least one tail of $u_{i_0}$ can be found so that its source $w$ is such that the image $f([w])$
cannot be matched to a single monoid generator in the correspondence $f$ creates on the generating intervals $D_E$ and $D_F$. Because of this,
\[f([w])=f(t^{i_0+1}[v_0])=t^{i_0+1}\left(t^{lm}[v_0']+\sum_{j=0}^{l-1}t^{jm}a_E[w_0']\right)\]
is not identified with $[w']$ for no vertex $w'$ in the quotient (in particular, with no the source of any 0-route) but it is a ``split'' into a sum of different monoid generators.
By considering the distance from $v_0'$ and by the choice of $i_0$, the first term $t^{i_0+1+lm}[v_0']$ has to correspond to a tail to $u_{i_0+lm}'$.

The existence of $u'_{i_0+lm}$ implies that $f(t^{i_0}[v_0])=t^{i_0}[v_0']=[u'_{i_0+lm}]+\sum_{j=0}^{l-1}t^{jm+i_0}a_E[w_0']$ and that there are vertices $u_{i}'$ for $i<i_0+lm$. In particular,  $f([v_0])=[v_0']=[u'_{lm}]+\sum_{j=0}^{l-1}t^{jm}a_E[w_0'].$ Hence, the image of $[v_0]$ allows a ``split'' of the same type as that of the map induced by an in-split plus.
This split caries to the images of $[v]$ for $v\in R(v_0)$: if $p$ is a path from $v$ to $v_0,$ then
$f([v])=f(t^{|p|}[v_0])=t^{|p|}[u'_{lm}]+\sum_{j=0}^{l-1}t^{jm+|p|}a_E[w_0'].$ So, for every such $v,$ there is a vertex $v'$ in the root of $u'_{lm}$ such that the length of the path from $v'$ to $u'_{lm}$ is $|p|$. Thus, we have that
\[n'_{i+lm}\geq n_i\mbox{ for all }i\geq 0.\]

For $jm+i$ where
$j=1,\ldots, l-1$ and $i\in m$, the elements $u_{jm+i}'$ are in the ``gap'' on the spine of $F$  and $f^{-1}([u_{jm+i}'])$
has to be of the form
$[pc^{j'}c^{-j'}p^*]$ in the quotient for some $j'$. Given that $u_{m+i}',$ for $i=1,\ldots, m$ has the distance $m+i$ from $v_0',$ the value of $j'$ is one in this case and the length of $p$ has to be $i.$ Hence,  such a path $p$ has to start in a tail of $u_{i-1}$ (it cannot start at $u_i$ because all the $j'$-routes from $u_{i}$ to $w_0$ are already matched up by $\rho$ with the matching $j'$-routes in $F$). This shows that every $u_{i-1}$ has to have a tail. Thus, $n_{i}>1$
for each $i\in m.$ If $k_2>m-1,$ the $j'$-routes from $u_{m}$ are matched up to the $j'$-routes from $u_{m}',$ so there has to be an edge $u_{m-1}$ receives outside of the spine. Hence, $n_{m-1}>2$.
This shows that $E$ satisfies condition (2) for the spine extending. Hence, the existence of any $i$ such that $n_i>n_i'$ implies that $E$ permits arbitrary spine extending.

Let us extend the spine of $E$ by $(l+1)m$ now, and let $E_{+l+1}$ be the resulting graph. By  considering $f\ol\phi_{+l+1}^{-1}$ where $\phi_{+l+1}$ is the spine extending, we have that the vertex on the spine of $E_{+l+1}$ at distance $lm$ from the exit emitter has no tails. As  $n_{lm}'>1,$ the difference in cardinalities of the edges the vertices of $E_{+l+1}$ and $F$ receive is in favor of $F$ now. Since our arguments so far did not require the graphs to be I-plus-reduced, we can apply them to $E_{+l+1}$ and $F$ and deduce that $F$ satisfies both conditions for the spine extending.

This shows that both $E$ and $F$ satisfy condition (b) -- their spines can be extended but are as short as possible. Hence, we have that $n_{i}=n_{i}'=\omega$ for all $i\in m.$
Since $n_{i_0}>n_{i_0}',$ $i_0$ is necessarily larger than $m-1$ so the spine length of $E$ is longer than $m-1$ and the formula $n_{i+lm}'\geq n_i$ implies that the spine of $F$ is at least by $lm$ longer than the spine of $E$. Hence, both graphs have spine longer than $m-1.$

Let $E_{+l}$ obtained by extending the spine of $E$ $l$ times. If $n_i''$ are the cardinalities of the edges $i$-th vertex on the spine of $E_{+l}$ receives, we have that $n_i'\geq n_{i}''$ for every $i.$
If $\sigma: P^{v_0}\to P^{v_0'}$ is the length preserving bijection of paths which $f_{2/1}$ induces and if $\sigma_l$ is such bijection induced by the spine extending map $\phi_{+l}:E\to E_{+l},$
we have that $\sigma\sigma_l^{-1}$ maps an $i$-tail onto an $i$-tail for every $i\geq 0.$
As $\sigma\sigma_l^{-1}$ is onto, the existence of $\sigma\sigma_l^{-1}$ implies that
$n_i'=n_{i}''$ for every $i.$ These values make $F$ isomorphic to the graph $E_{+l}.$ Let $\iota: E_{+l}\cong F$ be one such isomorphism (so $\iota(v_0)=v_0'$). As the spine of $F$ is as short as possible, we cannot have that $n_i'=1$ for all $m\leq i<(l+1)m.$ Thus, $l$ has to be zero which implies that $n_i'=n_i$ for all $i.$ In this case, $E_{+l}=E$ and there is an isomorphism $\iota: E\cong F$ such that $f_{2/1}=\ol\iota.$
\end{proof}

\begin{remark}
Note that the proof shows that $\iota$ is an isomorphism of the sink graphs of $E/H$ and $F/G$ which is stronger statement that the conclusion of the Quotient Proposition.
\label{remark_on_sink_graph}
\end{remark}

\begin{remark}
Note that if any of the tails of $u_i$ and $u_i'$ for $i>m-1$ is replaced by the left (finite or infinite) path with the same number of tails in both graphs (as one below) and the sink of such replacement identified with the range of the tail, then
the arguments of the proof still work out.
\[
\xymatrix{\ar@{.>}[r]&\bullet\ar[r]&\bullet\ar[r]&\bullet\ar[r]&\bullet
\\&\bullet\ar[u]^{(\kappa_3)}
&\bullet\ar[u]^{(\kappa_2)}&\bullet\ar[u]^{(\kappa_1)}&
}
\]
\noindent For $i\in m,$ if $n_i=n_i'=\omega,$ and if  such replacements are made to a (finite or infinite) number of tails of $u_i$ and $u_i'$ such that still infinitely many tails do not have such a replacement, then the same holds because the changes do not alter the validity of condition (2).
\label{remark_on_replacing_tails}
\end{remark}

The following lemma uses the assumptions that $E$ and $F$ are cut and I-plus-reduced and it does not hold without either of the two assumptions, as some of our earlier examples show.

\begin{lemma} {\bf The Cut Lemma.}
Let $E=E_{\cut}$ and $F=F_{\cut}$ be I-plus-reduced graphs such that
that there is $\iota: E/H\cong F/G$ which maps $v_0=\so(c)$ onto $v_0'=\so(c').$

If there is a $\POM^D$-isomorphism $f$ of the $\Gamma$-monoids such that $f_{2/1}=\ol\iota,$ that $f([w_0])=[w_0']$ for $w_0=\so(d)$ and $w_0'=\so(d'),$ and that the  connecting matrices, computed for $c,c',d,$ and $d',$ are equal, then $\iota$ can be extended to a graph isomorphism $\iota: E\cong F$ such that $f=\ol\iota.$
\label{lemma_f_identity_implies_cut_graphs_isomorphic}
\end{lemma}
\begin{proof}
The assumptions of the lemma enable us to extend $\iota$ to exits and their ranges and to the vertices and edges of $d$ if $n>0$, and of the suffix of the spine of the tail graph of length $k$ if $n=0.$

We show that $\iota$ can be extended to the rest of the spine (if any) and to the tails of the tail graph. Condition $C(i),$ considered on $E,$ depends only on the part of the graph on which $\iota$ is presently defined. Thus, for every $i\in n$ if $n>0$ (and every $i\leq k_t$ if $n=0$),
we have that $i$-tails of $E$ are cuttable if and only if $i$-tails of $F$ are cuttable.

If $n>0$ and $i\in n,$ let $P^{w_0}_{\not d, i}$ be the set of paths of $P^{w_0}_{\not d}$ of length $n-i+1$ modulo $n.$
If $n=0,$ then $P^{w_0}_{\not d}=P^{w_0}.$ Let $P^{w_0}_{\not d, i}$ be the set of paths of length $i+1$ ending at $w_0$.

Let us partition $P^{w_0}_{\not d, i}$ into two disjoint sets depending on the location of their sources:
\begin{center}
$P^{w_0}_{\quot, i}=\{p\in P^{w_0}_{\not d, i}\mid  \so(p)\in E^0-H\}\;\;$ and
$\;\;P_{\tails, i}^{w_0}=\{ p\in P^{w_0}_{\not d, i}\mid  \so(p)\in H\}.$
\end{center}
The length of a path $p\in P^{w_0}_{\quot, i}$ is $l+k'|c|+d(j,j')+1+n-i'$ for some $i'\in n,$ $l,k'\in\omega,$ and $j,j'\in m$ such that $a_{j'i'}\neq 0$ and $|p|=n-i+1(\mymod n).$  Thus, $C(i)$ holds for such values $l,k', j, j'$ and $i'$ if and only if $|\Pa_l^{v_j}|=\omega.$

Let $P^{w_0'}_{\not d', i}$ be analogously defined for $F$ and let $P^{w_0'}_{\quot, i},$ and $P^{w_0'}_{\tails, i}$ be analogously defined for $P^{w_0'}_{\not d', i}$. The existence of the restriction $f_1$ of $f$ on the $\Gamma$-monoids of the porcupine graphs ensures that there is a bijection
$\rho_{i}: P_{\not d, i}^{w_0}\to P_{\not d', i}^{w_0'}$
and the existence of $\iota$ ensures that such $\rho_{i}$ can be found so that it maps the set of paths in $P_{\quot, i}^{w_0}$ which have exits as their first edges onto the set of such paths $P_{\quot, i}^{w_0'}$. Let us choose one such $\rho_i$ maps as few $\quot$-paths to $\tails$-paths as possible. Since the graphs are I-plus reduced and the cardinalities $|\Pa_{l_p}^{v_{j_p}}|$ and $|\Pa_{l_p}^{v_{j_p}'}|$ are equal by the existence of $\iota$, such interchanges are possible only for paths $p$ with $|\Pa_{l_p}^{v_{j_p}}|=|\Pa_{l_p}^{v_{j_p}'}|$ equal to $\omega$. For such a path $p,$ let $l_p,k'_p, j_p, j'_p$ and $i'_p$ be as above considering $C(i)$.
In this case, the condition $C(i)$ holds in $E$ and so it holds in $F$ and this makes the $i$-tails cuttable in both graphs. As both graphs are in their cut forms, the cardinality $i$-tails is zero. So, the relation $\rho_{i}(p)\in P_{\tails, i}^{w_0'}$ is not possible.

This shows that $\rho_i$ maps $P_{\quot, i}^{w_0}$ onto $P_{\quot, i}^{w_0'}$ and $P_{\tails, i}^{w_0}$ onto $P_{\tails, i}^{w_0'}.$
In particular, for every $i\in n$ for $n>0$ (for every $i\leq k_t$ if $n=0$), we have that $\rho_i$ induces a bijection of $i$-tails. If $\iota_i$ is a bijection of $i$-tails for $i\in n$ ($i\leq k_t$) let us extend $\iota$ to the tails by mapping an $i$-tail $e$ to $\iota_i(e).$ Such extension, let us keep calling it $\iota,$ is an isomorphism $E\cong F$ such that $\ol\iota=f$.
\end{proof}

Having the Quotient Proposition and the Cut Lemma, we start the proof of Theorem \ref{theorem_n=2}.

{\bf \underline{The $\mathbf{m>0}$ case.}}
If $n>0,$ then $E$ and $F$ are single exit-emitters.
In this case, we can choose $c'\in [c']$ so that $f_{2/1}([v_0])=[v_0']$ but it may not be the case that $v_0'$ is the single exit-emitter. If $v_j'$ is the single exit-emitter in $F$, then we consider the canonical form $F_0$ of $(F_{\red\cut})$ in which $v_0'$ is the single exit-emitter and the map $\sigma_0: F\to F_0$ from Lemma \ref{lemma_maps_between_canonical_forms} so that $\ol\sigma_0([v_0'])=[v_0'].$ As $v_0'$ is the single exit-emitter of $F_0,$ we can consider $F_0$ instead of $F$ and $\ol\sigma_0f$ instead of $f$ and assume that $f_{2/1}([v_0])=[v_0']$ and that both $v_0$ and $v_0'$ are single exit-emitters.

As $f_{2/1}([v_0])=[v_0'],$ we have that
\[f([v_0])=t^{lm}[v_0']+b(t)[w_0']\]
for some $l\in \Zset$ and $b(t)\in \Zset^+[t,t^{-1}].$

If $a_E$ and $a_F$ are the connecting polynomials of $E$ and $F$,  we have that $f([v_0])=t^{lm}[v_0']+b[w_0']=t^{(l+1)m}[v_0']+t^{lm}a_F[w_0']+b[w_0']$ by Lemma \ref{lemma_monoid_of_canonical_form}. Thus, while $l$ and $b$ are not uniquely determined, changing $l$ changes $b$ in a specific way: increasing $l$ by one changes $b$ to $t^{lm}a_F+b.$ This shows that we can assume that $l\geq 0.$
We claim that we can choose $b$ to be in $\Zset^+[t].$ If $n>0,$ then $[w_0']=t^n[w_0'],$ so any $t^k[w_0']$ with $k<0$ is equal to $t^{k+k'n}[w_0']$ for $k'\in\Zset^+$ large enough so that $k+k'n\geq 0.$ If $n=0,$ $[w_0']$ is incomparable. The elements of the form $t^k[w_0']$ for $k\in \Zset$ are minimal elements and $D_F$ does not contain an element of the form $t^k[w_0']$ for $k<0$ (see \cite[Proposition 3.4]{Roozbeh_Lia_Comparable}). So, $b\in \Zset^+[t]$.

By Lemma \ref{lemma_monoid_of_canonical_form}, we have that $f([v_0])=f(t^m[v_0]+a_E[w_0])=t^{(l+1)m}[v_0']+t^mb[w_0']+a_E[w_0'].$ As we also have that $f([v_0])=t^{(l+1)m}[v_0']+t^{lm}a_F[w_0']+b[w_0']$ and, as $M_F^\Gamma$ is cancellative, we have that
\begin{equation}t^{lm}a_F[w_0']+b[w_0']=t^mb[w_0']+a_E[w_0'].
\label{equation_the_main_one}
\end{equation}

If $f^{-1}([v_0'])=t^{l'm}[v_0]+b'[w_0],$ the requirement that $f^{-1}(f([v_0]))=[v_0]$ produces
\[t^{(l+l')m}[v_0]+\sum_{j=0}^{l+l'-1}t^{jm}a_E[v_0]=[v_0]=t^{(l+l')m}[v_0]+t^{lm}b'[w_0]+b[w_0].\]
Canceling the term with $[v_0]$ produces the first relation below and using
$f(f^{-1}([v_0']))=[v_0']$ similarly produces the second relation below.
\begin{equation}
(t^{lm}b'+b)[w_0]=\sum_{j=0}^{l+l'-1}t^{jm}a_E[w_0]\hskip2cm
(t^{l'm}b+b')[w_0']=\sum_{j=0}^{l+l'-1}t^{jm}a_F[w_0']
\label{equation_b_and_b_prim}
\end{equation}

Let $|b|$ denote the number of terms of the form $t^i$ for $i\in \Zset^+$ of $b$ (so the $\Zset^+[t]$ monomial of the form $kt^i$ for $k\in\Zset^+$ is considered to be the sum of $k$-terms of the form $t^i$) and let the same notation be used for any polynomial of $\Zset^+[t].$
Equation (\ref{equation_the_main_one}) implies that $|t^{lm}a_F|+|b|=|t^mb|+|a_E|.$ Since $|b|=|t^mb|$ and $|t^{lm}a_F|=|a_F|,$ we have that $|a_F|+|b|=|b|+|a_E|.$ After canceling $|b|,$ this produces the relation $|a_E|=|a_F|.$  So, $c$ and $c'$ have the same number of exits.

Next, we show that the connecting polynomials of $E$ and $F$ are the same in the cycle-to-cycle case and that they are the same up to a multiple of $t^{lm}$ for some $l$ in the cycle-to-sink case.

{\bf The $\mathbf{m>0, n>0}$ case.} Let $L$ be the least common multiple of $m$ and $n$ and $G$ be the greatest common divisor of $m$ and $n.$ Let $m'$ be such that $m=Gm'$ and $n'$ be such that $n=Gn'$.

Let us choose $d\in [d]$ so that  the exits receivers in $E$ are among $w_{n-G+1}$ to $w_{0}.$ Let $i_0\in n$ be such that $w'_{i_0-_n(G-1))}$ to $w_{i_0}'$ receive exits in $F.$
By moving all exits of $F$ by a multiple of $m$ and then $L$-reducing and cutting, if needed, we can assume that $i_0\in \{n-G+1, \ldots, n-1, 0\}.$ Hence $n-_ni_0\in\{0,\ldots, G-1\}.$

With this set up, we have that $a_{E}=a_{00}t+a_{0(0-_n1)}t^2+\ldots+a_{0(0-_n(G-1))}t^G$ holds, so that $a_{E}[w_0]=\sum_{j=1}^Ga_{0(0-_n(j-1))}t^{j}[w_0]$ and $a_F=a_{0i_0}t^{n-_ni_0+1}+a_{0(i_0-_n1)}t^{n-_ni_0+2}+\ldots+a_{0(i_0-_n(G-1))}t^{n-_ni_0+G}.$ Thus, we have that $a_F[w_0']=\sum_{j=1}^{G} a'_{0(i_0-(j-1))}t^{n-_ni_0+j}[w_0'].$

Let $A_1=a_{0(0-_n(G-1))},$ $A_0=a_E-A_1,$ and let us consider $a_E$ as the sum of $n'$ blocks $A_0+A_1t^G+0t^{2G}+\ldots+0t^{(n'-1)G}.$ We consider these blocks because we have that $t^Ga_E[w_0]=(0t^0+A_0t^G+A_1t^{(1+_{n'}1)G}+0+\ldots+0t^{(n'-_{n'}1)G})[w_0].$ Using these blocks, the multiplication of $a_E$ by a term of the form $t^{lm}$ for $l\in \Zset^+$ is well-represented. Let us represent $a_F$ using similar blocks. If $G_0$ is such that $n-_ni_0+G_0=G$, let $A'_0=a_{0i_0}'t^{n-_ni_0+1}+\ldots+a_{0(i_0-_n(G_0-2))}'t^{n-_n(i_0-G_0+2)+1}=a_{0i_0}'t^{n-_ni_0+1}+\ldots+a_{0(n-G-2)}'t^{G-1}$ and $A_1'=a_{0(i_0-_n(G_0-1))}'+\ldots+a_{0(i_0-_n(G-1))}'t^{n-_ni_0}=a_{0(n-G-1))}'+\ldots+a_{0(i_0-_n(G-1))}'t^{n-_ni_0}$ so that we have that $a_F[w_0']=(A_0'+A_1't^G)[w_0']$ and use the same rules for computing $t^{m}a_F[w_0'].$

We represent $b$ using similar blocks. Let $b=\sum_{i\in n}b_it^{i}.$ We can assume that the degree of $b$ is at most $n-1$ since $b$ is always considered either applied to $[w_0]$ or $[w_0']$. So, we have that \[b=\sum_{k'\in n'}\sum_{j\in G}b_{j+k'G}t^{j+k'G}=B_0+B_1t^G+\ldots+B_{n'-_{n'}1}t^{(n'-_{n'}1)G}\mbox{  for }B_{k'}=\sum_{j\in G}b_{j+k'G}t^{j}.\]

Let $m_0<n'$ be such that $m'=m_0(\mymod n')$ (thus $m_0G<n'G=n$ and $m=m_0G(\mymod n)$)
and $n_0<n'$ be such that $n_0=lm'(\mymod n')$ (thus $n_0G<n'G=n$ and $lm=n_0G(\mymod n)$). With these values,
$t^{lm}a_F[w_0']=t^{n_0G}a_{F}[w_0']=(A_0t^{n_0G}+A_1t^{(n_0+_{n'}1)G})[w_0']$ and $t^mb[w_0']=t^{m_0G}b[w_0']=(B_{n'-_{n'}m_0}+B_{(n'-m_0)+_{n'}1}t^G+\ldots +B_{(n'-m_0)-_{n'}1}t^{(n'-_{n'}1)G})[w_0'].$ So, the coefficients with $[w_0']$ on both sides of the equation (\ref{equation_the_main_one}) are
\[
(0+\ldots+0+A_0't^{n_0G}+A_1't^{(n_0+_{n'}1)G}+0+\ldots+0)+(B_0+B_1t^{G}+\ldots+B_{n'-_{n'}1}t^{(n'-_{n'}1)G})\mbox{ and }\]\[(B_{n'-_{n'}m_0}+B_{(n'-m_0)+_{n'}1}t^G+\ldots +B_{(n'-m_0)-_{n'}1}t^{(n'-_{n'}1)G})+(A_0+A_1t^G+0t^{(1+_{n'}1)G}+\ldots+0t^{(n'-_{n'}1)G}).
\]
Equating the terms with $jG$ for  $j\in n'$ produces $n'$ equations. Adding them all up produces an equation which has $\sum_{j\in n'} B_j$ on both sides. Canceling this sum produces
$A_0'+A_1'=A_0+A_1.$ This implies that $a_{0(n-_nj)}=a_{0(n-_nj)}'$ for $j=i_0,\ldots, G-1$ and
$a_{0(n-_nj)}=a_{0(n-_nj-_nG)}'$ for $j=0,\ldots, n-i_0+1.$
By Lemma \ref{lemma_feasibility_of_an_exit_move}, we can move $0(n-_nj-_nG)$-exits of $F$ to $0(n-_nj)$-exits for every $j=0,\ldots, n-i_0+1$. If $F'$ is an $L$-reduced and cut form of the resulting graph and if $\psi: F\to F'$ is the exit move operation followed by an $L$-reduction and the cut map, we have that the connecting matrices of $E$ and $F'$ are equal and that $\ol\psi f([w_0])=[w_0']$. We can consider $F'$ instead of $F$ and $\ol\psi f$ instead of $f$ and we will continue to refer to these new elements as $F$ and $f.$ Thus, we have that $a_E=a_F$ (i.e  $A_0=A_0'$ and $A_1=A_1'$).

Note that if $n_0>1,$ then $n'>2$ and the $(n_0+_{n'}1)$-th equation is $A_0'+B_{n_0}=B_{{n'}-_{n'}m_0+n_0},$ so $A_0'$ is a summand of $B_{n'-_{n'}m_0+n_0}.$
The $(n_0+_{n'}2)$-th equation is $A_1'+B_{n_0+_{n'}1}=B_{n'-_{n'}m_0+n_0+1}$
so $A_1'$ is a summand of $B_{n'-_{n'}m_0+n_0+1}.$ Thus, $t^{(l-1)m}a_F[w_0']=(A_0'+A_1't^G)t^{lm-m}[w_0']=(A_0'+A_1't^G)t^{(n_0-m_0)G}[w_0']$ is a summand of $b[w_0'].$ Thus, we have that
\[f([v_0])=t^{lm}[v_0']+b[w_0']=t^{(l-1)m}(t^m[v_0']+a_F[w_0'])+b_{l-1}[w_0']=t^{(l-1)m}[v_0']+b_{l-1}[w_0']\]
for a summand $b_{l-1}$ of $b.$ Since all the subscripts of the $n'$ equations are considered modulo $n',$ one shows that the same simplification is possible whenever $n_0>0$: if $n_0=1,$ we can simplify $f([v_0])$ in the same way. This simplification enables us to consider $l-1$ instead of $l.$ By renaming the new $l$-value and the new $b$-value, we can continue this process until $l=0$ or $n_0=0.$

Since $l=0$ implies $n_0=0,$ it is sufficient to consider the case $n_0=0.$ Returning the values $n_0=0, A_0=A_0',$ and $A_1=A_1'$ to the $n'$ equations coming out of the equation (\ref{equation_the_main_one}), we have that $B_j=B_{j+_{n'}(n'-m_0)}$ for every $j\in n'.$ If $n$ does not divide $m$, then $m_0\neq 0$ so all blocks of $b$ are mutually equal to each other: $B_j=B_k$ for all $j,k\in n'.$ If $n$ divides $m,$ there is only one block $B_0,$ so this trivially holds. In any case, since $lm'=0(\mymod n')$ and $m'$ and $n'$ are mutually prime, we necessarily have that $l=0(\mymod n').$

If $l'$ and $b'$ are analogous to $l$ and $b$ for $f^{-1},$ and $n_0'$ is analogous to $n_0,$ the same argument shows that $n_0'=0$ so that $n'$ divides $l'.$ So, there are nonnegative integers $k$ and $k'$ such that
$l=kn'$ and $l'=k'n'.$ Hence, $n$ divides both $lm$ and $l'm$ and we have that
$t^{lm}b=b,$ $t^{l'm}b'=b',$ $t^{l'm}b=b,$ and
$t^{lm}b'=b'$ hold in $\Zset[t]/(t^n-1).$ The equations (\ref{equation_b_and_b_prim}) become $(b'+b)[w_0]=\sum_{j=0}^{l+l'-1}t^{jm}a_E[w_0]$ and
$(b'+b)[w_0']=\sum_{j=0}^{l+l'-1}t^{jm}a_E[w_0']$ so we have that
\[(B_0+B_0')\sum_{j\in n'}t^{jG}=b+b'=\sum_{j=0}^{l+l'-1}t^{jm}a_E\] holds in $\Zset[t]/(t^n-1)$ and this  implies that $B_0+B_0'=(k+k')(A_0+A_1)$ holds.
This shows that $B_0$ is equal to $k_0a_{0(0-_n(G-1))}+d_0+\sum_{j\in G, j\neq 0}(k_ja_{0(0-_n(j-1))}+d_j)t^j$ for some nonnegative integers $k_j$ and $d_j<a_{0(0-_n(j-1))}$ for $j\in G, j\neq 0$ and $d_0<a_{0(0-_n(G-1))}.$

Let $E'$ be the cut form of the graph obtained from $E$ by moving each $0(n-_n(j-1))$-exit $Lk_j$ times and moving $d_j$-many of the $0(n-j+1)$-exits $L$ times.
Let $\phi_{+b}: E\to E'$ be this move operation followed  by the cut map. We denote this operation by $\phi_{+b}$ since  $\ol\phi_{+b}([v_0])=[v_0]+b[w_0].$ Thus, the composition $\ol\phi_{+b} f^{-1}$ maps $[w_0']$ to $[w_0]$ and $[v_0']$ to \[t^{l'm}[v_0]+(b'+b)[w_0]=t^{l'm}[v_0]+\sum_{j=0}^{l+l'-1}t^{jm}a_E[w_0]=t^{l'm}[v_0]+\sum_{j=0}^{l'-1}t^{jm}a_E[w_0]+\sum_{j=l'}^{l+l'-1}t^{jm}a_E[w_0]=\]\[[v_0]+\sum_{j=l'}^{l+l'-1}t^{jm}a_E[w_0]=[v_0]+t^{l'm}\sum_{j=0}^{l-1}t^{jm}a_E[w_0]=[v_0]+\sum_{j=0}^{l-1}t^{jm}a_E[w_0].\]
The $L$-reduction of $E'$ followed by the cut map is $\phi_{+b}^{-1}$ and its image is $E$ since $E$ is $L$-reduced and cut.
We have that $\ol\phi_{+b}^{-1}([v_0])= t^{(l+l')m}[v_0]+(\sum_{j=0}^{l+l'}t^{jm}a_E-b)[w_0']=t^{(l+l')m}[v_0]+b'[w_0']$ holds. Thus, $f\ol\phi_{+b}^{-1}([v_0])= t^{(2l+l')m}[v_0']+(b+b')[w_0']=t^{lm}[v_0'].$

Let $F'$ be the graph obtained by moving all exits of $F$ $kL$ times and then cutting the resulting graph (recall that $k=\frac{n}{l}$). Let $\phi_{+kL}:F\to F'$ be this operation. The composition $\ol\phi_{+kL}f\ol\phi_{+b}^{-1}$ is defined and it maps $M_{E'}^\Gamma$ onto $M_{F'}^\Gamma.$
We have that
$\ol\phi_{+kL}f\ol\phi_{+b}^{-1}([v_0])=\ol\phi_{+kL}(t^{lm}[v_0'])=[v_0'].$
By the Quotient Proposition (Proposition \ref{proposition_E_without_H}), we have that $(\ol\phi_{+kL}f\ol\phi_{+b}^{-1})_{2/1}$ is induced by an isomorphism of the quotients. The maps $\phi_{+kL}$ and $\phi_{+b}$ do not impact the quotients, so the quotients of $E$ and $F$ are also isomorphic and $f_{2/1}=(\ol\phi_{+kL}f\ol\phi_{+b}^{-1})_{2/1}=\ol\iota.$ As both $E'$ and $F'$ are in their cut forms, we can use the Cut Lemma (Lemma \ref{lemma_f_identity_implies_cut_graphs_isomorphic}). By this lemma, there is a graph isomorphism $\iota': E'\cong F'$ such that $\iota'(v_j)=v_j'$ and $\iota'(w_i)=w_i'$ for every $j\in m$ and $i\in n$ and that $\ol\phi_{+kL}f\ol\phi_{+b}^{-1}=\ol{\iota'}$ so that
\[f=\ol{\phi_{+kL}^{-1}\iota'\phi_{+b}}\]
holds. This shows that we can realize $f$. So, it remains to show that $E\cong F.$

The graphs $E, F,$ and $E'\cong F'$ have isomorphic quotients and the same $c$-to-$d$ part. So, only the number of tails possibly distinguishes $E, F$ and $E'$. If $t_i$ is the number of tails added to $l_i$ in the process $\phi_{+b}: E\to E'$ and $t_i'$ in the process $\phi_{+kL}: F\to F',$ we have that $l_i+t_i=l_i'+t_i'$ since $E'\cong F'.$ For $i$ such that $t_i=\omega,$ we have that  $t_i'=\omega$ also since the quotients are isomorphic and the connecting matrices equal. The converse $t_i'=\omega \Rightarrow t_i=\omega$ also holds, so the $i$-tails are cuttable in $E$ if and only if they are cuttable in $F$ and $l_i=l_i'=0$ for such $i.$ For $i$ such that $t_i<\omega$ (thus $t_i'<\omega$), we have that $l_i=\omega$ if and only if $l_i=\omega,$ so $l_i=l_i'$ again in this case. Thus, if $l_i=\omega$ for all $i\in n,$ then $E\cong E'\cong F'\cong F,$ so $E'$ and $F'$ are also canonical forms.

Hence, it remains to consider $i$ such that all four values in the formula $l_i+t_i=l_i'+t_i'$
are finite. If $k_ja_{0(0-_n(j-1))}+d_j<ka_{0(0-_n(j-1))}$ for some $j\in G, j\neq 0$ or $k_0a_{0(0-n(G-1))}+d_0<ka_{0(0-_n(G-1))}$, then a $0(0-_n(j-1))$-exit of $F$ for $j\in \{1,\ldots, G\}$ is moved more times by $\phi_{+kL}$ than any $0(0-_n(j-1))$-exit of $E$ by $\phi_{+b}.$ As $F$ is $L$-reduced, this cannot happen if there is $i\in n$ such that $l_i'<\omega$, so $k_ja_{0(0-_n(j-1))}+d_j\geq ka_{0(0-_n(j-1))}$ and $k_0a_{0(0-n(G-1))}+d_0\geq ka_{0(0-_n(G-1))}.$ If $k_ja_{0(0-_n(j-1))}+d_j>ka_{0(0-_n(j-1))}$ for some $j\in G, j\neq 0$ or $k_0a_{0(0-n(G-1))}+d_0> ka_{0(0-_n(G-1))}$,
then a $0(0-_n(j-1))$-exit of $E$ for $j\in \{1,\ldots, G\}$ is moved more times by $\phi_{+b}$ than any $0(0-_n(j-1))$-exit of $F$ by $\phi_{+kL}.$ This also cannot happen if there is $i\in n$ such that $l_i<\omega,$ so we have that $k_ja_{0(n-_n(j-1))}+d_j=ka_{0(n-_n(j-1))}$ for all $j\in G, j\neq 0$
and $k_0a_{0(0-n(G-1))}+d_0=ka_{0(0-_n(G-1))}.$ By the definition of $d_j$ for $j\in G,$ this implies that $d_j=0$ for all $j\in G$ and so $k_j=k$ for all $j\in G.$ This shows that $\phi_{+b}$ and $\phi_{+kL}$ are the same moves and that, in particular, $b[w_0']=
\sum_{j=0}^{l-1}t^{jm}a_E[w_0']$ so that $f([v_0])=[v_0']$. Thus, starting with isomorphic graphs $E'\cong F',$ the maps $\phi_{+b}^{-1}$ and $\phi_{+kL}^{-1}$ produce isomorphic graphs. So, $\phi_{+kL}^{-1}\iota'\phi_{+b}$ is a graph isomorphism $\iota: E\cong F$ and we have that $f=\ol\iota$ holds in this case.

Before continuing with the proof of the next case, let us present some nonidentity $\POM^D$-automorphisms of the graphs from Example \ref{example_moving_exits_invariant}. For both graphs  of that example,
the map given by $f([v_0])=[v_0]+t[w_0]$ and $f([w_0])=[w_0]$ corresponds to moving one of the exits.
For the first graph, $f$ has the inverse such that $f^{-1}([v_0])=t[v_0]+t[w_0]$ and for the second graph,   $f^{-1}([v_0])=t^2[v_0]$.

{\bf The $\mathbf{m>0, n=0}$ case.} Recall that we established that $f_{2/1}([v_0])=[v_0']$. Let $k$ and $k'$ be the lengths of the spines of $E$ and $F.$ Assume that $k\leq k'.$ The case $k\geq k'$ is analogous by considering $f^{-1}$ instead of $f$.

Since $n=0$ and a sink is incomparable, we can write equation (\ref{equation_the_main_one})
as $t^{lm}a_{F}+b=t^mb+a_{E}$ which holds in $\Zset[t].$
By the definition of a canonical form, $a_{E}$ contains no monomial of degree smaller than $k+1$ and of degree larger than $k+m$ and $t^{lm}a_{F}$ contains no monomial of degree smaller than $k'+1+lm$ and of degree larger than $k'+m+lm.$
Thus, $b$ does not contain a monomial of degree smaller than $k+1$ which implies that $t^mb$ does not contain a monomial of degree smaller than $k+m+1.$
Since $k\leq k'$, $k+m+1\leq k'+m+1$ and, if $l>0,$ then
$k'+m+1\leq k'+lm+1,$ so $t^mb$ contains $t^{lm}a_F$ as a summand. Consequently, $b$ contains $t^{(l-1)m}a_F$ as a summand so that \[f([v_0])=t^{lm}[v_0']+t^{(l-1)m}a_{F}[w_0']+b_{l-1}[w_0]=t^{(l-1)m}[v_0']+b_{l-1}[w_0']\] holds for a summand $b_{l-1}$ of $b.$ Thus, we can decrease the value of $l$ if $l>0$ and $b\neq 0.$
By renaming the new $l$-value to $l$ and the new $b$-value to $b,$ we can continue this process until we eventually get that $l=0$ or $b=0.$ If $b=0,$ then $t^{lm}a_F=a_E.$ Since $k'\geq k$, this would be possible only if $l=0$ in which case $a_E=a_F.$ Thus, it remains to consider the case $l=0$ and $b\neq 0.$

Returning $l=0$ to the equation (\ref{equation_the_main_one}) produces $a_F+b=t^mb+a_E$ which implies that the degree of $t^mb$ is at most $m+k'$. Hence, the degree of $b$ is at most $k'.$ The equation (\ref{equation_the_main_one}) for $f^{-1}$ produces $t^{l'm}a_E+b'=t^mb'+a_F$ and it implies that $b'$ does not contain a monomial with degree smaller than $k'+1.$ Hence, all monomials in $b'$ have degrees strictly larger than those in $b.$ The first equation of  (\ref{equation_b_and_b_prim}) implies that $b+b'=\sum_{j=0}^{l'-1}t^{jm}a_E$, so there is $j_0\in l'$ such that $b=\sum_{j=0}^{j_0-1}t^{jm}a_E+b_0$ and $b'=\sum_{j=j_0+1}^{l'-1}t^{jm}a_E+b_0'$ where $b_0$ and $b_0'$ are such that their sum is $t^{j_0m}a_E.$

Let $E'$ be the graph obtained by  moving all exits of $E$ $j_0m$ times and then moving those which correspond to the monomials present in $b_0$ $m$ more times. Let $\phi_{+b}: E\to E'.$ Since all exits of $E$ end in $w_k,$ $b\neq 0$ holds if and only if the spine of $E'$ is longer than $k.$ If $\phi_{+b}^{-1}$ is the inverse operation defined on $E'.$ As \[t^{l'm}[v_0]+b'[w_0]+b[w_0]=t^{l'm}[v_0]+\sum_{j=0}^{l'-1}t^{jm}a_E[w_0]=[v_0]=\phi_{+b}^{-1}\phi_{+b}([v_0])=\phi_{+b}^{-1}([v_0])+b[w_0],\] we can cancel $b[w_0]$ in the first and the last expression and have that $\phi^{-1}_{+b}([v_0])=t^{l'm}[v_0]+b'[w_0].$

The composition $f\ol\phi_{+b}^{-1}: E'\to F$ maps $[v_0]$ to $t^{l'm}[v_0']+(t^{l'm}b+b')[w_0']=t^{l'm}[v_0']+\sum_{j=0}^{l'-1}t^{jm}a_F[w_0']=[v_0'].$
Equation  (\ref{equation_the_main_one}) for this composition, implies that $a_F=a_{E'}.$

As $f\ol\phi_{+b}^{-1}([v_0])=[v_0'],$ and $a_F=a_{E'},$ we can use the Quotient
Proposition to deduce that
$(f\ol\phi_{+b}^{-1})_{2/1}$ is induced by an isomorphism of the quotients. The map $\phi_{+b}$ does not impact the quotients, so the quotients of $E$ and $F$ are also isomorphic and $f_{2/1}=(f\ol\phi_{+b}^{-1})_{2/1}=\ol\iota.$

By the Cut Lemma, we have that $E'\cong F$.
As $E'\cong F$, the spine length of $E'$ is  $k'.$ If $b\neq 0,$ then $k'>k$ and $E'$ can be spine-reduced to $E$. However, since $F\cong E'$ and $F$ {\em is} spine-reduced, this is a contradiction. Hence, $b=0$ and so $k'=k,$ $E'=E,$ and $\iota: E\cong F$ is such that
\[f=\ol\iota.\]

{\bf \underline{The $\mathbf{m=0}$ case.}}
Since $m=0,$ $v_0$ and $v'_0$ are infinite emitters. The relation $f_{2/1}([v_0])=[v'_0]$ implies that $f([v_0])$ has exactly one of the elements $[v_0']$ and $[q_{Z'}]$ for some nonempty and finite $Z'\subseteq \so^{-1}(v_0')$ present in its representation via the generators. Letting $q_\emptyset=v_0',$ we unify the treatment of these two cases so that for any finite $Z\subseteq \so^{-1}(v_0),$ we  write \[f([q_Z])=[q_{Z_F}]+b_Z[w_0']\] for some finite $Z_F\subseteq \so^{-1}(v'_0)$ and some
$b_Z\in \Zset^+[t, t^{-1}].$
Using the same argument as in the $m>0$ case, one shows that $b_Z$ can be taken to be in $\Zset^+[t].$
The map $f^{-1}$ necessarily has the same form as $f$ so, let $f^{-1}([q_{Z'}])=[q_{Z'_E}]+b'_{Z'}[w_0]$ for some $b'_{Z'}\in \Zset^+[t]$ and some finite $Z'_E\subseteq \so^{-1}(v_0).$
As $[q_{(\emptyset_E)_F}]+a_{(\emptyset_E)_F}[w_0']=[v_0']=f(f^{-1}([v_0']))=f([q_{\emptyset_E}]+b'_{\emptyset}[w_0])=[q_{(\emptyset_E)_F}]+b_{\emptyset_E}[w_0']+b'_{\emptyset}[w_0']$ \[(b_{\emptyset_E}+b'_{\emptyset})[w_0']=a_{(\emptyset_E)_F}[w_0']\mbox { holds and, similarly, }(b'_{\emptyset_F}+b_{\emptyset})[w_0]=a_{(\emptyset_F)_E}[w_0]\mbox{ holds.}\] Thus, $b_\emptyset[w_0]=a_{W}[w_0]$ and $b'_\emptyset[w_0']=a_{W'}[w_0']$ for some finite sets $W\subseteq \so^{-1}(v_0)$ and $W'\subseteq \so^{-1}(v_0').$

Let $E'$ be the cut form of the graph obtained by moving the exits in $\emptyset_E$ and $\phi_E: E\to E'$ be the corresponding map. We have that $\ol\phi_E([v_0])=[v_0]+a_{\emptyset_E}[w_0]$ and that $\ol\phi_E^{-1}([v_0])=[q_{\emptyset_E}].$
Let $F'$ be the cut form of the graph obtained by moving the exits in $W'$ and $\phi_F$ the corresponding map so that $\ol\phi_F([v_0'])=[v_0']+a_{W'}[w_0']=[v_0']+b_\emptyset'[w_0']$. Thus, $\ol\phi_Ff\ol\phi_E^{-1}([v_0])=\ol\phi_Ff([q_{\emptyset_E}])=\ol\phi_F([q_{(\emptyset_E)_F}]+b_{\emptyset_E}[w_0'])=[q_{(\emptyset_E)_F}]+(b_{\emptyset_E}+b_\emptyset')[w_0']=[q_{(\emptyset_E)_F}]+a_{(\emptyset_E)_F}[w_0']=[v_0'].$

By Lemma \ref{lemma_monoid_of_canonical_form_m=0}, the relation
$g([v_0])=[v_0']$ for  $g=\ol\phi_Ff\ol\phi_E^{-1},$ implies that for any finite $Z\subseteq \so^{-1}(v_0)$ there is $Z'\subseteq \so^{-1}(v_0')$ such that $a_Z[w_0']=a_{Z'}[w_0']$ and that $g([q_Z])=[q_{Z'}].$ This implies that the connecting matrices of $E'$ and $F'$ are equal.

Thus, the assumptions of the Quotient Proposition are satisfied for $E', F',$ and $g$, and, by this proposition, there is an isomorphism $\iota$ of the quotients such that $g_{2/1}=\ol\iota.$ As $\phi_E$ and $\phi_F$ do not change the quotients, we have that $f_{2/1}=\ol\iota$. So, the quotients of $E$ and $F$ are isomorphic.

The assumptions of the Cut Lemma are satisfied for $E', F'$ and $g$ so $\iota$ can be extended to $\iota: E'\cong F'$ such that $g=\ol\iota.$ Hence,
\[f=\ol{\phi_F^{-1}\iota\phi_E}.\]

Next, we show that $a_{0i}=\omega$ implies $a_{0i}'=\omega$ for all  $i\in n$ if $n>0$ and for all $i\leq k$ if $n=0$. If $a_{0i}=\omega$ and $k\in \Zset^+$ is arbitrarily large, then  $kt^{n-_ni+1}[w_0]$ is a  summand of $[v_0]$ if $n>0$ and $kt^{i+1}[w_0]$ is a  summand of $[v_0]$ if $n=0.$ Since the power of $t$ is the only difference between the two cases, we prove the claim for $n>0$ and the proof for $n=0$ is analogous. So, $n>0$ and we have that $kt^{n-_ni+1}[w_0']=f(kt^{n-_ni+1}[w_0])$ is a summand of $f([v_0])=[q_{\emptyset_F}]+b_\emptyset[w_0'].$ By taking $k>|b_\emptyset|,$ we have that
$(k-|b_\emptyset|)t^{n-_ni+1}[w_0']$ is a summand of $[q_{\emptyset_F}].$ Since $\emptyset_F$ is finite and $k$ can be taken arbitrarily large, this implies that $v_0'$ emits at least $k-|b_\emptyset|-|\emptyset_F|$ paths of length $n-_ni+1$ to $[w_0'].$ Since this holds for any $k,$ $a_{0i}'$ is infinite.
Repeating the same argument for $f^{-1}$ ensures that the converse holds, so $a_{0i}=\omega$ if and only if $a_{0i}'=\omega.$

{\bf The $\mathbf{m=0, n>0}$ case.} In this case, the equivalence $a_{0i}=\omega$ if and only if $a_{0i}'=\omega$ implies that the connecting matrices of $E$ and $F$ are equal and they are equal of those of $E'$ and $F'.$ Since $E$ and $F$ are $\omega$-reduced, $E$ is the $\omega$-reduction of $E'$ and $F$ is the $\omega$-reduction of $F'.$ As isomorphic graph have isomorphic $\omega$-reductions, we have that $E\cong F.$

{\bf The $\mathbf{m=0, n=0}$ case.}
Let $k$ be the spine length of $E$ and $k'$ the spine length of $F.$

If $k=\omega,$ let $l$ be larger than $i$ such that an exit ending in $w_i$ is in $ W.$ Since $b_\emptyset[w_0]=a_W[w_0],$  the degree of $b_\emptyset$  is smaller than $l.$ As $k=\omega,$ for every $j$ arbitrarily larger than $l,$ there is $i\geq j$ such that $v_0$ emits an exit to $w_{i}.$ Then
$t^{i+1}[w_0']=f(t^{i+1}[w_0])$ is a summand of $f([v_0])=[q_{\emptyset_F}]+b_\emptyset[w_0'].$ Since $t^{i+1}[w_0']$ is not a summand of $b_\emptyset[w_0'],$ it is a summand of $[q_{\emptyset_F}].$ As we can do this for arbitrarily large $i$-values, we necessarily have that for some $i$, $w_i'$ is different that the range of edges in $\emptyset_F.$ This shows that $v_0'$ emits an exit to $w_i'.$ Since we can take $j$ to be arbitrarily large, $k'=\omega.$ By considering $f^{-1}$ and repeating this argument, if $k'=\omega,$ then $k=\omega.$ This shows that $k=\omega$ if and only if $k'=\omega.$

If $k<\omega,$ then $v_0$ emits either zero or $\omega$ many exits to $w_i$ for every $i\leq k$ and  $a_{0k}=\omega$ by the definition of the canonical form. Thus,  $a_{0k}'=\omega$ so $k'\geq k.$ Repeating the same argument for $f^{-1},$ we have that  $k'<\omega$ implies $k'\leq k.$
So, if $k<\omega$ then $k'<\omega $ and both $k\leq k'$ and $k'\leq k$ hold. Hence, $k=k'$ in this case.

If $k=k'<\omega$ the proof of the case $m=0, n>0$ applies: $E$ and $F$ have equal connecting matrices by the definition of the canonical form, $E$ is the $\omega$-reduction of $E'$ and $F$ is the $\omega$-reduction of $F'$, so $E'\cong F'$ implies $E\cong F.$

If $k=k'=\omega,$ the relation $E'\cong F'$ implies that the reduced forms of $E'$ and $F'$ are also isomorphic. Such reduced forms are isomorphic canonical forms of $E$ and $F$.

This completes the proof of Theorem \ref{theorem_n=2}.

\begin{remark}
The proof of Theorem \ref{theorem_n=2} implies that if $m=0$ and $E$ and $F$ are cut graphs with isomorphic quotients and same connecting matrices, their $\Gamma$-monoids are isomorphic if and only if their tail graphs are isomorphic. Indeed, assuming that $f$ is an isomorphism of their $\Gamma$-monoids, it is necessary of the form  $f([v_0])=[q_{Z_{\emptyset}}]+b_{\emptyset}[w_0]$ where $Z_{\emptyset}$ are finitely many exits and its moving does not impact the isomorphism class of the graphs. In particular, they end in the vertices with infinitely many tails. So, if they end in $w_i,$ then  $w_i$ has infinitely many tails. If such $w_i$ has no tails, this would force $f([v_0])$ to be $[v_0']$ and then we have that $E\cong F$ by Lemma \ref{lemma_f_identity_implies_cut_graphs_isomorphic}.

Basically, this means that we cannot be moving infinitely many exits. For example, let us consider the graphs $E$ and $F$ below where $l$ is a  countable cardinality.
\[\xymatrix{\bullet\ar[dr]^{\omega}&\bullet&\bullet\ar[l]_{(\omega)}\\
&\bullet\ar[u]&
}\hskip3cm
\xymatrix{\bullet\ar[dr]^{\omega}&\bullet&\bullet\ar[l]_{(\omega)}\\
&\bullet\ar[u]&\bullet\ar[l]_{(l)}
}\hskip3cm\]
If $l$ is finite, then it is zero if $F$ is reduced. If $l=\omega,$ the graphs have equal connecting matrices and isomorphic both the quotients and the porcupine graphs. However, their $\Gamma$-monoids are not $\POM^D$-isomorphic: the existence of a $\POM^D$-isomorphism $f$ would imply that moving finitely many exits would make the graphs isomorphic.

This argument also shows that if $f(D_E)$ were to be  matched with $D_F,$ the terms of the form $t^2[w_0']$ which appear as summands of some $[q_Z]$ are distinguished from the terms of the form $t^2[w_0']$ which correspond to the sources of the 1-tails in $F$. As there are no such terms in $E$, no such matching is possible.
\label{remark_infinite_outsplit}
\end{remark}

\section{Composition S-NE graphs}
\label{section_n>2}

\subsection{Breaking-vertices-free forms of composition S-NE graphs and cycle schemes}\label{subsection_breaking_vertices-free}

Let $E=E_{\tot}$ be a countable $n$-S-NE graph for any $n\geq 1$ with a composition series as below.
\[(\emptyset, \emptyset)\subsetneq (H_1,S_1)\subsetneq (H_2, S_2)\subsetneq \ldots \subsetneq (H_{n-1}, S_{n-1})\subsetneq (H_n, \emptyset)=(E^0, \emptyset)\]

We claim that we can consider a graph with a composition series such that the sets of breaking vertices $B_{H_j}$ are empty for all $j=1,\ldots, n-1$ ($H_n=E^0$ so $B_{H_n}=\emptyset$).
If $B_{H_1}\neq \emptyset,$
and $v\in B_{H_1},$ then $v$ emits infinitely many edges to $H_1$ and nonzero and finitely many edges to any of $H_{j+1}-H_j, j=1,\ldots, n-1.$ Let us consider an out-split with respect to the partition of $\so^{-1}(v)$ which has each of those finitely many edges in different partition sets and $\so^{-1}(v)\cap \ra^{-1}(H_1)$ in the remaining partition set. Since $E$ has finitely many infinite emitters, $B_{H_1}$ is finite, so we can repeat this for any element of $B_{H_1}.$ In the end, we obtain a graph with $B_{H_1}=\emptyset.$ By considering  $E/H_1$ and using induction, we can have $B_{H_j}=\emptyset$ for all $j.$ So, we can assume that $E$ has a composition series as below.
\[(\emptyset, \emptyset)\subsetneq (H_1,\emptyset)\subsetneq (H_2,\emptyset)\subsetneq \ldots \subsetneq (H_{n-1}, \emptyset)\subsetneq (H_n, \emptyset)=(E^0, \emptyset).\]

The result of this process for an $n$-S-NE graph $E$ is a graph which we say is the {\em breaking-vertices-free form of $E.$} For example, the second 3-S-NE graph is
the breaking-vertices-free form of the first graph and the fourth is such form of the third one.
\[\xymatrix{\\\bullet\ar@(lu,ld)\ar [r]^{\omega}&\bullet\ar@(ru,rd)}\hskip2cm
\xymatrix{\\\bullet\ar@(lu,ld)\ar[r]&\bullet\ar [r]^{\omega}&\bullet\ar@(ru,rd)}\hskip2cm\xymatrix{&\bullet\\\bullet\ar[ur]^{\omega}\ar@/_/ [r]\ar@/^/ [r]&\bullet\ar[u]^{\omega}}\hskip2cm\xymatrix{&\bullet&\bullet\ar[dl]\\\bullet\ar[ur]^{\omega}&\bullet\ar[u]^{\omega}&\bullet\ar[l]}\]

Let $c_{j+1}$ be a terminal cycle of $H_{j+1}/H_j$ for $j=0,\ldots, n-1.$
If $c_l$ emits an edge to $H_j$ for $1\leq j<l\leq n$ such that its range is not in $H_{j'}$ for any $j'<l,$ then there is a path originating in $c_l$ terminating at $c_j$ and such that no edge is on $c_l$ or $c_j$. We say that such path is a {\em $c_l$-to-$c_j$ path}  or, for brevity an $lj$-path. The first edge of such path is an {\em $lj$-exit.} Also, for brevity, we let $m_j$ stands for $|c_j|,$ and $v_{ji},$ $j=1,\ldots,n$ and $i\in m_j$ be vertices of $c_j.$

We determine the order of consideration of the elements of $E$. To simplify the approach, let us introduce a finite acyclic graph $E_c$ (``c'' for cycles) which represents how the cycles of $E$ are connected with each other and which we call the {\em cycle scheme} of $E$. The vertices of $E_c$ are $v_{c_l}$ where $c_l$ is the terminal cluster of a composition factor of $E$ and $ v_{c_l}$ emits an edge $e_{lj}$ to $v_{c_j}$ if there is a path from $c_l$ to $c_j$ in $E.$

For example, $\xymatrix{\bullet\ar[r]&\bullet}$ is the cycle scheme for every 2-S-NE graph with one terminal cluster and the graphs below are all possible cycle schemes for a 3-S-NE graph (up to graph isomorphisms).
\[\xymatrix{\bullet\\\bullet\\\bullet}\hskip1.9cm\xymatrix{\bullet&\\\bullet\ar[r]&\bullet}\hskip1.9cm\xymatrix{&\bullet\\\bullet\ar[ur]\ar[dr]&\\&\bullet}\hskip1.9cm\xymatrix{\bullet\ar[dr]&\\&\bullet\\\bullet\ar[ur]&}\hskip1.9cm\xymatrix{&&\\\bullet\ar[r]\ar@/^1pc/[rr]&\bullet\ar[r]&\bullet}\]

The consideration of 3-S-NE graphs with any but the last cycle scheme above reduces to the consideration of 2-S-NE graphs. For example, if a graph has the third cycle scheme above, then changing one 2-S-NE quotient to its 2-S-NE canonical form does not impact the other 2-S-NE quotient and vice versa.  Analogously, if $E$ is an $n$-S-NE graph, its consideration boils down to that of only $(n-1)$-S-NE graphs in all cases except when
the cycle scheme contains a path of length $n$, i.e. when $c_{j+1}$ emits an edge to $c_j$ for all $j=1,\ldots, n-1.$ Because of this, we focus primarily this most demanding scenario.

Let us also note that if the cycle scheme of an $n$-S-NE graph $E$ is such that the length of a path from any source to any sink is one,
this case involves consideration of only 2-S-NE graphs and requires the use of only Theorem \ref{theorem_n=2}. The object of the consideration in \cite{Do_et_al_Williams} are such graphs without infinite emitters and sinks.

\subsection{Direct-exit forms and the sink-graphs of a 3-S-NE graph}
\label{subsection_direct_exit_n=3}
We start with a 3-S-NE graph $E$ with the cycle scheme $
\xymatrix{\bullet\ar[r]\ar@/^/[rr]&\bullet\ar[r]&\bullet}.$ If $H_1\subseteq H_2$ are the two nontrivial and proper terms of the composition series of $E$. We refer to $E/H_1$ as the {\em 2-quotient} and $E/H_2$ as the {\em 1-quotient} of $E$ (because the labels ``1'' and ``2'' reflect the length of composition series of these quotients).  We aim to transform $E$ so that its 32-connecting paths have a certain direct-exit form.

The first step of this process is to make the 1-quotient $E/H_2$ canonical quotient of the 2-quotient $E/H_1$ and we say that $E/H_2$ is {\em 1-canonical} in $E$. Recall that this is achieved by moving the 32-exits so that only one vertex of $c_3$ emits them, rearranging the paths of the 1-S-NE sink-graph so they are canonical, and making the 1-quotient I-plus-reduced.
After this we reduce the graph as much as possible with respect to the moves of the 32-exits in the sense that 2-S-NE graph $E/H_1$ is reduced. Note that such reduction does not impact the 1-quotient any more and that the 32-exits may not be emitted from the same vertex any more.

For each $i\in m_2$ (recall that this means $i=0$ if $m_2=0$), we use the 21-blow-ups to make $v_{20}$ be the single exit-emitter. Note that this step is needed only if $m_2>1$ and that it depends on the choice of $c_2\in [c_2]$. After this, we consider the sink-graph of the 1-S-NE graph $H_2/H_1$ and rearrange the 32-connecting paths using Proposition \ref{proposition_canonical_tails} with $V$ being the set of vertices in $R(c_3^0)$. Thus, only impacted vertices are in $H_2-H_1-\{c_2^0\}.$ In particular, the exit-emitter $v_{20}$ is not impacted. Using the last sentence of
Proposition \ref{proposition_canonical_tails}, we can have the part of $c_2$ from $v_{21}$ (where ``1'' is considered modulo $m_2$) to $v_{20}$ be on the spine of the connecting paths.
After such rearranging, we consider the graph together with its 1-quotient again and this brings us to the sink-graph $S_{10}$: it is the graph obtained by considering the 2-quotient but without the edge $v_{20}$ emits in $c_2$. This consideration parallels the case $n=2$.

The spine of $S_{10}$ is the {\em 2-spine} and we use $k_2$ for its length. The part of the 2-spine which received exits from $c_3$ is {\em the 32-spine} and we use $k_{32}$ for its length. Hence, $k_{32}\leq k_2$. If $m_3>0,$ the 32-spine length is necessarily finite and if $m_3=0$ it can be finite or infinite.

For example, if $E$ is the first graph and $v_{20}$ is the range of $e$, the second graph below is the sink-graph $S_{10}$ and $e$ is its 32-spine.
The 2-spine is the left-infinite path ending in $c_2.$ So, in this example, $k_{32}=1< k_2=\omega.$
\[
\xymatrix{
\bullet\ar@(ul, dl)
\ar[dr]&\bullet\ar@{.>}[r]\ar@(ul, dl)&\bullet\ar@{.>}@(ur, dr)&&
\\
&\bullet\ar[u]^e&
\bullet\ar[l]&\bullet\ar[l]&\ar@{.>}[l]}
\hskip3cm
\xymatrix{
\bullet\ar@(ul, dl)
\ar[dr]&\bullet&&&
\\
&\bullet\ar[u]^e&
\bullet\ar[l]&\bullet\ar[l]&\ar@{.>}[l]}\]

The {\em 32-connecting matrix} is the connecting matrix of the sink-graph (note it depends on the element of $[c_2]$ we consider). For example, the 32-connecting matrix of the graph above is $[0\; 1]$.

When paths are rearranged so that the sink-graph is $S_{10},$ the graph is in its
{\em 32-direct-exit form} $E_{32-\direct}.$  For example, let $E$ be the first graph below and let $v_{20}$ be the single exit-emitter of 21-exits. The second graph is a direct-exit form with $S_{20}$ as its sink-graph. The changes resulting from $E\to E_{32-\direct}$ operation in the part of the graph outside of the 2-quotient are not shown in the diagram. The dashed arrow indicates the edge which is removed when the sink-graph is considered. The 32-spine has length two.
\[
\xymatrix{&\bullet\ar@/^/[d]\ar[r]\ar[drrr]&\bullet\ar[r]&\bullet\ar[r]&\bullet^{v_{20}}\ar@/^/[d]\ar@{.>}[r]&\bullet\ar@{.>}@(ur, dr)
\\
\bullet\ar[r]&\bullet\ar[r]\ar@/^/[u]&\bullet\ar[r]&\bullet\ar[ur]&\bullet\ar@/^/[u]&}
\hskip2cm
\xymatrix{&\bullet\ar@/^/[d]\ar[dr]\ar[ddr]&\bullet\ar@{-->}@/^/[d]\ar@{.>}[r]&\bullet\ar@{.>}@(ur, dr)\\
\bullet\ar[r]&\bullet\ar[dr]\ar@/^/[u]&\bullet\ar@/^/[u]&\bullet\ar[ul]_{(2)}\\&&\bullet\ar[u]&
\bullet\ar[ul]}
\]

Let us also consider an example with $m_3=0$. Let $E$ be the first graph below where $v_{30}$ emits one line of length $n$ for every $n\in \omega$ to $v_{20},$ the labels L1, L2 and L3 shorten ``length 1'', ``length 2'', and ``length 3'' and there are $\omega$ of each of those. The second graph is a direct-exit form $E_{32-\direct}$ of $E$ with $S_{20}$ as the sink-graph (compare $E$ and $E_{32-\direct}$ with the example before Proposition  \ref{proposition_canonical_tails}).
\[
\xymatrix{&\bullet\ar[d]^{L1}\ar@/_1pc/[d]^{L2}\ar@/_2.2pc/[d]^{L3}\ar@{.>}@/_2.9pc/[d]\\
\bullet\ar@/^/[r]&\bullet\ar@/^/[l]\ar@{.>}[r]&\bullet\ar@{.>}@(ur, dr)}
\hskip3cm
\xymatrix{&&&&\bullet\ar[d]\ar[dl]\ar[dll]\ar[dlll]&\\
\ar@{.>}[r]&\bullet\ar[r]&\bullet\ar[r]&\bullet\ar@/^/[r]&\bullet\ar@{-->}@/^/[l]\ar@{.>}[r]&\bullet\ar@{.>}@(ur, dr)\\
&\bullet\ar[u]^{(\omega)}&\bullet\ar[u]^{(\omega)}&\bullet\ar[u]^{(\omega)}&\bullet\ar[u]^{(\omega)}&}\]

\subsection{I-plus-reduction}
Next, we ensure that the 32-spine is as short as possible by considering in-split plus moves analogously to the case $n=2$.

We introduce some notation to shorten the arguments. The notation matches that of the $n=2$ case. We let $u_i, i\leq k_2$ be vertices on the 2-spine and $n_i$ be $|\ra^{-1}(u_i)|.$ If $a_i$ is the $i$-th entry of the 32-connecting matrix, then $a_i$ corresponds to the cardinality $|\ra^{-1}(u_i)\cap \so^{-1}(E^0-H_2)|.$ We denote the cardinality of  the difference
\[\ra^{-1}(u_i)-\left(\ra^{-1}(u_i)\cap \so^{-1}(E^0-H_2)\right)=\ra^{-1}(u_i)\cap \so^{-1}(H_2)\]
by $n_i-a_i$.
If both $n_i$ and $a_i$ are infinite, there is a slight abuse of cardinal arithmetic here because $\omega+l=\omega$ for any finite or countably infinite cardinality $l$ so we cannot
write that $\omega-\omega=l.$ Because of this, the label  $n_i-a_i$ is used only to denote {\em the cardinality of the set} $\ra^{-1}(u_i)\cap \so^{-1}(H_2).$
If $i=k_2,$ partitioning the set $\ra^{-1}(u_i)$ as the disjoint union of $\ra^{-1}(u_i)\cap \so^{-1}(E^0-H_2)$ and $\ra^{-1}(u_i)\cap \so^{-1}(H_2)$ we represent by writing $n_i$ as $a_i+(n_i-a_i)$. If $i<k_2,$ then $u_i$ receives the edge on the spine from $u_{i+1},$ so we partition
the set $\ra^{-1}(u_i)$ as
\[\ra^{-1}(u_i)=\left(\ra^{-1}(u_i)\cap\so^{-1}(u_{i+1})\right)\cup \left(\ra^{-1}(u_i)\cap\so^{-1}(E^0-H_2)\right)\cup \left(\ra^{-1}(u_i)\cap\so^{-1}(H_2-\{u_{i+1}\})\right).\]

The first partition set contains one edge $u_i$ receives on the spine and it has cardinality one. The second partition set is the set of exits $u_i$ receives and it has cardinality $a_i.$ The third partition set is the set of tails $u_i$ has in $H_2/H_1$ which remain tails in $E$. As the three partition sets are disjoint, we can consider their cardinalities and write
\[n_i=1+a_i+(n_i-a_i-1)\]
where we use convention above when writing the last term -- it only denotes the cardinality of the set $\ra^{-1}(u_i)\cap\so^{-1}(H_2-\{u_{i+1}\}).$

Conditions (1) and (2) which are sufficient for the spine extending in the $n=2$ case are analogous to the two conditions below. In (1), $i\in m_1$ if $m_1>0$ and $i\leq k_1$ if $m_1=0$.
\begin{enumerate}
\item For every $21$-exit $e$ ending at $v_{1i},$ there is a tail of $v_{1i}.$

\item Every vertex of $c_2$ has at least one tail.
Thus, for $i\in m_2$, $n_i-a_i-1>0$ and, if $k_{32}>m_2-1,$ then $n_{m_2-1}-a_{m_2-1}-1>1$.
\end{enumerate}

If $E$ is a graph which satisfies conditions (1) and (2), we elaborate on how the 32-spine is extended and what the resulting graph is. First, let us move the entry point of the spine from $v_{21}$ to $v_{20}.$ This is possible by condition (2) since $v_{2i}$ has a tail for every $i\neq 1$ and it is done in the same way as for 1-quotient of a 2-S-NE graph. The diagram below illustrates the move of the entry point from $v_{2(m_2-1)}$ to $v_{20}$. If we consider the in-split minus moves of the first graph with respect to the partitions $\{\ra^{-1}(\ra(e)), \emptyset\}$ and $\{\ra^{-1}(\ra(e))-\{e\}, \{e\}\}$ followed by the total out-splits, then we obtain the second and the third graph below.
\[
\xymatrix{&&\bullet\ar@/^/[d]&\\
\bullet\ar@{.>}@/^1pc/[urr]\ar@{.>}@/_1pc/[rr]\ar@{.>}^{qe}[urr]\ar@{.>}@(ul, ur)&&\bullet\ar@{.>}[r]\ar@{.>}@/^/[u]&\bullet\ar@{.>}@(ur, dr)}
\hskip1.3cm
\xymatrix{&&\bullet\ar@/^/[d]&\bullet\ar[dl]\\
\bullet\ar@{.>}@/^1pc/[urr]\ar@{.>}@/_1pc/[rr]\ar@{.>}[urr]^{qe}\ar@{.>}@(ul, ur)&&\bullet\ar@{.>}[r]\ar@{.>}@/^/[u]&\bullet\ar@{.>}@(ur, dr)}\hskip1.3cm
\xymatrix{&&&\bullet\ar@/^/[d]&\\
\bullet\ar@{.>}@/^1pc/[urrr]\ar@{.>}@/_1pc/[rrr]\ar@{.>}[rr]^{qe}\ar@{.>}@(ul, ur)&&\bullet\ar[r]&\bullet\ar@{.>}[r]\ar@{.>}@/^/[u]&\bullet\ar@{.>}@(ur, dr)}\]
In the figure above, the terminal and the source cycle can be proper or improper, the length of the path $q$ can be zero, and the diagrams do not show non-relevant parts outside of the 2-quotient.

So, if $E$ is a graph which satisfies conditions (1) and (2), we can move the entry point of the spine to $v_{00}$ using condition (2). Let $E'$ be the resulting graph. Then, a tail at $v_{21}$ and condition (1) are needed for the in-split plus move. Let $M$ be the mother graph which is the same as $E'$ but without a tail in $v_{21}$ and without the tails from condition (1).
In this case, $E'$ is  the total out-split of the in-split minus of $M$ with respect to $\{\ra^{-1}(v_{20}), \emptyset\}.$ The first two graphs below are $M$ and $E'.$
\[
\xymatrix{&&\bullet\ar@/^/[d]\ar@{.>}[dr]&\\
\bullet\ar@{.>}@/^1pc/[urr]\ar@{.>}@/_1pc/[rr]\ar@{.>}^{qe}[urr]\ar@{.>}@(ul, ur)&&\bullet\ar@{.>}@/^/[u]&\bullet\ar@{.>}@(ur, dr)}\hskip1.2cm
\xymatrix{&&\bullet\ar@/^/[d]\ar@{.>}[dr]&\bullet\ar[dl]&\bullet\ar@{.>}[dl]\\
\bullet\ar@{.>}@/^1pc/[urr]\ar@{.>}@/_1pc/[rr]\ar@{.>}[urr]^{qe}\ar@{.>}@(ul, ur)&&\bullet\ar@{.>}@/^/[u]&\bullet\ar@{.>}@(ur, dr)&}
\hskip.6cm
\xymatrix{&&&\bullet\ar@/^/[d]\ar@{.>}[dr]&\\
\bullet\ar@{.>}@/_1.5pc/[rrrr]\ar@{.>}@/^1pc/[urrr]\ar@{.>}@/_1pc/[rrr]\ar@{.>}[rr]^{qe}\ar@{.>}@(ul, ur)&&\bullet\ar[r]&\bullet\ar@{.>}@/^/[u]&\bullet\ar@{.>}@(ur, dr)}\]

If $e$ is the last edge of the spine not in $c_2$ in $E'$, we consider the total out-split of the in-split minus move on $M$ with respect to $\{\ra^{-1}(v_{20})-\{e\}, \{e\}\}$ and obtain a graph $F'$ whose total out-split is $F$, represented as the third graph above. We say that $F$ is the {\em 32-spine extending} of $E$.

If $k_{32}<m,$ then all exits are being emitted to $c_2$. In this case, if conditions (1) and (2) hold, the $2$-spine can be extended without $32$-spine being impacted. The extension of 2-spine matches the extension in the $n=2$ case. In this case, no new 31-paths are created. For example, if $E$ is the first graph and there are enough tails in the dotted part that which permit 2-spine extension, then the second graph is the result of such an extension.

\[\xymatrix{\bullet\ar[r]&\bullet\ar@(ul, ur)\ar[r]&\bullet\ar@{.>}[r]\ar@(ul,ur)&\bullet\ar@{.>}@(ur, dr)\\&
&\bullet\ar[u]&\bullet\ar@{.>}[u]}\hskip2cm
\xymatrix{\bullet\ar[r]&\bullet\ar[r]\ar@(ul, ur)&\bullet\ar@{.>}[r]\ar@(ul, ur)&\bullet\ar@{.>}@(ur, dr)&\\&\bullet\ar[r]&\bullet\ar[u]&\bullet\ar@{.>}[u]&\bullet\ar@{.>}[l]}\]

With this set-up, the difference in conditions (2) for the 32-spine and for the 2-spine extension can possibly come from the case when $n_i=a_i+1$ for $i\in m_2$. For example, if $m_3=0$ and  $a_i=n_i=\omega$ for all $i\in m_2$, then  $n_i=a_i+1.$

If the 32-spine can be extended and if there is an exit with the range $u_i$ outside of $c_2$ (so $i>m-1$) then a 32-spine extension makes this exit end in the vertex of the spine with distance $i+m.$ So, if $E$ permits arbitrary 32-spine extension, then it also permits 32-spine reduction up to when the exit with smallest such $i$ value is such that $m\leq i< 2m$.
In this case, we say that the 32-spine is {\em as short as possible}.
If the 32-spine cannot be extended and 2-spine can be arbitrarily extended, then we can still require the part without any tails (the line of length $lm_2$ which extends the spine if $l$ extensions are made) to be absent for the 2-spine to be {\em as short as possible}. This brings us to the following two conditions and we say that $E$ is {\em 32-I-plus-reduced} if either one of them holds. Note that (a) implies that 32-spine cannot be extended and that (b) implies that 32-spine is as short as possible.
\begin{enumerate}[\upshape(a)]
\item The 2-spine cannot be extended.\hskip.6cm (b) The 2-spine can be extended and it is as short as possible.
\end{enumerate}

\subsection{O-reduction and canonical quotients}
Besides the I-plus-reduction, we also consider the out-split reduction which parallels the reduction of a 2-S-NE graph.

If $E$ is the direct-exit form of a blow-up of an $32$-$kl$-exit of $F=F_{32-\direct}$ for some $k\in m_3$ and $l\in m_2$, we write $F\to_1E$ and say that $E$ is {\em $32$-$kl$-reducible}. For example, if $E$ is the first graph below, the second one is a blow-up of $E$ and the third one is obtained by rearranging the paths. So, the first graph is a 32-reduction of the last one.

\[
\xymatrix{
\bullet\ar[r]^{(\omega)}&\bullet\ar@(ul, ur)
\ar[r]&\bullet\ar@{.>}[r]\ar@(ul, ur)&\bullet\ar@{.>}@(ur, dr)
\\
&&\bullet\ar[u]&
\bullet\ar[l]^{(\omega)}}
\hskip.9cm
\xymatrix{
\bullet\ar[r]^{(\omega)}&\bullet\ar@(ul, ur)
\ar[r]&\bullet\ar[r]&\bullet\ar@{.>}[r]\ar@(ul, ur)&\bullet\ar@{.>}@(ur, dr)
\\
&&\bullet\ar[u]^{(\omega)}&\bullet\ar[u]&
\bullet\ar[l]^{(\omega)}}
\hskip.9cm
\xymatrix{
\bullet\ar[r]^{(\omega)}&\bullet\ar@(ul, ur)
\ar[r]&\bullet\ar[r]&\bullet\ar@{.>}[r]\ar@(ul, ur)&\bullet\ar@{.>}@(ur, dr)
\\
&&\bullet\ar[u]^{(\omega)+(\omega)=(\omega)}&\bullet\ar[u]&}\]

We say that $E$ is {\em $32$-reducible} if it is $32$-$kl$-reducible for some $k$ and $l.$
If the 2-quotient changes after a 32-blow-up and path rearrangements, then $E$ is
{\em $32$-O-reduced} if it is not $32$-reducible. Otherwise, $c_3$ is necessarily not proper (because a blow-up of an exit of a proper cycle changes the 32-connecting part) and such graph is also {\em $32$-O-reduced}. For example, the 2-quotient of the graph below is invariant to a 32-blow-up followed by the 32-path rearrangements. The second graph below is a blow-up of one of the 32-exits of the first. The third graph is obtained by path rearrangement of the second and it is isomorphic to the first. Hence, the first graph is 32-O-reduced.
\[
\xymatrix{\\
\bullet\ar[r]^{(\omega)}&\bullet
\ar[r]^{\omega}&\bullet\ar@{.>}[r]\ar@(ul, ur)&\bullet\ar@{.>}@(ur, dr)
\\
&\bullet\ar[ur]^{(\omega)}&\bullet\ar[u]&
\bullet\ar[l]^{(\omega)}}
\hskip.9cm
\xymatrix{\bullet\ar[r]^{(\omega)}&\bullet\ar[dr]&&\\
\bullet\ar[r]^{(\omega)}&\bullet
\ar[r]^{\omega}&\bullet\ar@{.>}[r]\ar@(ul, ur)&\bullet\ar@{.>}@(ur, dr)
\\
&\bullet\ar[ur]^{(\omega)}&\bullet\ar[u]&
\bullet\ar[l]^{(\omega)}}
\hskip.9cm
\xymatrix{\\
\bullet\ar[r]^{(\omega)}&\bullet
\ar[r]^{\omega}&\bullet\ar@{.>}[r]\ar@(ul, ur)&\bullet\ar@{.>}@(ur, dr)
\\
&\bullet\ar[ur]^{(\omega)+(1)=(\omega)}&\bullet\ar[u]&
\bullet\ar[l]^{(\omega)+(\omega)=(\omega)}}\]

To unify the terminology, let us say that a 2-S-NE graph is 21-I-plus-reduced if it is I-plus-reduced in the sense used in section \ref{section_n=2}.
A 3-S-NE graph $E=E_{32-\direct}$ is {\em I-plus-reduced} if it is  32-I-plus-reduced and its 2-quotient is 21-I-plus reduced.
A 3-S-NE graph $E=E_{32-\direct}$ is {\em O-reduced} if it is 32-O-reduced. If $E$ is both I-plus and O-reduced and if $v_{2i}$ is the single emitter of 21-exits, $E$ has the {\em $2$-$i$-canonical quotient} and we write $E=E_{\can\quot, i}$ in this case.
Just like 1-quotients of 2-S-NE graphs, the 2-quotients of the graphs $E_{\can\quot, i}$ and $E_{\can\quot, i'}$ are equivalent but not necessarily isomorphic.

\subsection{Canonical quotients of composition S-NE graphs}
\label{subsection_quotients_n>3}
We generalize the construction of a canonical quotient in $n\leq 3$ to the general case. Let $E$ be an $n$-S-NE graph with a composition series with $\emptyset\subsetneq H_1\subsetneq\ldots \subsetneq H_{n-1}\subsetneq H_n=E^0.$ Let $c_k, k=1,\ldots, n$ be the terminal cycles of composition factors, let $m_k=|c_k|,$ and let us assume that $c_{k+1}$ emits exits to $c_k$ for all $k=1,\ldots, n-1.$

We can ensure that the 1-S-NE graph $E/H_{n-1}$ is a 1-canonical quotient of 2-S-NE graph $E/H_{n-2}$ by the same argument used for $n=2.$ Then, we consider the 3-S-NE graph $E/H_{n-3}$ and make its 2-quotient $E/H_{n-2}$ be canonical. Since the $(n-1)(n-2)$-exits were moved to a single exit emitter to achieve this, we $(n-1)(n-2)$-O-reduce the graph again. This does not impact the 2-quotient any more and we proceed to the next step paralleling the previous one:
we move the $(n-2)(n-3)$-exits to start at a single vertex and we consider the 3-S-NE graph $E/H_{n-3}$ as the quotient of the 4-S-NE graph
$E/H_{n-4}.$ In this graph, we rearrange
the $(n-1)(n-2)$-paths and $n(n-2)$-paths by using Proposition \ref{proposition_canonical_tails} with $V$ being the set of vertices not in $H_{n-3}$ (i.e. the vertices in $R(c_{n-2}^0)$) and we  rearrange the connecting paths so that all the exits are emitted directly to the spine of the sink-graph $S_{10}$ of the 3-quotient $E/H_{n-3}$. In this way, we have the {\em $n(n-2)$-spine} and its length $k_{n(n-2)},$ we have the {\em $(n-1)(n-2)$-spine} and its length  $k_{n(n-2)},$ and we have the spine of the sink-graph of the quotient $H_{n-2}/H_{n-3}$ and its length $k_{n-2}.$
The first two spines are suffices of the last one. So,
$k_{n(n-2)}\leq k_{n-2}$ and $k_{(n-1)(n-2)}\leq k_{n-2}.$
The lengths $k_{n(n-2)}$ and $k_{(n-1)(n-2)}$ can be in any of the relations $k_{n(n-2)}\leq k_{(n-1)(n-2)}$ and $k_{n(n-2)}\geq k_{(n-1)(n-2)}$. With such path rearrangement, we say that the $(n-2)$-quotient is {\em direct-exit}.

For example, if $E$ is the first graph below, its 3-quotient is not direct-exit because the 32-spine is not a suffix of the 42-spine. The second graph below is the form of $E$ with 3-quotient being a direct-exit graph. In this case, we have that $k_{32}=1<k_{42}=k_2=2.$
\[
\xymatrix{&&\bullet\ar[d]&\\
&&\bullet\ar[d]&\\
\bullet\ar[rruu]\ar@(ul, dl)
\ar[dr]&\bullet\ar@(ul, ur)
\ar[dr]&\bullet\ar@{.>}[r]\ar@{.>}@(ul, dl)&\bullet\ar@{.>}@(ur, dr)
\\\bullet\ar[u]&\bullet\ar[u]&\bullet\ar[u]&}\hskip3cm
\xymatrix{&&\bullet\ar[d]&\\
&&\bullet\ar[d]&\\
\bullet\ar[rruu]\ar@(ul, dl)
\ar[dr]&\bullet\ar@(ul, ur)
\ar[ur]&\bullet\ar@{.>}[r]\ar@{.>}@(ul, dl)&\bullet\ar@{.>}@(ur, dr)
\\\bullet\ar[u]&\bullet\ar[u]&\bullet\ar[u]&}\]

Next, we ensure that all
$j(n-2)$-spines are as short as possible for $j=3,\ldots, n.$

Let us introduce the notation which parallels $n_i, a_i$ and $n_i-a_i-1$ of the $n=3$ case. We let $u_i, i\leq k_2$ be vertices on the 2-spine and $n_i=|\ra^{-1}(u_i)|.$ If $a_{ji}$ is the $i$-th entry of the $j2$-connecting matrix, then $a_{ji}$ corresponds to the cardinality $|\ra^{-1}(u_i)\cap \so^{-1}(E^0-H_j)|$ and
$\sum_{j=3}^{n}a_{ji}$ corresponds to the cardinality $|\ra^{-1}(u_i)\cap \bigcup_{j=3}^n\so^{-1}(H_j-H_2)|.$ Let $a_i$ denotes this cardinality.
Note that
\[\ra^{-1}(u_i)\cap\so^{-1}(E^0-H_2)=\ra^{-1}(u_i)\cap \bigcup_{j=3}^n\so^{-1}(H_j-H_2)\]
because every edge $u_i$ receives outside of $H_2$ is necessarily a $j2$-exit for some $j=3,\ldots, n.$

If $i<k_2,$ then $u_i$ receives the edge on the spine from $u_{i+1},$ so we partition
the set $\ra^{-1}(u_i)$ as
\[\ra^{-1}(u_i)=\left(\ra^{-1}(u_i)\cap\so^{-1}(u_{i+1})\right)\cup \left(\ra^{-1}(u_i)\cap\so^{-1}(E^0-H_2)\right)\cup \left(\ra^{-1}(u_i)\cap\so^{-1}(H_2-\{u_{i+1}\})\right).\]
Just as before, a reference to the difference $n_i-a_i-1$ is only to denote {\em the cardinality of the last set above}
and we write the partition above as
\[n_i=1+a_i+(n_i-a_i-1).\]
If $i=k_2,$
the set $\ra^{-1}(u_i)$ is
$\left(\ra^{-1}(u_i)\cap\so^{-1}(E^0-H_2)\right)\cup \left(\ra^{-1}(u_i)\cap\so^{-1}(H_2)\right)$ and we write this as $n_i=a_i+(n_i-a_i)$

If $j2$-spine is to be extended for some $j=3,\ldots, n,$ then all vertices of $c_2$ need to have at least one tail {\em not counting the exits they receive from other $c_{l},$} $l=3,\ldots, n.$ So, the two conditions for extending the $j2$-spine are below.
\begin{enumerate}
\item For every $21$-exit $e$ ending at $v_{1i},$ there is a tail of $v_{1i}.$ Here $i\in m_1$ if $m_1>0$ and $i\leq k_1$ if $m_1=0.$

\item Every vertex of $c_{2}$ has at least one tail.
Thus, for $i\in m_{2}$, $n_i-a_i-1>0$ and, if $k_{j2}>m_{2}-1,$ then $n_{m_{2}-1}-a_{m_{2}-1}-1>1$.
\end{enumerate}

Note that these conditions match the case $n=3$ exactly. As these two conditions are the same for every $j$, we have that one of the $j2$-spines can be extended for some $j=3,\ldots, n$ if and only if {\em any} of the $j2$-spines can be extended for all  $j=3,\ldots, n$.
If this holds, then 2-spine can also be extended in the same sense as in the case $n=3$.

If the $j2$-spine can be extended for some $j$ and if there is an exit with the range $u_i$ outside of $c_2$ (so $i>m-1$) then a $j2$-spine extension makes this exit end in the vertex of the spine with distance $i+m.$ So, if $E$ permits arbitrary $j2$-spine extension, then it also permits $j2$-spine reduction up to when the exit with smallest such $i$ value is such that $m\leq i< 2m$.
In this case, we say that the $j2$-spine is {\em as short as possible}.
If the $j2$-spine cannot be extended and 2-spine can be arbitrarily extended, then we can still require the part without any tails (the line of length $lm_2$ which extends the spine if $l$ extensions are made) to be absent for the 2-spine to be {\em as short as possible}. This brings us to the following conditions and we say that $E$ is {\em $2$-I-plus-reduced} if either of them holds.
\begin{enumerate}[\upshape(a)]
\item The 2-spine cannot be extended.

\item The 2-spine can be extended and it is as short as possible.
\end{enumerate}
The conditions match those from  the $n=3$ case.
Condition (a) implies that none of the $j2$-spines can be extended for $j=3,\ldots, n$. With this established, we say that $E$ is I-plus-reduced if it is $j$-I-plus reduced for every $j=2,\ldots, n.$

Next, we turn to O-reduction.
A blow-up of an $j2$-exit can create a $(j+1)2$-connecting path. So, it is not only the $j2$-connecting part and the number of some tails that an exit move can change but also the other connecting paths. However, we defined the 32-O-reduction using {\em changes in the graph} not only the changes in the tails, so the definition from the $n=3$ is applicable to the general case.
If $E$ is the direct-exit form of a blow-up of an $ji$-$kl$-exit
of a direct-exit graph $F$ for some $j>i\geq 2$ and $k\in m_j$, $l\in m_i,$ we say that $E$ is {\em $ji$-$kl$-reducible}. Such graph $E$ is {\em $ji$-reducible} for some $j>i\geq 2$ if it is $ji$-$kl$-reducible for some $k$ and $l.$
If the 2-quotient changes after a $ji$-blow-up and path rearrangements, then $E$ is
{\em $ji$-O-reduced} if it is not $ji$-reducible. Otherwise, $E$ is also $ji$-O-reduced.
We say that $E$ is {\em O-reduced} if it is $ji$-O-reduced for every $j>i\geq 2.$

If a direct-exit $n$-S-NE graph is I-plus-reduced and O-reduced and if $v_{2i}$ is the single exit emitter, we write
$E=E_{\can\quot, i}$ and say that its {\em $(n-1)$-quotient is canonical}.

\subsection{Connecting paths ending at a terminal cycle and canonical forms}
\label{subsection_j1_transformations} Let $E$ be a $n$-S-NE graph with its $(n-1)$-quotient in its $(n-1)$-quotient canonical form.
We define the tail graph  analogously as in $n=2$: it is the subgraph generated by the vertices in $H_1$.
To make the $j1$-connecting paths for $j>1$ and the paths in the tail graph
canonical, we turn to  Proposition \ref{proposition_canonical_tails} and consider it with $V$ being $P_{H_1}^0-H_1.$ This produces a {\em direct-exit} form of $E$.

If $m_1>0,$ this process makes the $c_j$-to-$c_1$ paths have length one and the tail graph consists exclusively of $c_1$ and tails to it.
If $m_1=0$, this creates the spine of the tail graph which we call the {\em 1-spine} and denote its length by $k_1$. If $m_1>0$ and $c_1\in [c_1]$ is fixed, we consider the path from $v_{11}$ (where the second ``1'' is considered modulo $m_1$) to $v_{10}$ be the spine of the tail graph so that we can unify the consideration and we can write $k_1=m_1-_{m_1}1$ in this case.

In both cases, the $j1$-exits are being emitted
directly into the 1-spine.
We let $u_{i}$ be the vertices on the 1-spine for $i\leq k_1$ (hence $u_i=v_{1(m_1-_{m_1}i}$ if $m_1>0$ and $i\in m_1$). With this notation, a {\em $j1$-$ki$-exit}
is an edge which originates in $v_{jk}, k\in m_j,$ and terminates in $v_{1i}=u_{m_1-_{m_1}i}$ for $i\in m_1$ if $m_1>0$ and in $u_{1i}$ for $i\leq k$ if $m_1=0.$ With this notation, the $j1$-connecting matrix is defined and, just as before, it depends on the choice of $c_j\in [c_j]$ and $c_1\in [c_1].$

If $k_{j1}$ is the largest of the integers such that the vertex $u_{k_{j1}}$ receives $j1$-exits, then $k_{j1}$ is the length of the $j1$-spine.  If no such largest integer exits, then $m_1=0$ and $k_1=\omega$ necessarily so we have that  the $j1$-spine is the 1-spine and that $k_{j1}=k_1$.
We may have $k_{j1}\neq k_{l1}$ for some $j\neq l$. For example, the first graph below has $k_{21}=0$ and $k_{31}=k_1=1.$
The second graph has $k_{21}=k_1=1$ and $k_{31}=0.$

\[\xymatrix{
\bullet\ar@(lu,ld)
\ar[r]
\ar[rd]&
\bullet\ar@(lu,ru)
\ar[r]&\bullet\\&\bullet\ar[ur]&&}\hskip2cm\xymatrix{
\bullet\ar@(lu,ld)
\ar[r]
\ar@/_2pc/[rrr]&
\bullet\ar@(lu,ru)
\ar[r]&\bullet\ar[r]&\bullet}\]

A {\em 1-$i$-tail} of $E$
is an edge which originates in a source and terminates in $v_{1i}=u_{m_1-_{m_1}i}$ for $i\in m_1$ if $m_1>0$ and in $u_{1i}$ for $i\leq k$ if $m_1=0.$

A {\em blow-up} of an $j1$-exit $e$ is an out-split with respect to the partition $\{e\}$ and $\so^{-1}(\so(e))-\{e\}$ followed by the out-splits making the resulting graph be in its total out-split form. An {\em exit move} is a blow-up followed by operations making the resulting graph be a direct-exit graph. None of the operations involved in an exit move impact the $(n-1)$-quotient. If the source $v$ of a moved $j1$-exit does not receive any edges from $H_l$ for any $l>j,$ then only some tails can be created by the move, just as in the $n=2$ case. If $v$ receives an edge from $H_l$ for some $l>j,$ then some  $c_l$-to-$c_1$ paths are  created in the process. For example, the second graph is the result of a move of any the $21$-exits of the first graph.
\[\xymatrix{&&\\\bullet\ar@(lu,ld)\ar[r]&\bullet\ar [r]^{\omega}&\bullet\ar@(ru,rd)}\hskip3cm
\xymatrix{&&\bullet\ar[d]\\\bullet\ar@/^2pc/[rr]\ar@(lu,ld)\ar[r]&\bullet\ar [r]^{\omega} &\bullet\ar@(ru,rd)}\]

If $E$ is obtained by a move of an $j1$-$kl$-exit of $F$ for some $j,$ $k,$ and $l,$ we write $F\to_1E$ and say that $E$ is {\em $j1$-$kl$-reducible}.
We define $E_{\red, j1, kl}$ as the graph with the same $(n-1)$-quotient and $j1$-connecting matrices as $F$ and with the number of $1$-$i$-tails equal to that number for $E$ if the cardinality of $1$-$i$-tails is not impacted by the exit move $F\to_1 E$. If the number of $1$-$i$-tails of $E$ is strictly larger than in $F$ and their number is finite in $E$, then $E_{\red, j1, kl}$ and $F$ have the same number of $1$-$i$-tails. Otherwise, $E_{\red, j1, kl}$ has infinitely many 1-$i$-tails.
We say that $E$ is {\em $j1$-reducible} if it is $j1$-$kl$-reducible for some $k$ and $l$.

If the cardinality of the 1-tails and $l1$-exits for $l>j$ changes by a $j1$-exit move, then $E$ is {\em $j1$-reduced} if it is not $j1$-reducible. If the cardinality of the 1-tails and $l1$-exits does not change by a $j1$-exit move, then $E$ is {\em $j1$-reduced}. If a graph is $j1$-reducible and $j1$-reduced and if $c_j$ both receives and emits exits, a move of a $j1$-exit which originates in the vertex which receives a path from $c_l$ for some $l>j$ produces a new $l1$-exit. So, in this case, $c_l$ is an infinite emitter (otherwise the number of $l1$ exits of $E$ and $F$ would differ). For example, a move of any of the 21-exits of the first graph below produces a graph isomorphic to the original graph. If $k$ is a finite and nonzero cardinality, the second graph below is not 31-reduced. The third graph is a 31-reduction of the second graph.
\[\xymatrix{&&\bullet\ar[d]^{(\omega)}\\\bullet\ar@/^2pc/[rr]^{\omega}\ar[r]^{\omega}&\bullet\ar[r]^{\omega} &\bullet\ar@(ru,rd)}\hskip2cm \xymatrix{&&\bullet\ar[d]^{(\omega)}\\\bullet\ar@/^2pc/[rr]^{k}\ar[r]^\omega&\bullet\ar[r]^{\omega} &\bullet\ar@(ru,rd)}\hskip2cm\xymatrix{&&\bullet\ar[d]^{(\omega)}\\\bullet\ar[r]^\omega&\bullet\ar[r]^{\omega} &\bullet\ar@(ru,rd)}\]

A graph $E$ is {\em 1-reduced} if it is $j1$-reduced for all $j=2,\ldots, n.$ In this case, we write $E=E_{1,\red}.$

For $j>1,$ if $c_j$ and $c_1$ are proper cycles and $L$ is the least common multiple of $m_j$ and $m_1,$ $E$ is {\em $L$-$j1$-$kl$ reducible} if there is $F$ such that $F\to_LE$ where $\to_1$ is a move of a $j1$-$kl$-exit and any subsequent move is the move of the resulting $j1$-exit. In this case, we define $E_{\red, L, j1, kl}$  analogously as when $n=2.$ We have that $E$ is {\em $L$-$j1$-reduced} if for any $k$ and $l,$ when $E_{\red, L, j1, kl}$ is defined, then  $E\cong E_{\red, L, j1, kl}.$

We define a {\em spine-$j1$-reduced graph} if $c_1$ is a sink and $c_j$ a proper cycle and a {\em $\omega$-$j1$-reduced graph} if $c_j$ is an infinite emitter
analogously as when $n=2.$ The $n=2$ case definitions do not involve any reference to the length of a composition series, so we can generalize them directly. We unify the four cases saying that $E$ is {\em $L_j$-$j1$-reduced} if it is $L$-$j1$-reduced if $m_j>0$ and $m_1>0$, spine-$j1$-reduced if $m_j>1$ and $m_1=0$, $\omega$-$j1$-reduced if $m_j=0, m_1>0$ or $m_j=m_1=0$ and $k_{j1}<\omega$, and $j1$-reduced if $m_j=m_1=0$ and $k_{j1}=\omega.$
If $\mathbf L$ is the $(n-1)$-tuple $(L_2,\ldots, L_{n}),$ we say that $E$ is {\em $\mathbf L$-reduced} if $E$ is $L_j$-$j1$-reduced for every $j>1.$ In this case, we write $E=E_{\mathbf L\text{-}\red}.$

Let $i\in m_1$ if $c_1$ is a proper cycle of length $m_1$ and let $i\in \omega$ if $c_1$ is a sink. For such $i,$ we consider the following condition.

\begin{itemize}
\item[$C(1i)$] There are paths $p,q,d$ and edges $g, e$ such that $g$ is a tail of $H_{j}/H_{j-1}$ for some $j=2,\ldots, n,$ $p$ is a path from $\ra(g)$ to a vertex in $c_{j}$ which contains no edges of $c_j,$ $q$ is a path from $\ra(p)$ to a vertex in $c_l$ for some $1<l\leq j$, $e$ is an $l1$-exit and $d$ is a path from $\ra(e)$ to $\so(c_1)$ which does not contain $c_1$. There are infinitely many tails with the same range as $g$ in $H_j/H_{j-1}.$
If $c_1$ is a proper cycle, then $1+|p|+|q|+|d|=m_1-i+1(\mymod m_1)$ and,
if $c_1$ is a sink, then $1+|p|+|q|+|d|=i+1.$
\end{itemize}

If this condition holds, we say that 1-$i$-tails are {\em cuttable}. For example, let $E$ be the graph below, let $\kappa$ be any countable cardinal, and let $H_1$ be the hereditary and saturated closure of the sink, $H_2$ of the source of the loop, and $H_3$ of the vertices of a cycle of length two.
\[
\xymatrix{&\bullet\ar@/^/[d]&\bullet\ar[d]^{(\omega)}&&\bullet\ar[d]^{(l_1)}&\bullet\ar[d]^{(l_2)}&\\
\bullet\ar[r]^{(\kappa)}&\bullet\ar[r]\ar@/^/[u]&\bullet\ar@(ld,rd)\ar[r]&\bullet&\bullet\ar[l]&\bullet\ar[l]&\bullet\ar[l]}\]

The 1-1-tails of $H_1$ are cuttable because of the infinitely many tails which $c_2$ is receiving. The 1-2-tails of $H_1$ are cuttable if and only if $\kappa=\omega.$

If $l_i$ is the number of 1-$i$-tails of a graph $E,$ we have that $E_{\cut}$ has the same $(n-1)$-S-NE quotient, the same $j1$-connecting matrices as $E$ and the number of the 1-$i$-tails is specified as follows. Let
$k_{\max}$ be the maximum of all spine lengths $k_{j1}$ for $j>1$. For $i<k_{\max}$, $E_{\cut}$ has $l_i$ 1-$i$-tails if 1-$i$-tails are not cuttable and it has zero $1$-$i$-tails otherwise. If $k_{\max}=k_1,$ then the above specifies the number of tails in $E_{\cut}$ completely. If $k_1>k_{\max}$, the number of tails of $E_{\cut}$ is specified by the following two cases.
\begin{enumerate}
\item There is no $k_0\in k_1, k_0\geq k_{\max}$ such that $1$-$i$-tails are cuttable for all $i\in k_1, i\geq k_0.$ Then, the number of 1-$i$-tails is $l_i$ if 1-$i$-tails are not cuttable and zero otherwise. The spine length of the tail graph of $E_{\cut}$ remains $k_1$.

\item There is $k_0\in k_1, k_0\geq k_{\max}$ such that $1$-$i$-tails are cuttable for all $i\in k_1, i\geq k_0.$ The length of the spine graph of $E_{\cut}$ is $k_0$ in this case. For $ i<k_0,$ the number of 1-$i$-tails is $l_i$ if 1-$i$-tails are cuttable and zero otherwise. The spine length of the tail graph of $E_{\cut}$ is $k_0$.
\end{enumerate}

For the graph in the previous example, $E_{\cut}$ has $l_1=0.$ If $\kappa<\omega,$ $k_0=2$ and, if $\kappa=\omega,$ then $E_{\cut}$ has $l_2=0$ and $k_0=1.$

\begin{definition} Let $n>1$ and let $E$ be a breaking-vertex-free, direct-exit, countable $n$-S-NE such that the $(n-1)$-quotient is canonical. For every $j=2,3,
\ldots, n,$ considered in this order, we move some $j1$-exits, if needed, of the reduced and cut form of $E$ as specified below.
Then, define
$E_{\can}$ as the $\mathbf L$-reduced and cut form of the resulting graph.
\begin{itemize}
\item If $m_j>0$ and $m_1>0,$ then the required form is obtained by moving the $j1$-exits so that the resulting graph has the $j1$-part with a single exit-emitter
and that the ranges of the exits are among $G=$GCD$(m_j,m_1)$ consecutive vertices of $c_1.$

\item If $m_j>0$ and $m_1=0,$ then the required form is obtained by moving the $j1$-exits which do not end in the $j1$-spine source to end in that source. Hence, $c_j$ emits exits only to the source of the $j1$-spine.

\item If $m_j=0$ and $m_1>0$ or if $m_j=m_1=0$ and $k_{j1}<\omega$,
the required form is obtained by moving all the $j1$-$0i$-exits if  $v_{1i}$ receives nonzero and finitely many of such exits.

\item If $m_j=m_1=0$ and $k_{j1}=\omega$, no exit moves are needed.
\end{itemize}
If $E$ is any $n$-S-NE graph, we let $E_{\can}$ be any canonical form of the direct-exit form of $E$ with $(n-1)$-canonical quotient.
We say that $E$ is {\em canonical} or that it is {\em in a canonical form} if $E\cong F_{\can}$ for some $n$-S-NE graph $F$.
\label{definition_canonical_forms_n>2}
\end{definition}

If $\phi_1$ and $\phi_2$ are the maps transforming $E$ to the two canonical forms, any $\POM^D$-isomorphism $f$ of the monoids of $E_1$ and $E_2$ could be composed with $\ol{\phi_1\phi_2^{-1}}$ and $\ol{\phi_1\phi_2^{-1}}f$ could be considered instead of $f$ to ensure that the {\em same} canonical form is considered.
The consideration of the maps like $\phi_1$ and $\phi_2$ above ensures that two canonical forms obtained from the same cut and reduced graph can be transformed one to the other. The same argument was  used in Lemma \ref{lemma_maps_between_canonical_forms}) so this lemma  continues to hold.

We define the relation $\approx$ for countable $n$-S-NE graphs by the same condition as if $n\leq 2$:
\[E\approx F\mbox{ if there are canonical forms $E_{\can}$ and $F_{\can}$ such that }E_{\can}\cong F_{\can}.\]
This relation is reflexive and symmetric and transitivity holds by Lemma \ref{lemma_approx_is_transitive} stated for $n$-S-NE graphs instead of 2-S-NE graphs. Such lemma holds by the same proof as Lemma \ref{lemma_approx_is_transitive}.

\subsection{The main result and its corollaries}\label{subsection_main}

In this section, we prove the general version of the Quotient Proposition, the Cut Lemma and the main result of the paper, Theorem \ref{theorem_GCC_disjoint_cycles}. Then, we prove some of its corollaries, Corollaries \ref{corollary_unital},  \ref{corollary_C_star} and \ref{corollary_iso_conjecture}.

Let $E$ and $F$ be $n$-S-NE graphs with composition series $H_i$ and $G_i$ for $i=0,\ldots, n$ and let $f$ be a $\POM^D$-isomorphism of the graph $\Gamma$-monoids of $E$ and $F.$
If $f_{j/i}$ denotes the $\POM^D$-isomorphism of the monoid of $H_j/H_i$ for $i<j$  induced by $f$ and if $f_{i}$ denotes the restriction of $f$ to the monoid of $P_{H_i}$, we can pick the composition series of $F$ so that the image of
$f_{j/i}$ is the monoid of
$G_j/G_i$ and the image of
$f_{i}$ is the monoid of $P_{G_i}.$ Proposition \ref{proposition_on_canonical_quotients_n} below will enable us to choose the canonical forms with isomorphic $(n-1)$-quotients by an isomorphism which induces $f_{n/1}$
if we assume that the GCC for the $(n-1)$-S-NE graphs.

\begin{proposition} {\bf The General Quotient Proposition.}
Assume that Lemma \ref{lemma_glavna_in_general_case} (the general Cut Lemma) and Theorem \ref{theorem_GCC_disjoint_cycles} holds for the $(n-1)$-S-NE graphs. With $E$, $F$, $f$, $H_1,$ and $G_1$ as in the above paragraph and such that the $(n-1)$-quotients are canonical, assume that the following conditions hold.
\begin{enumerate}[\upshape(i)]
\item $f([v_{20}])=[v_{20}'],$ $v_{20}$ and $v_{20}'$ are single exit-emitters, and
$f([v_{10}])=[v_{10}'].$

\item The 21-connecting matrices, computed using $v_{20}, v_{10}, v_{20}'$ and $v_{10}'$ are equal.

\item There is an isomorphism $\iota_{n/2}: E/H_2\cong F/G_2$ such that $f_{n/2}=\ol\iota_{n/2}.$
\end{enumerate}
Under these assumptions,
there is an isomorphism $\iota_{n/1}: E/H_1\to F/G_2$ such that $f_{n/1}=\ol\iota_{n/1}.$
\label{proposition_on_canonical_quotients_n}
\end{proposition}
\begin{proof}
By the hypothesis that Theorem \ref{theorem_GCC_disjoint_cycles} holds for $(n-1)$-S-NE graphs, the existence of $f_{n/1},$ implies the existence of canonical forms $(E/H_1)_{\can}$ and $(F/G_1)_{\can}$ of $E/H_1$ and $F/G_1$, $\psi_E:E/H_1\to (E/H_1)_{\can}$ and $\psi_F: F/G_1\to (F/G_1)_{\can},$ an isomorphism $\iota_{n/1, \can}: (E/H_1)_{\can}\to (F/G_1)_{\can}$ such that $\ol{\psi_F^{-1}\iota_{n/1, \can}\psi_E}=f_{n/1}$.
The maps $\psi_E$ and $\psi_F$ are some $j2$-exit moves, $\mathbf L$-reductions, and cuts for $j>2$. We can apply the inverses of the $j2$-exit moves of $\psi_E$ to the rest of the graph which has $(F/G_1)_{\can}$ as its $(n-1)$-quotient because $(E/G_1)_{\can}\cong (F/G_1)_{\can}.$ These operations do no impact the $(n-2)$-quotient, so it remains canonical. We obtain a graph $F'$ with the same $(n-2)$-quotient,  the $j2$-connecting matrices equal to those of $E,$ and with $v_{20}$ the single emitter of $j1$-exits.

For a moment, we can consider $F'$ instead of $F$. Instead of $f$, we can consider $f$ composed with the map which the correspondence $\phi': F\to F'$  induces on the $\Gamma$-monoid level. Once we obtain the statement for the changed graphs and changed $f$, O-reduction of the $(n-1)$-quotient of both graphs would produce graphs with isomorphic $(n-2)$-quotients and equal $j2$-connecting matrices. This is because applying the same $j2$-exit moves to the graphs with the same $j2$-connecting part produces graphs with the same $j2$-connecting parts and intact $(n-2)$-quotients. Thus, returning to the original $E$ and $F$, this shows that we can assume that the $j2$-connecting matrices are equal for all $j>2$.

By the proof of Proposition \ref{proposition_E_without_H} and Remark \ref{remark_on_sink_graph}, the existence of $f_2$ on the monoids of $P_{H_2}$ and $P_{G_2}$ implies the existence of an isomorphism $\iota_{2/1}$ of the sink-graphs of $H_2/H_1$ and $G_2/G_1$ such that $f_{2/1}=\ol\iota_{2/1}$. So, besides the three assumptions above, we have the following two conditions holding.
\begin{itemize}
\item[(iv)] There is an isomorphism $\iota_{2/1}$ of the sink-graphs of $H_2/H_1$ and $G_2/G_1$ such that $f_{2/1}=\ol\iota_{2/1}$.

\item[(v)] The $j2$-connecting matrices $A_{j2}=A_{j2}'$ are equal for $j=3,\ldots, n$.
\end{itemize}

We recall that $u_i, i\leq k_2$ are the vertices of the 2-spine of $E$, $u_i', i\leq k_2'$ are the vertices of the 2-spine of $F$, $n_i=|\ra^{-1}(u_i)|,$ $n_i'=|\ra^{-1}(u_i')|$ and we let $A_{j2}=A_{j2}'=[a_{ji}],$ $i\leq k_{j2}$ and $a_i=\sum_{j=3}^n a_{ji}=|\ra^{-1}(u_i)\cap\so^{-1}(E^0-H_2)|.$ For $i<k_2,$ we also recall the partition
\[\ra^{-1}(u_i)=\left(\ra^{-1}(u_i)\cap\so^{-1}(u_{i+1})\right)\cup \left(\ra^{-1}(u_i)\cap\so^{-1}(E^0-H_2)\right)\cup \left(\ra^{-1}(u_i)\cap\so^{-1}(H_2-\{u_{i+1}\})\right)\]
which we write as $n_i=1+a_i+(n_i-a_i-1).$ For $i=k_2,$ we have
$\ra^{-1}(u_i)= \left(\ra^{-1}(u_i)\cap\so^{-1}(E^0-H_2)\right)\cup \left(\ra^{-1}(u_i)\cap\so^{-1}(H_2)\right)$ which we write as $n_i=a_i+(n_i-a_i).$

Conditions (iii) and (v) imply that when the paths in the porcupine quotients are rearranged from their 1-quotient canonical position (every tail on the 2-spine receives only tails) to their porcupine-quotient form (the cycles $c_j$ and $c_j'$ correspond to the left-infinite paths receiving the paths of the $l$-quotients for $l>j$ if $m_j>0$ and the edges infinite emitters emit being split into separate ``strands'', see Remark \ref{remark_on_replacing_tails}), these 1-S-NE graphs remain isomorphic. Indeed -- conditions (iii) and (v) ensure that the rearrangements are done in the same way for the same paths in both graphs.
Because of this, we have that $n_i=n_i'$ for all $i $ and than, in particular, $k_2=k_2'.$ Condition (v) implies that $k_{j2}=k_{j2}'$ for $j>2$

As $n_i=n_i'$ for all $i\leq k_2,$ we have that
$a_i+(n_i-a_i-1)=a_i'+(n_i'-a_i'-1)$ for $i<k_2$ and that, if $k_{32}=k_2,$ then
$a_{k_2}+(n_{k_2}-a_{k_2})=a_{k_2}'+(n_{k_2}'-a_{k_2}').$
If $n_i$ is finite, then these relations imply that
$n_i-a_i-1=n_i'-a_i'-1$ for $i<k_2$ and that, if $k_{32}=k_2$ that
$n_{k_2}-a_{k_2}=n_{k_2}'-a_{k_2}'$. If $n_i$ is infinite and $a_i$ is finite, then the same holds since
$n_i-a_i-1=n_i'-a_i'-1=\omega$ for $i<k_2$ and that
$n_{k_2}-a_{k_2}=n_{k_2}'-a_{k_2}'=\omega$ if $k_{32}=k_2.$ Hence, we only need to consider the case when both $a_i$ and $n_i$ are infinite, in which case $m_j=0$ for some $j>2.$

If $m_2=0,$ any cut maps done in $(E/H_1)_{\can}$ give rise to the analogous cut maps of $E/H_1$ for two reasons. First, if $m_2=0$  the identification $c_2c_2^*$ with $\so(c_2)$ is valid. Second, the length of the connecting path does not play a role in condition $C(2i)$ only its existence does. Because of this, we can use the cut map for the tails of the $2$-spines, so we can assume that $E/H_1$ and $F/G_1$ are cut. Hence, these $(n-1)$-quotients satisfy the assumptions of the General Cut
Lemma. By this lemma, there is $\iota_{n/1}: E/H_1\cong F/G_1$ such that $f_{n/1}=\ol\iota_{n/1}$.

Let $m_2>0$ and $m_j=0$ for some $j>2$ (we note that the arguments below are valid also if $m_j>0$ for all $j>2$).
Assume there is $i_0$ is the smallest such that $n_{i_0}-a_{i_0}-1>n_{i_0'}-a_{i_0}'-1$ if $i_0<k_{2}$ or, if $i_0=k_{32}=k_2,$ that $n_{i_0}-a_{i_0}>n_{i_0}'-a_{i_0}'.$ The excess of tails in $E$ make us be able to repeat the argument of the proof of the Quotient Proposition for the $n=2$ case (Proposition \ref{proposition_E_without_H}) to show that both graphs satisfy condition (1) for a 2-spine extending. This is because the existence of a 0-route from a tail in the quotient does not depend on whether there is only a 1-quotient or a larger quotient below the root of the range of that tail (so it works for $n>2$ also). So, the bijection $P^{v_{10}}_{\not c_1}\to P^{v_{10}'}_{\not c_1'}$, let us again call it $\rho$ as in the proof of the Quotient Proposition for $n=2$, which $f_1$ induces maps a 0-route to the tail graph.
Hence, the tail graph of $F$ contains enough tails to accommodate such an image. In particular, for every $21$-exit $e$ which ends at $v_{1i}$ for some $i\leq k_{21}$, $F$ contains a tail at $v_{1i}'$. Thus, condition (1) for the 2-spine extending holds.
Since $F$ contains at least two edges ending at $v_{1i}'$ (the exit and the tail), $v_{1i}$ has to receive at least that many edges. As one of them is $e$, $v_{1i}$ has to have at least one more tail. So, $E$ also satisfies condition (1) for spine extending.

As $\rho$ interchanges a quotient route to a tail route, at least one tail can be found so that its source $w$ is such that the image $f([w])$
cannot be matched to a single monoid generator in the correspondence $f$ creates on the generating intervals $D_E$ and $D_F$ so that
\[f([w])=f(t^{i_0+1}[v_{20}])=t^{i_0+1}\left(t^{lm_2}[v_{20}']+\sum_{j=0}^{l-1}t^{jm_2}a_{21}[v_{10}']\right)\]
is not identified with $[w']$ for no vertex $w'$ in the $(n-1)$-quotient. By considering the distance from $v_{20}'$, the first term $t^{i_0+1+lm_2}[v_{20}']$ has to correspond to a tail to $u_{i_0+lm_2}'$. The arguments here match exactly those in the proof of the Quotient Proposition for $n=2.$ Using the same arguments, one shows that $n'_{i+lm_2}\geq n_i$  for all $i\leq k_2$. By conditions (ii), (iii), and (v) the bijection $\rho$ can be chosen to map the paths starting in the sources of $j2$-exits of $E$ to the sources of $j2$-exits of $F$. So, any interchanges between the quotient and the tail paths can happen only for the quotient paths starting in the tails of the emitting cycles. Because of this, the existence of the portion of the 2-spine of $F$ between $u_{m_2}'$ and  $u_{lm_2-1}'$ implies that $f^{-1}([u_{jm_2+i_0}'])$
has to be of the form
$[pc_2^{j'}c_2^{-j'}p^*]$ for a path $p$ starting in a tail and for some $j'$. Given that $u_{m_2+i}',$ for $i=1,\ldots, m_2$ has the distance $m_2+i$ from $v_{20}',$ the value of $j'$ is one in this case and the length of $p$ has to be $i.$ Hence, such a path $p$ has to start in a tail of $u_{i-1}$. So, we have that $n_i-a_i-1>0$ and $n_{m_2-_{m_2}1}-a_{m_2-_{m_2}1}-1>1.$
This shows that $E$ satisfies condition (2) for the 2-spine extending. Hence, $E$ permits arbitrary 2-spine extending.

Extending the 2-spine of $E$ and comparing the resulting graph with $F$ enables us to use the same arguments as in the proof of Proposition \ref{proposition_E_without_H} again and have that the 2-spine of $F$ can be shortened. So, we arrive to a contradiction. This shows that any deviation from
$n_i-a_i-1=n_i'-a_i'-1$ (or $n_i-a_i=n_i'-a_i'$ if $i=k_{32}=k_2$), leads to a contradiction.

These relations and the  conditions (iii), (iv), and (v) enable us to define an isomorphism $\iota_{n/1}$ so it maps the 2-spine of $E$ and its tails onto the 2-spine of $F$ and its tails, $j2$-exits of $E$ onto the $j2$-exits of $F,$ and the rest of the $(n-2)$-quotient onto the rest of the $(n-2)$-quotient using $\iota_{n/2}$. By this construction, we have that $f_{n/1}=\ol\iota_{n/1}$ for such  $\iota_{n/1}.$
\end{proof}

We the generalization of the Cut  Lemma. The arguments in the proof are analogous to the $n=2$ case and we include them for completeness.

\begin{lemma} {\bf The General Cut Lemma.}
Let $E=E_{\cut}$ and $F=F_{\cut}$ be direct-exit $n$-S-NE graphs with canonical $(n-1)$-quotients  such that
that there is $\iota_{n/1}: E/H_1\cong F/G_1$ which maps $v_{j0}=\so(c_j)$ onto $v_{j0}'=\so(c_j')$ for $j=2,\ldots, n.$

If there is a $\POM^D$-isomorphism $f$ of the $\Gamma$-monoids such that $f_{n/1}=\ol\iota_{n/1},$ that $f([v_{10}])=[v_{10}']$ for $v_{10}=\so(c_1)$ onto $v_{10}'=\so(c_1'),$ and that the $j1$-connecting matrices (computed for $c_j,$ and $c_j', j=1,\ldots, n$) are equal and such that $\iota_{n-1}(\so(c_j))=\so(c_j')$ for $j=2,\ldots, n$, then $\iota_{n/1}$ can be extended to a graph isomorphism $\iota: E\cong F$ such that $f=\ol\iota.$
\label{lemma_glavna_in_general_case}
\end{lemma}
\begin{proof}
The assumptions of the lemma enable us to extend $\iota$ to exits and their ranges, to the vertices and edges of $c_1$ if $m_1>0$, and to the vertices and edges of the suffix of the $1$-spine of length $k_{\max}=\max\{k_{j1}\mid j=2,\ldots, n\}$ if $m_1=0.$

We show that we can extend $\iota_{n/1}$ to the rest of the 1-spine (if any) and to the 1-tails. For any $i,$ condition $C(1i)$ for both $E$ and $F$ depends only on the subgraph on which $\iota_{n/1}$ is currently defined.
Thus, for every $i\leq k_{\max}$, the 1-$i$-tails are cuttable in $E$ if and only if they are cuttable in $F.$

If $m_1>0$ and $i\in m_1,$ let $P^{v_{10}}_{\not c_1, i}$ be the set of paths of $P^{v_{10}}_{\not c_1}$ of length $m_1-i+1$ modulo $m_1$.
If $m_1=0$ and $i\leq k_1,$ let $P^{v_{10}}_{\not c_1, i}$ be the set of paths of $P^{v_{10}}$ of length $i+1$. We partition $P^{v_{10}}_{\not c_1, i}$ into
\begin{center}
$P^{v_{10}}_{\quot, i}=\{p\in P^{v_{10}}_{\not c_1, i}\mid \so(p)\in E^0-H_1)\}\;\;$ and
$\;\;P_{\tails, i}^{v_{10}}=\{p\in P^{v_{10}}_{\not c_1, i}\mid \so(p)\in H_1\},$
\end{center}
and let such sets be analogously defined for $F$.  The condition $C(1i)$ holds for some paths $p,q,d,g,e$ as in the statement of $C(1i)$ if and only if there are infinitely many tails with the same range as $g$ in $H_j/H_{j-1}$ which are not $lj$-exits in $E$ for any $l>j>1.$

The existence of the restriction $f_1$ of $f$ on the $\Gamma$-monoids of the porcupine graphs $P_{H_1}$ and $P_{G_1}$ ensures that there is a bijection
$\rho_{i}: P_{\not c_1, i}^{v_{10}}\to P_{\not c_1', i}^{v_{10}'}$
and the existence of $\iota_{n/1}$ ensures that such $\rho_{i}$ can be found so that it maps the set of paths in $P_{\quot, i}^{v_{10}}$ which have exits as their first edges onto the set of such paths $P_{\quot, i}^{v_{10}'}$. Let us choose one such $\rho_i$ which maps as few $\quot$-paths to $\tails$-paths as possible. By the existence of $\iota_{n/1}$, the cardinalities $|\Pa_{l_p}^{v_{j_p}}|$ and $|\Pa_{l_p}^{v_{j_p}'}|$ are equal.
The $(n-1)$-quotients are canonical, so  both $E$ and $F$ are I-plus reduced. By the proof of the General Quotient Proposition, the existence of an interchange is possible only if
$|\Pa_{l_p}^{v_{j_p}}|=|\Pa_{l_p}^{v_{j_p}'}|$ is equal to $\omega$. For such a path $p,$ let $l_p,k'_p, j_p, j'_p$ and $i'_p$ be as above considering $C(1i)$.
In this case,
the condition $C(1i)$ holds
in $E$ and so it holds in $F$ and this makes the 1-$i$-tails cuttable in both graphs. As both graphs are in their cut forms, the cardinality 1-$i$-tails is zero.

This shows that $\rho_i$ maps $P_{\quot, i}^{v_{10}}$ onto $P_{\quot, i}^{v_{10}'}$ and $P_{\tails, i}^{v_{10}}$ onto $P_{\tails, i}^{v_{10}'}.$
In particular, for every $i\in m_1$ if $m_1>0$ (for every $i\leq k_1$ if $m_1=0$), $\rho_i$ induces a bijection of $1i$-tails. Let $\iota_i$ be one such bijection. We extended $\iota_{n/1}$ to the 21-exits and the terminal cycles and now we can further extend it to the 1-spine and the tails using $\iota_{i}.$ If $\iota$ is such an extension, then  $\iota: E\cong F$ and $f=\ol\iota$.
\end{proof}

\begin{theorem} Let $E$ and $F$ be countable composition S-NE graphs. The following conditions are equivalent.
\begin{enumerate}
\item There is a $\POM^D$-isomorphism $f:M_E^\Gamma\to M_F^\Gamma.$

\item The relation $E\approx F$ holds.

\item There is a graded $*$-isomorphism $\phi:L_K(E)\to L_K(F).$
\end{enumerate}
If (1) holds, there are  canonical forms $E_{\can}$ and $F_{\can}$ and operations $\phi_E: E\to E_{\can},$ $\iota: E_{\can}\cong F_{\can},$ and $\phi_F: F\to F_{\can},$ such that $\ol{\phi_F^{-1}\iota\phi_E}=f.$
\label{theorem_GCC_disjoint_cycles}
\end{theorem}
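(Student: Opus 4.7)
The implications $(2)\Rightarrow(3)$ and $(3)\Rightarrow(1)$ are immediate: the former since a composition of the canonical-form operations extends to a graded $*$-isomorphism of the two algebras by the properties of such operations reviewed in Section \ref{subsection_2SNE_intro}, and the latter since $\V^\Gamma$ is a functor preserving the generating interval. The main work is to establish $(1)\Rightarrow(2)$ together with the realization statement, and I would proceed by induction on the composition length $n$. The cases $n=1$ and $n=2$ are covered by Propositions \ref{proposition_cofinal_graphs} and \ref{proposition_canonical_tails} and by Theorem \ref{theorem_n=2} respectively.

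For the inductive step, assuming the theorem holds for composition length at most $n-1$, first replace $E$ and $F$ by their direct-exit, breaking-vertices-free, total out-split forms. Each of these replacements comes from a graph operation (the total out-split, the elimination of breaking vertices via targeted out-splits, and the $V$-relative canonicalization of Proposition \ref{proposition_canonical_tails} applied to $P_{H_1}$), and each extends to a graded $*$-isomorphism of the corresponding algebras, so the induced $\POM^D$-isomorphism can be absorbed into $f$. Under the lattice correspondence between admissible pairs, graded ideals, and $\Gamma$-order-ideals (\cite[Theorem 2.5.8]{LPA_book}, \cite[Theorem 5.11]{Ara_et_al_Steinberg}), the isomorphism $f$ maps the ideal $J^\Gamma(H_1)$ onto an ideal $J^\Gamma(G_1)$ for some hereditary and saturated $G_1\subseteq F^0$, yielding a commutative diagram of short exact sequences of $\Gamma$-monoids. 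The restriction of $f$ to $J^\Gamma(H_1)$ is a $\POM^D$-isomorphism between the $\Gamma$-monoids of the $1$-S-NE porcupine graphs $P_{H_1}$ and $P_{G_1}$, and the induced quotient map $f_{E/H_1}$ is a $\POM^D$-isomorphism between the $\Gamma$-monoids of the $(n-1)$-S-NE graphs $E/H_1$ and $F/G_1$.

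By the inductive hypothesis applied to $E/H_1$ and $F/G_1$ we obtain $E/H_1\approx F/G_1$ and a graph operation realizing $f_{E/H_1}$; by Proposition \ref{proposition_cofinal_graphs} together with Proposition \ref{proposition_canonical_tails} we realize the restriction of $f$ to $J^\Gamma(H_1)$ by an operation on $P_{H_1}$. These two realizations must be glued into a single operation $E\to F_{\can}$ compatible with $f$. My plan is to process the cycles $c_2,c_3,\ldots,c_n$ in order, using the $n=2$ argument from Section \ref{section_n=2} at each stage. After making $E/H_1$ canonical by the induction, consider the $2$-S-NE porcupine-quotient $P_{H_2}$ (whose composition factors have terminal clusters $c_1^0$ and $c_2^0$) and apply the $n=2$ analysis to transform its $c_2$-to-$c_1$ part into canonical form without touching $E/H_1$; then apply the corresponding procedure inside $P_{H_3}$ to canonicalize the $c_3$-to-$c_1$ paths (relative to the already-canonical $c_1$ and $c_2$ parts), and so on up to $j=n$. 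At each stage the operations alter only the $c_l$-to-$c_1$ parts for $l\ge j$, so earlier canonicalizations survive. Once all connecting matrices and tail counts coincide between $E$ and $F$, Lemma \ref{lemma_glavna_in_general_case} produces a graph isomorphism $\iota:E_{\can}\cong F_{\can}$ with $\ol\iota$ equal to the composite isomorphism induced from $f$, yielding both $E\approx F$ and the explicit realization claim.

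The main obstacle will be the bookkeeping at the inductive step: a $j1$-exit move used to canonicalize the $c_j$-to-$c_1$ component can create new $l1$-exits or $l1$-tails for $l>j$, changing the arithmetic of connecting matrices and the cuttability conditions $C(1i)$ at higher levels. This is the analogue, one composition layer at a time, of the delicate $L$-reduction, spine-reduction, and cut-form analysis from the proof of Theorem \ref{theorem_n=2}, and the key technical verification is that at the $j$-th stage the relevant restriction of $f$ satisfies the hypotheses of the $n=2$ analysis applied to the 2-S-NE graph extracted from $P_{H_j}/P_{H_{j-1}}\cup(\text{lower levels already matched})$. Once this is checked, the six-case analysis of Section \ref{subsection_the_proof} (by the values of $m_j$ and $m_1$) applies verbatim to the $c_j$-to-$c_1$ component at each layer, and the composite of the graph operations produced is the desired realization of $f$.
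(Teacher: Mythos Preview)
Your proposal is correct and follows essentially the same inductive strategy as the paper: induction on composition length with the $n\le 2$ cases as base, passing to canonical forms, matching $E/H_1\cong F/G_1$ via the inductive hypothesis, then processing $j=2,3,\ldots,n$ in order by applying the $n=2$ machinery to the $c_j$-to-$c_1$ layer, and closing with Lemma~\ref{lemma_glavna_in_general_case}. The one place where the paper is more explicit than your sketch is precisely the point you flag as ``the key technical verification'': rather than extracting an auxiliary $2$-S-NE graph, the paper computes $f([v_{j0}])$ directly in $M_F^\Gamma$, uses the already-established relation $f_{n/1}([v_{j0}])=[v_{j0}']$ to show that the intermediate terms $b_2[v_{20}'],\ldots,b_{j-1}[v_{(j-1)0}']$ are forced and cancel, and thereby derives literal analogues of equations~(\ref{equation_the_main_one}) and~(\ref{equation_b_and_b_prim}) involving only $[v_{10}']$---at which point the case analysis of Section~\ref{subsection_the_proof} applies verbatim.
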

\begin{proof}
If $E$ is an $n$-S-NE graph and if any of the four conditions holds, then $F$ is an $n$-S-NE graph by Theorem \ref{theorem_comp_series}. So, we can assume that $E$ and $F$  have composition series of the same length.

Since the implications (2) $\Rightarrow$ (3) and (3) $\Rightarrow$ (1) are direct, we prove (1) $\Rightarrow$ (2) and the last sentence of the theorem.

Let
$\phi_E: E\to E_{\can}$ and $\phi_F: F\to F_{\can}$
be operations transforming the graphs to their canonical forms. We can consider $\ol\phi_Ff\ol{\phi_E^{-1}}$ instead of $f$ and so we can assume that $E=E_{\can}$ and $F=F_{\can}.$

Assume that (1) holds and that $H_i, i=0,\ldots, n$ are the terms of a composition series of $E$. Let $G_i,$ $i=0,\ldots n$ be the admissible pairs of $F$ such that $f(J^\Gamma(H_i))=J^\Gamma(G_i)$ which implies that the sequence $G_i$ constitutes the terms of a composition series of $F.$ Let $f_i$ denotes the restriction of $f$ to $J^\Gamma(H_i).$ For $i=1,\ldots, n-1,$ $j=2,\ldots, n$ and $i<j,$ let $f_{j/i}$ be the induced $\POM^D$-isomorphism $M^\Gamma_{H_{j}/H_{i}}\to M^\Gamma_{G_{j}/G_{i}}.$  Let $m_i$ be the length of the terminal cycle of $H_i/H_{i-1}$ (and $G_i/G_{i-1}$) and $A_{ji}=[a_{ji, lk}], i=1\ldots, n-1, j=2, \ldots n, i<j$ be the $c_j$-to-$c_i$ connecting matrix  and let $A'_{ji}=[a_{ji, lk}]$ be $c_j'$-to-$c_i'$ connecting matrix of $F.$
Let $v_{ji}$ be the vertices of $c_j$ for $i\in m_j$ and $v_{ji}'$ are the vertices of $c_j'$ for $i\in m_j=|c_j'|.$

If there are no edges emitted from $E/H_j$ to $H_j$ for some $j=1,\ldots, n,$ then $E$ is a disjoint union of $E/H_j$ and $H_j$ and $M_E^\Gamma$ is a direct sum of $J^\Gamma(H_j)$ and $M^\Gamma_{E/H_j}.$ The existence of $f$ implies that $M_F^\Gamma$ splits into a direct sum of two terms each of which is $\POM^D$-isomorphic with $J^\Gamma(H_j)$ and $M^\Gamma_{E/H_j}.$ Since the generators of $J^\Gamma(G_j)$ and $M^\Gamma_{F/G_j}$ are independent, we have that there are no paths from $F/G_j$ to $G_j$. So, $F$ is a disjoint union of $F/G_j$ and $G_j.$ Since the composition length of $P_{H_j}$ and $E/H_j$ is smaller than $n,$ we can use induction hypothesis and establish that the needed claim holds.

Thus, we can consider only connected graphs. The above consideration shows that there are edges from $E/H_j$ to $H_j$ if and only if there are exits from $F/G_j$ to $G_j$.

Let us start the induction now. Our induction base is the $n=2$ case in which case
the General Quotient Proposition, the General Cut Lemma, and Theorem \ref{theorem_GCC_disjoint_cycles} hold by Quotient Proposition, the Cut Lemma, and Theorem \ref{theorem_n=2}. The validity of the General Quotient Proposition and the General Cut Lemma for $n$-S-NE graphs are shown assuming that Theorem \ref{theorem_GCC_disjoint_cycles} hold for $(n-1)$-S-NE graphs. So, we are proving the theorem for $n$-S-NE graphs assuming that
the General Quotient Proposition and the General Cut Lemma hold for $n$-S-NE graphs.

First, we show that the assumptions of the General Quotient Proposition can be made to hold for graphs equivalent with $E, F$ and a $\POM^D$-isomorphism  which is realizable if and only if $f$ is realizable.

The graphs $E/H_1$ and $F/G_1$ are equivalent so they have isomorphic canonical forms and such forms can be founds so that the maps $\psi_E: E/H_1\to (E/H_1)_{\can},$ $\psi_F: F/G_1\to (F/G_1)_{\can},$ and $\iota_{n/1,\can}:  (E/H_1)_{\can}\cong  (F/G_1)_{\can}$ are such that
$f_{n/1}=\ol{\psi_F^{-1}\iota_{n/1, \can}\psi_E}.$ The $(n-2)$-quotients of $E/H_1, (E/H_1)_{\can}\cong (F/G_1)_{\can},$ and $F/G_1$ are isomorphic by an isomorphism $\iota_{n/2}$ such that $f_{n/1}=\left(\ol{\psi_F^{-1}\iota_{n/1, \can}\psi_E}\right)_{n/2}=f_{n/1}=\left(\ol{\iota_{n/1, \can}}\right)_{n/2}=\iota_{n/2}$ because the exit moves, reductions, and the cuts which maps $\psi_E$ and $\psi_F$ may involve do not change the $(n-2)$-quotients. So, we have that the $(n-2)$-quotients are isomorphic by an isomorphism which induces $f_{n/2}.$

If $c_2$ does not emit exits to $H_1,$ then $c_2$ is terminal for $E$ and we have that $f([v_{20}])=f_{n/1}([v_{20}])=[v_{20}'].$ If $c_2$ emits edges to $H_1,$ we consider the 2-S-NE graph $P_{H_2}$ and the restriction $f_2$ of $f$ to the $\Gamma$-monoid of $P_{H_2}.$ By the proof of Theorem \ref{theorem_n=2}, we have that canonical forms $E$ and $F$ can be found so that $P_{H_2}\cong P_{G_2}.$ Thus the 21-connecting matrices are equal and they are computed using the cycles such that $f_{2/1}([v_{20}])=[v_{20}']$ and $f_2([v_{10}])=[v_{10}']$.

If $c_2$ is proper and $c_1$ is not, then
$f_2$ can be realized by a graph isomorphism $\iota_2: P_{H_2}\cong P_{G_2}$ such that $\iota_2(v_{20})=v_{20}'$ by the proof of Theorem \ref{theorem_n=2}. Thus, we have that $f([v_{20}])=[v_{20}']=[\iota_2(v_{20})].$ In all other cases when $f_2$ is not realized by a graph isomorphism, there are 2-canonical forms $P_{H_2}'$ and $P_{G_2}'$, exit moves and cuts $\phi_{E,2}:P_{H_2}\to P_{H_2}'$ and $\phi_{F,2}:P_{G_2}\to P_{G_2}'$ and
a graph isomorphism $\iota_2: P_{H_2}'\cong P_{G_2}'$ such that $f_2=\ol{\phi_{F,2}^{-1}\iota_2\phi_{E, 2}}$ by
the proof of Theorem \ref{theorem_n=2}. These exit moves and cuts do not impact the $(n-2)$-quotients.

Let $E_2$ be the graph obtained by the moves and cuts as in
$\phi_{E,2}$ and $F_2$ the graph obtained by the moves and cuts as in $\phi_{F,2}.$
We still use $\phi_{E,2}$ and $\phi_{F,2}$ for the operations $E\to E_2$ and $ F\to F_2.$ Let $g$ be $\ol\phi_{F,2}f\ol\phi_{E, 2}^{-1}.$ Since the moves
$\phi_{E,2}$ and $\phi_{F,2}$ do not impact $E/H_1$ and $F/G_1,$ $g_{n/1}=f_{n/1}.$ As $E$ and $F$ are $\mathbf L$-reduced, $E_2$ and $F_2$ are still canonical forms of $E$ and $F$, so we can consider $E_2$ and $F_2$ instead of $E$ and $F$ and $g$ instead of $f.$ Hence, we can assume that $f([v_{20}])=[v_{20}'].$

So, we have that all assumptions of the General Quotient Proposition hold.
This enables us to deduce that there is a graph isomorphism
$\iota_{n/1}:E/H_1\cong F/G_1$
such that $f_{n/1}=\ol\iota_{n/1}.$ The existence of $\iota_{n/1}$ also implies that $A_{lj}=A_{lj}'$ for every $l>j>1.$

At this point, we start the consideration of the $j1$-parts for $j=3,\ldots, n$ in that order. First, $j=3.$
Let us write $f([v_{30}])$ in terms of $[v_{20}'],$  $[v_{10}']$ and $[v_{30}']$ if $m_3>0$ and $[v_{20}'],$  $[v_{10}'],$ and $[q_{Z}]$ for finite $Z\subseteq \so^{-1}(v_{30})$ if $m_3=0.$ Just as in the proof of Theorem \ref{theorem_n=2}, we consider the cases $m_3>0$ and $m_3=0$ separately.

If $m_3>0,$ let $a_{3l}$ be the connecting polynomial of the $c_3$-to-$c_l$ part for $l=1,2,$ defined analogously as before, and let $a_{3l}'$ be analogous such polynomial for $F$. Let
$f([v_{30}])=t^{lm_3}[v_{30}']+q+b_1[v_{10}']$ for some $l\geq 0$ some $b_1\in \Zset^+[t]$ and some  element $q$ of $M^\Gamma_{P_{G_2}}.$ Since $[v_{30}']=f_{n/1}([v_{30}])=t^{lm_3}[v_{30}']+q,$ $q$ is equal to $\sum_{j=0}^{l-1}t^{m_3j}a_{32}[v_{20}'].$ Let us shorten this last term to $b_2[v_{20}'].$
The relation
\begin{equation}
[v_{30}]=t^{m_3}[v_{30}]+a_{32}[v_{20}]+a_{31}[v_{10}]
\label{equation_relation_in_n_case}
\end{equation}
holds in $M_{E}^\Gamma$ and an analogous such relation holds in $M_{F}^\Gamma.$ Since $A_{32}=A'_{32},$ we have that $a_{32}=a_{32}'.$
Using relation (\ref{equation_relation_in_n_case}), we have that \[t^{lm_3}[v_{30}']+b_2[v_{20}']+b_1[v_{10}']=f([v_{30}])=f(t^{m_3}[v_{30}]+a_{32}[v_{20}]+a_{31}[v_{10}])=\]\[t^{(l+1)m_3}[v_{30}']+t^{m_3}b_2[v_{20}']+t^{m_3}b_1[v_{10}']+a_{32}[v_{20}']+a_{31}[v_{10}'].\]
By using the analogue of relation (\ref{equation_relation_in_n_case}) in $M_F^\Gamma,$ we have that  $t^{lm_3}[v_{30}']+b_2[v_{20}']+b_1[v_{10}']=t^{(l+1)m_3}[v_{30}']+t^{lm_3}a_{32}'[v_{20}']+t^{lm_3}a_{31}'[v_{10}']+b_2[v_{20}']+b_1[v_{10}'].$
Thus, we have that
\[t^{(l+1)m_3}[v_{30}']+t^{lm_3}a_{32}'[v_{20}']+t^{lm_3}a_{31}'[v_{10}']+b_2[v_{20}']+b_1[v_{10}']=\]\[t^{(l+1)m_3}[v_{30}']+t^{m_3}b_2[v_{20}']+t^{m_3}b_1[v_{10}']+a_{32}[v_{20}']+a_{31}[v_{10}']\]
and the terms with $[v_{30}']$ cancel out.
By using that $b_2[v_{20}']=\sum_{j=0}^{l-1}t^{m_3j}a_{32}[v_{20}'],$ we can cancel all terms with $[v_{20}']$ and we arrive to
\begin{equation}
t^{lm_3}a_{31}'[v_{10}']+b_1[v_{10}']=t^{m_3}b_1[v_{10}']+a_{31}[v'_{10}]
\label{equation_the_main_one_in_n_case}
\end{equation}
which generalizes  (\ref{equation_the_main_one}) of the $n=2$ case. If $l', b_1',$ and $b_2'$ are defined analogously as $l, b_1,$ and $b_2$ for $f^{-1},$ we  consider the relation $f^{-1}(f([v_{30}]))=[v_{30}]$ and have that
\[t^{(l+l')m_3}[v_{30}]+\sum_{j=0}^{l+l'-1}t^{jm_3}a_{32}[v_{20}]+\sum_{j=0}^{l+l'-1}t^{jm_3}a_{31}[v_{10}]=[v_{30}]=f^{-1}(f([v_{30}]))=\]\[f^{-1}(t^{lm_3}[v_{30}']+b_2[v_{20}']+b_1[v_{10}'])=t^{(l+l')m_3}[v_{30}]
+t^{lm_3}b_2'[v_{20}]+t^{lm_3}b_1'[v_{10}]
+b_2[v_{20}]+b_1[v_{10}]=\]\[t^{(l+l')m_3}[v_{30}]
+\sum_{j=0}^{l+l'-1}t^{lm_3}a_{32}[v_{20}]+t^{lm_3}b_1'[v_{10}]
+b_1[v_{10}]\]
which implies that $(t^{lm_3}b_1'+b_1)[v_{10}]=\sum_{j=0}^{l+l'-1}t^{jm_3}a_{31}[v_{10}].$  Using $f(f^{-1}([v_{30}']))=[v_{30}']$ similarly, we arrive to
the analogues of relations (\ref{equation_b_and_b_prim}) of the $n=2$ case:
\begin{equation}
(t^{lm_3}b_1'+b_1)[v_{10}]=\sum_{j=0}^{l+l'-1}t^{jm_3}a_{31}[v_{10}]\;\;\mbox{ and }\;\;(t^{l'm_3}b_1+b'_1)[v_{10}']=\sum_{j=0}^{l+l'-1}t^{jm_3}a_{31}'[v_{10}'].
\label{equation_b_and_b_prim_in_n_case}
\end{equation}
Thus, $f([v_{30}])$ depends only on the $c_3$-to-$c_1$ part. Having the formulas  (\ref{equation_the_main_one_in_n_case}) and
(\ref{equation_b_and_b_prim_in_n_case}) which are analogous to (\ref{equation_the_main_one}) and (\ref{equation_b_and_b_prim}), enables us to deduce that $f([v_{30}])=t^{lm_3}[v_{30}']+b_1[w_{10}']$ for $l$ and $b_1$ having the following form. If
$L_1$ is the least common multiple of $m_1$ and $m_3$, the map $\phi_{31, E}: [v_{30}]\mapsto [v_{30}']+b_1[w_{10}']$ corresponds to moving  moving all 31-exits by a nonnegative integer multiple of $L_1$ and some of the 31-exits $L_1$ times. By the same arguments as in the proof of Theorem \ref{theorem_n=2}, $l$ divides $m_1$ and if $k=\frac{m_1}{l},$ then the operation $\phi_{31, F}^{-1}:[v_{30}]\mapsto t^{lm_3}[v_{30}]$
corresponds to the inverse of $\phi_{31, F}$ which is moving all 31-exits $kL_1$ times. If $E_{31}'$ and $F_{31}'$ are graphs resulting from applying $\phi_{31, E}$ to $E$ and applying $\phi_{31, F} $ to $F$, then the composition $g_{31}=\ol\phi_{31, F}f \ol\phi_{31, E}^{-1}$ is such that $g_{31}([v_{30}])=[v_{30}'].$ Thus, the values $b_1$ and $l$ for $g_{31}$ are zeros. With these values, the formula (\ref{equation_the_main_one_in_n_case}) implies that the 31-connecting polynomials are equal modulo $m_1$. Since the maps
$\phi_{31, E}$ and $\phi_{31, F}$ do not change the $31$-connecting polynomials, $E$ and $F$ have equal 31-connecting polynomials. By the canonical format of the 31-exits, the 31-connecting matrices of $E$ and $F$ are equal.

As the 31-exit moves do not impact the $(n-1)$-quotients nor the 21-connecting part, the $(n-1)$-quotients are isomorphic and the 21-connecting matrices are equal for all four graphs $E$, $F,$$E_{31}',$ and $F_{31}'$. We also have that
$(g_{31})_{n/1}=f_{n/1}=\ol\iota_{n/1}$ and that
$g_{31}([v_{i0}])=[v_{i0}']$ for $i=1,2.$

If $m_1=0,$ the formulas  (\ref{equation_the_main_one_in_n_case}) and (\ref{equation_b_and_b_prim_in_n_case}) and the requirement that the spine of the 31-part is as short as possible imply that
$f([v_{30}])=[v_{30}].$ In this case, we let $E=E_{31}',$  $F=F_{31}',$ and we let
$\phi_{31, E}$ and $\phi_{31, F}$ be the identity maps, and $g_{31}$ be $f.$

If $m_3=0,$ Let $f([v_{30}])=[q_{Z_1'\cup Z_2'}]+b_2[v_{02}']+b_1[v_{10}']$ for some finite $Z'_1\subseteq \so^{-1}(v_{30}')\cap \ra^{-1}(G_1),$ finite $Z'_2\subseteq \so^{-1}(v_{30}')\cap \ra^{-1}(G_2-G_1),$ and some $b_1, b_2\in \Zset^+[t].$ Since $f_{n/1}([v_{30}])=[v_{30}'],$
$b_2[v_{20}']$ is $a_{Z'_2}[v_{20}'],$ so
\[f([v_{30}])=[q_{Z_1'\cup Z_2'}]+a_{Z_2'}[v_{20}']+b_1[v_{10}']=[q_{Z_1'}]+b_1[v_{10}']\]
holds showing that $f([v_{30}])$ depends only on the $c_3$-to-$c_1$ part.
To follow the notation from the proof of Theorem \ref{theorem_n=2}, let $Z_1'=\emptyset_F$ and $b_1=b_{\emptyset}$. Considering $f^{-1}$ and letting
$f^{-1}([v_{30}])=[q_{\emptyset_E}]+b_{\emptyset}'[v_{10}],$ we can repeat the same arguments used in the $n=2$ case. The conditions $f^{-1}(f([v_{30}]))=[v_{30}]$
and  $f(f^{-1}([v_{30}']))=[v_{30}']$ imply that
$(b_{\emptyset_E} + b'_{\emptyset})[v_{10}'] = a_{(\emptyset_E)_F}[v_{10}']$ and that $(b_{\emptyset_F}' + b_{\emptyset})[v_{10}] = a_{(\emptyset_F)_E}[v_{10}].$
This implies the existence of
finite sets $W\subseteq \so^{-1}(v_{30})$ and $W'\subseteq \so^{-1}(v_{30}')$
such that $b_{\emptyset}[v_{10}]=a_W[v_{10}]$ and
$b_{\emptyset}'[v_{10}']=a_{W'}[v_{10}']$ The map $\phi_{31, E}: [v_{30}]\to [v_{30}]+a_{\emptyset_E}[v_{10}']$ is moving exits in $\emptyset_E$. Let $E_{31}'$ be the result of making these moves in $E$. Let $\phi_{31, F}: [v_{30}']\to [v_{30}']+a_{W'}[v_{10}']=[v_{30}']+b_{\emptyset}[v_{10}']$ be the operation of moving exits in $W'$ and let $F_{31}'$ be the resulting graph. If $g_{31}=\ol\phi_{31, F}f \ol\phi_{31, E}^{-1},$ then $g_{31}([v_{30}])=\ol\phi_{31, F}f ([q_{\emptyset_E}])=\ol\phi_{31, F}([q_{(\emptyset_E)_F}]+b_{\emptyset_E}[v_{10}'])
=[q_{(\emptyset_E)_F}]+b_{\emptyset_E}[v_{10}']+b_{\emptyset}[v_{10}']=[q_{(\emptyset_E)_F}]+a_{(\emptyset_E)_F}[v_{10}']
=[v_{30}'].$

This relation implies that the 31-connecting matrices of $E_{31}'$ and $F_{31}'$ are equal. If $m_1>0$ or if $k_{31}$ is finite, this implies that $E$ and $F$ have the same 31-connecting matrix by the definition of the canonical forms. If $m_1=0$ and $k_{31}=\omega,$ we may not have that $E$ and $F$ have the 31-connecting matrices equal but that holds for
$E_{31}'$ and $F_{31}'$. As the 31-exit moves do not impact the $(n-1)$-quotients nor the 21-connecting part, we have that the $(n-1)$-quotient and the 21-connecting part of $E_{31}'$ is the same as those elements of $E$ and that the same holds for $F_{31}'$ and $F$. We also have that $(g_{31})_{n/1}=f_{n/1}=\ol\iota_{n/1}$ and that
$g_{31}([v_{i0}])=[v_{i0}']$ for $i=1,2,3.$

If $n=3,$ we let $E_{31}$ and $F_{31}$ be the cut forms of $E_{31}'$ and $F_{31}'$, $\iota_{\cut, E}: E_{31}'\to E_{31}$ and $\iota_{\cut, F}: F_{31}'\to F_{31}$ be the cut maps, and $g=\ol\iota_{\cut, F}g_{31} \ol\iota_{\cut, E}^{-1}$.
The assumptions of the General Cut  Lemma hold for $E_{31}, F_{31}$ and $g$ and we have that there is an isomorphism $\iota: E_{31}\cong F_{31}$ such that $g=\ol\iota.$ If we $\mathbf L$-reduce the graphs $E_{31}\cong F_{31}$ (in most cases this is not needed, only in the $m_1=m_3=0,$ $k_{31}=\omega$ it may be needed), then we arrive to isomorphic canonical forms $(E_{31})_{\mathbf L\text{-}\red}$ and $(F_{31})_{\mathbf L\text{-}\red}$
such that $f=\ol{\phi_F\iota\phi_E}$ for
$\phi_E=\iota_{\mathbf L\text{-}\red, E}\iota_{\cut, E}
\phi_{31, E}$ and $\phi_F=\iota_{\mathbf L\text{-}\red, F}\iota_{\cut, F}
\phi_{31, F}.$

If $n>3,$ we continue the process by considering $E_{31}'$ instead of $E=E_{21}'$, $F_{31}'$ instead of $F=F_{21}'$, and $g_{31}$ instead of $f$. Repeating the argument results in graphs $E_{41}',$ $F_{41}',$ and exit moves $\phi_{41, E}: E_{31}'\to E_{41}',$ $\phi_{41, F}: F_{31}'\to F_{41}'$ such that the $(n-1)$-quotient and the 31- and the 21-connecting parts of $E_{41}'$ are the same as those elements of $E_{31}'$ and that the same holds for $F_{41}',$ $F_{31}',$ and $F$. If $g_{41}=\ol\phi_{41, F}g_{31} \ol\phi_{41, E}^{-1},$ then
$g_{41}([v_{40}])=[v_{40}'].$ This implies that the 41-connecting matrices of $E_{41}'$ and $F_{41}'$ are equal.  We also have that
$(g_{41})_{n/1}=f_{n/1}=\ol\iota_{n/1}$ and that
$g_{41}([v_{i0}])=[v_{i0}']$ for $i=1,2,3,4.$

At the last step of the process, we obtain the graphs $E_{n1}'$ and $F_{n1}'$ and
exit moves $\phi_{n1, E}: E_{(n-1)1}'\to E_{n1}'$ and $\phi_{n1, F}: F_{(n-1)1}'\to F_{n}'$
such that
$g_{n1}([v_{j0}])=[v_{j0}']$ for all $j=1\ldots, n$ where
$g_{n1}=\ol\phi_{n1, F}g_{(n-1)1} \ol\phi_{n1, E}^{-1}$ and such that $E_{n1}'$ and $F_{n1}'$ have $j1$-connecting matrices equal for all $j<n.$
The relation
$g_{n1}([v_{n0}])=[v_{n0}']$ implies that the $n1$-connecting matrices of $E_{n1}'$ and $F_{n1}'$ are also equal.

Let $E_{n1}$ and $F_{n1}$ be the cut forms of $E_{n1}'$ and $F_{n1}'$, let $\iota_{\cut, E}$ and
$\iota_{\cut, F}$ be the cut maps, and let
\begin{center}
$\psi_E=\iota_{\cut, E}
\phi_{n1, E}\ldots\phi_{41, E}\phi_{31, E}$ and $\psi_F=\iota_{\cut, F}
\phi_{n1, F}\ldots\phi_{41, F}\phi_{31, F}.$
\end{center} We have that
$E_{n1},$ $F_{n1},$ and $g=\ol\iota_{\cut, F}g_{n1} \ol\iota_{\cut, E}^{-1}$ satisfy the assumptions of the General Cut Lemma, so there is an isomorphism $\iota_{n1}: E_{n1}\cong F_{n1}$ such that $g=\ol\iota'$. So,
$f=\ol{\psi_f^{-1}\iota'\psi_E}$. Finally, $\mathbf L$-reducing isomorphic graphs their isomorphism does not change, so if $\iota_{\mathbf L\text{-}\red, E}: E_{n1}\to (E_{n1})_{\mathbf L\text{-}\red}$ and $\iota_{\mathbf L\text{-}\red, F}: F_{n1}\to (F_{n1})_{\mathbf L\text{-}\red}$ and
$\iota:  (E_{n1})_{\mathbf L\text{-}\red}\cong (F_{n1})_{\mathbf L\text{-}\red},$ then $\iota'=\iota_{\mathbf L\text{-}\red, F}^{-1}\iota\iota_{\mathbf L\text{-}\red, E}.$ So, for
$\phi_E=\iota_{\mathbf L\text{-}\red, E}\psi_E$ and $\phi_F=\iota_{\mathbf L\text{-}\red, F}\psi_F,$ $f$ can be realized as \[f=\ol{\phi_F^{-1}\iota\phi_E}.\]
The isomorphic graphs $(E_{n1})_{\mathbf L\text{-}\red}$ and $(E_{n1})_{\mathbf L\text{-}\red}$ are canonical since their quotients are $(n-1)$-canonical, their exits have the prescribed format and they are cut and $\mathbf L$-reduced.
\end{proof}

Theorem \ref{theorem_GCC_disjoint_cycles} implies the following corollary.

\begin{corollary} {\bf The GCC holds for graphs with disjoint cycles, finitely many vertices and countably many edges.}
If $E$ and $F$ are graphs with disjoint cycles, finitely many vertices and countably many edges the statement of Theorem \ref{theorem_GCC_disjoint_cycles} holds.
\label{corollary_unital}
\end{corollary}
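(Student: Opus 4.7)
My plan is to reduce the corollary directly to Theorem \ref{theorem_GCC_disjoint_cycles} by verifying that each of the two graphs under consideration is a countable composition S-NE graph. Once this is established, the equivalence of conditions (1)--(3) and the realization statement follow immediately.

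First I would recall the characterization stated in the paper just after Definition \ref{definition_S_NE_graphs}: a graph is a composition S-NE graph if and only if its cycles are mutually disjoint, it has finitely many cycles, sinks and infinite emitters, and every infinite path ends in a cycle. So, given a graph $E$ with disjoint cycles, finitely many vertices, and countably many edges, I would check these four conditions in turn. Disjointness of cycles is assumed; finiteness of the sets of cycles, sinks, and infinite emitters is immediate because each such object is determined by a vertex (or, in the case of cycles, by a vertex on it) and there are only finitely many vertices. The only point that requires a brief argument is that every infinite path ends in a cycle.

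For this last point, let $p = e_1e_2\ldots$ be an infinite path in $E$. Since $E^0$ is finite, some vertex $v$ appears infinitely often as $\so(e_{n_k})$ along $p$; thus $v$ lies on a closed path and hence on some cycle $c$. The subpaths of $p$ from one occurrence of $v$ to the next are closed paths based at $v$. Because the cycles of $E$ are mutually disjoint, any closed path based on a vertex of $c$ must be a power of $c$ (a closed path decomposes into cycles, and a cycle through $v$ has to be $c$ itself by the disjointness assumption). Therefore, from the first occurrence of $v$ onward, $p$ is a concatenation of copies of $c$, so $p$ ends in $c$. Consequently $E$ is a composition S-NE graph, and so is $F$ by the same argument (which applies as soon as any of conditions (1)--(3) holds, since the three conditions each force $F$ to have finitely many vertices, disjoint cycles, and countably many edges via the $\POM^D$-isomorphism or graded $*$-isomorphism; alternatively, the corollary can be stated symmetrically by assuming both $E$ and $F$ have this form).

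Since a graph with finitely many vertices and countably many edges is countable, both $E$ and $F$ fall within the scope of Theorem \ref{theorem_GCC_disjoint_cycles}. Applying that theorem directly yields the equivalence of conditions (1)--(3) and the realization of the isomorphism $f$ by an explicit graded $*$-algebra isomorphism of the form $\phi_F^{-1}\iota\phi_E$, completing the proof. The only step with any substance is the verification that infinite paths terminate in a cycle, and this is short once the disjointness of cycles is exploited; there is no real obstacle beyond recognizing that the hypotheses of the corollary are a special case of those of the main theorem.
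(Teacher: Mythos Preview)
Your proof is correct and follows essentially the same route as the paper: verify that a graph with disjoint cycles and finitely many vertices is a countable composition S-NE graph, then invoke Theorem \ref{theorem_GCC_disjoint_cycles}. The paper's proof is terser, simply noting that finitely many vertices forces finitely many cycles, sinks, and infinite emitters, and relying on the characterization stated after Definition \ref{definition_S_NE_graphs} (which in turn cites \cite[Corollary 6.6]{Lia_porcupine_quotient}); your explicit argument that every infinite path ends in a cycle fills in a detail the paper leaves to the earlier discussion.
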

\begin{proof}
If $E$ has disjoint cycles and finitely many vertices, then
there can be only finitely many cycles, sinks and infinite emitters and $E$ is a countable $n$-S-NE graph for some $n.$
\end{proof}

\subsection{Graph \texorpdfstring{$\mathbf{C^*}$}{}-algebra version of the main result. The Graded Strong Isomorphism Conjecture}

If $E$ is a graph, the {\em graph $C^*$-algebra $C^*(E)$ of $E$} is the universal $C^*$-algebra generated by mutually orthogonal projections $\{p_v\mid v\in E^0\}$ and partial isometries with mutually orthogonal ranges $\{s_e\mid e\in E^1\}$ satisfying the analogues of the (CK1) and (CK2) axioms and the axiom (CK3) stating that $s_es_e^*\leq p_{\so(e)}$ for every $e\in E^1$ (where $\leq$ is the order on the set of projections given by $p\leq q$ if $p=pq=qp$). \cite[Definition 5.2.5]{LPA_book} has more details. By letting $s_{e_1\ldots e_n}$ be $s_{e_1}\ldots s_{e_n}$ and $s_v=p_v$ for $e_1,\ldots ,e_n\in E^1$ and $v\in E^0,$ $s_p$ is defined for every path $p.$

The set $\{p_v, s_e\mid v\in E^0, e\in E^1\}$ is referred to as a {\em Cuntz-Krieger $E$-family}.
For such an $E$-family and an element $z$ of the unit
circle $\mathbb T$, one defines a map $\gamma^E_z$ by $\gamma^E_z(p_v)=p_v$ and $\gamma^E_z(s_e)=zs_e$ and then uniquely extends this map to an automorphism of $C^*(E)$ (we assume a homomorphism of a $C^*$-algebra to be bounded and $*$-invariant). The {\em gauge action} $\gamma^E$ on $\mathbb T$ is given by $\gamma^E(z)=\gamma^E_z.$ Note that $\gamma^E_z(s_ps_q^*)=z^{|p|-|q|}s_ps_q^*$ for $z\in \mathbb T$ and paths $p$ and $q.$ The presence of the degree $|p|-|q|$ of $pq^*$ in the previous formula explains the connection of this action and the $\mathbb Z$-grading of $L_{\mathbb C}(E).$ In particular, the gauge action induces a $\Zset$-grading of $C^*(E)$ (see \cite[Section 3.1]{Lia_porcupine} for more details).
If $R$ is a $C^*$-algebra with an action $\beta: \mathbb T\to$ Aut$(R),$ we say that a homomorphism $\phi: C^*(E)\to R$ is {\em gauge-invariant} if $\beta_z\phi=\phi\gamma_z^E$ for every $z\in\mathbb T.$ In particular, $\phi: C^*(E)\to C^*(F)$ is gauge-invariant if  $\gamma^F_z \phi=\phi\gamma_z^E$ for every $z\in \mathbb T.$

In the case when the field $K$ is the field of complex numbers $\mathbb C$ considered with the complex-conjugate involution, the existence of a natural injective map $L_{\mathbb C}(E)\to C^*(E)$ (see \cite[Definition 5.2.1 and Theorem 5.2.9]{LPA_book}) implies that if there is a $*$-isomorphism of $L_{\mathbb C}(E)$ and $L_{\mathbb C}(F)$ for  graphs $E$ and $F,$ then there is an isomorphism of $C^*(E)$ and $C^*(F).$ In particular, if the algebra $*$-isomorphism is graded, then it induces a gauge-invariant isomorphism of the graph $C^*$-algebras. This  holds basically because for $\phi: C^*(E)\to C^*(F)$ induced by a graded algebra map, both $\gamma_z^F\phi(s_ps_q^*)$ and $\phi\gamma_z^E(s_ps_q^*)$ are equal to $z^{|p|-|q|}\phi(s_ps_q^*)$ for any paths $p,q$ of $E$.

The natural injection $L_{\mathbb C}(E)\to C^*(E)$ induces a natural monoid isomorphism $\V(L_{\mathbb C}(E))\to \V(C^*(E))$ (see \cite[Theorem 5.3.5]{LPA_book}) and, in fact, a natural $\Gamma$-monoid isomorphism $\V^\Gamma(L_{\mathbb C}(E))\to \V^\Gamma(C^*(E)).$ This isomorphism maps the elements $[v]$ and $[q_Z]$ onto themselves, so we have a natural isomorphism of both $\V^\Gamma(L_{\mathbb C}(E))$ and $ \V^\Gamma(C^*(E))$ and $M_E^\Gamma.$ Because of this, the graph $C^*$-algebra version of the main result holds for graph $C^*$-algebras as the following corollary shows.

\begin{corollary} {\bf The GCC holds for the graph $\mathbf{C^*}$-algebras of composition S-NE graphs.}
If $E$ and $F$ are countable composition S-NE graphs,
the conditions (1) to (3) from Theorem \ref{theorem_GCC_disjoint_cycles} are equivalent to the condition that there is a gauge-invariant isomorphism $\phi: C^*(E)\to C^*(F)$ such that $\ol\phi=f$ where $f$ is a map from condition (1) of Theorem \ref{theorem_GCC_disjoint_cycles}.
\label{corollary_C_star}
\end{corollary}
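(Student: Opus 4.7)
The plan is to insert the gauge-invariant $C^*$-isomorphism condition as a new condition $(4)$ into the chain of equivalences of Theorem \ref{theorem_GCC_disjoint_cycles}, and to establish the two implications $(3)\Rightarrow(4)$ via the universal property of $C^*(E)$ and $(4)\Rightarrow(1)$ via the spectral description of the $\Zset$-grading on $C^*(E)$. The remaining implications are then inherited from Theorem \ref{theorem_GCC_disjoint_cycles}.

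First I would handle $(3)\Rightarrow(4)$. Theorem \ref{theorem_GCC_disjoint_cycles} furnishes a graded $*$-isomorphism $\phi:L_{\mathbb C}(E)\to L_{\mathbb C}(F)$ with $\ol\phi=f$. Composing with the natural embedding $L_{\mathbb C}(F)\hookrightarrow C^*(F)$, the images of vertices and edges form a Cuntz-Krieger $E$-family inside $C^*(F)$: axioms (CK1) and (CK2) descend because they hold in $L_{\mathbb C}(F)$, and (CK3) is automatic because $\phi(e)\phi(e)^{*}=\phi(ee^{*})$ is a subprojection of $\phi(\so(e))$. The universal property of $C^*(E)$ then yields a unique $*$-homomorphism $\tilde\phi:C^*(E)\to C^*(F)$ extending $\phi$. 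Because $\phi$ sends vertices to degree-zero and edges to degree-one elements, $\tilde\phi$ intertwines the gauge actions on the Cuntz-Krieger generators and hence on all of $C^*(E)$, so $\tilde\phi$ is gauge-invariant. Applying the same construction to $\phi^{-1}$ produces a gauge-invariant inverse on generators, which extends to a two-sided inverse of $\tilde\phi$. To verify $\ol{\tilde\phi}=f$ I would then observe that the natural $\Gamma$-monoid map $\V^\Gamma(L_{\mathbb C}(E))\to \V^\Gamma(C^*(E))$ fits $\phi$ and $\tilde\phi$ into a commuting square by naturality, while the identifications $M_E^\Gamma\cong \V^\Gamma(L_{\mathbb C}(E))$ and $M_E^\Gamma\cong \V^\Gamma(C^*(E))$ both send $[v]$ to $[vA]$ for $A\in\{L_{\mathbb C}(E),C^*(E)\}$, making the relevant triangle commute; the generating intervals correspond since each is given by finite subsums of vertex classes.

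Next I would handle $(4)\Rightarrow(1)$. The $\Zset$-grading of $C^*(E)$ coincides with the spectral decomposition of the gauge action: the degree-$n$ component equals $\{x\mid \gamma^E_z(x)=z^n x\text{ for all }z\in\mathbb T\}$. A gauge-invariant $*$-isomorphism $\psi:C^*(E)\to C^*(F)$ therefore preserves the $\Zset$-grading automatically and induces a morphism $\V^\Gamma(\psi):\V^\Gamma(C^*(E))\to \V^\Gamma(C^*(F))$ of pre-ordered $\Gamma$-monoids. Transporting through the natural identifications with $M_E^\Gamma$ and $M_F^\Gamma$ and using that the generating intervals are characterized by vertex classes, $\V^\Gamma(\psi)$ yields a $\POM^D$-isomorphism $f$ as required.

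The main obstacle I anticipate is the careful bookkeeping needed to match the generating intervals $D_E$ across the three incarnations $M_E^\Gamma$, $\V^\Gamma(L_{\mathbb C}(E))$, and $\V^\Gamma(C^*(E))$ in the non-unital case, where no order-unit is available. This rests on the description of $D_E$ via the unitization reviewed in \cite[Section 3.5]{Roozbeh_book} and \cite[Section 4.1]{Roozbeh_Lia_Ultramatricial}, combined with the fact that each natural monoid isomorphism sends $[v]$ to $[v]$; once that compatibility is spelled out, both implications reduce to direct consequences of the universal property of $C^*(E)$ and the spectral characterization of the grading.
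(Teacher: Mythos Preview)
Your proposal is correct and follows essentially the same approach as the paper. The paper's proof is a brief two-line argument that relies on the discussion in the paragraphs immediately preceding the corollary (the natural injection $L_{\mathbb C}(E)\hookrightarrow C^*(E)$, the fact that a graded $*$-isomorphism of Leavitt path algebras extends to a gauge-invariant isomorphism of $C^*$-algebras, and the natural $\Gamma$-monoid isomorphism $\V^\Gamma(L_{\mathbb C}(E))\to \V^\Gamma(C^*(E))$); you have spelled out these same steps more explicitly via the universal property of $C^*(E)$ and the spectral description of the grading, which is a legitimate and slightly more self-contained way to present the identical argument.
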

\begin{proof}
If condition (1) holds and $f$ is a map as in this condition, then there is a graded $*$-algebra isomorphism $\phi: L_{\mathbb C}(E)\to L_{\mathbb C}(F)$ such that $\ol\phi=f$ by Theorem \ref{theorem_GCC_disjoint_cycles}. The map $\phi$ extends to a gauge-invariant isomorphism $C^*(E)\to C^*(F),$ which we also call $\phi,$ such that $\ol\phi=f.$
\end{proof}

\begin{corollary} {\bf The Graded Strong Isomorphism Conjecture holds for composition S-NE graphs.}
If $E$ and $F$ are countable composition S-NE graphs, the conditions below are equivalent to conditions (1) to (3) of Theorem \ref{theorem_GCC_disjoint_cycles}.
\begin{enumerate}
\item[{\em (4)}] The algebras $L_K(E)$ and $L_K(F)$ are graded isomorphic as algebras.

\item[{\em (5)}] The algebras $L_K(E)$ and $L_K(F)$ are graded isomorphic as rings.

\item[{\em (6)}] The algebras $L_K(E)$ and $L_K(F)$ are graded isomorphic as $*$-rings.

\item[{\em (7)}] The algebras $C^*(E)$ and $C^*(F)$ are graded isomorphic.
\end{enumerate}
\label{corollary_iso_conjecture}
\end{corollary}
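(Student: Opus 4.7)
The plan is to close the implication cycles around the already-established equivalence of (1)--(3) from Theorem \ref{theorem_GCC_disjoint_cycles}. The easy half consists of $(3) \Rightarrow (4) \Rightarrow (5)$ and $(3) \Rightarrow (6) \Rightarrow (5)$, which hold since a graded $*$-algebra isomorphism is a fortiori a graded algebra, graded ring, and graded $*$-ring isomorphism. The implication $(3) \Rightarrow (7)$ follows from (the proof of) Corollary \ref{corollary_C_star}: a graded $*$-algebra isomorphism $\phi : L_{\mathbb C}(E) \to L_{\mathbb C}(F)$ extends uniquely to a gauge-invariant $*$-isomorphism of the enveloping $C^*$-algebras via the natural embeddings $L_{\mathbb C}(E) \hookrightarrow C^*(E)$ and $L_{\mathbb C}(F) \hookrightarrow C^*(F)$ together with the universal property of $C^*(E)$, and gauge invariance coincides with preservation of the $\mathbb Z$-grading.

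The substantive step is to prove $(5) \Rightarrow (1)$ and $(7) \Rightarrow (1)$; combined with the equivalence of (1)--(3) from Theorem \ref{theorem_GCC_disjoint_cycles}, this closes all cycles. For $(5) \Rightarrow (1)$, I apply the functor $\V^{\Gamma}$ to a graded ring isomorphism $\phi : L_K(E) \to L_K(F)$: since the $\Gamma$-action on $\V^{\Gamma}(L_K(E))$ is inherited from the grading and $\phi$ respects gradings, $\V^{\Gamma}(\phi)$ is a $\Gamma$-monoid isomorphism. The description of $D_{L_K(E)}$ recalled in section \ref{subsection_graph_monoid} as the standard interval obtained from the unitization of $L_K(E)$ shows that it depends only on the graded ring structure of $L_K(E)$; under the natural identifications $M_E^{\Gamma} \cong \V^{\Gamma}(L_K(E))$ and $M_F^{\Gamma} \cong \V^{\Gamma}(L_K(F))$, this yields a $\POM^D$-isomorphism. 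The argument for $(7) \Rightarrow (1)$ is parallel: a graded isomorphism of $C^*$-algebras is automatically gauge-invariant, so $\V^{\Gamma}(\phi)$ is a $\Gamma$-monoid isomorphism, and the chain $M_E^{\Gamma} \cong \V^{\Gamma}(L_{\mathbb C}(E)) \cong \V^{\Gamma}(C^*(E))$ recalled in the paragraphs preceding Corollary \ref{corollary_C_star} transports it to a $\POM^D$-isomorphism of talented monoids.

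The main obstacle is the careful verification that the generating interval is truly intrinsic to the graded ring structure, so that arbitrary graded ring (and arbitrary graded $C^*$-algebra) isomorphisms preserve it. When $E^0$ is finite, $L_K(E)$ is unital and $D_{L_K(E)}$ is the order-interval below $[L_K(E)]$, which is obviously preserved by any unital graded ring isomorphism. When $E^0$ is infinite, I will exploit that $L_K(F)$ is graded locally unital with the finite sums of vertex idempotents as local units, so any idempotent $e \in L_K(F)$ satisfies $e = (\sum_{w \in F'} w)\, e\, (\sum_{w \in F'} w)$ for some finite $F' \subseteq F^0$, placing $[e L_K(F)]$ in $D_F$. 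Applying this to $e = \phi(v)$ for $v \in E^0$ and using that $D$ is downward closed and additively generated shows $\V^{\Gamma}(\phi)(D_E) \subseteq D_F$; the same argument for $\phi^{-1}$ yields equality. The $C^*$-algebra analogue is identical in spirit, and once this is in place the corollary follows immediately from Theorem \ref{theorem_GCC_disjoint_cycles} and Corollary \ref{corollary_C_star}.
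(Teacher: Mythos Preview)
Your proposal is correct and follows essentially the same implication scheme as the paper's proof: the paper records $(3)\Rightarrow(4)\Rightarrow(5)$, $(3)\Rightarrow(6)\Rightarrow(5)$, and $(5)\Rightarrow(1)$ as ``direct'' and invokes Corollary~\ref{corollary_C_star} for the equivalence with~(7). Your contribution is to unpack the step the paper leaves implicit, namely why a graded \emph{ring} isomorphism automatically yields a $\POM^D$-isomorphism (preservation of the generating interval via local unitality); this is a genuine detail the paper takes for granted in section~\ref{subsection_graph_monoid}, and your argument via $\phi(v)\leq\sum_{w\in F'}w$ is the right one, so nothing further is needed.
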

\begin{proof}
Since the implications (3) $\Rightarrow$ (4) $\Rightarrow$ (5), (3) $\Rightarrow$ (6) $\Rightarrow$ (5), and (5) $\Rightarrow$ (1) are direct and (1) $\Rightarrow$ (3) holds by Theorem \ref{theorem_GCC_disjoint_cycles},
we have that (1) to (6) are equivalent. The equivalence of (1) and (7) follows by Corollary \ref{corollary_C_star}.
\end{proof}

\subsection{Diagonal preservation and relation to Williams' Conjecture}\label{subsection_diagonal}

The {\em diagonal} of a Leavitt path algebra of $E$ is the $K$-linear span of the elements of the form $pp^*$ where $p$ is a path of $E.$
We use $D_E$ to denote this algebra. The diagonal-preserving maps of Leavitt path and graph $C^*$-algebras are particularly relevant due to the results from \cite{Arklint_et_al_diagonal} and \cite{Carlsen_diagonal}. Because of this, we show that conditions (1) to (7) are equivalent with conditions obtained by adding the requirement that the isomorphisms in conditions (3) to (7) are diagonal preserving. Using the terminology of \cite{Eilers_Ruiz_3-bit}, this shows that 111=110 for the class of graphs we consider.

We start with a short lemma.

\begin{lemma}
If $E$ and $F$ are any graphs and $\phi: L_K(E)\to L_K(F)$ is a graded $*$-homomorphism such that $\phi(v),\phi(ee^*)\in D_F$ and $\phi(e)qq^*\phi(e^*)\subseteq D_F$ for every $v\in E^0,$ $e\in E^1,$ and a path $q$ of $F,$ then $\phi(D_E)\subseteq D_F.$
\label{lemma_on_diagonal}
\end{lemma}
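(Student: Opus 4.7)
The plan is to show that $\phi(pp^*) \in D_F$ for every path $p$ of $E$ by induction on $|p|$, and then extend by $K$-linearity. Since $D_E$ is the $K$-linear span of the elements $pp^*$ for paths $p$ of $E$, this is exactly what is needed.

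The base cases $|p|=0$ and $|p|=1$ are immediate from the hypotheses: if $|p|=0$ then $p=v\in E^0$ and $\phi(pp^*)=\phi(v)\in D_F$; if $|p|=1$ then $p=e\in E^1$ and $\phi(pp^*)=\phi(ee^*)\in D_F$. For the inductive step with $|p|\ge 2$, I would factor $p=ep'$ where $e\in E^1$ and $p'$ is a path of strictly shorter length with $\so(p')=\ra(e)$. Then
\[
\phi(pp^*)=\phi(e)\phi(p'(p')^*)\phi(e)^*,
\]
using that $\phi$ is a $*$-homomorphism.

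By the inductive hypothesis, $\phi(p'(p')^*)\in D_F$, so it is a finite $K$-linear combination $\sum_i k_i\, q_iq_i^*$ for paths $q_i$ of $F$. Hence
\[
\phi(pp^*)=\sum_i k_i\,\phi(e)\,q_iq_i^*\,\phi(e)^*,
\]
and each summand lies in $D_F$ by the hypothesis on $\phi(e)qq^*\phi(e^*)$. This closes the induction, and since $D_E$ is the $K$-span of the elements $pp^*$, $K$-linearity of $\phi$ yields $\phi(D_E)\subseteq D_F$.

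There is essentially no obstacle here; the argument is a direct induction, and the three hypotheses on $\phi(v)$, $\phi(ee^*)$, and $\phi(e)qq^*\phi(e^*)$ are precisely tailored to cover the three situations that arise (length zero, length one, and the inductive step).
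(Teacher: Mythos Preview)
Your proof is correct and follows essentially the same approach as the paper: induction on the length of $p$, with the base cases $|p|=0,1$ given by hypothesis and the inductive step handled by factoring $p=ep'$ and expanding $\phi(p'(p')^*)\in D_F$ as a $K$-linear combination of elements $q_iq_i^*$. The paper's proof is slightly terser (it does not spell out the linear-combination expansion), but the argument is the same.
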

\begin{proof}
If $p$ is a path of $E$ of length zero or one, the assumed properties imply that $\phi(pp^*)$ is in $D_F.$ If $p=er$ where $e\in E^1$ and $r$ is a path of $E,$ we can assume that $\phi(rr^*)\in D_F$ by induction and have that
$\phi(pp^*)=\phi(e)\phi(rr^*)\phi(e^*)$ is in $ D_F$ by induction and the assumed properties.
\end{proof}

By the lemma above, the operations on graphs considered in this work preserve the diagonal.

\begin{lemma}
The graph operations realizing the relation $\approx$ on the class of composition S-NE graphs induce graded $*$-isomorphisms which preserve the diagonal.
\label{lemma_approx_preserves_the_diagonal}
\end{lemma}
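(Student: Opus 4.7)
The plan is to invoke Lemma \ref{lemma_on_diagonal} for each atomic graph operation whose composition realizes the relation $E \approx F$. The operations in question are: out-splits and their inverses the out-amalgamations (which comprise the total out-split of subsection \ref{subsection_outsplit}, the blow-ups entering exit moves, and the step-moves used to produce $E_{\can\quot}$); the canonical-form map $E \to E_{\can, V}$ of Proposition \ref{proposition_canonical_tails}; and the tail-cutting map $\iota_{\cut}$ of Proposition \ref{proposition_cut_form} together with its $n$-S-NE generalization. All other maps used (such as $\phi_{+b}$, $\phi_{+kL}$, or the $L$-, $\omega$-, and spine-reductions) are by construction finite compositions of these, so it suffices to verify the three hypotheses of Lemma \ref{lemma_on_diagonal} for each of the listed atomic operations.

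For each such $\phi : E' \to F'$, inspection of the defining formulas shows that (a) $\phi(v)$ is a finite sum of elements of the form $\pm pp^*$ for paths $p$ of $F'$ (for instance $\phi(v_n) = v - \sum_{e \in \bigcup_{i<n}\mathcal{E}_i} ee^*$ for an out-amalgamation, $\phi(v_g) = g_0 g g^* g_0^*$ in a tail-cutting map, and $\phi(v)$ a single vertex elsewhere), and (b) $\phi(e)$ is a finite sum of monomials $p_iq_i^*$ with $\ra(p_i) = \ra(q_i)$ and $|p_i| - |q_i| = 1$. The first hypothesis of Lemma \ref{lemma_on_diagonal} follows immediately. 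For the second, expanding $\phi(ee^*) = \phi(e)\phi(e)^*$ and repeatedly applying $q^*q = \ra(q)$ together with (CK1) collapses each summand to either a monomial $pp^*$ or to zero; the model computation
\[
\phi(e)\phi(e)^* = g_ng_n^* - g_ngg^*g_n^* + g_{n+1}gg^*g_{n+1}^*
\]
for the tail-cutting map of Example \ref{example_number_of_tails} illustrates how the apparently off-diagonal cross-terms either cancel pairwise or consolidate into diagonal monomials such as $(g_n g)(g_n g)^*$.

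The main obstacle is the third hypothesis, which requires a path-comparison analysis of
\[
\phi(e)\,qq^*\,\phi(e^*) = \sum_{i,j} p_i \,(q_i^* q q^* q_j)\, p_j^*
\]
for an arbitrary path $q$ of $F'$. The middle factor $q_i^* q q^* q_j$ is nonzero only when one of $q_i$, $q$, $q_j$ is an initial segment of the other two, in which case iterating $q^*q = \ra(q)$ and (CK1) reduces it to either a common range vertex or to $rr^*$ for a single path $r$, yielding diagonal summands of the form $p_i p_i^*$ (when $i=j$) or $p_i r r^* p_j^*$. The remaining $i \neq j$ cross-terms either cancel in pairs or themselves collapse to diagonal monomials by exactly the orthogonality already exploited in step (b): the idempotents $gg^*$ for distinct edges $g$ are mutually orthogonal, and in the tail-cutting case the iterated family $\{e_j = e^j gg^* e^{-j}\}_{j \geq 0}$ of Example \ref{example_shortening_tails} (and its higher-composition-length analogues from section \ref{section_canonical_for_n-S-NE}) is mutually orthogonal by construction. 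Lemma \ref{lemma_on_diagonal} therefore applies to each atomic $\phi$, and composing through the finite chain realizing $E \approx F$ yields the claimed diagonal-preserving graded $*$-isomorphism; applying the same argument to the inverse operations gives equality $\phi(D_E) = D_F$.
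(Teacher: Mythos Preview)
Your approach is essentially the paper's: reduce to a finite list of atomic operations and verify the hypotheses of Lemma~\ref{lemma_on_diagonal} for each, using orthogonality of the relevant idempotents (the $e_i=q_iq_i^*$ in the tail-cutting case) to control cross-terms. Two minor points of comparison are worth noting. First, the paper's taxonomy is slightly different from yours: it groups the direct-exit, canonical-tail, and step-move constructions together as ``1-S-NE moves'' on the grounds that for all of them $\phi(e)$ is a \emph{single} monomial $pq^*$, which makes hypothesis~3 trivial (no cross-terms at all); out-splits are handled separately because $\phi(f)=\sum_i f_i$ with the $f_i$ having pairwise distinct ranges, so again only the $i=j$ term survives against any $qq^*$; only the tail-cut genuinely requires the cancellation argument you sketch. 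Second, your general claim that $q_i^*qq^*q_j$ reduces to ``a common range vertex or $rr^*$'' is not literally correct for $i\neq j$ (e.g.\ if $q$ is a common prefix of $q_i=qs$ and $q_j=qt$ one gets $s^*t$), so the cross-term analysis cannot be done uniformly---it really does rest on the specific structure of each operation. The paper therefore does the tail-cut case by an explicit two-case split (either $q_i$ is a prefix of $q$, whence $e_iqq^*=qq^*$ and $\iota_{\cut}(g_{j,l,n'})qq^*$ collapses to a single term, or $q$ is a prefix of $q_i$, whence $e_iqq^*=e_i$ and the computation reduces to that of $\iota_{\cut}(g_{j,l,n'})\iota_{\cut}(g_{j,l,n'})^*$). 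Your sketch has all the right ingredients; filling in that case split would make it complete.
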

\begin{proof}
If $E$ and $F$ are composition S-NE graphs such that $E\approx F,$ then one can be transformed to the other by using three types of operations and their inverses.
\begin{enumerate}
\item The operations making the exits direct and the tails and quotients canonical are of the same nature -- they map the vertices on the generators of the diagonal algebra (elements of the form $pp^*$ for paths $p$) and edges to the elements of the form $pq^*$ for paths $p$ and $q.$ We refer to such an operation as an {\em 1-S-NE move}.

\item Converting the graphs to their total out-split forms and considering blow-ups boils down to a series of out-splits and path rearranging. If the path rearranging involves only finitely many vertices it is equivalent to in-split plus moves. In particular, an exit move is a blow-up composed with path rearranging.

\item If the graphs have infinitely many vertices, the {\em tail-cuttings} of section \ref{subsection_tail_cutting} may be applicable.
\end{enumerate}

Because of this, in order to prove the lemma, it is sufficient to show that a path rearranging, an out-split, a tail cut, and the inverses of such operations induce diagonal-preserving isomorphisms.

Let us first consider a path rearranging $\phi: E\to F$ The graded $*$-isomorphism extension
$\phi: L_K(E)\to L_K(F)$ is such that for any $v\in E^0$ and $e\in E^1,$ $
\phi(v)=pp^*\in D_F$ for some path $p$ of $F$ and $\phi(e)=pq^*$ for some paths $p$ and $q$ of $F$ so that $\phi(ee^*)=pq^*qp^*=pp^*\in D_F.$ It is direct to check that $\phi(e)rr^*\phi(e)^*=pq^*rr^*qp^*$ is also a diagonal element: the products $q^*r$ and $r^*q$ are nonzero when either $q=rs$ or $r=qs$ for some path $s$. In the first case,
$pq^*rr^*qp^*=ps^*r^*rr^*rsp^*=ps^*sp^*=pp^*$ and, in the second case, $pq^*rr^*qp^*=pq^*qss^*q^*qp^*=pss^*p^*.$ So, in both cases, $\phi(e)rr^*\phi(e)^*$ is a diagonal element. The inverse of $\phi$ also maps edges to the elements of the form $pq^*$ for some paths $p$ and $q,$ so the argument that $\phi^{-1}$ preserves the diagonal is the same.

The proofs of
\cite[Theorems 2.1.2 and 2.2.3]{Eilers_Ruiz_3-bit}
contain arguments that an out-split and an in-split plus moves preserve the diagonal. This can be checked also using Lemma \ref{lemma_on_diagonal}. For an out-split, for example,  if
$E$ is a graph, $v$ a vertex which emits edges, and $\mathcal E_1,\ldots \mathcal E_n$ is a partition $\mathcal P$ of $\so^{-1}(v),$ then let $f_1, \ldots, f_n$ be the new edges which replace $f\in E^1$ such that $\ra(f)=v.$ Let $\phi$ be the graded $*$-isomorphism of the algebras of $E$ and $E_{v,\mathcal P}$ given by $\phi(v)=\sum_{i=1}^n v_i,$ $\phi(w)=w$ for $w\in E^0-\{v\},$ $\phi(f)=\sum_{i=1}^n f_i $ if $\ra(f)=v$ and $\phi(f)=f$ if $\ra(f)\neq v.$ This isomorphism preserves the diagonal since
$\phi(ff^*)=\sum_{i=1}^n f_if_i^*$ where we use that the edges $f_i$ have different ranges for different $i$ values. If $q$ is any path of $E_{v, \mathcal P},$ then the product
$\sum_{i=1}^n f_iqq^*\sum_{i=1}^nf_i^*$ is zero in the case that $\so(q)$ is different than $\ra(f_i)$ for any $i$ or the product is $f_iqq^*f_i^*$ for $i$ such that $\so(q)=\ra(f_i).$ In either case, this product is in $D_F.$  The argument for checking that the properties of the lemma hold for the inverse of $\phi$ is similar.

It remains to show that
the tail cutting and its inverse preserve the diagonal. To do that, let $\iota_{\cut}: E\to E_{\cut}$ be the tail cutting map from the proof of Proposition \ref{proposition_cut_form}. By the proof of Proposition \ref{proposition_cut_form}, the images of all vertices and edges except the edges $g_{j,l, n'}$ and their sources are mapped to elements of the form $pq^*$ for some paths $p$ and $q.$ So, the argument that the assumptions of Lemma \ref{lemma_on_diagonal} hold for $\iota_{\cut}$ and such vertices and edges is the same as  when 1-S-NE moves were considered. Hence, it remains to check that $\iota_{\cut}(g_{j,l, n'}g_{j,l, n'}^*)$ and $\iota_{\cut}(g_{j,l, n'})qq^*\iota_{\cut}(g_{j,l, n'}^*)$ are in the diagonal for any path $q$ of $E_{\cut}.$

We use the notation from the proof of Proposition \ref{proposition_cut_form} and we recall that
\[\iota_{\cut}(g_{j,l, n'})=
g_{j,l, n'}-\sum_{i\in C_{j,l,\leq\lfloor\frac{n'}{2}\rfloor}}g_{j,l, n'}e_{i}
+\sum_{i\in C_{j,l, \leq \lfloor \frac{n'}{2}\rfloor}}g_{j,l, 2n'+1}e_{i}.\]
Using this definition and the fact that the idempotents $e_i$ are mutually orthogonal, one can foil the terms of the product $\iota_{\cut}(g_{j,l, n'})\iota_{\cut}(g_{j,l, n'})^*$ and note that all the non-diagonal elements cancel out. When considering the product
$\iota_{\cut}(g_{j,l, n'})qq^*\iota_{\cut}(g_{j,l, n'}^*)$ for a path $q,$ this product is nonzero if and only if $q_i$ is a prefix of $q$ or $q$ is a prefix of $q_i$ for some $i\in C_{j,l, \leq \lfloor\frac{n'}{2}\rfloor}.$ If $q=q_is,$ then   $e_iqq^*=e_iq_iss^*q_i^*=q_iss^*q_i=qq^*.$  Thus, $\iota_{\cut}(g_{j,l,n'})qq^*=g_{j,l,n'}gg^*-g_{j,l,n'}gg^*+g_{j,l,2n'+1}gg^*=g_{j,l,2n'+1}gg^*$ and $\iota_{\cut}(g_{j,l,n'})qq^*\iota_{\cut}(g_{j,l,n'}^*)=g_{j,l,2n'+1}qq^*g_{j,l,2n'+1}^*$ is a diagonal element. If $q_i=qs,$
then  $e_iqq^*=qss^*q^*qq^*=qss^*q^*=e_i.$ This implies that  $qq^*e_i=e_i$ and these relations ensure that all non-diagonal terms of the product
$\iota_{\cut}(g_{j,l, n'})qq^*\iota_{\cut}(g_{j,l, n'}^*)$ cancel out.

Turning to $\iota_{\cut}^{-1},$ we recall that $\iota_{\cut}^{-1}(g_{j,l, 2n'+1}e_{i})=g_{j,l, n'}e_{i},$ so that the element
$\iota_{\cut}^{-1}(g_{(j,l, n')})$ has the same format as $\iota_{\cut}(g_{(j,l, n')})$: it is a sum of $g_{j,l, n'}-\sum_{i\in C_{j,l,\leq\lfloor\frac{n'}{2}\rfloor}}g_{j,l, n'}e_{i}$ and an element of the form
$\sum_{i\in C_{j,l, \leq \lfloor \frac{n'}{2}\rfloor}}x_iq_{i}^*$ where $x_i$ are some paths.  Because of this form, checking  that $\iota_{\cut}^{-1}$ satisfies the assumption of Lemma \ref{lemma_on_diagonal} is completely analogous to checking the same for $\iota_{\cut}.$
\end{proof}

\begin{proposition}
Let (8), (9) and (10) be the conditions below and let (8*) and (9*) be the conditions obtained by requiring that the isomorphisms in (8) and (9) are $*$-isomorphisms.
If $E$ and $F$ are composition S-NE graphs, the conditions (1) to (7) are equivalent to any of (8) to (10), (8*) and (9*).
\begin{enumerate}[\upshape(1)]
\item[{\em (8)}] There is a diagonal-preserving graded algebra isomorphism $L_K(E)\to L_K(F)$.

\item[{\em (9)}] There is a diagonal-preserving graded ring isomorphism $L_K(E)\to L_K(F)$.

\item[{\em (10)}] There is a diagonal-preserving graded (equivalently, equivariant) isomorphism $C^*(E)\to C^*(F)$.
\end{enumerate}
\label{proposition_on_diagonal}
\end{proposition}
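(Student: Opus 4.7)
The plan is to deduce the equivalence by separating the two obvious halves and then invoking the realization statement of Theorem~\ref{theorem_GCC_disjoint_cycles} together with Lemma~\ref{lemma_approx_preserves_the_diagonal}. The ``forgetting the diagonal'' direction is immediate: each of (8), (8*), (9), (9*), (10) directly yields one of the conditions (4), (6), (5), (6), (7) respectively from Corollary~\ref{corollary_iso_conjecture}, and hence all of (1)--(7). So the content of the proposition lies in the converse, and for this it suffices to establish that condition (2) implies the two strongest new conditions, namely (8*) and (10); the conditions (8), (9), (9*) are then automatic from (8*).

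For the implication $(2)\Rightarrow(8^*)$, I would assume $E\approx F$ and apply the last sentence of Theorem~\ref{theorem_GCC_disjoint_cycles}: there exist canonical forms $E_{\can}$ and $F_{\can}$, graph operations $\phi_E:E\to E_{\can}$ and $\phi_F:F\to F_{\can}$, and a graph isomorphism $\iota:E_{\can}\cong F_{\can}$ whose composite $\phi_F^{-1}\iota\phi_E$ extends to a graded $*$-algebra isomorphism $L_K(E)\to L_K(F)$. As noted in the proof of Lemma~\ref{lemma_approx_preserves_the_diagonal}, the operations $\phi_E$ and $\phi_F$ are each finite compositions of the three building blocks (1-S-NE moves, out-splits, and tail cuts) whose diagonal-preservation is verified there via Lemma~\ref{lemma_on_diagonal}. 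The graph isomorphism $\iota$ trivially preserves the diagonal, and a composition of diagonal-preserving graded $*$-isomorphisms is again diagonal-preserving. Hence $\phi_F^{-1}\iota\phi_E:L_K(E)\to L_K(F)$ is a diagonal-preserving graded $*$-algebra isomorphism, establishing (8*); (8), (9), and (9*) follow by weakening.

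For $(2)\Rightarrow(10)$, I would extend the preceding graded $*$-algebra isomorphism $\phi:L_{\mathbb C}(E)\to L_{\mathbb C}(F)$ (taking $K=\mathbb C$) to a gauge-invariant $*$-isomorphism $\tilde\phi:C^*(E)\to C^*(F)$ exactly as in the proof of Corollary~\ref{corollary_C_star}. The diagonal of $C^*(E)$ is the closed linear span of the idempotents $s_ps_p^*$, which is precisely the norm-closure of the image of $D_E\subseteq L_{\mathbb C}(E)$ inside $C^*(E)$. Since $\tilde\phi$ is a bounded $*$-homomorphism satisfying $\tilde\phi(D_E)\subseteq D_F$, continuity yields $\tilde\phi(\overline{D_E})\subseteq \overline{D_F}$, and the same for $\tilde\phi^{-1}$, so $\tilde\phi$ preserves the $C^*$-diagonals. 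I do not expect any real obstacle: Theorem~\ref{theorem_GCC_disjoint_cycles} and Lemma~\ref{lemma_approx_preserves_the_diagonal} already do the hard work, and the only point requiring any care is the passage from the algebraic diagonal to its norm closure in the $C^*$-setting, which is purely a continuity argument.
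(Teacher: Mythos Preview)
Your proposal is correct and follows essentially the same route as the paper: reduce to showing $(2)\Rightarrow(8^*)$ via Lemma~\ref{lemma_approx_preserves_the_diagonal}, with the remaining implications being formal. The paper simply records $(8^*)\Rightarrow(10)$ as ``direct'' rather than spelling out the continuity argument for the $C^*$-diagonal as you do, but this is a matter of detail, not of strategy.
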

\begin{proof}
The ``equivariant'' is equivalent with ``graded'' in condition (10) since the only group action we consider on the graph $C^*$-algebras is the gauge action, so ``equivariant'' and ``gauge-invariant'' are synonymous. The grading of a graph $C^*$-algebra is such that ``gauge-invariant'' and ``graded'' are equivalent terms. The implications (8*) $\Rightarrow$ (8) $\Rightarrow$ (9) $\Rightarrow$ (5), (8*) $\Rightarrow$ (9*) $\Rightarrow$ (6), and (8*) $\Rightarrow$ (10) $\Rightarrow$ (7)
are direct. Since we have that (1) to (7) are equivalent, it is sufficient to show that (2) $\Rightarrow$ (8*). This implication holds by Lemma \ref{lemma_approx_preserves_the_diagonal}.
\end{proof}

We note that the equivalent conditions (1) to (10) are also equivalent to the groupoids of the graphs being isomorphic by \cite[Theorem 6.1]{Ben_Steinberg}.

We conclude the paper by discussing the relation of the GCC with Williams' Problem. Williams' Problem, formulated for graphs instead of matrices, is asking whether the shift equivalence of the incidence matrices of two graphs implies the strong shift equivalence of those two matrices. This problem, formulated in 1973, is known to have a negative answer since 1997. Since then, a significant volume of work has been devoted to study the difference between shift equivalence and strong shift equivalence over nonnegative integers. Recently, consideration of various classes of finite graphs with disjoint cycles and their graph algebras resulted in affirmative answers to Williams' problem for some classes of graphs considered in \cite{Cordiero_et_al},  \cite{Do_et_al_Williams}, and \cite{Eilers_Ruiz_3-bit}.

The term ``Williams' Conjecture'' has been used to refer to the statement that Williams' Problem has an affirmative answer. Although it is not clear whether Williams' Conjecture implies GCC or vice versa, the two conjectures are related via Krieger's dimension group of a graph $E$ which is isomorphic to $G_E^\Gamma$ considered as an object of $\POG$ (see \cite{Roozbeh_Dynamics} or \cite{Eilers_Ruiz_3-bit}). By \cite[Corollary 9]{Roozbeh_Dynamics}, the incidence matrices of two finite graphs with no sinks are shift equivalent if and only if the $\Gamma$-groups of the two graphs are $\POG$-isomorphic. If such an isomorphism were a $\POG^u$-isomorphism and if these two graphs were S-NE graphs, then we would have that the conditions (1) to (10) hold. If the graphs have no sources then, by \cite[Theorem 7.3]{Carlsen_et_al}, the two incidence matrices would be strongly shift equivalent. So, for the implication GCC $\Rightarrow$ Williams' Conjecture for finite S-NE graphs without sources or sinks, the ``missing step'' is going from $\POG$ to $\POG^u$ isomorphism.

For the converse implication (Williams' Conjecture $\Rightarrow$ GCC) for finite S-NE graphs with no sinks and sources, the ``missing step'' is going from a stable isomorphism to an isomorphism. Indeed, assuming that the $\Gamma$-groups of two finite graphs are $\POG^u$-isomorphic, then the incidence matrices are shift equivalent and, by Williams' Conjecture, strongly shift equivalent. If these graphs have no sinks and sources, then their graph $C^*$-algebras are equivariantly stably isomorphic in a way
that respects the diagonals by \cite[Theorem 7.3]{Carlsen_et_al}. If one could obtain one such isomorphism without ``stable'' and if the graphs were S-NE graphs, then we would have that the Leavitt path algebras of such graphs are graded $*$-isomorphic.

\end{document}